\newcommand{\C}{\mathbb{C}}
\newcommand{\F}{\mathbb{F}}
\newcommand{\Qlb}{\overline{\mathbb{Q}}_\ell}
\newcommand{\fK}{\mathfrak{K}}
\newcommand{\fO}{\mathfrak{O}}
\newcommand{\Z}{\mathbb{Z}}
\newcommand{\cO}{\mathcal{O}}
\newcommand{\Fr}{\mathsf{Fr}}
\newcommand{\Gm}{\mathbb{G}_m}
\newcommand{\bP}{\mathbb{P}}
\newcommand{\pt}{\mathrm{pt}}
\newcommand{\scA}{\mathscr{A}}
\newcommand{\scD}{\mathscr{D}}
\newcommand{\Kb}{K^{\mathrm{b}}}
\newcommand{\cone}{\mathrm{cone}}
\newcommand{\Proj}{\mathsf{Proj}}
\newcommand{\Pure}{\mathsf{Pure}}
\newcommand{\Mod}{\mathsf{Mod}}
\newcommand{\Gv}{{\check G}}
\newcommand{\Tv}{{\check T}}
\newcommand{\Bv}{{\check B}}
\newcommand{\Lv}{{\check L}}
\newcommand{\Pv}{{\check P}}
\newcommand{\Iv}{{\check I}}
\newcommand{\Uv}{{\check U}}
\newcommand{\GvO}{\Gv(\fO)}
\newcommand{\LvO}{\Lv(\fO)}
\newcommand{\TvO}{\Tv(\fO)}
\newcommand{\fg}{\mathfrak{g}}
\newcommand{\ft}{\mathfrak{t}}
\newcommand{\ftv}{{\check \ft}}
\newcommand{\fgv}{{\check \fg}}
\newcommand{\bX}{\mathbf{X}}
\newcommand{\bXv}{{\check \bX}}
\newcommand{\bXp}{\mathbf{X}^+}
\newcommand{\Rv}{{\check R}}
\newcommand{\Fl}{\mathsf{Fl}}
\newcommand{\Gr}{\mathsf{Gr}}
\newcommand{\cN}{{\mathcal{N}}}
\newcommand{\cB}{\mathcal{B}}
\newcommand{\wcN}{\widetilde{\mathcal{N}}}
\newcommand{\la}{\lambda}
\newcommand{\rhov}{{\check \rho}}
\newcommand{\alv}{{\check \alpha}}
\newcommand{\Weil}{\mathrm{Weil}}
\newcommand{\mix}{\mathrm{mix}}
\newcommand{\geom}{\mathrm{geom}}
\newcommand{\dm}{\cD^\mix}
\newcommand{\cDb}{\cD^{\mathrm{b}}}
\newcommand{\cCb}{\cC^{\mathrm{b}}}
\newcommand{\Perv}{\mathsf{P}}
\newcommand{\Pervm}{\mathsf{P}^\mix}
\newcommand{\Pervw}{\mathsf{P}^\Weil}
\newcommand{\IC}{\mathrm{IC}}
\newcommand{\ICm}{\mathrm{IC}^\mix}
\newcommand{\uQlb}{\underline{\overline{\mathbb{Q}}}_\ell}
\newcommand{\free}{{\mathrm{free}}}
\newcommand{\DGM}{\mathsf{DGMod}}
\newcommand{\DGMG}{\DGM^G}
\newcommand{\DGMGm}{\DGM^{G \times \Gm}}
\newcommand{\DGC}{\mathsf{DGCoh}}
\newcommand{\DGCGf}{\DGC^G_\free}
\newcommand{\DGCGmf}{\DGC^{G \times \Gm}_\free}
\newcommand{\sA}{\mathsf{A}}
\newcommand{\KPG}{\mathsf{Kproj}^G}
\newcommand{\cF}{\mathcal{F}}
\newcommand{\cK}{\mathcal{K}}
\newcommand{\cH}{\mathcal{H}}
\newcommand{\cS}{\mathcal{S}}
\newcommand{\cM}{\mathcal{M}}
\newcommand{\bO}{\mathbb{O}}
\newcommand{\wbO}{\widetilde{\mathbb{O}}}
\newcommand{\fR}{\mathfrak{R}}
\newcommand{\cL}{\mathcal{L}}
\newcommand{\cR}{\mathcal{R}}
\newcommand{\lotimes}{{\stackrel{_L}{\otimes}}}
\newcommand{\tsqtimes}{\mathrel{\widetilde{\boxtimes}}}
\newcommand{\lan}{\langle}
\newcommand{\ran}{\rangle}
\newcommand{\D}{\mathbb{D}}
\newcommand{\uC}{\underline{\C}}
\newcommand{\For}{\mathrm{For}}
\newcommand{\sfG}{\mathsf{G}}
\newcommand{\sfL}{\mathsf{L}}
\newcommand{\cD}{\mathcal{D}}
\newcommand{\Coh}{\mathsf{Coh}}
\newcommand{\QCoh}{\mathsf{QCoh}}
\newcommand{\Rep}{\mathsf{Rep}}
\newcommand{\Vect}{\mathsf{Vect}}
\newcommand{\cW}{\mathcal{W}}
\newcommand{\id}{\mathrm{id}}
\newcommand{\sE}{\mathsf{E}}
\newcommand{\fr}{\mathrm{free}}
\newcommand{\prj}{\mathrm{proj}}
\newcommand{\sm}{\mathsf{m}}
\newcommand{\scB}{\mathscr{B}}
\newcommand{\scC}{\mathscr{C}}
\newcommand{\scE}{\mathscr{E}}
\newcommand{\scS}{\mathscr{S}}
\newcommand{\cC}{\mathcal{C}}
\newcommand{\fS}{\mathfrak{S}}
\newcommand{\fT}{\mathfrak{T}}
\newcommand{\fX}{\mathfrak{X}}
\DeclareMathOperator{\Hom}{Hom}
\DeclareMathOperator{\Ext}{Ext}
\DeclareMathOperator{\Ind}{Ind}
\DeclareMathOperator{\rad}{rad}
\newcommand{\aff}{\mathrm{aff}}
\newcommand{\lf}{\mathrm{lf}}
\newcommand{\qis}{\mathrm{qis}}
\newcommand{\pr}{\mathsf{prim}}
\newcommand{\FBK}{\mathrm{F}^{\mathrm{BK}}}
\newcommand{\prt}{\mathsf{pretr}}
\def\mon#1{\text{\rm $#1$-mon}}
\def\eq#1{\text{\rm $#1$-eq}}
\newtheorem{thm*}{Theorem}
\numberwithin{equation}{section}
\newtheorem{thm}{Theorem}[section]
\newtheorem{lem}[thm]{Lemma}
\newtheorem{prop}[thm]{Proposition}
\newtheorem{cor}[thm]{Corollary}
\theoremstyle{definition}
\theoremstyle{remark}
\newtheorem{rmk}[thm]{Remark}
\title[Constructible sheaves on affine Grassmannians]{Constructible sheaves on affine Grassmannians and geometry of the dual nilpotent cone}
\author{Pramod N. Achar}
\address{Department of Mathematics, Louisiana State University, Baton Rouge, LA 70803, USA}
\email{pramod@math.lsu.edu}
\author{Simon Riche}
\address{Clermont Universit{\'e}, Universit{\'e} Blaise Pascal, Laboratoire de  
Math{\'e}ma\-tiques, BP 10448, F-63000 Clermont-Ferrand. \newline
\indent CNRS, UMR 6620, Laboratoire de Math{\'e}matiques, F-63177 Aubi{\`e}re.
}
\email{simon.riche@math.univ-bpclermont.fr}
\begin{document}

\begin{abstract}
In this paper we study the derived category of sheaves on the affine Grassmannian of a complex reductive group $\Gv$, contructible with respect to the stratification by $\Gv(\C[[x]])$-orbits. Following ideas of Ginzburg and Arkhipov--Bezrukavnikov--Ginzburg, we describe this category (and a mixed version) in terms of coherent sheaves on the nilpotent cone of the Langlands dual reductive group $G$. We also show, in the mixed case, that restriction to the nilpotent cone of a Levi subgroup corresponds to hyperbolic localization on affine Grassmannians.
\end{abstract}

\maketitle


\section{Introduction}
\label{sect:intro}

\subsection{}

Let $\Gv$ be a complex connected reductive group, and let 
\[
\Gr_{\Gv}:=\Gv(\fK)/\GvO
\]
be the associated affine Grassmannian (where $\fK=\C((x))$ and $\fO=\C[[x]]$). The Satake equivalence is an equivalence of tensor categories 
\[
\cS_G : \Rep(G) \ \xrightarrow{\sim} \ \Perv_{\eq{\GvO}}(\Gr_{\Gv})
\]
between the category $\Perv_{\eq{\GvO}}(\Gr_{\Gv})$ of $\GvO$-equivariant perverse sheaves on $\Gr_{\Gv}$ (endowed with the convolution product) and the category $\Rep(G)$ of finite-dimen\-sional $G$-modules (endowed with the tensor product), where $G$ is the complex reductive group which is Langlands dual to $\Gv$. This equivalence is ``functorial with respect to restriction to a Levi'' in the sense that, if $L \subset G$ is a Levi subgroup and $\Lv \subset \Gv$ a dual Levi subgroup, the restriction functor $\Rep(G) \to \Rep(L)$ can be realized geometrically as a (renormalized) hyperbolic localization functor 
\[
\fR^G_L : \Perv_{\eq{\GvO}}(\Gr_{\Gv}) \ \to \ \Perv_{\eq{\LvO}}(\Gr_{\Lv})
\]
in the sense of Braden \cite{br}.

The forgetful functor 
\[
\Perv_{\eq{\GvO}}(\Gr_{\Gv}) \ \to \ \Perv_{\mon{\GvO}}(\Gr_{\Gv}),
\]
where $\Perv_{\mon{\GvO}}(\Gr_{\Gv})$ is the category of perverse sheaves constructible with respect to the stratification by $\GvO$-orbits, is an equivalence of categories. Hence the category $\Perv_{\eq{\GvO}}(\Gr_{\Gv})$ is naturally the heart of a $t$-structure on the full subcategory $\cDb_{\mon{\GvO}}(\Gr_{\Gv})$ of the derived category of constructible sheaves on $\Gr_{\Gv}$ whose objects have their cohomology sheaves constructible with respect to the stratification by $\GvO$-orbits. Therefore, one can ask two natural questions:
\begin{enumerate}

\item Is it possible to describe the category $\cDb_{\mon{\GvO}}(\Gr_{\Gv})$ in terms of the geometry of the group $G$?

\item Is this description functorial with respect to restriction to a Levi subgroup?

\end{enumerate}

\subsection{}

First, consider question (1). In \cite{g2}, Ginzburg has explained how to describe morphisms in $\cDb_{\mon{\GvO}}(\Gr_{\Gv})$ between shifts of simple objects of $\Perv_{\mon{\GvO}}(\Gr_{\Gv})$, in terms of coherent sheaves on the nilpotent cone $\cN_G$ of $G$. The next step towards answering the question is to construct a functor from the category $\cDb_{\mon{\GvO}}(\Gr_{\Gv})$ to a certain category related to coherent sheaves on $\cN_G$. This question is rather subtle, due to the lack of extra structure on the category $\cDb_{\mon{\GvO}}(\Gr_{\Gv})$ (such as a convolution product). However, one can adapt the constructions of Arkhipov--Bezrukavnikov--Ginzburg in \cite{abg} to construct a functor from $\cDb_{\mon{\GvO}}(\Gr_{\Gv})$ to $\DGCGf(\cN_G)$, where $\DGCGf(\cN_G)$ is a certain modified version of the derived category of $G$-equi\-variant coherent sheaves on $\cN_G$, where $G$ acts on $\cN_G$ by conjugation (see \S \ref{ss:equivalence} below for a precise definition). Then, it follows from Ginzburg's result that 
\begin{equation}
\label{eqn:equivalence-intro}
F_G : \cDb_{\mon{\GvO}}(\Gr_{\Gv}) \ \xrightarrow{\sim} \ \DGCGf(\cN_G)
\end{equation}
is an equivalence of triangulated categories.

Note that this result can be deduced directly from the results of \cite{abg} in the case $G$ is semisimple of adjoint type. Instead, we give two direct proofs of equivalence \eqref{eqn:equivalence-intro}. The first one is based on the same construction as in \cite{abg}, but is slightly simpler. This construction uses a refinement of the stratification by $\GvO$-orbits, namely the stratification by orbits of an Iwahori subgroup of $\GvO$. The latter is better-behaved than the former, e.g.~because, due to results of Beilinson--Ginzburg--Soergel in \cite{bgs}, the category of perverse sheaves for this stratification has enough projectives, and its derived category is equivalent to the associated constructible derived category. Another central argument is the formality of a certain dg-algebra, see \S \ref{ss:formality}.

Our second proof of equivalence \eqref{eqn:equivalence-intro}, inspired by the methods of \cite{bf}, is completely formal, and based on the notion of \emph{enhanced triangulated category}. The main argument is again a formality result (at the categorical level), similar to the one used in the first proof.

\subsection{}

Now, consider question (2). Here our answer is less satisfactory. The functor $\fR^G_L$ induces a triangulated functor 
\[
\fR^G_L : \cDb_{\mon{\GvO}}(\Gr_{\Gv}) \ \to \ \cDb_{\mon{\LvO}}(\Gr_{\Lv}).
\]
On the coherent side of the picture, the natural functor to consider is the inverse image functor
\[
(i_L^G)^* : \DGCGf(\cN_G) \ \to \ \DGC^L_{\fr}(\cN_L)
\]
for the inclusion $\cN_L \hookrightarrow \cN_G$. It would be natural to expect that there exists an isomorphism of functors
\begin{equation}
\label{eqn:isom-functors-intro}
(i_L^G)^* \circ F_G \ \cong \ F_L \circ \fR^G_L.
\end{equation}
However, we were not able to prove this fact. More precisely, it is easy to check that the images of perverse sheaves under both functors appearing in \eqref{eqn:isom-functors-intro} coincide. It can also be checked (see Proposition \ref{prop:action-functors-morphisms}) that the action of both functors on morphisms between shifts of perverse sheaves can be identified. However, we were not able to construct a morphism of functors between $(i_L^G)^* \circ F_G$ and $F_L \circ \fR^G_L$. The main reason is that the functor $\fR^G_L$ is not well-behaved on the category of perverse sheaves constructible with respect to orbits of an Iwahori subgroup, and hence is ``not compatible with the construction of $F_G$.''

To be able to give an answer to (a variant of) question (2), we have to introduce more rigidity (or more structure) on the category $\cDb_{\mon{\GvO}}(\Gr_{\Gv})$. This rigidity is provided by an additional grading, related to weights of Frobenius. In fact, we replace the category $\cDb_{\mon{\GvO}}(\Gr_{\Gv})$ by its ``mixed version'' $\dm_{\mon{\GvO}}(\Gr_{\Gv})$, defined and studied (in a general context) in \cite{ar}. (This definition is based in an essential way on the results of \cite{bgs}.) An easy generalization of the constructions alluded to above yields an equivalence of categories
\begin{equation}
\label{eqn:equivalence-intro-mix}
F_G^{\mix} : \dm_{\mon{\GvO}}(\Gr_{\Gv}) \ \xrightarrow{\sim} \ \cDb_{\fr} \Coh^{G \times \Gm}(\cN_G),
\end{equation}
where $\cDb_{\fr} \Coh^{G \times \Gm}(\cN_G)$ is a certain subcategory of the derived category of $G \times \Gm$-equivariant coherent sheaves on $\cN_G$, where $\Gm$ acts on $\cN_G$ by dilatation. (Note that a similar construction was already considered in \cite{abg}.)

Using again general constructions from \cite{ar}, one can define a ``mixed version''
\[
\fR^{G,\mix}_L : \dm_{\mon{\GvO}}(\Gr_{\Gv}) \ \to \ \dm_{\mon{\LvO}}(\Gr_{\Lv})
\]
of the functor $\fR^G_L$. On the coherent side of the picture, one again has an inverse image functor
\[
(i_L^G)^*_{\mix} : \cDb_{\fr} \Coh^{G \times \Gm}(\cN_G) \ \to \ \cDb_{\fr} \Coh^{L \times \Gm}(\cN_L).
\]
Our main result is the proof of an isomorphism of functors
\begin{equation}
\label{eqn:isom-functors-intro-mix}
(i_L^G)^*_{\mix} \circ F_G^{\mix} \ \cong \ F_L^{\mix} \circ \fR^{G,\mix}_L.
\end{equation}
This proof is based on the observation that the category $\cDb_{\fr} \Coh^{G \times \Gm}(\cN_G)$ is the bounded homotopy category of an \emph{Orlov category} in the sense of \cite{ar}. Then \eqref{eqn:isom-functors-intro-mix} is a consequence of a general uniqueness result on functors between bounded homotopy categories of Orlov categories.

\subsection{Contents of the paper}

In Section \ref{sect:statements} we state precisely the main results of this paper. In Section \ref{sect:proof-1} we give a first proof of equivalences \eqref{eqn:equivalence-intro} and \eqref{eqn:equivalence-intro-mix}. In Section \ref{sect:proof-2} we give a second proof of these equivalences. In Section \ref{sect:relation-ABG} we explain the relation between our results and the main results of \cite[Part II]{abg}. In Section \ref{sect:hl-restriction}, we prove isomorphism \eqref{eqn:isom-functors-intro-mix}. This section also contains a new proof of a result of Ginzburg \cite{g2} on the geometric realization of the Brylinski--Kostant filtration in terms of perverse sheaves, which may be of independent interest. In Sections  \ref{sect:mix-convolution} and \ref{sect:hl-convolution} we study two related questions: the compatibility of our equivalence \eqref{eqn:equivalence-intro-mix} with convolution of (mixed) perverse sheaves, and compatibility of hyperbolic localization with convolution. Finally, in Section \ref{sect:example} we describe some of our objects of study more concretely in the case $\Gv=\mathrm{SL}(2)$.

\subsection{Conventions}

If $X$ is a complex algebraic variety endowed with an action of an algebraic group $H$, recall that an object $\cF$ of the derived category of sheaves on $X$ is said to be \emph{$H$-monodromic} if for any $i \in \Z$, the sheaf $\cH^i(\cF)$ is constructible with respect to a stratification whose strata are $H$-stable. We denote by $\cDb_{\mon{H}}(X)$ the subcategory of the derived category of sheaves on $X$ whose objects are $H$-monodromic, and by $\Perv_{\mon{H}}(X)$ the subcategory of perverse sheaves. We use the same terminology and notation for $\Qlb$-sheaves on varieties defined over an algebraically closed field of characteristic $p \neq l$. We also denote by $\cDb_{\eq{H}}(X)$ the equivariant derived category of sheaves on $X$ (see \cite{bl}), and by $\Perv_{\eq{H}}(X)$ the subcategory of perverse sheaves.

We will often work with $\Qlb$-sheaves on varieties defined over a finite field $\F_p$, where $p \neq l$ is prime. For our considerations, the choice of $l$ and $p$ is not important; we fix it once and for all. We fix a square root of $q$ in $\Qlb$, which defines a square root of the Tate sheaf on any such variety. We denote for any $i \in \Z$ by $\lan i \ran$ the Tate twist $(-\frac{i}{2})$. We also fix an isomorphism $\Qlb \cong \C$. If $X$ is a variety over $\F_p$, endowed with an action of an algebraic group $H$, we say that a perverse sheaf on $X$ is $H$-monodromic if its pull-back to $X \times_{\mathrm{Spec}(\F_p)} \mathrm{Spec}(\overline{\F_p})$ is $H \times_{\mathrm{Spec}(\F_p)} \mathrm{Spec}(\overline{\F_p})$-monodromic.

For any dg-algebra $A$ endowed with an action of an algebraic group $H$, we denote by $\DGM^H(A)$ the derived category of $H$-equivariant $A$-dg-modules. Recall that a \emph{dgg-algebra} is a bigraded algebra endowed with a differential of bidegree $(1,0)$, which satisfies the usual Leibniz rule with respect to the first grading. (Here, ``dgg" stands for ``differential graded graded.") Similarly, a \emph{dgg-module} over a dgg-algebra is a dg-module over the underlying dg-algebra endowed with a compatible additional $\Z$-grading. The first grading will be called ``cohomological," and the second one ``internal." If $A$ is a dgg-algebra, endowed with a compatible $H$-equivariant structure, we denote by $\DGM^{H \times \Gm}(A)$ the derived category of $H$-equivariant $A$-dgg-modules. We denote by $\lan n \ran$ the ``internal shift'' defined by the formula
\[
(M \lan n \ran)^i_j \ = \ M^i_{j-n},
\]
where subscripts indicate the internal grading, and superscripts indicate the cohomological grading.

For any algebraic group $H$, we denote by $Z(H)$ the center of $H$.

\subsection{Acknowledgements}

The first author is grateful to the Universit\'e Clermont-Fer\-rand II for its hospitality during a visit in June 2010, when much of the work in this paper was carried out.  This visit was supported by the CNRS and the ANR.  In addition, P.A. received support from NSA Grant No.~H98230-09-1-0024 and NSF Grant No.~DMS-1001594, and S.R. is supported by ANR Grant No.~ANR-09-JCJC-0102-01.

\section{Notation and statement of the main results}
\label{sect:statements}

\subsection{Reminder on the Satake equivalence}
\label{ss:reminder-satake}

Let $\Gv$ be a complex connected reductive algebraic group. Let $\fO:=\C[[x]]$ be the ring of formal power series in an indeterminate $x$, and let $\fK:=\C((x))$ be its quotient field. We will be interested in the affine Grassmannian \[ \Gr_{\Gv} := \Gv(\fK)/\GvO. \] This space has a natural structure of ind-variety, see \cite{g2, mv, bd, ga, np}, and it is endowed with an action of the group-scheme $\GvO$. We consider the \emph{reduced} ind-scheme structure on $\Gr_{\Gv}$. Consider the category \[ \Perv_{\eq{\GvO}}(\Gr_{\Gv}) \] of $\GvO$-equivariant perverse sheaves on $\Gr_{\Gv}$, with coefficients in $\C$. As usual, an object of this category is understood to be an equivariant perverse sheaf on a closed finite union of $\GvO$-orbits (which is a finite-dimensional algebraic variety).

It is a well-known fundamental result (see \cite{g2, mv}) that this category is endowed with a natural convolution product $\star$, which makes it a (rigid) tensor category, and that it is also equipped with a fiber functor
\[
H^{\bullet}(-) \, := \, H^{\bullet}(\Gr_{\Gv},-) \, : (\Perv_{\eq{\GvO}}(\Gr_{\Gv}),\star) \to (\Vect_{\C},\otimes).
\]
(Here, $\Vect_{\C}$ is the category of finite-dimensional $\C$-vector spaces.)

Fix a maximal torus $\Tv \subset \Gv$, and a Borel subgroup $\Bv \subset \Gv$ containing $\Tv$. The constructions of \cite[Sections 11--12]{mv} provide a canonical complex connected reductive algebraic group $G$ which is dual to $\Gv$ in the sense of Langlands, and a canonical equivalence $\cS_G$ of tensor categories which makes the following diagram commutative:
\[
\xymatrix{
(\Rep(G), \otimes) \ar[rr]^-{\cS_G} \ar[rd]_-{\For} & & (\Perv_{\eq{\GvO}}(\Gr_{\Gv}),\star) \ar[ld]^-{H^{\bullet}(-)} \\
& (\Vect_{\C},\otimes) &
}
\]
where $\Rep(G)$ is the category of finite-dimensional $G$-modules (endowed with the natural tensor product), and $\For$ is the natural fiber functor (which forgets the action of $G$).

Let $\Uv$ be the unipotent radical of $\Bv$. Let $\bX$ be the cocharacter lattice of $\Tv$. Then there is an inclusion
\[
\bX = \Gr_{\Tv} \hookrightarrow \Gr_{\Gv}.
\]
We denote by $L_{\la}$ the image of $\la \in \bX$. For $\la \in \bX$, we denote by $\fS_{\la}$ the $\Uv(\fK)$-orbit of $L_{\la}$. Let also $\bXv$ be the character lattice of $\Tv$, and consider the (complex) torus $T:=\Hom_{\Z}(\bXv,\C^{\times})$, so that we have a canonical isomorphism $X^*(T) \cong \bX$. (In other words, $T$ is dual to $\Tv$ in the sense of Langlands.) We have a tautological equivalence of tensor categories
\[
\cS_T : \, (\Rep(T),\otimes) \to (\Perv_{\eq{\TvO}}(\Gr_{\Tv}),\star).
\]

Let $\Rv \subset \bXv$ be the root system of $\Gv$. The choice of $\Bv \subset \Gv$ determines a system of positive roots $\Rv^+$ in $\Rv$ (chosen as the roots of $\mathrm{Lie}(\Bv)$). Let $\rhov$ be the half sum of positive roots. By \cite[Proposition 6.4]{mv}, the functor
\[
\fR^G_T \, := \, \bigoplus_{\la \in \bX} H^{\lan \lambda,2\rhov \ran}_c(\fS_{\la},-) : \Perv_{\eq{\GvO}}(\Gr_{\Gv}) \to \Perv_{\eq{\TvO}}(\Gr_{\Tv})
\]
is a tensor functor. And, by \cite[Theorem 3.6]{mv}, there is a natural isomorphism of tensor functors which makes the following diagram commutative:
\[
\xymatrix{
(\Perv_{\eq{\GvO}}(\Gr_{\Gv}),\star) \ar[rr]^-{\fR^G_T} \ar[rd]_-{H^{\bullet}(-)} & & (\Perv_{\eq{\TvO}}(\Gr_{\Tv}),\star) \ar[ld]^-{H^{\bullet}(-)} \\
& (\Vect_{\C},\otimes) &
}
\]
By Tannakian formalism (\cite[Corollary 2.9]{dm}), one obtains a morphism of algebraic groups $T \to G$. By \cite[Section 7]{mv}, this morphism is injective, and identifies $T$ with a maximal torus of $G$.

Let $R \subset \bX$ the root system of $G$ (which is canonically the dual of $\Rv$), and let $R^+$ be the system of positive roots in $R$ corresponding to the choice of $\Rv^+$ in $\Rv$. This choice determines a canonical Borel subgroup $B \subset G$ containing $T$. Let $\Delta$ be the basis of $R$ associated to the choice of $R^+$, and let $\bXp \subset \bX$ be the set of dominant weights of $T$.

Let $\Iv := (\mathbf{ev}_0)^{-1} (\Bv)$ be the Iwahori subgroup of $\GvO$ determined by $\Bv$, where $\mathbf{ev}_0 : \GvO \to \Gv$ is the evaluation at $x=0$. Then $\{L_{\la}, \la \in \bX\}$ is a set of representatives of $\Iv$-orbits on $\Gr_{\Gv}$. Similarly, $\{L_{\la}, \la \in \bXp\}$ is a set a representatives of $\GvO$-orbits. We denote by $\Gr_{\Gv}^{\la}$ the orbit of $\la \in \bXp$, and by $\IC_{\la}$ the associated simple perverse sheaf, an object of $\Perv_{\eq{\GvO}}(\Gr_{\Gv})$. For $\la \in \bXv$ we define
\[
V_{\lambda} \ := \ (\cS_G)^{-1}(\IC_{\lambda}).
\]
Then $V_{\lambda}$ is a simple $G$-module with highest weight $\lambda$ (see \cite[Proposition 13.1]{mv}).

For any $\lambda \in \bX$, we denote by $i_{\lambda} : \{L_{\lambda}\} \hookrightarrow \Gr_{\Gv}$ the inclusion.

\subsection{Equivalence}
\label{ss:equivalence}

Recall that the forgetful functor
\[
\Perv_{\eq{\GvO}}(\Gr_{\Gv}) \to \Perv_{\mon{\GvO}}(\Gr_{\Gv})
\]
from the category of \emph{$\GvO$-equivariant} perverse sheaves on $\Gr_{\Gv}$ to that of \emph{$\GvO$-monodromic} perverse sheaves is an equivalence of categories. (In our case this follows easily from the semisimplicity of the category $\Perv_{\mon{\GvO}}(\Gr_{\Gv})$ proved e.g.~in \cite[Lemma 7.1]{mv}.) Hence the abelian category $\Perv_{\eq{\GvO}}(\Gr_{\Gv})$ is naturally the heart of a $t$-structure on the derived category $\cDb_{\mon{\GvO}}(\Gr_{\Gv})$ of $\GvO$-monodromic sheaves on $\Gr_{\Gv}$. Our first result is a description of this triangulated category.

Recall that there exists a right action of the tensor category $\Perv_{\eq{\GvO}}(\Gr_{\Gv})$ on the category $\cDb_{\mon{\GvO}}(\Gr_{\Gv})$, which extends the convolution product. We denote it by 
\[
\left\{ \begin{array}{ccc} \cDb_{\mon{\GvO}}(\Gr_{\Gv}) \times \Perv_{\eq{\GvO}}(\Gr_{\Gv}) & \to & \cDb_{\mon{\GvO}}(\Gr_{\Gv}) \\ (M,P) & \mapsto & M \star P \end{array} \right. .
\]

On the other hand, consider the nilpotent cone $\cN_G \subset \fg$ of the Lie algebra $\fg$ of $G$. It is endowed with an action of $G \times \Gm$, defined by the formula 
\[
(g,t) \cdot X := t^{-2} \mathrm{Ad}_g(X), \quad \text{for } (g,t) \in G \times \C^{\times}, \ X \in \cN_G.
\]
Hence, the algebra $\C[\cN_G]$ is a graded $G$-equivariant algebra, concentrated in even degrees. We use this grading to consider it as a $G$-equivariant dg-algebra, endowed with the trivial differential. We denote by $\DGCGf(\cN_G)$ the triangulated subcategory of the derived category of $G$-equivariant dg-modules over this $G$-equivariant dg-algebra which is generated by the ``free'' objects, i.e.~the objects of the form $V \otimes_{\C} \C[\cN_G]$, where $V$ is a finite dimensional $G$-module. The differential on $V \otimes_{\C} \C[\cN_G]$ is the trivial one, the module structure and the grading are the natural ones, and the $G$-action is diagonal. The tensor product with $G$-modules induces a right action of the tensor category $\Rep(G)$ on the category $\DGCGf(\cN_G)$, which we denote simply by 
\[
\left\{ \begin{array}{ccc} \DGCGf(\cN_G) \times \Rep(G) & \to & \DGCGf(\cN_G) \\ (M,V) & \mapsto & M \otimes V \end{array} \right. .
\]

\begin{thm} \label{thm:equivalence}

There exists an equivalence of triangulated categories 
\[ 
F_G : \cDb_{\mon{\GvO}}(\Gr_{\Gv}) \ \xrightarrow{\sim} \ \DGCGf(\cN_G)
\]
and a natural bifunctorial isomorphism
\begin{equation}
\label{eqn:compatibility-F-S}
F_G( M \star \cS_G(V)) \ \cong \ F_G(M) \otimes V
\end{equation}
for $M$ in $\cDb_{\mon{\GvO}}(\Gr_{\Gv})$ and $V$ in $\Rep(G)$.

\end{thm}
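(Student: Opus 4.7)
The natural strategy, adapted from Arkhipov--Bezrukavnikov--Ginzburg \cite{abg}, is to build $F_G$ out of a ``convolution with the regular representation'' functor. For each finite-dimensional $G$-module $V$ consider $\cS_G(V) \in \Perv_{\eq{\GvO}}(\Gr_{\Gv})$, and assemble the collection $\{V^* \otimes \cS_G(V)\}_V$ into a single pro-object $\cR_G$ carrying commuting left $G$- and right $\Rep(G)$-actions. Tentatively define
\[
F_G(M) \ := \ R\Hom^{\bullet}_{\cDb_{\mon{\GvO}}(\Gr_{\Gv})}\bigl(\IC_0, \ M \star \cR_G\bigr),
\]
which is naturally a graded module over the dg-algebra $A := R\Hom^\bullet(\IC_0, \IC_0 \star \cR_G)$ and carries a compatible $G$-action. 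With this design, the bifunctorial isomorphism \eqref{eqn:compatibility-F-S} is automatic, since $\cR_G \star \cS_G(V) \cong \cR_G \otimes V$ (the ``absorbing'' property of the regular object) gives $F_G(M \star \cS_G(V)) \cong F_G(M) \otimes V$ by construction.

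The substance of the theorem lies in identifying $A$, as a $G$-equivariant dg-algebra, with $\C[\cN_G]$ equipped with zero differential and twice the natural grading. At the cohomology level this is the theorem of Ginzburg \cite{g2}, which identifies $\bigoplus_n \Ext^n(\IC_0, \IC_0 \star \cR_G)$ with the coordinate ring of $\cN_G$ via $\GvO$-equivariant cohomology and the principal-nilpotent construction. Granting a formality quasi-isomorphism $A \simeq \C[\cN_G]$, the functor $F_G$ lands in the derived category of $G$-equivariant dg-modules over $\C[\cN_G]$; Ginzburg's explicit computation of $R\Hom^\bullet(\IC_{\la}, \IC_{\mu} \star \cR_G)$ then yields full faithfulness on the subcategory generated by the $\IC_{\la}$ with $\la \in \bXp$, while essential surjectivity onto $\DGCGf(\cN_G)$ is immediate from the construction, since each free module $V \otimes \C[\cN_G]$ is manifestly in the image of $\IC_0 \star \cS_G(V)$.

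The crux, and the main technical obstacle, is the formality of $A$: there is no a priori reason for this dg-algebra to be formal. The plan is to pass to the finer Iwahori stratification. By \cite{bgs} (cf.~\S\ref{ss:equivalence}), the category $\Perv_{\mon{\Iv}}(\Gr_{\Gv})$ has enough projectives and its bounded derived category computes $\cDb_{\mon{\Iv}}(\Gr_{\Gv})$, so the analogous Ext-dg-algebra admits a concrete chain-level model via a projective generator. Working in the mixed framework $\dm_{\mon{\Iv}}$ of \cite{ar}, one shows that all the relevant Ext-groups are pure of the expected weight; this weight-purity condition forces formality by the standard Deligne-style argument. A final descent from the Iwahori-monodromic to the $\GvO$-monodromic setting, using the $\GvO/\Iv$-averaging functor, then transfers formality back to $A$ itself and yields the equivalence $F_G$ claimed in the theorem.
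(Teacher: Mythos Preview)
Your overall architecture matches the paper's first proof (Section~\ref{sect:proof-1}): one forms $\cR_G=\cS_G(\C[G])$, identifies $\Ext^\bullet_{\mon{\GvO}}(1_G,\cR_G)$ with $\C[\cN_G]$ via Ginzburg's computation, builds a chain-level model of the $\Ext$-algebra using a projective resolution in the Iwahori-constructible category, and establishes formality; then full faithfulness on the $\cS_G(V)$'s plus Lemma~\ref{lem:triangulated-categories} finishes. The compatibility \eqref{eqn:compatibility-F-S} via the absorbing isomorphism $\cS_G(V)\star\cR_G\cong \cR_G\otimes V$ is exactly Lemma~\ref{lem:image-IC}.

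Where you diverge from the paper is in the formality step. You propose the Deligne-style route: pass to the mixed category $\dm_{\mon{\Iv}}$, use purity of the relevant $\Ext$-groups, and conclude. That is essentially the argument of \cite{abg}, and the paper does use a variant of it for the \emph{mixed} equivalence (\S\S\ref{ss:lf-subalgebra}--\ref{ss:mixed-version}). But for the non-mixed Theorem~\ref{thm:equivalence}, the paper's point is that one can do something more elementary. In Proposition~\ref{prop:morphism-dg-algebras} the resolution $P^\bullet$ is chosen \emph{minimal}; since $\cR_G$ is a semisimple perverse sheaf, every map $P^{-i}\to\cR_{G,k}$ factors through $P^{-i}/\rad(P^{-i})$, so the complex $\Hom^\bullet(P^\bullet,\cR_G)$ has trivial differential and is already the $\Ext$-algebra. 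The surjection $P^\bullet\to 1_G$ therefore induces an honest dg-algebra morphism $\sE^\bullet(1_G,\cR_G)\to \Ext^\bullet_{\mon{\GvO}}(1_G,\cR_G)$, which is a quasi-isomorphism by projectivity of the $P^j$. No weights, no Frobenius, no purity.

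Two smaller points. First, your ``final descent via the $\GvO/\Iv$-averaging functor'' is not what the paper does and is not needed: the functor $\sE^\bullet(1_G,(-)\star\cR_G)$ is defined on all of $\cDb\Perv_{\mon{\Iv}}(\Gr_{\Gv})$, and one simply restricts to the full subcategory $\cDb_{\mon{\GvO}}(\Gr_{\Gv})$. Second, the paper works with a projective resolution of $1_G$ in the category of \emph{pro}-objects in $\Perv_{\mon{\Iv}}(\Gr_{\Gv})$ (\S\ref{ss:projective-resolution}), not with a single projective generator; this is because $\Perv_{\mon{\Iv}}(\Gr_{\Gv})$ is only an inductive limit of categories with enough projectives.
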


This equivalence is proved at the level of morphisms between objects of the form $\cS_G(V)[n]$ (in the case $G$ is semisimple) in \cite[Proposition 1.10.4]{g2}. It is also suggested in \cite[Sections 7 and 10]{abg} (in the case $G$ is semisimple and adjoint), though it is not explicitly stated. In fact, under this assumption one can deduce Theorem \ref{thm:equivalence} from \cite[Theorem 9.1.4]{abg}, see \cite[\S 11.2]{ar}.

In this paper we give two direct proofs of this result. In Section \ref{sect:proof-1}, we provide a rather explicit construction of the functor $F^G$, and prove that it is an equivalence in \S \ref{ss:FG-equivalence}. These arguments are a simplified version of those of \cite[Part II]{abg}. In Section \ref{sect:relation-ABG} we prove that the equivalence constructed this way is isomorphic to the one which can be deduced from \cite[Theorem 9.1.4]{abg} (in case $G$ is semisimple and adjoint).

Then, in Section \ref{sect:proof-2} we give a second (shorter) proof of this equivalence, inspired by the arguments of the proof of the main result of \cite{bf}. This second proof does not provide an explicit description of $F^G$. It is based on the notion of \emph{enhanced triangulated category} (see \cite{bk, dr, bll}).

\subsection{Equivalence: mixed version}
\label{ss:equivalence-mix}

As in \cite{abg}, the equivalence of Theorem \ref{thm:equivalence} comes together with a ``mixed version.'' To explain this, we first have to consider some generalities. 

On several occasions in this paper we will use the following easy lemma on triangulated categories. (This lemma is stated e.g.~in \cite[Lemma 3.9.3]{abg}.)

\begin{lem}
\label{lem:triangulated-categories}

Let $\scD, \scD'$ be triangulated categories, and let $F: \scD \to \scD'$ be a triangulated functor. Assume given a set $S$ of objects of $\scD$ such that

\begin{enumerate}
\item $S$, respectively $F(S)$, generates $\scD$, respectively $\scD'$, as a triangulated category;
\item for any $M,N$ in $S$ and $i \in \Z$ the functor $F$ induces an isomorphism
\[
\Hom_{\scD}(M,N[i]) \ \xrightarrow{\sim} \ \Hom_{\scD'}(F(M),F(N)[i]).
\]
\end{enumerate}
Then $F$ is an equivalence of categories.\qed

\end{lem}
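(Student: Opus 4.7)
The plan is to prove fully faithfulness first and then essential surjectivity, both by reducing to the generating set $S$ via a standard five-lemma/triangulated-subcategory argument.

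For fully faithfulness, I would fix $N\in S$ and consider the full subcategory $\scD_N\subset\scD$ consisting of those $M$ such that $F$ induces an isomorphism $\Hom_{\scD}(M,N[i])\simto\Hom_{\scD'}(F(M),F(N)[i])$ for every $i\in\Z$. Hypothesis (2) says $S\subset\scD_N$. The subcategory $\scD_N$ is clearly closed under shifts. Given a distinguished triangle $M'\to M\to M''\to M'[1]$ with $M',M''\in\scD_N$, applying $\Hom_{\scD}(-,N[i])$ and $\Hom_{\scD'}(F(-),F(N)[i])$ produces two long exact sequences linked by $F$, so the five lemma forces $M\in\scD_N$. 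Hence $\scD_N$ is a triangulated subcategory containing $S$, and by hypothesis (1) equals $\scD$. Now fix $M\in\scD$ and repeat the argument in the second variable, replacing $N$ by an object varying in a triangulated subcategory of $\scD$ containing $S$; this again is all of $\scD$, yielding full faithfulness of $F$.

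For essential surjectivity, let $\scE\subset\scD'$ be the full subcategory of objects isomorphic to $F(X)$ for some $X\in\scD$. It is closed under shifts since $F(X)[i]\cong F(X[i])$. Given a morphism $f\colon F(X)\to F(Y)$ in $\scE$, by the full faithfulness just established there is a unique $g\colon X\to Y$ in $\scD$ with $F(g)=f$; then the distinguished triangle $X\to Y\to\cone(g)\to X[1]$ maps under $F$ to a distinguished triangle showing $\cone(f)\cong F(\cone(g))\in\scE$. Thus $\scE$ is a triangulated subcategory of $\scD'$ containing $F(S)$, so by hypothesis (1) it equals $\scD'$.

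I do not expect any serious obstacle here: both steps are routine once one sets up the appropriate triangulated subcategory and invokes the five lemma. The only mildly delicate point is making sure that full faithfulness is used before handling cones in the essential-surjectivity step, so that one can actually lift a morphism $f$ between objects of $\scE$ to a morphism in $\scD$ whose cone controls $\cone(f)$.
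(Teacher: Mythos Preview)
Your argument is correct and is the standard proof of this well-known lemma. Note, however, that the paper does not actually give a proof: the lemma is stated with a \texttt{\textbackslash qed} and a reference to \cite[Lemma~3.9.3]{abg}, so there is no ``paper's own proof'' to compare against. Your write-up supplies exactly the kind of routine five-lemma / triangulated-subcategory argument one would expect, and the one point you flag as delicate (using full faithfulness before lifting cones in the essential-surjectivity step) is indeed the only place where the order of the two halves matters.
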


Let $X_{\Z}$ be a scheme over $\Z$, endowed with a finite (algebraic) stratification $\scS_{\Z}=\{X_{\Z,s}\}_{s \in S}$ by affine spaces. One can consider the version $X_{\C}:=X \times_{\mathrm{Spec}(\Z)} \mathrm{Spec}(\C)$ of $X_{\Z}$ over $\C$, endowed with the stratification $\scS_{\C}=\{X_{\C,s}\}_{s \in S}$, and the version $X_{\F}:=X \times_{\mathrm{Spec}(\Z)} \mathrm{Spec}(\F)$ of $X_{\Z}$ over $\F:=\overline{\F_p}$, endowed with the stratification $\scS_{\F}=\{X_{\F,s}\}_{s \in S}$. We assume that $\scS_{\C}$ is a Whitney stratification, and that the stratification $\scS_{\F}$ satisfies the usual condition \cite[Equation (6.1)]{ar}.

Consider the subcategory
\[
\cDb_{\scS_{\C}}(X_{\C}), \quad \text{respectively} \quad \cDb_{\scS_{\F}}(X_{\F})
\]
of the derived category of sheaves on $X_{\C}$ (for the complex topology), respectively of $\Qlb$-sheaves on $X_{\F}$ (for the {\'e}tale topology), consisting of objects whose cohomology sheaves are constructible with respect to the stratification $\scS_{\C}$, respectively $\scS_{\F}$. (Here, ``constructible'' amounts to requiring that the cohomology sheaves of our complexes are constant on each stratum.) Consider also the abelian subcategory
\[
\Perv_{\scS_{\C}}(X_{\C}), \quad \text{respectively} \quad \Perv_{\scS_{\F}}(X_{\F})
\]
of perverse sheaves. The following result is well known, and is used e.g.~in \cite{abg}. We include a proof for completeness.

\begin{lem} \label{lem:change-of-field}

There exists an equivalence of triangulated, respectively abelian, categories
\[
\cDb_{\scS_{\C}}(X_{\C}) \ \cong \ \cDb_{\scS_{\F}}(X_{\F}) \quad \text{respectively} \quad \Perv_{\scS_{\C}}(X_{\C}) \ \cong \ \Perv_{\scS_{\F}}(X_{\F}).
\]

\end{lem}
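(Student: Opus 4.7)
The strategy is to construct the equivalences by specialization over an arithmetic base, combined with Artin's comparison between the \'etale and classical topologies. In particular, I will not argue by an abstract Ext-algebra comparison plus Lemma~\ref{lem:triangulated-categories}; instead the functor is produced directly, which is cleaner since Lemma~\ref{lem:triangulated-categories} requires a given functor.

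\emph{Step 1: Arithmetic base.} Fix a strict henselization $R$ of the local ring $\Z_{(p)}$, where $p$ is the prime fixed in the conventions; this is a strictly henselian discrete valuation ring with residue field $\F = \overline{\F_p}$ and whose fraction field $K$ admits an embedding into $\C$. Set $X_R := X_\Z \times_{\Spec(\Z)} \Spec(R)$, equipped with the induced stratification $\scS_R = \{X_{R,s}\}_{s \in S}$. The special fiber recovers $(X_\F, \scS_\F)$, and the generic fiber $X_K$, after base change along the embedding $K \hookrightarrow \C$, recovers $(X_\C, \scS_\C)$.

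\emph{Step 2: Nearby cycles.} The nearby cycles functor for the family $X_R \to \Spec(R)$ yields a triangulated functor
\[
R\Psi : \cDb_{\scS_K}(X_K) \ \longrightarrow \ \cDb_{\scS_\F}(X_\F)
\]
between $\Qlb$-\'etale constructible derived categories. Since each stratum $X_{R,s}$ is an affine space, hence smooth over $R$ with geometrically connected fibers, one verifies stratum by stratum, using smooth and proper base change, that $R\Psi$ preserves $\scS$-constructibility, acts as the identity on constant sheaves of each stratum, and preserves the recollement gluing data; it is therefore an equivalence. This is the step where the hypothesis that strata are affine spaces and condition \cite[Equation~(6.1)]{ar} are used crucially.

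\emph{Step 3: From $K$ to $\C$.} The embedding $K \hookrightarrow \C$, which is a filtered colimit of smooth ring maps, induces a base-change equivalence
\[
\cDb_{\scS_K}(X_K) \ \xrightarrow{\sim} \ \cDb_{\scS_\C}(X_\C)^{\text{\'et}}
\]
on $\ell$-adic \'etale constructible derived categories. Artin's comparison theorem, together with the fixed isomorphism $\Qlb \cong \C$, identifies the latter category with the classical Betti category $\cDb_{\scS_\C}(X_\C)$. Composing with (the inverse of) the functor of Step~2 produces the desired equivalence
\[
\cDb_{\scS_\C}(X_\C) \ \xrightarrow{\sim} \ \cDb_{\scS_\F}(X_\F).
\]
All the functors above are $t$-exact for the perverse $t$-structures (with a uniform normalization), so the equivalence restricts to an equivalence of hearts $\Perv_{\scS_\C}(X_\C) \simeq \Perv_{\scS_\F}(X_\F)$.

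\emph{Main obstacle.} The delicate point is Step~2, namely that $R\Psi$ really is an equivalence between the $\scS$-constructible subcategories. The argument proceeds by induction on the stratification, using the recollement triangles attached to open-closed decompositions $X_R = (X_R \setminus \overline{X_{R,s}}) \sqcup \overline{X_{R,s}}$ and the fact that $R\Psi$ commutes with $j^*, j_!, j_*, i^*, i_*, i^!$ for strata closed or open immersions (via proper/smooth base change). On each stratum $X_{R,s} \cong \bA^{d_s}_R$ one reduces to checking that $R\Psi(\uQlb) \cong \uQlb$ on $\bA^{d_s}_\F$, which is immediate from smooth base change and vanishing of higher nearby cycles on smooth affine space.
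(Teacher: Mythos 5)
Your route differs from the paper's in a real way. The paper also passes through a strictly henselian DVR $R$ with residue field $\F$, but it uses the two restriction functors
\[
\cDb_{\scS_{\C}}(X_{\C,\mathrm{et}}) \ \longleftarrow \ \cDb_{\scS_{R}}(X_{R}) \ \longrightarrow \ \cDb_{\scS_{\F}}(X_{\F})
\]
of \cite[\S 6.1.8]{bbd} and shows each is an equivalence by applying Lemma~\ref{lem:triangulated-categories} to the set of standard objects: the categories are generated by the $\Delta_s$, $\nabla_s$, and in all three categories $\Hom(\Delta_s,\nabla_t[n]) = \Qlb^{\delta_{s,t}\delta_{n,0}}$, an elementary computation on affine-space strata. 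Your opening remark that Lemma~\ref{lem:triangulated-categories} ``requires a given functor'' and that you can avoid it is therefore a bit off the mark: the paper does supply explicit functors, and the lemma is exactly the mechanism used to check they are equivalences. You instead propose to build a single functor directly via nearby cycles $R\Psi$. That is an attractive idea, but as written it relocates rather than removes the difficulty.

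The gap is in Step~2. The assertion that $R\Psi$ ``preserves the recollement gluing data'' and therefore is an equivalence is the whole content of the claim and is not justified. Nearby cycles does \emph{not} in general commute with $j_!$, $j_*$, or $i^!$ along locally closed inclusions over the trait: proper base change gives commutation with proper pushforward (hence $i_*$ for closed immersions), and smooth base change gives commutation with $j^*$ for open immersions, but the remaining functors require a genuine argument, typically encoded in a statement that the family is ``without vanishing cycles'' or ``sans monodromie'' along the stratification. Your reduction ``on each stratum, check $R\Psi(\uQlb)\cong\uQlb$ on $\bA^{d_s}_{\F}$'' shows $R\Psi$ acts correctly on each stratum separately, but full faithfulness on the glued category is precisely about the off-diagonal $\Hom$'s between objects supported on different strata, i.e.\ about the gluing, and that is left unverified. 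If you do carry out that verification, you will find yourself proving something equivalent to the $\Hom(\Delta_s,\nabla_t[n])$ computation anyway, so nothing is gained over the paper's argument, which handles the gluing issue uniformly and cheaply via generation. In short: the strategy is plausible and, once Step~2 is made precise, would yield the same equivalence as the paper (the composite of the inverse of one restriction functor with the other is, on $\scS$-constructible objects, nearby cycles), but the proof as it stands hand-waves the crux.
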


\begin{proof} It is enough to construct the first equivalence in such a way that it is $t$-exact. Then, restricting to the hearts gives the second equivalence.

First, by general arguments (see \cite[\S 6.1]{bbd} or \cite[Proposition 5]{bf}), one can replace the category $\cDb_{\scS_{\C}}(X_{\C})$ by the analogous category $\cDb_{\scS_{\C}}(X_{\C,\mathrm{et}})$ where the complex topology is replaced by the {\'e}tale topology, and the coefficients are $\Qlb$ instead of $\C$. Then, choose a strictly henselian discrete valuation ring $R \subset \C$ whose residue field is $\F$, and consider the corresponding constructible category $\cDb_{\scS_{R}}(X_{R})$. Then there are natural functors
\[
\cDb_{\scS_{\C}}(X_{\C,\mathrm{et}}) \ \leftarrow \ \cDb_{\scS_{R}}(X_{R}) \ \to \ \cDb_{\scS_{\F}}(X_{\F})
\]
(see \cite[\S 6.1.8]{bbd}).

We claim that these functors are equivalences. Indeed, one can consider the standard and costandard objects $\Delta_s:=(j_s)_! \uQlb{}_{X_s} [\dim X_s]$ and $\nabla_s:=(j_s)_* \uQlb{}_{X_s}[\dim X_s]$ in all three of these categories (where $j_s$ is the inclusion of the stratum labelled by $s$). These families of objects each generate the categories under consideration (as triangulated categories). Moreover, it is easy to check that in all these categories we have
\[
\Hom(\Delta_s,\nabla_t[n]) \ = \ \Qlb^{\delta_{s,t} \delta_{n,0}}
\]
(see \cite[VI.4.20]{mi}). The result follows by Lemma \ref{lem:triangulated-categories}.\end{proof}

The group $\Gv$ and its subgroups $\Tv$, $\Bv$ can be defined over $\Z$. Hence the ind-scheme $\Gr_{\Gv}$ together with its stratification by $\Iv$-orbits has a version over $\Z$, and we are in the situation of Lemma \ref{lem:change-of-field}. We obtain an equivalence of abelian categories 
\begin{equation} \label{eqn:equiv-Gr-change-of-field}
\Perv_{\mon{\Iv}}(\Gr_{\Gv}) \ \cong \ \Perv_{\mon{{\Iv}}}(\Gr_{\Gv,\F}).
\end{equation}
(On the right-hand side, we have simplified the notation: ``$\mon{\Iv}$'' means monodromic for the action of the version of $\Iv$ over $\F$.)

The category $\Perv_{\mon{\Iv}}(\Gr_{\Gv,\F})$ has a natural mixed version $\Perv_{\mon{\Iv}}^{\mix}(\Gr_{\Gv})$ in the sense of \cite{bgs} or \cite{ar}, constructed as follows. Consider the versions $\Gr_{\Gv,\F_p}$, $\Iv_{\F_p}$ of $\Gr_{\Gv}$, $\Iv$ over the finite field $\F_p$. With the notation of \cite[\S 6.1]{ar}, one can consider the Serre subcategory
\[
\Pervw_{\mon{\Iv}}(\Gr_{\Gv,\F_p})
\]
of the category of $\Qlb$-perverse sheaves on $\Gr_{\Gv,\F_p}$ which is generated by the simple objects $\IC(Y,\Qlb) \lan j \ran$ where $j \in \Z$ and $Y$ is an $\Iv_{\F_p}$-orbit on $\Gr_{\Gv,\F_p}$. By \cite[Th{\'e}or{\`e}me 5.3.5]{bbd}, every object $P$ of this category is equipped with a canonical \emph{weight filtration}, denoted $W_{\bullet} P$. Then we define 
\[
\Pervm_{\mon{\Iv}}(\Gr_{\Gv}) := \{ P \in \Pervw_{\mon{\Iv}}(\Gr_{\Gv,\F_p}) \mid \text{for all } i \in \Z, \ \mathrm{gr}^W_i P \text{ is semisimple} \}.
\]
This is an abelian subcategory of $\Pervw_{\mon{\Iv}}(\Gr_{\Gv,\F_p})$. By \cite[Theorem 4.4.4]{bgs}, the derived category $\cDb \Pervm_{\mon{\Iv}}(\Gr_{\Gv})$ is a mixed version\footnote{Equivalently, in the terminology of \cite[Definition 4.3.1]{bgs}, $\Pervm_{\mon{\Iv}}(\Gr_{\Gv})$ is a grading on $\Perv_{\mon{{\Iv}}}(\Gr_{\Gv,\F})$.} of $\cDb \Perv_{\mon{{\Iv}}}(\Gr_{\Gv,\F})$ in the sense of \cite[\S 2.3]{ar}, for the shift functor $\lan 1 \ran$. (In particular, the abelian category $\Pervm_{\mon{\Iv}}(\Gr_{\Gv})$ is a mixed version of $\Perv_{\mon{{\Iv}}}(\Gr_{\Gv,\F})$ in the sense of \cite[\S 2.3]{ar}, but this notion is weaker.) As the notation suggests, we will mainly forget about the field $\F$ and, using equivalence \eqref{eqn:equiv-Gr-change-of-field}, we will consider $\Pervm_{\mon{\Iv}}(\Gr_{\Gv})$ as a mixed version of $\Perv_{\mon{\Iv}}(\Gr_{\Gv})$. In particular, we have a forgetful functor
\[
\For : \Pervm_{\mon{\Iv}}(\Gr_{\Gv}) \to \Perv_{\mon{\Iv}}(\Gr_{\Gv}).
\]

By \cite[Corollary 3.3.2]{bgs}, the realization functor 
\begin{equation}
\label{eqn:realization-functor-equivalence}
\cDb \Perv_{\mon{\Iv}}(\Gr_{\Gv}) \ \to \ \cDb_{\mon{\Iv}}(\Gr_{\Gv})
\end{equation}
is an equivalence of categories. We define the category
\[
\dm_{\mon{\Iv}}(\Gr_{\Gv}) \ := \ \cDb \Pervm_{\mon{\Iv}}(\Gr_{\Gv}).
\]
By the remarks above, this triangulated category is a mixed version of $\cDb_{\mon{\Iv}}(\Gr_{\Gv})$ in the sense of \cite[\S 2.3]{ar}. (Note that this definition is indeed a particular case of the general definition given in \cite[Equation (7.1)]{ar} by \cite[Corollary 7.10]{ar}.)

In this paper we are mainly not interested in the stratification by $\Iv$-orbits, but rather in the stratification by $\GvO$-orbits. We denote by 
\[
\dm_{\mon{\GvO}}(\Gr_{\Gv})
\]
the triangulated subcategory of $\dm_{\mon{\Iv}}(\Gr_{\Gv})$ generated by the simple objects associated with the $\Gv(\F_p[[x]])$-orbits $\Gr^{\lambda}_{\Gv,\F_p}$, $\lambda \in \bX^+$ (and their shifts). By construction, this triangulated category is a mixed version of the category $\cDb_{\mon{\GvO}}(\Gr_{\Gv})$. In particular, there is a forgetful functor
\begin{equation}
\label{eqn:for-const}
\For : \dm_{\mon{\GvO}}(\Gr_{\Gv}) \to \cDb_{\mon{\GvO}}(\Gr_{\Gv}).
\end{equation}

Recall the dg-algebra $\C[\cN_G]$ considered in \S \ref{ss:equivalence}. Now we consider this algebra as a dgg-algebra. Here, the differential on $\C[\cN_G]$ is again trivial, and the bigrading is chosen so that the natural generators of this algebra are in bidegree $(2,2)$. We denote by $\DGCGmf(\cN_G)$ the subcategory of the derived category $\DGM^{G \times \Gm}(\C[\cN_G])$ generated by the objects of the form $V \otimes_{\C} \C[\cN_G] \lan i \ran$ for $V$ a finite dimensional $G$-module. This triangulated category is clearly a graded version of the category $\DGCGf(\cN_G)$ in the sense of \cite[\S 2.3]{ar}.

As in \cite[\S 9.6]{abg}, one can play the ``regrading trick'': the functor which sends the $\Z^2$-graded vector space $M=\oplus_{(i,j) \in \Z^2} M^i_j$ to the $\Z^2$-graded vector space $N=\oplus_{(i,j) \in \Z^2} N^i_j$ defined by
\[
N^i_j := M^{i+j}_j
\]
induces an equivalence of triangulated categories
\begin{equation}
\label{eqn:equivalence-regrading}
\DGCGmf(\cN_G) \ \xrightarrow{\sim} \ \cDb_{\free}\Coh^{G \times \Gm}(\cN_G),
\end{equation}
where $\cDb_{\free}\Coh^{G \times \Gm}(\cN_G)$ is the subcategory of the bounded derived category of $G \times \Gm$-equivariant coherent sheaves on $\cN_G$ (with respect to the $G \times \Gm$-action defined in \S \ref{ss:equivalence}) generated by the ``free'' objects of the form $V \otimes_{\C} \cO_{\cN_G}$ for $V$ a $G \times \Gm$-module. Hence, the category $\cDb_{\free}\Coh^{G \times \Gm}(\cN_G)$ is also a graded version of the category $\DGCGf(\cN_G)$. In particular, there is a forgetful functor
\begin{equation}
\label{eqn:for-coh}
\For : \cDb_{\free}\Coh^{G \times \Gm}(\cN_G) \to \DGCGf(\cN_G).
\end{equation}

The ``mixed version'' of Theorem \ref{thm:equivalence} is the following result.

\begin{thm} 
\label{thm:equivalence-mix}

There exists an equivalence of triangulated categories
\[
F_G^{\mix} : \dm_{\mon{\GvO}}(\Gr_{\Gv}) \ \xrightarrow{\sim} \ \cDb_{\free}\Coh^{G \times \Gm}(\cN_G)
\]
such that the following diagram commutes:
\[
\xymatrix@C=2cm{
\dm_{\mon{\GvO}}(\Gr_{\Gv}) \ar[d]_-{\For}^-{\eqref{eqn:for-const}} \ar[r]_-{\sim}^-{F_G^{\mix}} & \cDb_{\free}\Coh^{G \times \Gm}(\cN_G) \ar[d]^-{\For}_-{\eqref{eqn:for-coh}} \\
\cDb_{\mon{\GvO}}(\Gr_{\Gv}) \ar[r]_-{\sim}^-{F_G} & \DGCGf(\cN_G).
}
\]
This equivalence satisfies:
\[
F_G^{\mix}(M \lan n \ran) \ \cong \ F_G^{\mix}(M) \lan n \ran [n].
\]

\end{thm}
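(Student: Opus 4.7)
The plan is to mimic (either of) the proofs of Theorem \ref{thm:equivalence}, carrying along an additional internal grading coming from weights of Frobenius. The construction of Section \ref{sect:proof-1} uses an Iwahori refinement of the $\GvO$-stratification precisely because $\Perv_{\mon{\Iv}}(\Gr_{\Gv})$ admits enough projectives and possesses the mixed enhancement $\Pervm_{\mon{\Iv}}(\Gr_{\Gv})$; this is exactly the structure we need here, so the same refinement is the starting point.

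First, I would let $P$ be a (pro-)projective generator of $\Pervm_{\mon{\Iv}}(\Gr_{\Gv})$ whose image in $\Perv_{\mon{\Iv}}(\Gr_{\Gv})$ is the projective generator used in the proof of Theorem \ref{thm:equivalence}. The bigraded ext-algebra
\[
E^{\mix} \ := \ \bigoplus_{i,j \in \Z} \Hom_{\dm_{\mon{\Iv}}(\Gr_{\Gv})}(P, P\lan j \ran [i])
\]
is naturally a dgg-algebra (the internal grading coming from the Tate shift $\lan 1 \ran$), and by general principles of \cite{bgs, ar} it is a mixed enhancement of the ext-algebra $E$ used in Section \ref{sect:proof-1}. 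The crucial technical input is a graded refinement of the formality result of \S \ref{ss:formality}: one must prove that, as a dgg-algebra, $E^{\mix}$ is quasi-isomorphic to $\C[\cN_G]$ with its generators placed in bidegree $(2,2)$. Such graded formality is typically easier than its ungraded counterpart, since the weight grading forbids non-trivial homotopies between pieces of different internal degree; the same Koszul/Yoneda resolutions used in the unmixed case already carry the relevant weight information.

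Once $E^{\mix}$ is so identified, I would define $F_G^{\mix}$ as $R\Hom^\bullet(P,-)$ landing in dgg-modules over $E^{\mix} \simeq \C[\cN_G]$, then restrict to the $\GvO$-monodromic subcategory and apply the regrading equivalence \eqref{eqn:equivalence-regrading} to land in $\cDb_{\free}\Coh^{G \times \Gm}(\cN_G)$. To check that this is an equivalence, I would apply Lemma \ref{lem:triangulated-categories} to the set of objects $\{\Dm_\la \lan j \ran, \Nm_\la \lan j \ran : \la \in \bX^+, j \in \Z\}$, which generates the source; their images generate the target, and the Hom computations reduce, after forgetting the weight grading, to the already-established computations from the proof of Theorem \ref{thm:equivalence}, the extra weight bookkeeping being forced by the canonical bigrading on $\C[\cN_G]$.

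The commutativity of the displayed diagram and the formula $F_G^{\mix}(M \lan n \ran) \cong F_G^{\mix}(M) \lan n \ran [n]$ would then be automatic from the construction: on the dgg side (before regrading), $\lan n \ran$ on the sheaf side corresponds to $\lan n \ran$ on modules, and the regrading equivalence \eqref{eqn:equivalence-regrading} converts $\lan n \ran$ into $\lan n \ran [n]$ by its very definition; forgetting weights on either side collapses the internal grading and matches up by construction with $F_G$. The main obstacle is the graded formality of $E^{\mix}$. If one wishes to avoid it, the alternative is to follow the enhanced-category proof of Section \ref{sect:proof-2}: the analogous formality statement there becomes a formal uniqueness result for dg-enhancements, and introducing weights amounts to endowing both enhancements with an action of $\Z$ by auto-equivalences (the shifts $\lan 1 \ran$), after which the same uniqueness argument produces $F_G^{\mix}$ directly.
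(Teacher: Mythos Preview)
Your proposal has a genuine gap at its core: the identification $E^{\mix}\simeq\C[\cN_G]$ is not what the paper proves, and indeed is not true for the object $P$ you describe. The construction in Section~\ref{sect:proof-1} does \emph{not} use a projective generator of $\Perv_{\mon{\Iv}}(\Gr_{\Gv})$; it uses a projective resolution $P^{\bullet}$ of the single object $1_G$. The self-ext of $1_G$ is $\C$, and the self-ext of a genuine projective generator would be a large noncommutative quiver-type algebra, not $\C[\cN_G]$. What produces $\C[\cN_G]$ is the ind-object $\cR_G=\cS_G(\C[G])$: the relevant algebra is $\Ext^{\bullet}_{\mon{\GvO}}(1_G,\cR_G)$ (Proposition~\ref{prop:Ext-algebra}), and the $G$-equivariance comes from the $G$-action on $\cR_G$ by right translation. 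Accordingly the functor is $M\mapsto \sE^{\bullet}(1_G, M\star\cR_G)$, landing in $G$-equivariant dg-modules over this algebra, not $R\Hom(P,-)$ for a projective generator. Your $E^{\mix}$ carries no $G$-action, so there is no mechanism for your proposed functor to land in $G$-equivariant objects on $\cN_G$.

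In the mixed version (\S\S\ref{ss:lf-subalgebra}--\ref{ss:mixed-version}) the paper lifts both $P^{\bullet}$ and $\cR_G$ to mixed objects $P^{\bullet}_{\mix}$ and $\cR_G^{\mix}$. There is a further technical obstacle you pass over: Frobenius does not act locally finitely on $\sE^{\bullet}(1_G,\cR_G)$ because its graded pieces are infinite-dimensional (they are inverse limits over the filtration $X_n$). The paper constructs a locally-finite sub-dg-algebra $A^{\lf}$ by taking generalized Frobenius eigenspaces termwise and shows the inclusion $A^{\lf}\hookrightarrow A^{\bullet}$ is a quasi-isomorphism (Lemma~\ref{lem:qis-lf}); only then does one obtain a dgg-structure and the graded quasi-isomorphism \eqref{eqn:qis} to $\C[\cN_G]$. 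Your idea that ``graded formality is easier'' is correct in spirit, but the actual argument requires this locally-finite passage first. After this, your strategy for checking the equivalence and the diagram is essentially the paper's: fully-faithfulness (Proposition~\ref{prop:FGmix-fully-faithful}) is deduced from Theorem~\ref{thm:equivalence} together with the fact that both sides are graded versions of their unmixed counterparts, and Lemma~\ref{lem:triangulated-categories} is applied to the generators $\cS_G^0(V)\lan j\ran$ (equivalently the $\IC^{\mix}_\lambda\lan j\ran$), not to standard/costandard objects in the Iwahori sense.
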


Again, in the case $G$ is semisimple and adjoint, this result can be deduced from \cite[Theorem 9.4.3]{abg}. We give two proofs of this theorem in \S \ref{ss:mixed-version} and \S \ref{ss:mixed-version-enhanced}, which are parallel to those of Theorem \ref{thm:equivalence}. In \S \ref{ss:isom-FG-FGmix} we prove that the two equivalences obtained by these methods are in fact isomorphic. One can also prove that, in the case $G$ is semisimple and adjoint, this equivalence is isomorphic to the one deduced from \cite[Theorem 9.4.3]{abg}.

Note that, in contrast to Theorem \ref{thm:equivalence}, there is no ``compatibility'' statement with respect to convolution of perverse sheaves in Theorem \ref{thm:equivalence-mix}. We will address this problem in \S \ref{ss:Satake-mix} below.

\subsection{Hyperbolic localization and restriction to a Levi subgroup}
\label{ss:hl-restriction}

Next we study functoriality properties of Theorems \ref{thm:equivalence} and \ref{thm:equivalence-mix}. Consider a standard parabolic subgroup $\Pv \subset \Gv$, and its Levi factor $\Lv \subset \Pv$ containing $\Tv$. This data is entirely determined by the choice of a subset of $\Delta$, hence it determines a Levi subgroup $L \subset G$ containing $T$.

On the coherent side of the picture, one can consider the inclusion 
\[
i_L^G : \cN_L \hookrightarrow \cN_G
\]
and the associated (derived) pull-back functors
\begin{align*}
(i_L^G)^* : \DGCGf(\cN_G) & \to \DGCGf(\cN_L), \\ (i_L^G)^*_{\mix} : \cDb_{\free} \Coh^{G \times \Gm}(\cN_G) & \to \cDb_{\free} \Coh^{G \times \Gm}(\cN_L).
\end{align*}

Now, consider the constructible side of the picture. Let $\Pv^-$ be the parabolic subgroup of $\Gv$ opposite to $\Pv$ (relative to $\Tv$). The inclusions $\Pv \hookrightarrow \Gv$, $\Pv^- \hookrightarrow \Gv$ and the projections $\Pv \twoheadrightarrow \Lv$, $\Pv^- \twoheadrightarrow \Lv$ induce morphisms of ind-schemes
\begin{align*}
i : \Gr_{\Pv} \to \Gr_{\Gv}, & \quad j : \Gr_{\Pv^-} \to \Gr_{\Gv}, \\
p : \Gr_{\Pv} \to \Gr_{\Lv}, & \quad q : \Gr_{\Pv^-} \to \Gr_{\Lv}.
\end{align*}
Then, one has functors
\begin{align*}
p_! i^* : \cDb_{\mon{\GvO}}(\Gr_{\Gv}) & \to \cDb_{\mon{\LvO}}(\Gr_{\Lv}), \\
q_* j^! : \cDb_{\mon{\GvO}}(\Gr_{\Gv}) & \to \cDb_{\mon{\LvO}}(\Gr_{\Lv}).
\end{align*}

Let $\lambda_L : \C^{\times} \to \Tv$ be a generic dominant coweight of $\Tv$ with values in the center of $\Lv$. (For example, one may take $\lambda_L=2\rho_G - 2 \rho_L$, where $\rho_G$, respectively $\rho_L$, is the half sum of positive roots of $G$, respectively of $L$.) Then $\Gr_{\Lv}$ is the set of fixed points for the action of $\lambda_L(\C^{\times}) \subset \GvO$ on $\Gr_{\Gv}$. Moreover, $i$ is a locally closed embedding which identifies $\Gr_{\Pv}$ with the attracting set for this action. Similarly, $j$ identifies $\Gr_{\Pv^-}$ with the attracting set for the action of $\lambda_L^{-1}$. Hence we are in the situation of \cite[Theorem 1]{br}, which provides an isomorphism of functors
\[
p_! i^* \ \cong \ q_* j^! : \cDb_{\mon{\GvO}}(\Gr_{\Gv}) \to \cDb_{\mon{\LvO}}(\Gr_{\Lv}).
\]
This functor is called the \emph{hyperbolic localization} functor, and is denoted $h^{!*}_L$.

The connected components of $\Gr_{\Lv}$ are parametrized by the set $X^*(Z(L))$ of characters of the center of $L$. We denote by $\Gr_{\Lv,\chi}$ the connected component associated to $\chi \in X^*(Z(L))$. Any object $M$ of $\cDb_{\mon{\LvO}}(\Gr_{\Lv})$ is the direct sum of (finitely many) objects $M_{\chi}$ supported on $\Gr_{\Lv,\chi}$, $\chi \in X^*(Z(L))$. Let $\Theta_L$ be the functor which sends such an $M$ to
\[
\bigoplus_{\chi \in X^*(Z(L))} M_{\chi} [\lan \chi, 2\rho_{\Gv} - 2\rho_{\Lv} \ran].
\]
Here, $\rho_{\Gv}$ and $\rho_{\Lv}$ are the half sums of positive coroots of $G$ and $L$. Note that $2\rho_{\Gv} - 2\rho_{\Lv}$ is orthogonal to all roots of $L$, hence the pairing $\lan \chi, 2\rho_{\Gv} - 2\rho_{\Lv} \ran$ makes sense. We consider the functor
\[
\fR^G_L \, := \, \Theta_L \circ h^{!*}_L : \ \cDb_{\mon{\GvO}}(\Gr_{\Gv}) \to \cDb_{\mon{\LvO}}(\Gr_{\Lv}).
\]
(Note that this notation is consistent with that of \S \ref{ss:reminder-satake}.)

The importance of this functor is clear from the following result, which is proved in \cite[Proposition 5.3.29 and Lemma 5.3.1]{bd}. (The case $L=T$ is one of the fundamental preliminary results of \cite{mv}.)

\begin{thm}
\label{thm:hl-restriction-classical}

The functor $\fR^G_L$ sends $\Perv_{\eq{\GvO}}(\Gr_{\Gv}) \subset \cDb_{\mon{\GvO}}(\Gr_{\Gv})$ to the subcategory $\Perv_{\eq{\LvO}}(\Gr_{\Lv}) \subset \cDb_{\mon{\LvO}}(\Gr_{\Lv})$. Moreover, the following diagram commutes up to an isomorphism of functors:
\[
\xymatrix@C=2cm{
\Rep(G) \ar[r]^-{\cS_G}_-{\sim} \ar[d]_-{\mathrm{Res}^G_L} & \Perv_{\eq{\GvO}}(\Gr_{\Gv}) \ar[d]^-{\fR^G_L} \\
\Rep(L) \ar[r]^-{\cS_L}_-{\sim} & \Perv_{\eq{\LvO}}(\Gr_{\Lv}),
}
\]
where $\mathrm{Res}^G_L$ is the restriction functor.\qed

\end{thm}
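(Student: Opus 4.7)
The plan is to reduce both claims to the special case $L=T$ (which is part of \cite{mv}) plus a transitivity/Tannakian argument, while using Braden's theorem as the main geometric input. First I would apply \cite[Theorem 1]{br} in its two equivalent forms: $p_!\,i^* \cong q_*\,j^!$ as functors $\cDb_{\mon{\GvO}}(\Gr_{\Gv}) \to \cDb_{\mon{\LvO}}(\Gr_{\Lv})$. This duplication of descriptions is what makes $h^{!*}_L$ tractable: stalks are best computed from $p_!\,i^*$, and costalks from $q_*\,j^!$.

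For the $t$-exactness statement, I would compute the stalk of $h^{!*}_L(\IC_{\lambda})$ at a point $L_{\mu}$ belonging to the component $\Gr_{\Lv,\chi}$. Via $p_!\,i^*$ this stalk is $H^{\bullet}_c\bigl(\mathbb{S}^{\Pv}_{\mu} \cap \overline{\Gr_{\Gv}^{\lambda}}, \IC_{\lambda}\bigr)$, where $\mathbb{S}^{\Pv}_{\mu}$ is the $\Uv_{\Pv}(\fK)$-orbit of $L_{\mu}$. The key input is a Mirković--Vilonen-style dimension estimate for $\mathbb{S}^{\Pv}_{\mu} \cap \Gr_{\Gv}^{\lambda}$, generalized from $\Bv$ to the parabolic $\Pv$; when combined with the shift $\langle \chi, 2\rho_{\Gv} - 2\rho_{\Lv} \rangle$ built into $\Theta_L$, this estimate degenerates to the correct vanishing bounds on either side of the perverse $t$-structure for $\Gr_{\Lv}$. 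The dual estimate using $q_*\,j^!$ gives the costalk bound, yielding $t$-exactness. Equivariance under $\LvO$ follows since all operations are equivariant for the Levi factor.

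Once we know $\fR^G_L$ sends $\Perv_{\eq{\GvO}}(\Gr_{\Gv})$ to $\Perv_{\eq{\LvO}}(\Gr_{\Lv})$, I would upgrade it to a tensor functor. The natural way is to establish compatibility of hyperbolic localization with convolution: $\fR^G_L(M \star N) \cong \fR^G_L(M) \star \fR^G_L(N)$ canonically. This can be proved by descending through the Beilinson--Drinfeld Grassmannian $\Gr_{\Gv,X^{2}}$, where convolution is realized by collision of two points of a curve; the $\Gm$-action via $\lambda_L$ extends fiberwise and Braden's theorem applies in families, so the collision limit of hyperbolic localizations is the hyperbolic localization of the collision. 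The same argument also gives compatibility with the commutativity constraints.

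Finally, to identify the resulting tensor functor $\cS_L^{-1} \circ \fR^G_L \circ \cS_G$ with $\Res^G_L$, I would invoke Tannakian formalism \cite{dm}: both functors are tensor functors between the same neutral Tannakian categories, so each corresponds to a homomorphism $L \to G$. I would pin down the homomorphism via the transitivity $\fR^L_T \circ \fR^G_L \cong \fR^G_T$, which follows from Braden's theorem applied to a generic dominant cocharacter of $T$ (chosen simultaneously generic for $L$ and $G$), together with the fact that iterated attraction factors through the Levi's attraction. Since the $L = T$ case of the theorem is already known (it is the defining property of the Satake equivalence in \cite{mv}), this transitivity forces the homomorphism $L \to G$ to be the standard inclusion, completing the identification. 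The step I expect to be most delicate is the convolution compatibility, since it requires Braden's theorem to behave well in an ind-scheme family, but this is precisely where the BD Grassmannian provides the right framework.
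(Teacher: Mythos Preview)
The paper does not prove this theorem in-text; it is stated with a \qed\ and attributed to \cite[Proposition 5.3.29 and Lemma 5.3.1]{bd}. Your outline is a faithful summary of that standard argument: Braden's theorem for the two descriptions of $h^{!*}_L$, parabolic dimension estimates \`a la Mirkovi\'c--Vilonen for $t$-exactness, convolution compatibility via the global (Beilinson--Drinfeld) Grassmannian, and a Tannakian identification pinned down by the transitivity $\fR^L_T \circ \fR^G_L \cong \fR^G_T$ together with the known case $L=T$. So your approach matches the cited source.

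One place where the paper itself offers an alternative is precisely the step you flagged as most delicate, the convolution compatibility $\fR^G_L(M \star N) \cong \fR^G_L(M) \star \fR^G_L(N)$. Rather than running Braden's theorem in a family over the BD Grassmannian, Section~\ref{sect:hl-convolution} reproves this (Proposition~\ref{prop:convolution-hl}, in fact in greater generality) using Gaitsgory's nearby-cycles realization of convolution together with the base-change morphisms for $\Psi$. That approach sidesteps the ind-scheme-family version of Braden at the cost of importing Gaitsgory's machinery; yours stays closer to \cite{bd}. Both are correct.
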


\begin{rmk}
\label{rmk:R-convolution}
One of the main steps in the proof of the commutativity of the diagram of Theorem \ref{thm:hl-restriction-classical} is to show that the functor $\fR^G_L$ commutes with convolution. In Section \ref{sect:hl-convolution} we give a new proof of this result in greater generality, see \S \ref{ss:hl-convolution} for details.
\end{rmk}

Our second main result relates these two pictures. In fact, we were only able to relate mixed versions of these functors. Using general constructions of \cite{ar}, we first prove that the functor $\fR^G_L$ has a ``mixed version''
\[
\fR^{G,\mix}_L : \dm_{\mon{\GvO}}(\Gr_{\Gv}) \to \dm_{\mon{\LvO}}(\Gr_{\Lv})
\]
(see Proposition \ref{prop:mixed-version-RGL} for details). Then, using the notion of \emph{Orlov category} studied in \cite{ar} we obtain the following result, which is proved in \S \ref{ss:proof-thm-hl-restriction}.

\begin{thm}
\label{thm:hl-restriction}

The following diagram commutes up to an isomorphism of functors:
\[
\xymatrix@C=2cm{
\dm_{\mon{\GvO}}(\Gr_{\Gv}) \ar[r]^-{F_G^{\mix}}_-{\sim} \ar[d]_-{\fR^{G,\mix}_L} & \cDb_{\free} \Coh^{G \times \Gm}(\cN_G) \ar[d]^-{(i^G_L)^*_{\mix}} \\
\dm_{\mon{\LvO}}(\Gr_{\Lv}) \ar[r]^-{F_L^{\mix}}_-{\sim} & \cDb_{\free} \Coh^{L \times \Gm}(\cN_L).
}
\]

\end{thm}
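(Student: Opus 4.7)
The strategy is to exploit the Orlov-category framework of \cite{ar}, as already hinted at in the introduction. Both the source and target of the two candidate functors are bounded homotopy categories of Orlov categories, and the universal property of such homotopy categories reduces an isomorphism of triangulated functors to an isomorphism of their restrictions to the underlying Orlov categories. Concretely, $\dm_{\mon{\GvO}}(\Gr_{\Gv})$ is the bounded homotopy category $\Kb(\scA_G)$ of the additive Orlov subcategory $\scA_G$ whose indecomposable objects are the shifts $\ICm_\lambda \lan n \ran$ (for $\lambda \in \bXp$, $n \in \Z$), while $\cDb_{\free}\Coh^{G \times \Gm}(\cN_G)$ is the bounded homotopy category $\Kb(\scB_G)$ of the Orlov subcategory $\scB_G$ of free modules $V_\lambda \otimes \cO_{\cN_G} \lan n \ran$ (with appropriate degree function compatible with the Tate twist).

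My plan would then proceed as follows. First, I would verify that the equivalence $F_G^\mix$ of Theorem \ref{thm:equivalence-mix} matches $\scA_G$ with $\scB_G$: combining the compatibility $F_G^\mix(M\lan n\ran) \cong F_G^\mix(M)\lan n\ran[n]$ with the known value $F_G(\cS_G(V)) \cong V \otimes \C[\cN_G]$ from the classical (unmixed) setting shows $F_G^\mix(\ICm_\lambda\lan n\ran) \cong V_\lambda \otimes \cO_{\cN_G}\lan n\ran$ up to a cohomological shift. Second, I would check that the mixed hyperbolic localization functor $\fR^{G,\mix}_L$ preserves the Orlov subcategory, sending $\scA_G$ into $\scA_L$. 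At the unmixed level this is exactly the content of Theorem \ref{thm:hl-restriction-classical}, combined with the fact that $\fR^G_L$ commutes with Tate twists (which is built into the mixed version constructed via \cite{ar}); the compatibility with the weight filtration is encoded in Proposition \ref{prop:mixed-version-RGL}. On the coherent side, $(i^G_L)^*_\mix$ trivially preserves free modules, sending $V \otimes \cO_{\cN_G}\lan n\ran$ to $(\Res^G_L V) \otimes \cO_{\cN_L}\lan n\ran$.

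Third, using Theorem \ref{thm:hl-restriction-classical} (the geometric Satake Levi compatibility) together with the identification $V_\lambda = \cS_G^{-1}(\IC_\lambda)$, I would produce a canonical isomorphism
\[
(i^G_L)^*_\mix \circ F_G^\mix(\ICm_\lambda) \ \cong \ F_L^\mix \circ \fR^{G,\mix}_L(\ICm_\lambda)
\]
on objects of $\scA_G$, and check that this is functorial in morphisms of $\scA_G$. For morphism-level functoriality the essential input is Proposition \ref{prop:action-functors-morphisms}, which already asserts the coincidence of the two functors on morphisms between shifts of perverse sheaves; one then only needs to upgrade to the Tate-twisted mixed setting, which is automatic since the grading $\lan n\ran$ is matched on both sides.

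Finally, I would invoke the general uniqueness theorem from \cite{ar} for triangulated functors out of the bounded homotopy category of an Orlov category: any additive functor $\scA_G \to \Kb(\scB_L)$ satisfying appropriate vanishing admits an essentially unique triangulated extension to $\Kb(\scA_G) \to \Kb(\scB_L)$. The main obstacle I anticipate is the third step: matching the isomorphisms on morphisms between $\ICm_\lambda\lan m\ran$ and $\ICm_\mu\lan n\ran$ in a canonically functorial (not merely object-wise) way, so that one gets a genuine natural isomorphism on all of $\scA_G$ rather than a collection of unrelated isomorphisms. This is where Proposition \ref{prop:action-functors-morphisms} must be applied with care, and where the rigidity provided by the mixed structure (weight filtrations, Tate twists) is indispensable.
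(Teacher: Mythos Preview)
Your proposal is correct and follows essentially the same strategy as the paper: identify the relevant categories as bounded homotopy categories of Orlov categories, verify that the two composite functors agree on the Orlov subcategory (Theorem~\ref{thm:hl-restriction-classical} for objects, Proposition~\ref{prop:action-functors-morphisms} for morphisms), and then invoke \cite[Theorem~4.7]{ar}. The one streamlining in the paper is that it first transports the entire problem to the coherent side, comparing $(i^G_L)^*_{\mix}$ with $F_L^{\mix}\circ\fR^{G,\mix}_L\circ(F_G^{\mix})^{-1}$ as functors $\cDb_{\free}\Coh^{G\times\Gm}(\cN_G)\to\cDb_{\free}\Coh^{L\times\Gm}(\cN_L)$, so that only the Orlov structure on $\Coh_{\free}^{G\times\Gm}(\cN_-)$ (Lemma~\ref{lem:orlov-coherent}) is needed and no separate Orlov description of $\dm_{\mon{\GvO}}(\Gr_{\Gv})$ is required.
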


\begin{rmk}

We expect that the non-mixed version of the diagram of Theorem \ref{thm:hl-restriction} also commutes. However, we were not able to prove this fact. The problem is the following. One can check that there exist  isomorphisms 
\begin{equation} \label{eqn:isom-objects}
(i^G_L)^* \circ F_G (\cS_G(V)) \ \cong \ F_L \circ \fR^G_L(\cS_G(V))
\end{equation}
for all $V$ in $\Rep(G)$, which are well-behaved with respect to morphisms (see Proposition \ref{prop:action-functors-morphisms}). The objects $\cS_G(V)$ generate the triangulated category $\cDb_{\mon{\GvO}}(\Gr_{\Gv})$. However, we were not able to construct a morphism of \emph{functors} which would induce isomorphisms \eqref{eqn:isom-objects} on objects. In the mixed setting, we use the extra structure given by the grading, and general constructions from homological algebra, to construct such a morphism of functors.

\end{rmk}

In \S\S \ref{ss:Satake-mix}--\ref{ss:hl-convolution}, we present results that are not essential but which enlighten some of the interesting properties of the functors $F_G$ and $\fR^G_L$ and their mixed versions.

\subsection{Satake equivalence and mixed perverse sheaves}
\label{ss:Satake-mix}

Consider the categories 
\[ 
\Perv_{\eq{\GvO}}(\Gr_{\Gv,\F_p}), \quad \text{respectively} \quad \Perv_{\mon{\GvO}}(\Gr_{\Gv,\F_p})
\] of $\Gv(\F_p[[x]])$-equivariant, respectively $\Gv(\F_p[[x]])$-monodromic, $\Qlb$-perverse sheaves on the $\F_p$-version of $\Gr_{\Gv}$. The forgetful functor
\[
\Perv_{\eq{\GvO}}(\Gr_{\Gv,\F_p}) \to \Perv_{\mon{\GvO}}(\Gr_{\Gv,\F_p})
\]
is fully faithful. Indeed, the corresponding fact over $\overline{\F_p}$ is well known (see \cite[Proposition A.1]{mv}). And it is also well known (\cite[Proposition 5.1.2]{bbd}) that the categories of perverse sheaves over $\F_p$ can be described as categories of perverse sheaves over $\overline{\F_p}$ endowed with a Weil structure. This implies our claim. (For this, see also \cite[Proposition 1 and its proof]{ga}.) 

For any $\lambda \in \bX^+$, we denote by $\IC_{\lambda}^{\mix}$ the simple perverse sheaf associated to the constant local system on $\Gr^{\lambda}_{\Gv,\F_p}$, normalized so that it has weight $0$. We let
\[ 
\Pervw_{\mon{\GvO}}(\Gr_{\Gv,\F_p}) 
\]
be the Serre subcategory of $\Perv_{\eq{\GvO}}(\Gr_{\Gv,\F_p})$, or equivalently $\Perv_{\mon{\GvO}}(\Gr_{\Gv,\F_p})$, generated by the simple objects $\IC_{\lambda}^{\mix} \lan j \ran$, where $\lambda \in \bX^+$ and $j \in \Z$.

It is well known that the category $\Perv_{\eq{\GvO}}(\Gr_{\Gv,\F_p})$ can be endowed with a convolution product $(M,N) \mapsto M \star N$, which is associative and commutative (see \cite[\S 1.1.2]{ga}). The following result is well known, but we have not found any explicit proof in this setting in the literature. It can be easily deduced from \cite[Propositions 9.4 and 9.6]{np}. We give a different proof to illustrate the techniques of \cite{ar}.

\begin{prop} 
\label{prop:convolution-mix}

For any $\lambda,\mu \in \bX^+$, the convolution $\IC_{\lambda}^{\mix} \star \IC_{\mu}^{\mix}$ is a direct sum of simple perverse sheaves $\IC_{\nu}^{\mix}$, $\nu \in \bX^+$.

\end{prop}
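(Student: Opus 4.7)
The plan is to combine purity of the normalized mixed IC sheaves with the semisimplicity of pure perverse Weil sheaves from \cite[Th\'eor\`eme 5.3.8]{bbd}.

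First, by construction each $\IC_\lambda^{\mix}$ is pure of weight~$0$, and its stalks at every point of $\Gr_{\Gv,\F_p}$ are of Tate type---that is, direct sums of Tate twists $\Qlb(j)$, so that Frobenius eigenvalues on stalks lie in $\{q^{j/2} : j \in \Z\}$. This reflects the parity behaviour of mixed IC sheaves on affine Grassmannians, which in turn rests on the fact that each $\Gv(\F_p[[x]])$-orbit $\Gr^\nu_{\Gv,\F_p}$ is an iterated affine-space bundle over a partial flag variety, hence simply connected.

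Second, I would express the convolution as a proper pushforward $m_*(\IC_\lambda^{\mix} \tsqtimes \IC_\mu^{\mix})$, where $m$ is the Satake convolution morphism restricted to the finite-dimensional twisted product $\overline{\Gr^\lambda_{\Gv,\F_p}} \ttimes \overline{\Gr^\mu_{\Gv,\F_p}}$, on which it is proper. Twisted external products and proper pushforwards both preserve purity (by Deligne's theorem, \cite[Corollaire 5.4.3]{bbd}) and preserve the Tate-type stalk property. Combined with the exactness of convolution for the perverse $t$-structure, this shows that $\IC_\lambda^{\mix} \star \IC_\mu^{\mix}$ is a perverse Weil sheaf pure of weight~$0$ with Tate-type stalks.

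Third, \cite[Th\'eor\`eme 5.3.8]{bbd} gives that any pure perverse Weil sheaf over $\F_p$ is a direct sum of simple perverse Weil sheaves pure of the same weight. Hence $\IC_\lambda^{\mix} \star \IC_\mu^{\mix}$ is a direct sum of simple, pure weight-$0$, $\Gv(\F_p[[x]])$-monodromic perverse Weil sheaves, each constructible for the stratification by $\Gv(\F_p[[x]])$-orbits. Simple connectedness of the orbits forces each summand to be geometrically isomorphic to some $\IC_\nu$ for $\nu \in \bX^+$, and the Tate-type stalk condition restricts its Weil structure to that of $\IC_\nu^{\mix}\lan j\ran$ for some $j \in \Z$; purity of weight~$0$ then forces $j = 0$, giving the claimed decomposition.

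The main obstacle is the last step: without the Tate-type property of stalks, purity alone would only force the Frobenius eigenvalues on simple summands to have complex absolute value~$1$, leaving room for simple summands carrying ``exotic'' weight-zero Weil structures outside the subcategory $\Pervw_{\mon{\GvO}}(\Gr_{\Gv,\F_p})$. Propagating the Tate-type property through the twisted external product and proper pushforward is the key input from the techniques of \cite{ar} that rules this out.
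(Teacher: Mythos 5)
Your overall framework (purity plus control of Frobenius on stalks, then deduce semisimplicity) is a reasonable route, close in spirit to the alternative proof via \cite{np} that the paper mentions in passing. But two steps as written are not correct, and they are exactly where the content lies.

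First, \cite[Th\'eor\`eme 5.3.8]{bbd} does \emph{not} say that a pure perverse Weil sheaf over $\F_p$ is a direct sum of simple perverse Weil sheaves. It says the decomposition holds after base change to $\overline{\F_p}$. Over $\F_p$ the correct statement is \cite[Proposition 5.3.9]{bbd}: a pure perverse Weil sheaf is a direct sum of indecomposables of the form $S \otimes_{\Qlb} V$, where $S$ is a simple perverse Weil sheaf and $V$ is a $\Qlb$-vector space carrying an indecomposable \emph{unipotent} Frobenius action. (This is the very fact the paper invokes in the proof of Proposition \ref{prop:hl-Frobenius}.) Thus purity alone leaves open the possibility of nontrivial Jordan blocks, and your reduction to ``simple summands with exotic Weil structures'' misses the actual obstruction: the $V$'s could have dimension $\geq 2$.

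Second, the assertion that ``proper pushforwards \dots preserve the Tate-type stalk property'' is false as a general statement. The stalk of $m_*K$ at $y$ is $H^\bullet(m^{-1}(y), K|_{m^{-1}(y)})$, and fiberwise cohomology can introduce non-semisimple Frobenius, or Weil numbers that are not powers of $q^{1/2}$, even when $K$ itself has stalks that are sums of Tate twists (think of the constant sheaf on an elliptic curve fiber). What makes the statement true in this setting is concrete geometry of the twisted product and of the convolution morphism (Demazure resolutions, affine pavings of fibers), which is precisely the input the Ng\^o--Polo argument supplies; it cannot be obtained formally from Deligne's purity theorem. If you did have Tate-type (i.e., semisimple, integer-power-of-$q$) Frobenius on all stalk cohomology groups of the convolution, you could indeed deduce that each $V$ above is $1$-dimensional, because a nontrivial Jordan block in $V$ would force non-semisimple Frobenius on $(S\otimes V)_x$ at any $x$ with $S_x\neq 0$. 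But establishing the hypothesis is the hard part.

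The paper's own proof takes a genuinely different path: it pulls back along the smooth proper $p:\Fl_{\Gv}\to\Gr_{\Gv}$, rewrites $p^*(\IC_\lambda^{\mix}\star\IC_\mu^{\mix})$ in terms of the Iwahori-equivariant convolution $p^*(\IC_\lambda^{\mix})\star^{\Iv} p^*(\IC_\mu^{\mix})$ up to tensoring by the (Frobenius-semisimple) cohomology of $\Gv/\Bv$, and then invokes Bezrukavnikov--Yun's semisimplicity result for $\Iv$-equivariant convolutions of Weil sheaves on $\Fl_{\Gv}$ \cite[Proposition 3.2.5]{by}. This sidesteps any direct analysis of stalk-level Frobenius actions, which is what makes it short, and it is what the authors mean by ``illustrating the techniques of \cite{ar}.'' To salvage your version you would need to import the Ng\^o--Polo geometry (minuscule/quasi-minuscule generation, Demazure resolutions with affine pavings) rather than rely on abstract functoriality.
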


\begin{proof}
Let $\Fl_{\Gv}:=\Gv(\fK)/\Iv$ be the affine flag variety. It is naturally endowed with the structure of an ind-scheme, and we have a natural smooth and proper morphism $p : \Fl_{\Gv} \to \Gr_{\Gv}$. One can define the category $\cD^{\Weil}_{\mon{\Iv}}(\Fl_{\Gv})$ as for $\Gr_{\Gv}$, see \cite[\S 6.1]{ar}. We already know that $\IC_{\lambda}^{\mix} \star \IC_{\mu}^{\mix}$ is a $\GvO$-equivariant perverse sheaf. Hence it is enough to prove that $p^*(\IC_{\lambda}^{\mix} \star \IC_{\mu}^{\mix})$ is a semisimple object of $\cD^{\Weil}_{\mon{\Iv}}(\Fl_{\Gv})$.

Let us denote by $(- \star^{\Iv} -)$ the convolution of $\Iv$-equivariant complexes on $\Fl_{\Gv}$, or the action of $\Iv$-equivariant complexes on $\Fl_{\Gv}$ on $\Iv$-equivariant complexes on $\Gr_{\Gv}$. Then we have
\begin{align*}
p^*(\IC_{\lambda}^{\mix}) \star^{\Iv} p^*(\IC_{\mu}^{\mix}) \ & \cong \ p^* \bigl( p^*(\IC_{\lambda}^{\mix}) \star^{\Iv} \ICm_{\mu} \bigr) \\
& \cong \ p^* \bigl( p_* (p^*(\IC_{\lambda}^{\mix})) \star \ICm_{\mu} \bigr) \\
& \cong \ p^* \bigl( \IC_{\lambda}^{\mix} \star \IC_{\mu}^{\mix} \otimes H^{\bullet}(\Gv_{\F_p}/\Bv_{\F_p}) \bigr).
\end{align*}
The cohomology $H^{\bullet}(\Gv_{\F_p}/\Bv_{\F_p})$ has a semisimple action of Frobenius. Hence to prove the proposition it is enough to prove that $p^*(\IC_{\lambda}^{\mix}) \star^{\Iv} p^*(\IC_{\mu}^{\mix})$ is a semisimple object of $\cD^{\Weil}_{\mon{\Iv}}(\Fl_{\Gv})$. However, this follows from \cite[Proposition 3.2.5]{by}, see also \cite[Remark 12.3]{ar}.
\end{proof}

It follows from Proposition \ref{prop:convolution-mix} that the subcategory $\Pervw_{\mon{\GvO}}(\Gr_{\Gv,\F_p})$ is stable under the convolution product. Let $\Perv^0_{\mon{\GvO}}(\Gr_{\Gv})$ be the subcategory of $\Pervw_{\mon{\GvO}}(\Gr_{\Gv,\F_p})$ whose objects are direct sums of objects $\IC_{\lambda}^{\mix}$, $\lambda \in \bX^+$. Again by Proposition \ref{prop:convolution-mix}, this subcategory is stable under the convolution product.

Equivalence \eqref{eqn:equiv-Gr-change-of-field} induces a similar equivalence where ``$\mon{\Iv}$'' is replaced by ``$\mon{\GvO}$.'' In particular, it follows that extension of scalars defines a functor
\[
\Phi_G : \Perv^0_{\mon{\GvO}}(\Gr_{\Gv}) \to \Perv_{\mon{\GvO}}(\Gr_{\Gv}),
\]
which commutes with convolution products.

\begin{lem}
\label{lem:Phi_G-equivalence}

The functor $\Phi_G$ is an equivalence of tensor categories.

\end{lem}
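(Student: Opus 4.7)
The plan is to verify the three conditions that $\Phi_G$ is fully faithful, essentially surjective, and strictly compatible with the convolution tensor structure (associativity, unit, and commutativity constraints).

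First I would check fully-faithfulness. Both source and target are $\Qlb$-linear (respectively $\C$-linear) semisimple abelian categories: the semisimplicity of $\Perv_{\mon{\GvO}}(\Gr_{\Gv})$ is recalled in the introduction (via \cite[Lemma 7.1]{mv}), and $\Perv^0_{\mon{\GvO}}(\Gr_{\Gv})$ is semisimple by its very definition as sums of the simple Weil perverse sheaves $\IC_\lambda^{\mix}$ (pure of weight $0$, no Tate twists). Since $\Qlb$ is algebraically closed, Schur's lemma gives $\End(\IC_\lambda^{\mix}) = \Qlb$ and $\Hom(\IC_\lambda^{\mix}, \IC_\mu^{\mix}) = 0$ for $\lambda \neq \mu$ inside $\Pervw_{\mon{\GvO}}(\Gr_{\Gv,\F_p})$. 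The functor $\Phi_G$, being essentially pull-back from $\F_p$ to $\overline{\F_p}$ followed by the equivalence of Lemma \ref{lem:change-of-field}, sends $\IC_\lambda^{\mix}$ to $\IC_\lambda$, and via the fixed isomorphism $\Qlb \cong \C$ the induced map on $\End$ is an isomorphism. Since all objects are direct sums of simples, full faithfulness follows.

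Next I would check essential surjectivity. Every object of $\Perv_{\mon{\GvO}}(\Gr_{\Gv})$ is a direct sum of simple objects $\IC_\lambda$ with $\lambda \in \bX^+$, and each such simple object lies in the image of $\Phi_G$ by the identification $\Phi_G(\IC_\lambda^{\mix}) \cong \IC_\lambda$. Hence $\Phi_G$ is essentially surjective, and together with the first step this gives an equivalence of abelian categories.

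Finally I would address the tensor structure. The paragraph preceding the lemma already records that $\Phi_G$ commutes with convolution, so there is a natural binary isomorphism $\Phi_G(M \star N) \simto \Phi_G(M) \star \Phi_G(N)$; the unit $\IC_0^{\mix}$ is sent to the unit $\IC_0$. It remains to verify that the associativity and (suitably modified) commutativity constraints of geometric Satake agree on both sides. For associativity, this is automatic because both constraints come from the same triple-convolution diagram, defined by the same geometric recipe at the level of ind-schemes over $\Z$, so they are preserved by change of base field. For the commutativity constraint one uses the fusion interpretation via the Beilinson--Drinfeld Grassmannian (or, equivalently, the sign-twisted naive swap of \cite{mv}, where the sign depends only on the parity of $\GvO$-orbit dimensions): this construction makes sense uniformly over $\F_p$, $\overline{\F_p}$ and $\C$, and is preserved by extension of scalars since dimensions of strata and the gluing data are defined over $\Z$.

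The main obstacle I anticipate is not any of the above steps individually, which are each essentially formal given the semisimplicity of the categories and the geometric origin of the constraints; rather, it is the bookkeeping required to ensure that the comparison of commutativity constraints does not introduce a spurious Frobenius twist (since we are working over $\F_p$, not $\overline{\F_p}$, in the source). This is handled by observing that the $\IC_\lambda^{\mix}$ are normalized to have weight $0$ with Frobenius acting trivially on the stalk at $L_\lambda$, so no additional Tate twists can enter the constraint.
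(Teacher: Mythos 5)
Your proof is correct and takes essentially the same route as the paper: semisimplicity of both categories plus the observation that $\Phi_G$ induces a bijection on simple objects with matching one-dimensional $\End$-spaces gives the equivalence of abelian categories. The paper's own proof stops there, since it records before the lemma that $\Phi_G$ commutes with convolution and treats the compatibility of the remaining tensor constraints as implicit; your extra paragraph on associativity and commutativity constraints and the absence of spurious Tate twists is a reasonable elaboration of what the paper leaves unsaid, not a divergence in method.
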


\begin{proof}
This functor induces a bijection on isomorphism classes of objects. Hence it is enough to prove that it is fully faithful. As the category $\Perv^0_{\mon{\GvO}}(\Gr_{\Gv})$ is semisimple by definition, it is enough to prove that for any $\lambda,\mu \in \bX^+$ the functor $\Phi_G$ induces an isomorphism
\[
\Hom_{\Perv^0_{\mon{\GvO}}(\Gr_{\Gv})}(\IC^{\mix}_{\lambda},\IC^{\mix}_{\mu}) \xrightarrow{\sim} \Hom_{\Perv_{\mon{\GvO}}(\Gr_{\Gv})}(\IC_{\lambda},\IC_{\mu}).
\]
However, this fact is obvious since both spaces are isomorphic to $\C^{\delta_{\lambda,\mu}}$.
\end{proof}

Using Lemma \ref{lem:Phi_G-equivalence}, we obtain an equivalence of tensor categories
\begin{equation}
\label{eqn:satake-0}
\cS^0_G : \Rep(G) \ \xrightarrow{\sim} \ \Perv^0_{\mon{\GvO}}(\Gr_{\Gv}).
\end{equation}
Using again the general constructions of \cite{ar} we construct for any object $M$ in $\Perv^0_{\mon{\GvO}}(\Gr_{\Gv})$ a functor
\[
(-) \star M : \cD^{\mix}_{\mon{\GvO}}(\Gr_{\Gv}) \to \cD^{\mix}_{\mon{\GvO}}(\Gr_{\Gv})
\]
which is a mixed version of the functor $(-) \star \Phi_G(M)$, see Proposition \ref{prop:mixed-version-convolution}. Then compatibility of the equivalence $F_G^{\mix}$ with convolution can be stated as follows. The proof of this result is given in \S \ref{ss:compatibility}.

\begin{prop}
\label{prop:FGmix-convolution}

For any $V$ in $\Rep(G)$ and $M$ in $\cD^{\mix}_{\mon{\GvO}}(\Gr_{\Gv})$, there exists an isomorphism
\[
F_G^{\mix}(M \star \cS_G^0(V)) \ \cong \ F_G^{\mix}(M) \otimes V,
\]
which is functorial in $M$.

\end{prop}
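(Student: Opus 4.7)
The plan is to recognise both functors $M \mapsto F_G^{\mix}(M \star \cS_G^0(V))$ and $M \mapsto F_G^{\mix}(M) \otimes V$ as mixed versions of the \emph{same} non-mixed functor $\cDb_{\mon{\GvO}}(\Gr_{\Gv}) \to \DGCGf(\cN_G)$, and then to invoke a uniqueness principle for mixed lifts, supplied by the Orlov category formalism of \cite{ar} that already underlies Theorems \ref{thm:equivalence-mix} and \ref{thm:hl-restriction}.

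For the first step I would identify the common non-mixed functor. Proposition \ref{prop:mixed-version-convolution} presents $(-) \star \cS_G^0(V)$ as a mixed version of $(-) \star \Phi_G(\cS_G^0(V))$, which is isomorphic to $(-) \star \cS_G(V)$ through the Satake identification \eqref{eqn:satake-0} and Lemma \ref{lem:Phi_G-equivalence}. The endofunctor $(-) \otimes V$ on $\cDb_{\free}\Coh^{G \times \Gm}(\cN_G)$ is tautologically a mixed lift of $(-) \otimes V$ on $\DGCGf(\cN_G)$, since $V$ carries no internal grading. Combining these with the forgetful square of Theorem \ref{thm:equivalence-mix} shows that, after applying $\For$, both composite functors descend to $N \mapsto F_G(N \star \cS_G(V))$ and $N \mapsto F_G(N) \otimes V$, which are canonically identified by \eqref{eqn:compatibility-F-S}.

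For the second step I would appeal to the uniqueness of mixed lifts in the Orlov-category framework. The target $\cDb_{\free}\Coh^{G \times \Gm}(\cN_G)$ is the bounded homotopy category of an Orlov category with pure generators $V \otimes \cO_{\cN_G}\lan i\ran$, and the source $\dm_{\mon{\GvO}}(\Gr_{\Gv})$ is analogously generated by the pure simples $\ICm_\lambda\lan i\ran$. The uniqueness theorem of \cite{ar} thus reduces the construction of the sought isomorphism of functors to comparing actions on pure objects, where the comparison is delivered by Proposition \ref{prop:convolution-mix}: it controls the weights of the simple summands of $\ICm_\lambda \star \cS_G^0(V)$ and matches them to the purely formal decomposition of $F_G^{\mix}(\ICm_\lambda) \otimes V$ on the coherent side via $F_G^{\mix}$ and the identification in Theorem \ref{thm:equivalence-mix}.

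The main obstacle is to upgrade the pointwise agreement into a genuine natural transformation of triangulated functors. The Orlov-category uniqueness provides exactly this, but one must verify its hypotheses: both mixed lifts must behave identically on morphisms between pure objects, and each must respect the internal shift $\lan 1\ran$. The latter is recorded in Theorem \ref{thm:equivalence-mix} and is immediate for the remaining operations; the former reduces to a weight-counting verification that both functors act identically on $\Hom(\ICm_\lambda, \ICm_\mu\lan i\ran[j])$, which follows from \eqref{eqn:compatibility-F-S} combined with Proposition \ref{prop:convolution-mix} once one unwinds the semisimple decomposition of convolution on pure generators.
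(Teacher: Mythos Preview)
Your proposal is correct and follows essentially the same strategy as the paper: both identify the two functors on pure generators using the non-mixed compatibility \eqref{eqn:compatibility-F-S} and then invoke \cite[Theorem~4.7]{ar} for Orlov categories to promote this to an isomorphism of triangulated functors. The paper streamlines the argument by first conjugating with $F_G^{\mix}$ to obtain two endofunctors $A_1(M)=M\otimes V$ and $A_2(M)=F_G^{\mix}\bigl((F_G^{\mix})^{-1}(M)\star\cS_G^0(V)\bigr)$ of $\cDb_{\free}\Coh^{G\times\Gm}(\cN_G)$, so that the Orlov structure of Lemma~\ref{lem:orlov-coherent} is available directly and the comparison on $\Coh_{\free}^{G\times\Gm}(\cN_G)$ reduces immediately to Lemma~\ref{lem:image-IC}(2); this avoids your separate discussion of the Orlov structure on the constructible side and the weight-counting verification on morphisms.
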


In the remainder of this subsection we describe a setting in which the abelian category $\Perv^0_{\mon{\GvO}}(\Gr_{\Gv})$ appears more naturally. These results will not be used in the rest of the paper.

Let us define the category
\[
\Pervm_{\mon{\GvO}}(\Gr_{\Gv}) := \{P \in \Pervw_{\mon{\GvO}}(\Gr_{\Gv,\F_p}) \mid \text{for all } i \in \Z, \ \mathrm{gr}^W_i P \text{ is semisimple} \}.
\]
By definition, this category is a full subcategory of the abelian category $\Pervm_{\mon{\Iv}}(\Gr_{\Gv})$, which is stable under extensions. Any object of $\Pervm_{\mon{\GvO}}(\Gr_{\Gv})$ is endowed with a canonical weight filtration. By definition again, $\Perv^0_{\mon{\GvO}}(\Gr_{\Gv})$ is the subcategory of $\Pervm_{\mon{\GvO}}(\Gr_{\Gv})$ whose objects have weight $0$.

\begin{lem}
\label{lem:spherical-perv-mix}

\begin{enumerate}
\item The category $\Pervm_{\mon{\GvO}}(\Gr_{\Gv})$ is semisimple.
\item The weight filtration of any $M$ in $\Pervm_{\mon{\GvO}}(\Gr_{\Gv})$ splits canonically.
\end{enumerate}

\end{lem}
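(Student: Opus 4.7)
My plan is to reduce both assertions to a single vanishing in the Weil category $\Pervw_{\mon{\GvO}}(\Gr_{\Gv,\F_p})$: namely, for all $\lambda,\mu \in \bXp$ and all integers $i \neq j$,
\[
\Hom(\ICm_\lambda \lan i \ran, \ICm_\mu \lan j \ran) \ = \ 0 \ = \ \Ext^1(\ICm_\lambda \lan i \ran, \ICm_\mu \lan j \ran).
\]
Granting this, claim~(2) will follow by induction along the weight filtration: for $P \in \Pervm_{\mon{\GvO}}(\Gr_{\Gv})$, the subquotient $\mathrm{gr}^W_i P$ is semisimple by the very definition of $\Pervm$, hence is a direct sum of copies of $\ICm_\lambda \lan -i \ran$ for various $\lambda$ (pure of weight $i$); a standard long-exact-sequence argument together with the vanishing above then shows $\Ext^1(\mathrm{gr}^W_i P, W_{i-1} P) = 0$ for every $i$, so every step of the filtration splits, and the parallel $\Hom$-vanishing makes the splitting unique, hence canonical. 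Claim~(1) is then immediate: every $P$ decomposes canonically as $\bigoplus_i \mathrm{gr}^W_i P$, and each summand is semisimple.

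For the key Ext-vanishing, the geometric input will be the semisimplicity of $\Perv_{\mon{\GvO}}(\Gr_{\Gv,\F})$ recalled after Lemma~\ref{lem:change-of-field} (this is \cite[Lemma~7.1]{mv}, which is itself a consequence of Lusztig's parity vanishing for stalks of spherical $\IC$-sheaves); this yields $\Ext^1_{\cDb_{\mon{\GvO}}(\Gr_{\Gv,\F})}(\IC_\lambda,\IC_\mu) = 0$, since $\Ext^1$ in the heart of a $t$-structure agrees with $\Ext^1$ in the ambient triangulated category. To pass from $\F$ to $\F_p$ I will use the Leray spectral sequence for $\mathrm{Gal}(\F/\F_p)$, which (because this Galois group has cohomological dimension one) degenerates into the short exact sequence
\[
0 \to (\Ext^{n-1}_{\F}(A,B))_{\Fr} \to \Ext^n_{\F_p}(A,B) \to (\Ext^n_{\F}(A,B))^{\Fr} \to 0.
\]
With $n=1$ and $A = \ICm_\lambda \lan i \ran$, $B = \ICm_\mu \lan j \ran$, the quotient vanishes by the previous step, and the submodule is the Frobenius coinvariants of $\Hom_\F(\ICm_\lambda \lan i \ran, \ICm_\mu \lan j \ran)$; this Hom is zero for $\lambda \neq \mu$, and for $\lambda = \mu$ it is one-dimensional with Frobenius acting by the nontrivial power $q^{(j-i)/2}$ (since $i \neq j$), so both invariants and coinvariants vanish. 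The same computation treats $\Hom_{\F_p}$ in place of $\Ext^1_{\F_p}$.

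The only real obstacle is the bookkeeping of the weight conventions and of the induced Galois action on Tate twists; the substantive mathematical content is entirely captured by the semisimplicity/parity statement for spherical perverse sheaves on $\Gr_{\Gv,\F}$, combined with the formal theory of the weight filtration over a finite field.
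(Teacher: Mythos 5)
Your proposal is correct, and it takes a route genuinely different from the paper's. The paper proves~(1) first: it reduces to showing $\Ext^1(\IC^{\mix}_\lambda,\IC^{\mix}_\mu\lan j\ran)=0$ for \emph{all} $j$ (including $j=0$), computes this Ext group inside the Iwahori mixed category $\Pervm_{\mon{\Iv}}(\Gr_{\Gv})$, and invokes the ``mixed version'' formalism of \cite{ar} to identify it with a direct summand of $\Ext^1_{\Perv_{\mon{\Iv}}(\Gr_{\Gv})}(\IC_\lambda,\IC_\mu)$, which vanishes by \cite[Lemma~7.1]{mv}; assertion~(2) is then an immediate consequence. You instead prove~(2) first, needing only the vanishing for \emph{distinct} internal twists $i\neq j$, which you obtain via the degenerating Hochschild--Serre short exact sequence for $\Gal(\F/\F_p)$ plus the semisimplicity over $\F$ and a Frobenius eigenvalue computation on $\Hom$; you then deduce~(1) from the canonical splitting together with the defining property of $\Pervm$ that each $\gr^W_i P$ is semisimple. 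This cleverly sidesteps the $j=0$ case (which your Galois argument alone would not give, since Frobenius acts trivially on $\Hom_\F(\IC_\lambda,\IC_\lambda)$), at the cost of routing the proof of~(1) through~(2). Your argument is more elementary in that it does not require the $\cD^{\mix}$-vs-$\cD$ direct-sum decomposition of Hom groups from \cite{ar}, but both proofs ultimately rest on the same geometric input, namely semisimplicity of the spherical perverse category over an algebraically closed field. One small point worth making explicit in a write-up: the Ext groups you compute live in $\Pervw_{\mon{\GvO}}(\Gr_{\Gv,\F_p})$, and you need the inclusion $\Ext^1_{\Pervm}\hookrightarrow\Ext^1_{\Pervw}$, which holds because $\Pervm$ is a full subcategory of $\Pervw$ stable under extensions.
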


\begin{proof}
To prove (1), it is enough to prove that for any $\lambda,\mu$ in $\bX^+$ and any $j \in \Z$ we have
\[
\Ext^1_{\Pervm_{\mon{\GvO}}(\Gr_{\Gv})}(\IC_{\lambda}^{\mix}, \IC_{\mu}^{\mix}\lan j \ran)=0.
\]
However, as $\Pervm_{\mon{\GvO}}(\Gr_{\Gv})$ is stable under extensions in $\Pervm_{\mon{\Iv}}(\Gr_{\Gv})$, one can compute the $\Ext^1$-group in the latter category. And, as $\cDb \Pervm_{\mon{\Iv}}(\Gr_{\Gv})$ is a mixed version of $\cDb \Perv_{\mon{\Iv}}(\Gr_{\Gv})$, this $\Ext^1$-group is a direct factor of
\[
\Ext^1_{\Perv_{\mon{\Iv}}(\Gr_{\Gv})}(\IC_{\lambda}, \IC_{\mu}).
\]
It is well known that the latter group is trivial, see e.g.~\cite[Proof of Lemma 7.1]{mv}.

Let us consider assertion (2). The fact that the filtration splits follows from (1). This splitting is canonical because there are no non-zero morphisms between pure objects of distinct weights.\end{proof}

By Lemma \ref{lem:spherical-perv-mix} and Proposition \ref{prop:convolution-mix}, the subcategory $\Pervm_{\mon{\GvO}}(\Gr_{\Gv})$ of the category $\Pervw_{\mon{\GvO}}(\Gr_{\Gv})$ is stable under convolution. Hence $(\Pervm_{\mon{\GvO}}(\Gr_{\Gv}),\star)$ is a semisimple tensor category. Using Tannakian formalism (\cite{dm}), it is not difficult to identify this category. The proof of the following proposition is left to the reader.

\begin{prop}

There exists an equivalence of tensor categories
\[
\cS_G^{\mix} : (\Rep(G \times \Gm),\otimes) \ \xrightarrow{\sim} \ (\Pervm_{\mon{\GvO}}(\Gr_{\Gv}),\star)
\]
such that the diagram:
\[
\xymatrix@C=2cm{
\Rep(G \times \Gm) \ar[r]_-{\sim}^-{\cS_G^{\mix}} \ar[d]_-{\mathrm{Res^{G \times \Gm}_G}} & \Pervm_{\mon{\GvO}}(\Gr_{\Gv}) \ar[d]^-{\For} \\ 
\Rep(G) \ar[r]_-{\sim}^-{\cS_G} & \Perv_{\mon{\GvO}}(\Gr_{\Gv})
}
\]
commutes.\qed

\end{prop}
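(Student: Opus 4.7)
The plan is to leverage the tensor equivalence $\cS^0_G$ of \eqref{eqn:satake-0} together with the splitting of the weight filtration provided by Lemma \ref{lem:spherical-perv-mix}(2) in order to upgrade Satake to the mixed setting. The key observation is that a representation $W$ of $G \times \Gm$ amounts to a $\Z$-graded $G$-representation $W = \bigoplus_{j \in \Z} W_j$, while Lemma \ref{lem:spherical-perv-mix}(2) says that every object $P$ of $\Pervm_{\mon{\GvO}}(\Gr_{\Gv})$ decomposes canonically as $P \cong \bigoplus_{j \in \Z} \mathrm{gr}^W_j P$, and each $\mathrm{gr}^W_j P$ belongs to $\Perv^0_{\mon{\GvO}}(\Gr_{\Gv}) \lan j \ran$.

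I would therefore define
\[
\cS_G^{\mix}(W) \ := \ \bigoplus_{j \in \Z} \cS^0_G(W_j) \lan j \ran,
\]
and similarly on morphisms. By Lemma \ref{lem:spherical-perv-mix}(1) the target category is semisimple with simple objects $\IC_\lambda^{\mix} \lan j \ran$ indexed by $(\lambda,j) \in \bXp \times \Z$, while the source is semisimple with simple objects $V_\lambda \boxtimes \C_j$ indexed by the same set (where $\C_j$ is the one-dimensional weight-$j$ character of $\Gm$). Since $\cS_G^{\mix}$ sets up a bijection on simples, it is automatically essentially surjective and fully faithful.

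The remaining point is tensor compatibility. Given the existing tensor isomorphism $\cS^0_G(V \otimes V') \cong \cS^0_G(V) \star \cS^0_G(V')$, the desired isomorphism $\cS_G^{\mix}(W \otimes W') \cong \cS_G^{\mix}(W) \star \cS_G^{\mix}(W')$ reduces, upon expanding along the bigrading, to the identity
\[
(A \lan k \ran) \star (B \lan l \ran) \ \cong \ (A \star B) \lan k+l \ran \qquad \text{for } A,B \in \Perv^0_{\mon{\GvO}}(\Gr_{\Gv}),
\]
which is a standard compatibility of Tate twists with convolution of Weil sheaves. Commutativity of the diagram is then immediate: since Tate twists trivialize after pullback to $\overline{\F_p}$, one has $\For \circ \lan j \ran \cong \For$, whence
\[
\For(\cS_G^{\mix}(W)) \ \cong \ \bigoplus_j \Phi_G(\cS^0_G(W_j)) \ \cong \ \cS_G\bigl(\bigoplus_j W_j\bigr) \ \cong \ \cS_G(\mathrm{Res}^{G \times \Gm}_G W)
\]
using Lemma \ref{lem:Phi_G-equivalence}.

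The only nonformal ingredient is the compatibility of internal shifts with convolution; everything else follows mechanically from the semisimplicity statement of Lemma \ref{lem:spherical-perv-mix}. A more intrinsic alternative would be to apply Tannakian reconstruction directly to the rigid symmetric semisimple tensor category $(\Pervm_{\mon{\GvO}}(\Gr_{\Gv}),\star)$ using the fiber functor $P \mapsto \bigoplus_j H^{\bullet}(\Gr_{\Gv,\F}, \For(\mathrm{gr}^W_j P))$ (with its natural $\Gm$-action by the grading), and then identify the resulting affine group scheme with $G \times \Gm$ by noting that restriction to the weight-zero subcategory recovers ordinary Satake and that the internal shift produces the central $\Gm$ factor.
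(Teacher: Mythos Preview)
Your proposal is correct. The paper in fact does not give a proof of this proposition at all: it simply says ``Using Tannakian formalism (\cite{dm}), it is not difficult to identify this category. The proof of the following proposition is left to the reader,'' and marks the statement with \qed. So there is nothing to compare against in detail.

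Your direct construction via the canonical weight splitting of Lemma~\ref{lem:spherical-perv-mix}(2) and the already-established tensor equivalence $\cS^0_G$ is a perfectly good way to fill this in, and arguably more explicit than the Tannakian route the paper gestures at. The only point worth being slightly more careful about is the coherence of the monoidal structure: you write down the tensor isomorphisms but do not explicitly check the pentagon and hexagon axioms. This is routine---they follow from the corresponding coherence for $\cS^0_G$ together with the naturality and additivity of the Tate-twist compatibility $(A\lan k\ran)\star(B\lan l\ran)\cong(A\star B)\lan k+l\ran$---but it is worth saying so. Your closing Tannakian sketch is exactly what the paper has in mind, and would give the result more intrinsically at the cost of having to identify the automorphism group of the fiber functor.
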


\subsection{Hyperbolic localization and convolution}
\label{ss:hl-convolution}

Consider, as in \S \ref{ss:hl-restriction}, a standard Levi subgroup $\Lv \subset \Gv$, and the corresponding standard Levi $L \subset G$. It follows in particular from Theorems \ref{thm:equivalence-mix} and \ref{thm:hl-restriction} and Proposition \ref{prop:FGmix-convolution} that for any $M \in \cDb_{\mon{\GvO}}(\Gr_{\Gv})$ which is in the image of the forgetful functor $\For : \dm_{\mon{\GvO}}(\Gr_{\Gv}) \to \cDb_{\mon{\GvO}}(\Gr_{\Gv})$ and $N \in \Perv_{\eq{\GvO}}(\Gr_{\Gv})$, there exists an isomorphism
\[
\fR^G_L(M \star N) \ \cong \ \fR^G_L(M) \star \fR^G_L(N).
\]
Indeed, let $M'$ be an object of $\dm_{\mon{\GvO}}(\Gr_{\Gv})$ such that $M \cong \For(M')$. Let $N':=(\Phi_G)^{-1}(N)$. Then we have a chain of isomorphisms
{\small
\begin{eqnarray*}
\fR^G_L(M \star N) & \overset{\textrm{Th. \ref{thm:hl-restriction}}}{\cong} & \For \circ (F_L^{\mix})^{-1} \circ (i^{G}_{L})^*_{\mix} \circ F_G^{\mix}(M' \star N') \\
& \overset{\textrm{Prop. \ref{prop:FGmix-convolution}}}{\cong} & \For \circ (F_L^{\mix})^{-1} \circ (i^{G}_{L})^*_{\mix} \bigl( F_G^{\mix}(M') \otimes (\cS_G^0)^{-1}(N') \bigr) \\
& \overset{(\dag)}{\cong} & \For \circ (F_L^{\mix})^{-1} \Bigl( \bigl((i^{G}_{L})^*_{\mix} F_G^{\mix}(M') \bigr) \otimes  \bigl( (\cS_L^0)^{-1} \fR^{G,\mix}_L(N') \bigr) \Bigr) \\
& \overset{\textrm{Prop. \ref{prop:FGmix-convolution}}}{\cong} & \For \Bigl( (F_L^{\mix})^{-1} \bigl((i^{G}_{L})^*_{\mix} \circ F_G^{\mix}(M') \bigr) \star \fR^{G,\mix}_L(N') \Bigr) \\
& \overset{\textrm{Th. \ref{thm:hl-restriction}}}{\cong} & \For \bigl( \fR_L^{G,\mix}(M') \star \fR_L^{G,\mix}(N') \bigr) \\
& \cong & \fR^G_L(M) \star \fR^G_L(N).
\end{eqnarray*}
}Isomorphism $(\dag)$ uses an isomorphism of functors $\fR^{G,\mix}_L \circ \cS_G^0 \ \cong \ \cS_L^0 \circ \mathrm{Res}^G_L$ which follows from Theorem \ref{thm:hl-restriction-classical} and the construction of $\fR^{G,\mix}_L$; see Remark \ref{rmk:hl-restriction-mix} for details.

In Section \ref{sect:hl-convolution} we give a (topological) proof of the following much more general claim. Recall that the right action of the tensor category $\Perv_{\eq{\GvO}}(\Gr_{\Gv})$ on $\cDb_{\mon{\GvO}}(\Gr_{\Gv})$ extends to a convolution bifunctor
\[
(- \star -) : \ \cDb_c(\Gr_{\Gv}) \times \cDb_{\eq{\GvO}}(\Gr_{\Gv}) \to  \cDb_c(\Gr_{\Gv}),
\]
where $\cDb_c(\Gr_{\Gv})$ is the bounded derived category of constructible sheaves on $\Gr_{\Gv}$. As in \S \ref{ss:hl-restriction}, let $\lambda_L : \C^{\times} \to \Tv$ be a generic dominant cocharacter with values in the center of $\Lv$. We let $\mathrm{Aut}$ denote the pro-algebraic group of automorphisms of $\fO$, see \cite[\S 2.1.2]{ga}. Then we have the following result, whose proof is given in \S \ref{ss:convolution-hl}.

\begin{prop} \label{prop:convolution-hl}

For any $M$ in $\cDb_{\mon{\lambda_L(\C^{\times})}}(\Gr_{\Gv})$ and $N$ in $\cDb_{\eq{\GvO \rtimes \mathrm{Aut}}}(\Gr_{\Gv})$, there is a bifunctorial isomorphism
\[
\fR^G_L(M \star N) \ \cong \ \fR^G_L(M) \star \fR^G_L(N).
\]

\end{prop}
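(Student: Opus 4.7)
The plan is to lift the $\lambda_L(\C^\times)$-action from $\Gr_{\Gv}$ to the convolution diagram
\[
\Gr_{\Gv} \ \xleftarrow{\ m\ } \ \Gv(\fK) \times^{\GvO} \Gr_{\Gv}
\]
and to apply Braden's theorem twice: once on $\Gr_{\Gv}$ to the output $M \star N = m_*(M \tsqtimes N)$, and once on the convolution space to the twisted product $M \tsqtimes N$ itself. The natural $\C^\times$-action to consider is $t \cdot [g_1, g_2 \GvO] := [\lambda_L(t) g_1, g_2 \GvO]$, which makes $m$ equivariant. The $\GvO \rtimes \mathrm{Aut}$-equivariance of $N$, combined with the $\lambda_L(\C^\times)$-monodromicity of $M$, ensures that $M \tsqtimes N$ is $\C^\times$-monodromic for this action; the $\mathrm{Aut}$-equivariance is what permits one to build the twisted product as an equivariant object in a way compatible with the $\C^\times$-action on the base.

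The geometric core of the argument is an explicit description of the fixed locus, the attractor, and the repeller of this $\C^\times$-action on $\Gv(\fK) \times^{\GvO} \Gr_{\Gv}$. A direct computation using that $\lambda_L$ is central in $\Lv$ and dominant for $\Pv$ identifies the fixed locus with
\[
\Lv(\fK) \times^{\LvO} \Gr_{\Lv} \ \hookrightarrow \ \Gv(\fK) \times^{\GvO} \Gr_{\Gv}
\]
in such a way that the restriction of $m$ becomes the $\Lv$-convolution morphism $m_L$; the attractor identifies with $\Pv(\fK) \times^{\Pv(\fK) \cap \GvO} \Gr_{\Pv}$, and the repeller with the analogous space for $\Pv^-$, the canonical projections to the fixed locus being induced by $\Pv \twoheadrightarrow \Lv$ on the first factor and by $p : \Gr_{\Pv} \to \Gr_{\Lv}$ on the second.

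Given these identifications, one rewrites $h^{!*}_L(M \star N)$ as $(p_L)_! (i_L)^* m_*(M \tsqtimes N)$ via Braden on $\Gr_{\Gv}$, commutes $(i_L)^*$ past $m_*$ by proper base change along the cartesian square comparing $m$ with its restriction to the attractor, and then applies Braden once more on $\Gv(\fK) \times^{\GvO} \Gr_{\Gv}$ together with a pointwise comparison of twisted products to recognize the outcome as $(\fR^G_L M) \tsqtimes (\fR^G_L N)$ on $\Lv(\fK) \times^{\LvO} \Gr_{\Lv}$, up to cohomological shifts on each connected component of $\Gr_{\Lv}$ depending only on its $Z(L)$-weight; these shifts are additive under $\Lv$-convolution and are absorbed exactly by the renormalization $\Theta_L$. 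The main obstacle lies in the geometric identification of the attractor above, since $\Gv(\fK) \times^{\GvO} \Gr_{\Gv}$ is only an ind-scheme; a clean application of Braden forces one to reduce to a finite-dimensional model by replacing $\GvO$ by a normal subgroup of finite codimension and bounding the supports of $M$ and $N$ by finite unions of orbits, and the $\mathrm{Aut}$-equivariance of $N$ is precisely what permits this reduction in a manner compatible with the $\lambda_L(\C^\times)$-action.
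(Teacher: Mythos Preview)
Your approach is genuinely different from the paper's. The paper does \emph{not} apply Braden's theorem to the convolution space $\Gv(\fK)\times^{\GvO}\Gr_{\Gv}$ at all. Instead it invokes Gaitsgory's realization of convolution via nearby cycles: for $N$ in $\cDb_{\eq{\GvO\rtimes\mathrm{Aut}}}(\Gr_{\Gv})$ one has $M\star N\cong\Psi^{\Gv}(N_{X\smallsetminus x}\boxtimes M)$, where $\Psi^{\Gv}$ is the nearby cycles functor for the Beilinson--Drinfeld Grassmannian $\Gr'_{\Gv,X}\to X$. The inclusion $\Pv\hookrightarrow\Gv$ and projection $\Pv\twoheadrightarrow\Lv$ give maps $i',p'$ on the family $\Gr'_{-,X}$; over $X\smallsetminus x$ the ind-scheme $\Gr'_{\Gv,X\smallsetminus x}$ splits as a product $\Gr_{\Gv,X\smallsetminus x}\times\Gr_{\Gv}$, so $(p'_U)_!(i'_U)^*$ decomposes as the product of hyperbolic localizations on each factor. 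One then uses the general base-change morphisms $(p'_0)_!(i'_0)^*\circ\Psi^{\Gv}\to\Psi^{\Lv}\circ(p'_U)_!(i'_U)^*$ and its dual $(q'_0)_*(j'_0)^!$ version to produce maps in both directions between $\fR^G_L(M\star N)$ and $\fR^G_L(M)\star\fR^G_L(N)$, and checks they are inverse. The $\mathrm{Aut}$-equivariance enters through Gaitsgory's identification $\cC_{\Gv}(M,N)\cong M\star N$, not through any finite-dimensional reduction. What the nearby-cycles route buys is that the geometry over the generic fibre is a genuine product, so no attractor analysis on a twisted product is needed.

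Your direct strategy is reasonable and your identification of the fixed locus is correct, but there is a gap in the base-change step. You write that one ``commutes $(i_L)^*$ past $m_*$ by proper base change along the cartesian square comparing $m$ with its restriction to the attractor.'' Proper base change along $i:\Gr_{\Pv}\hookrightarrow\Gr_{\Gv}$ lands you on $m^{-1}(\Gr_{\Pv})$, which is \emph{strictly larger} than the attractor for your $\C^\times$-action on the convolution space: the attractor is contained in $\pi_1^{-1}(\Gr_{\Pv})\cap m^{-1}(\Gr_{\Pv})$ since the first projection $\pi_1$ is also equivariant. So after base change you are not on the attractor, and the subsequent ``Braden once more on the convolution space'' does not apply as stated. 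What you actually need is the compatibility of hyperbolic localization with equivariant proper pushforward, $h^{!*}\circ m_*\cong (m_L)_*\circ h^{!*}_{\mathrm{conv}}$; this is true but is not a single base change---one typically proves it by combining the two Braden descriptions $p_!i^*$ and $q_*j^!$ on source and target, or by invoking the contraction lemma. Once that compatibility is in hand, the remaining task is to identify $h^{!*}_{\mathrm{conv}}(M\tsqtimes N)$ with $(h^{!*}M)\tsqtimes(h^{!*}N)$ on $\Lv(\fK)\times^{\LvO}\Gr_{\Lv}$, which again requires the attractor analysis you sketch and is where the ind-scheme issues you flag genuinely bite.
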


Note that the forgetful functor $\Perv_{\eq{\GvO \rtimes \mathrm{Aut}}}(\Gr_{\Gv}) \to \Perv_{\eq{\GvO}}(\Gr_{\Gv})$ is an equivalence of categories, see \cite[Proposition 1]{ga} or more generally \cite[Proposition A.1]{mv}. Hence Proposition \ref{prop:convolution-hl} applies in particular if $N$ is in $\Perv_{\eq{\GvO}}(\Gr_{\Gv})$. In case we assume in addition that $M$ is a $\GvO$-monodromic perverse sheaf, this result is well known, see Remark \ref{rmk:R-convolution}. The general case may be known to experts; however we have not found any proof in this generality in the literature. Note also that our proof of Proposition \ref{prop:convolution-hl} works similarly in the case of sheaves with coefficients in any commutative ring of finite global dimension\footnote{In this case we need a more general case of Braden's theorem, which holds by \cite[Remark (3) on p.~211]{br}.}. This proof is independent of the rest of the paper.

\section{Constructible sheaves on $\Gr_{\Gv}$ and coherent sheaves on $\cN_G$:\\ first approach}
\label{sect:proof-1}

In this section we give a first proof of Theorems \ref{thm:equivalence} and \ref{thm:equivalence-mix}. Our arguments are a simplified version of the proofs of \cite[Theorems 9.1.4 and 9.4.3]{abg}.

\subsection{Reminder on cohomology of affine Grassmannians}
\label{ss:reminder-cohomology}


Let us recall, following \cite{g2} and \cite{yz}, how one can give information on the cohomology $H^{\bullet}(\Gr_{\Gv}):=H^{\bullet}(\Gr_{\Gv},\C)$. As $\Gr_{\Gv}$ is homeomorphic to the group of polynomial loops in a Lie group (see \cite[\S 1.2]{g2}), this cohomology is a graded-commutative and cocommutative Hopf algebra. Denote by $\pr$ its subspace of primitive elements, i.e.~elements $x$ whose image under the comultiplication is $x\otimes 1 + 1 \otimes x$. This space is graded, by restriction of the grading on $H^{\bullet}(\Gr_{\Gv})$: $\pr=\oplus_n \pr^n$. For any $c \in \pr^n$, the cup product with $c$ induces, for any $M$ in $\Perv_{\eq{\GvO}}(\Gr_{\Gv})$, a functorial morphism
\[
c \cup - : H^{\bullet}(M) \to H^{\bullet + n}(M).
\]
Hence, via $\cS_G$, we obtain an endomorphism $\phi(c)$ of the functor $\For : \Rep(G) \to \Vect_{\C}$. By \cite[Proposition 2.7]{yz}, this endomorphism satisfies $\phi(c)_{V_1 \otimes V_2}=\phi(c)_{V_1} \otimes \id_{V_2} + \id_{V_1} \otimes \phi(c)_{V_2}$. Hence one obtains an automorphism $\widetilde{\phi}(c)$ of the \emph{tensor} functor $\Rep(G) \to \mathrm{Mod}(\C[\varepsilon]/\varepsilon^2)$ which sends $V$ to $V \otimes_{\C} \C[\varepsilon]/\varepsilon^2 = V \oplus V \cdot \varepsilon$ by setting
\[
\widetilde{\phi}(c)_V :=
\left( \begin{array}{cc}
\id_V & 0 \\
\phi(c)_V & \id_V
\end{array} \right) :
V \oplus V \cdot \varepsilon \to V \oplus V \cdot \varepsilon.
\]
By Tannakian formalism (\cite[Proposition 2.8]{dm}), this gives us a point in $G(\C[\varepsilon]/\varepsilon^2)$. Moreover, the image of this point under the morphism $G(\C[\varepsilon]/\varepsilon^2) \to G$ induced by the evaluation at $\varepsilon=0$ is $1$ (because the induced automorphism of the fiber functor $\For$ is trivial). Hence one obtains a point $\psi(c) \in \fg$. This way we have constructed a Lie algebra morphism
\[
\psi : \pr \to \fg.
\]
Note that this morphism is $\C^{\times}$-equivariant, where the action on $\pr$ corresponds to the grading defined above, and the action on $\fg$ is via the adjoint action of the cocharacter $2\rhov : \C^{\times} \to T$.

Let $\cL_{\det}$ be a very ample line bundle\footnote{The choice of $\cL_{\det}$ is not unique; however, it is easily seen that our constructions essentially do not depend on this choice, mainly because we work with sheaves with coefficients in characteristic $0$. If $\Gv$ is simple then there is a natural candidate for $\cL_{\det}$, namely the determinant line bundle, which explains our notation.} on $\Gr_{\Gv}$. Consider the first Chern class $c_1(\cL_{\det}) \in H^2(\Gr_{\Gv})$. We let
\[
e_G:=\psi(c_1(\cL_{\det})) \in \fg.
\]
This element has weight $2$ for the adjoint action of the cocharacter $2\rhov$. Hence $e_G \in \bigoplus_{\alpha \in \Delta} \fg_{\alpha}$. Moreover, $e_G$ is a regular nilpotent element of $\fg$ (see \cite{g2} or \cite[Proposition 5.6]{yz}), or equivalently its component on any $\fg_{\alpha}$ is nonzero. (Note that \cite[Proposition 5.6]{yz} gives a much more explicit description of $e_G$ if $\cL_{\det}$ is the determinant line bundle; we will not need this in this paper.) As $\pr$ is an abelian Lie algebra, the morphism $\psi$ factors through a $\C^{\times}$-equivariant morphism (denoted similarly for simplicity) $\psi:\pr \to \fg^{e_G}$, where $\fg^{e_G}$ is the centralizer of $e_G$ in $\fg$. 

Now, assume for a moment that $\Gv$ is semisimple and simply connected, so that $\Gr_{\Gv}$ is irreducible. By general theory of cocommutative Hopf algebras (see \cite[Theorem 13.0.1]{sw}), $H^{\bullet}(\Gr_{\Gv})$ is isomorphic to the symmetric algebra of the space $\pr$. By \cite[Proposition 1.7.2]{g2} or \cite[Corollary 6.4]{yz}, the morphism $\psi:\pr \to \fg^{e_G}$ is an isomorphism. Hence we obtain an isomorphism of graded Hopf algebras
\begin{equation}
\label{eqn:isom-cohomology}
\psi : H^{\bullet}(\Gr_{\Gv}) \xrightarrow{\sim} \mathrm{S}(\fg^{e_G}).
\end{equation} 
Note that, by construction, if $M$ is in $\Perv_{\eq{\GvO}}(\Gr_{\Gv})$ and $c \in H^{\bullet}(\Gr_{\Gv})$, the endomorphism of $H^{\bullet}(M)$ induced by the cup product with $c$ coincides with the action of $\psi(c)$ on $(\cS_G)^{-1}(M)$.

For a general reductive $\Gv$, the connected component $\Gr_{\Gv}^0$ of $\Gr_{\Gv}$ containing the base point $\GvO/\GvO$ identifies with the affine Grassmannian of the simply-connected cover of the derived subgroup of $\Gv$. Hence \eqref{eqn:isom-cohomology} gives a description of $H^{\bullet}(\Gr_{\Gv}^0):=H^{\bullet}(\Gr_{\Gv}^0,\C)$.

\subsection{An $\mathrm{Ext}$-algebra} 
\label{ss:Ext-algebra}

Let us define the objects 
\[
1_G:=S_G(\C) \quad \text{and} \quad \cR_G:=S_G(\C[G]),
\]
where $\C$ is the trivial $G$-module, and $\C[G]$ is the (left) regular representation of $G$. Then $1_G$ is an object of $\Perv_{\mon{\GvO}}(\Gr_{\Gv})$ (more precisely the skyscraper sheaf at $L_0=\GvO/\GvO \in \Gr_{\Gv}$) and $\cR_G$ is an ind-object in $\Perv_{\mon{\GvO}}(\Gr_{\Gv})$. This object is a ring-object, i.e.~there is a natural associative (and commutative) product 
\begin{equation} \label{eqn:def-m}
\sm : \cR_G \star \cR_G \to \cR_G
\end{equation}
induced by the multiplication in $\C[G]$. Moreover, $\cR_G$ is endowed with an action of $G$ (induced by the right multiplication of $G$ on itself).

More concretely, one can choose an isomorphism
\[
\cR_G \ \cong \ \varinjlim_{k \geq 0} \, \cR_{G,k},
\]
where $\cR_{G,k}$ is an object of $\Perv_{\mon{\GvO}}(\Gr_{\Gv})$ endowed with an action of $G$ for any $k$, and such that the multiplication $\sm$ of \eqref{eqn:def-m} is induced by $G$-equivariant morphisms
\[
\sm_{k,l} : \cR_{G,k} \star \cR_{G,l} \to \cR_{G,k+l}.
\]

Consider the $G$-equivariant graded algebra 
\[
\Ext^{\bullet}_{\mon{\GvO}}(1_{G}, \cR_G)
\]
whose $i$-th component is
\[
\varinjlim_{k \geq 0} \, \Hom_{\cDb_{\mon{\GvO}}(\Gr_{\Gv})}(1_G, \cR_{G,k}[i]).
\]
(Note that this definition is consistent with the usual formula for morphisms with values in an ind-object, see \cite[Equation (2.6.1)]{ks2}. In particular, it does not depend on the choice of the $\cR_{G,k}$'s.) The action of $G$ on this vector space is induced by the action on $\cR_G$. The (graded) algebra structure can be described as follows. Take $\xi \in \Ext^i_{\mon{\GvO}}(1_{G}, \cR_G)$, $\zeta \in \Ext^j_{\mon{\GvO}}(1_{G}, \cR_G)$; then the product $\xi \cdot \zeta$ is the composition 
\[ 
1_G \xrightarrow{\zeta} \cR_G[j] \cong 1_G \star \cR_G[j] \xrightarrow{\xi \star \cR_G[j]} \cR_G \star \cR_G[i+j] \xrightarrow{\sm[i+j]} \cR_G[i+j].
\]
(This structure is also considered in \cite[\S 7.2]{abg}.)

On the other hand, consider the algebra
\[
\C[\cN_G]
\]
of functions on the nilpotent cone $\cN_G$ of $G$. It is $G$-equivariant, and graded by the $\C^{\times}$-action defined in \S \ref{ss:equivalence}.

\begin{prop} 
\label{prop:Ext-algebra}

There exists an isomorphism of $G$-equivariant graded algebras \[ \Ext^{\bullet}_{\mon{\GvO}}(1_{G}, \cR_G) \ \cong \ \C[\cN_G]. \]

\end{prop}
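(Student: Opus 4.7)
The plan is to combine Ginzburg's description of Ext groups between shifts of simple objects in the Satake category with a Peter--Weyl decomposition of $\cR_G$. As a $(G \times G)$-bimodule one has $\C[G] \cong \bigoplus_{\lambda \in \bXp} V_\lambda^* \otimes V_\lambda$, with the left regular action (which is fed into Satake) on the factors $V_\lambda$ and the right regular action (which is responsible for the $G$-action on $\cR_G$) on the factors $V_\lambda^*$. Applying $\cS_G$ yields a $G$-equivariant isomorphism of ind-objects $\cR_G \cong \bigoplus_{\lambda \in \bXp} V_\lambda^* \otimes \IC_\lambda$, with the $G$-action concentrated on the multiplicity spaces $V_\lambda^*$. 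It follows that
\[
\Ext^\bullet_{\mon{\GvO}}(1_G, \cR_G) \ \cong \ \bigoplus_{\lambda} V_\lambda^* \otimes \Ext^\bullet_{\mon{\GvO}}(1_G, \IC_\lambda)
\]
as graded $G$-modules.

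The next step is to invoke Ginzburg's theorem, cited as \cite[Proposition 1.10.4]{g2} in the introduction: for every $V, W \in \Rep(G)$ there is a natural isomorphism
\[
\Ext^\bullet_{\mon{\GvO}}(\cS_G(V), \cS_G(W)) \ \simto \ \Hom_G(V, W \otimes \C[\cN_G])
\]
of graded vector spaces, where the grading on the right comes from the even $\Gm$-grading on $\C[\cN_G]$ of \S\ref{ss:equivalence}. Specializing to $V = \C$ and $W = V_\lambda$ gives $\Ext^\bullet(1_G, \IC_\lambda) \cong (V_\lambda \otimes \C[\cN_G])^G$. Substituting into the display above and applying the standard Peter--Weyl identity $\bigoplus_\lambda V_\lambda^* \otimes (V_\lambda \otimes M)^G \cong M$ (valid for any $G$-module $M$; apply with $M = \C[\cN_G]$) produces a $G$-equivariant isomorphism of graded vector spaces
\[
\Ext^\bullet_{\mon{\GvO}}(1_G, \cR_G) \ \cong \ \C[\cN_G].
\]

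Finally, one must check that this identification respects algebra structures: Yoneda composition combined with $\sm$ on the left-hand side, and the usual product on $\C[\cN_G]$ on the right. Under Satake, $\sm$ corresponds to the algebra multiplication $\C[G] \otimes \C[G] \to \C[G]$, whose Peter--Weyl components $V_\lambda^* \otimes V_\mu^* \to V_\nu^*$ are dual to the inclusions $V_\nu \hookrightarrow V_\lambda \otimes V_\mu$. Using the naturality of Ginzburg's isomorphism with respect to tensor products --- i.e., that convolution on the Satake side intertwines with tensor product of $G$-modules under $\cS_G$, and that Ginzburg's isomorphism is compatible with the resulting pairings on both sides --- one traces through and identifies the Yoneda--$\sm$ product with ordinary multiplication on $\C[\cN_G]$. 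This algebra compatibility is the principal obstacle, since Ginzburg's result is normally stated at the level of individual Hom-groups; one must upgrade it to a form sufficiently functorial to transport the monoidal data across the isomorphism.
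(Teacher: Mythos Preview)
Your approach is essentially the one carried out in \cite[Theorem~7.3.1]{abg}, which is exactly what the paper cites for this proposition. The paper's own proof consists of that citation together with a one-line reduction from general reductive $G$ to the adjoint case: both algebras are unchanged under replacing $G$ by $G/Z(G)$. So your proposal is \emph{more} detailed than the paper's proof, not less, and follows the same route.

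Two small remarks. First, Ginzburg's \cite[Proposition~1.10.4]{g2} is stated for semisimple $G$, so you should include the reduction to $G/Z(G)$ (this is straightforward: the Ext side sees only the component $\Gr_{\Gv}^0$, and $\cN_G = \cN_{G/Z(G)}$). Second, the algebra compatibility you correctly flag as ``the principal obstacle'' is where the real work lies; your sketch via naturality with respect to tensor products is the right idea, and this is what \cite[\S\S 7.4--7.5]{abg} carries out in detail. Within the paper's own logical flow, note that the analogous Hom computation (Proposition~\ref{prop:isom-morphisms}) is proved \emph{after} Proposition~\ref{prop:Ext-algebra} and in fact uses it via the construction of $F_G$, so citing the external source \cite{g2} rather than the paper's internal version is essential to avoid circularity.
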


\begin{proof} This result is proved in \cite[Theorem 7.3.1]{abg} in the case where $G$ is semisimple and adjoint. (Part of the arguments for this proof are reproduced in the proof of Proposition \ref{prop:isom-morphisms} below.) The general case follows, as both of these algebras are unchanged under the replacement of $G$ by $G/Z(G)$.\end{proof}

\subsection{A projective resolution of $1_G$}
\label{ss:projective-resolution}

Recall the definition of the category of pro-object in an abelian category (see \cite[Definition 1.11.4]{ks1} or \cite[Definition 6.1.1]{ks2}); recall also that this category is abelian (see \cite[Theorem 8.6.5(i)]{ks2}). In this subsection we construct a projective resolution of the object $1_G$ in the category of pro-objects in $\Perv_{\mon{\Iv}}(\Gr_{\Gv})$. A similar construction is also performed (without much details) in \cite[\S 9.5]{abg}.

Let us fix a collection of closed finite unions of $\Iv$-orbits
\[
\{L_0\}=X_0 \subset X_1 \subset X_2 \subset \cdots
\]
such that $\Gr_{\Gv}=\cup_{n \geq 0} \, X_n$. For any $n \geq 0$, we denote by $i_n : X_n \hookrightarrow \Gr_{\Gv}$ the inclusion.

For any $n \geq 0$, it follows from \cite[Theorem 3.3.1]{bgs} that the category $\Perv_{\mon{\Iv}}(X_n)$ is equivalent to the category of finite dimensional modules over a finite-dimensional $\C$-algebra. Hence one can choose a projective resolution
\[
\cdots \to P^{-2}_n \xrightarrow{d_n^{-2}} P^{-1}_n \xrightarrow{d_n^{-1}} P^0_n \to 1_G
\]
which is minimal, i.e.~such that for any $i \leq 0$, $P_n^i$ is a projective cover of $\ker(d^{i+1}_n)$. (Here, by an abuse of notation, we consider $1_G$ as an object of $\Perv_{\mon{\Iv}}(X_n)$, without mentioning which ``$n$" is considered.)

Consider now the inclusion $\iota_n : X_n \hookrightarrow X_{n+1}$. For any projective object $P$ in $\Perv_{\mon{\Iv}}(X_{n+1})$, $(\iota_n)^* P$ is a perverse sheaf (because $P$ has a filtration with standard objects as subquotients, see \cite[Theorem 3.3.1]{bgs}), and it is even a projective object in $\Perv_{\mon{\Iv}}(X_n)$ (because the functor $(\iota_n)_*$ is exact). More precisely, we have the following.

\begin{lem}
\label{lem:restriction-projective}

For any $j \leq 0$ there is an isomorphism
\[
(\iota_n)^* P^j_{n+1} \ \cong \ P^j_n.
\]

\end{lem}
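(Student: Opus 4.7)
The plan is to show that $(\iota_n)^* P^\bullet_{n+1} \to 1_G$ is itself a minimal projective resolution of $1_G$ in $\Perv_{\mon{\Iv}}(X_n)$; the claimed isomorphism then follows from the uniqueness (up to isomorphism) of minimal projective resolutions.

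I would first verify that each $(\iota_n)^* P^j_{n+1}$ belongs to $\Perv_{\mon{\Iv}}(X_n)$ and is projective there. Recall that $P^j_{n+1}$ admits a filtration by standard objects $\Delta_\mu$; proper base change shows that $(\iota_n)^* \Delta_\mu$ is either the corresponding standard on $X_n$ (when $\Iv \cdot L_\mu \subset X_n$) or zero (when $\Iv \cdot L_\mu \subset X_{n+1} \setminus X_n$), so $(\iota_n)^* P^j_{n+1}$ is an iterated extension of standards and hence perverse. Projectivity follows from the adjunction
\[
\Hom_{\Perv_{\mon{\Iv}}(X_n)}\bigl((\iota_n)^* P^j_{n+1}, -\bigr) \; \cong \; \Hom_{\Perv_{\mon{\Iv}}(X_{n+1})}\bigl(P^j_{n+1}, (\iota_n)_*(-)\bigr),
\]
since $(\iota_n)_*$ is exact on perverse sheaves (as $\iota_n$ is closed) and $P^j_{n+1}$ is projective. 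Applying the triangulated functor $(\iota_n)^*$ to the quasi-isomorphism $P^\bullet_{n+1} \xrightarrow{\sim} 1_G$ in $\cDb_{\mon{\Iv}}(X_{n+1})$ then yields a quasi-isomorphism $(\iota_n)^* P^\bullet_{n+1} \xrightarrow{\sim} 1_G$ whose source is a complex of perverse sheaves, so it is an honest resolution.

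The main obstacle is the minimality of this new resolution, which I would formulate as the statement that, for each simple object $S \in \Perv_{\mon{\Iv}}(X_n)$, the multiplicity of $S$ in the top of $(\iota_n)^* P^j_{n+1}$ equals $\dim \Ext^{-j}_{\Perv_{\mon{\Iv}}(X_n)}(1_G, S)$. Using the same adjunction,
\[
\dim \Hom\bigl((\iota_n)^* P^j_{n+1}, S\bigr) \; = \; \dim \Hom\bigl(P^j_{n+1}, (\iota_n)_* S\bigr),
\]
which, by the minimality of $P^\bullet_{n+1}$ and the fact that $(\iota_n)_* S$ is a simple object of $\Perv_{\mon{\Iv}}(X_{n+1})$, equals $\dim \Ext^{-j}_{\Perv_{\mon{\Iv}}(X_{n+1})}(1_G, (\iota_n)_* S)$. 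Because $\iota_n$ is a closed embedding, $(\iota_n)_*$ is fully faithful and exact on constructible derived categories; combining this with the realization equivalence \eqref{eqn:realization-functor-equivalence} identifies the preceding Ext with $\dim \Ext^{-j}_{\Perv_{\mon{\Iv}}(X_n)}(1_G, S)$. Hence $(\iota_n)^* P^\bullet_{n+1}$ is a minimal projective resolution of $1_G$ in $\Perv_{\mon{\Iv}}(X_n)$, and the uniqueness of such resolutions forces $(\iota_n)^* P^j_{n+1} \cong P^j_n$ for each $j \leq 0$.
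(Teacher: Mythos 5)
Your proof is correct but takes a genuinely different route from the paper's. The paper argues term by term: by minimality, $P^j_n$ (resp.\ $P^j_{n+1}$) decomposes as a direct sum of projective covers $P(n,L)$ (resp.\ $P(n+1,L')$) with multiplicities $\dim \Ext^{-j}(1_G,L)$; one then matches multiplicities across the two strata, checks that $(\iota_n)^* P(n+1,(\iota_n)_* L) \cong P(n,L)$ for $L$ supported on $X_n$, and that $(\iota_n)^* P(n+1,L') = 0$ when $L'$ is supported on $X_{n+1}\smallsetminus X_n$. You instead show that the whole restricted complex $(\iota_n)^* P^\bullet_{n+1}$ is a minimal projective resolution of $1_G$ in $\Perv_{\mon{\Iv}}(X_n)$ and invoke uniqueness. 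This is conceptually tidier (no explicit decomposition into indecomposables), and it gives for free the stronger fact recorded just after the lemma in the paper, namely that the isomorphism can be realized as an isomorphism of \emph{complexes}. The core computation is the same in both arguments: identifying $\Ext^{-j}_{\Perv_{\mon{\Iv}}(X_{n+1})}(1_G,(\iota_n)_* S)$ with $\Ext^{-j}_{\Perv_{\mon{\Iv}}(X_n)}(1_G, S)$ via the realization equivalence and (co)adjunction for the closed embedding $\iota_n$.

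The one step you pass over too quickly is the claim that the \emph{termwise} complex $(\iota_n)^* P^\bullet_{n+1}$ is a resolution. Applying the triangulated functor $(\iota_n)^*$ to the derived-category object represented by $P^\bullet_{n+1}$ of course returns $1_G$, but this does not by itself say that the chain complex obtained by applying $(\iota_n)^*$ degree by degree has vanishing cohomology in negative degrees; one must know that this termwise application represents the triangulated-functor pullback. The clean way to see it is the same observation you already made for the individual terms: every syzygy $Z^j := \ker d^j$ is $\Delta$-filtered (start from $Z^1 = 1_G = \Delta_0$, and use that the kernel of a surjection from a projective onto a $\Delta$-filtered object is again $\Delta$-filtered, e.g.\ via the criterion $\Ext^{>0}(M,\nabla_\mu)=0$), so $(\iota_n)^*$ sends each short exact sequence $0 \to Z^j \to P^j_{n+1} \to Z^{j+1} \to 0$ to a short exact sequence of perverse sheaves. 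With this detail supplied, your argument is complete.
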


\begin{proof}
Fix $j \leq 0$. The projective object $P^j_n$, respectively $P^j_{n+1}$, is the direct sum of the projective covers in $\Perv_{\mon{\Iv}}(X_n)$, respectively $\Perv_{\mon{\Iv}}(X_{n+1})$, of the simple objects $L$ in $\Perv_{\mon{\Iv}}(X_n)$, respectively $L'$ in $\Perv_{\mon{\Iv}}(X_{n+1})$, such that $\Hom(P^j_n, L) \neq 0$, respectively $\Hom(P^j_{n+1}, L') \neq 0$, counted with multiplicity. By minimality we have isomorphisms
\[
\Hom(P^j_n, L) \, \cong \, \Ext^{-j}_{\Perv_{\mon{\Iv}}(X_n)}(1_G,L), \quad \Hom(P^j_{n+1}, L') \, \cong \, \Ext^{-j}_{\Perv_{\mon{\Iv}}(X_{n+1})}(1_G,L').
\]
In fact, denoting by $P(n,L)$, respectively $P(n+1,L')$ the projective cover of a simple object $L$ in $\Perv_{\mon{\Iv}}(X_n)$, respectively $L'$ in $\Perv_{\mon{\Iv}}(X_{n+1})$, there are (non-canonical) isomorphisms
\begin{align*}
P_n^j \ \cong \ & \bigoplus_{\genfrac{}{}{0pt}{}{L \text{ simple in}}{\Perv_{\mon{\Iv}}(X_n)}} \bigl( \Ext^{-j}_{\Perv_{\mon{\Iv}}(X_n)}(1_G,L) \bigr)^* \otimes_{\C} P(n,L), \\
P_{n+1}^j \ \cong \ & \bigoplus_{\genfrac{}{}{0pt}{}{L' \text{ simple in}}{\Perv_{\mon{\Iv}}(X_{n+1})}} \bigl( \Ext^{-j}_{\Perv_{\mon{\Iv}}(X_{n+1})}(1_G,L') \bigr)^* \otimes_{\C} P(n+1,L').
\end{align*}

If $L$ is in $\Perv_{\mon{\Iv}}(X_n)$, then 
\[
\Ext^{-j}_{\Perv_{\mon{\Iv}}(X_n)}(1_G,L) \ \cong \ \Ext^{-j}_{\Perv_{\mon{\Iv}}(X_{n+1})}(1_G,(\iota_n)_*L).
\]
Indeed, by \cite[Corollary 3.3.2]{bgs} the left-hand side, respectively right-hand side, is isomorphic to 
\[
\Hom_{\cDb_{\mon{\Iv}}(X_n)}(1_G,L[-j]), \text{ respectively } \Hom_{\cDb_{\mon{\Iv}}(X_{n+1})}(1_G,(\iota_n)_* L[-j]).
\]
Now these spaces coincide since $(\iota_n)^* 1_G \cong 1_G$. Moreover, by construction the projective cover of $L$ in $\Perv_{\mon{\Iv}}(X_n)$ is isomorphic to the restriction to $X_n$ of the projective cover of $(\iota_n)_* L$ in $\Perv_{\mon{\Iv}}(X_{n+1})$ (see \cite[proof of Theorem 3.2.1]{bgs}). 

On the other hand, if $L'$ is a simple object associated to an $\Iv$-orbit included in $X_{n+1} \smallsetminus X_n$, then the projective cover of $L'$ in $\Perv_{\mon{\Iv}}(X_{n+1})$ is supported in $X_{n+1} \smallsetminus X_n$ (see again \cite[proof of Theorem 3.2.1]{bgs}, or use reciprocity, see the arguments in \cite[step 2 of the proof of Theorem 9.5]{ar}). This concludes the proof.
\end{proof}

More precisely, the resolution $P_n^{\bullet}$ being fixed, one can choose the minimal projective resolution $P_{n+1}^{\bullet}$ in such a way that we have an isomorphism of \emph{complexes}
\[
(\iota_n)^* P_{n+1}^{\bullet} \ \cong \ P_n^{\bullet}.
\]
In particular, by adjunction this provides a morphism of complexes
\[
(i_{n+1})_* P^{\bullet}_{n+1} \to (i_n)_* P^{\bullet}_n.
\]
For any $j \leq 0$, we set
\[
P^j\ := \ \varprojlim_{n \geq 0} \, (i_n)_* P^j_n,
\]
a pro-object in $\Pervm_{\mon{\Iv}}(\Gr_{\Gv})$. For $j <0$, the differentials $d^j_n : P^j_n \to P^{j+1}_n$ induce morphisms $d^j : P^j \to P^{j-1}$ such that $d^{j+1} \circ d^j=0$. Hence one can consider the complex of pro-objects
\[
P^{\bullet}:=\bigl( \cdots \to P^{-2} \to P^{-1} \to P^0 \to 0 \to \cdots \bigr),
\]
where $P^i$ is in degree $i$. There is a natural morphism of complexes $P^{\bullet} \to 1_G$.

\begin{lem}
\label{lem:projective-resolution}

\begin{enumerate}
\item For any $k$, the object $P^k$ is projective in the category of pro-objects in $\Perv_{\mon{\Iv}}(\Gr_{\Gv})$.
\item The morphism $P^{\bullet} \to 1_G$ is a quasi-isomorphism.
\end{enumerate}

\end{lem}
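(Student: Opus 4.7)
My plan is to handle both statements by reducing them to analogous facts on the finite-dimensional closed subvarieties $X_n$, where we are in the classical setting of a category of finitely generated modules over a finite-dimensional algebra. The key technical input is that, by construction, $P^k$ and $P^{\bullet}$ admit strictly compatible representatives $((i_n)_* P^k_n)_n$ and $((i_n)_* P^{\bullet}_n)_n$ via the identifications $(\iota_n)^* P^k_{n+1} \cong P^k_n$ from Lemma \ref{lem:restriction-projective}; in particular the transition maps in these pro-systems are surjective, which supplies the Mittag-Leffler property needed to promote level-wise statements to statements in the pro-category.

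I would prove (2) first, as it is the more conceptually straightforward. For each $n$, the complex $P^{\bullet}_n \to 1_G$ is by choice a projective resolution, in particular an exact complex in $\Perv_{\mon{\Iv}}(X_n)$. Since the closed embedding $i_n : X_n \hookrightarrow \Gr_{\Gv}$ induces an exact functor on perverse sheaves, the complex $(i_n)_* P^{\bullet}_n \to 1_G$ is exact in $\Perv_{\mon{\Iv}}(\Gr_{\Gv})$ for every $n$. Thus the morphism $P^{\bullet} \to 1_G$ of complexes of pro-objects is represented level-wise by quasi-isomorphisms, and hence is a quasi-isomorphism in the category of complexes of pro-objects.

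For (1), I would compute, for any $Q$ in $\Perv_{\mon{\Iv}}(\Gr_{\Gv})$ (necessarily supported on some $X_m$),
\[
\Hom_{\mathrm{Pro}}(P^k, Q) \ = \ \varinjlim_n \Hom_{\Perv_{\mon{\Iv}}(\Gr_{\Gv})}((i_n)_* P^k_n, Q) \ \cong \ \Hom_{\Perv_{\mon{\Iv}}(X_m)}(P^k_m, (i_m)^* Q),
\]
the first equality being the defining property of pro-Hom and the second coming from the adjunction $((i_n)_*, (i_n)^*)$ applied to perverse sheaves supported on $X_n$, together with the compatibility from Lemma \ref{lem:restriction-projective} that makes the colimit system essentially constant for $n \geq m$. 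Since $P^k_m$ is projective in $\Perv_{\mon{\Iv}}(X_m)$, this shows that $\Hom_{\mathrm{Pro}}(P^k, -)$ is exact when restricted to the underlying abelian category. To upgrade this to exactness on all of $\mathrm{Pro}(\Perv_{\mon{\Iv}}(\Gr_{\Gv}))$, one represents a short exact sequence of pro-objects level-wise and uses the formula $\Hom_{\mathrm{Pro}}(P^k, (Q_j)_j) = \varprojlim_j \Hom_{\mathrm{Pro}}(P^k, Q_j)$, combined with the Mittag-Leffler condition to eliminate the $\varprojlim{}^1$ obstruction.

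The main delicate point is precisely this last step, since passing from exactness of $\Hom_{\mathrm{Pro}}(P^k, -)$ on the subcategory $\Perv_{\mon{\Iv}}(\Gr_{\Gv})$ to exactness on the full pro-category is not automatic. It relies on the surjectivity of the transition maps between the $(i_n)_* P^k_n$, which is in turn guaranteed by the strict inductive compatibility of the resolutions $P_n^{\bullet}$ afforded by Lemma \ref{lem:restriction-projective}. Without the on-the-nose compatibility (as opposed to mere isomorphism), the argument would stumble over $\varprojlim{}^1$-contributions.
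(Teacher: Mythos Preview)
Your approach is essentially the same as the paper's. Both proofs first establish exactness of $\Hom_{\mathrm{Pro}}(P^k,-)$ restricted to $\Perv_{\mon{\Iv}}(\Gr_{\Gv})$ via the adjunction and Lemma~\ref{lem:restriction-projective} (showing the colimit stabilizes), and then upgrade to the full pro-category by writing a short exact sequence of pro-objects as a filtered projective limit of short exact sequences in the base category. For part (2), both arguments reduce to the level-wise exactness of $(i_n)_* P^{\bullet}_n \to 1_G$, using that kernels and images in the pro-category are computed level-wise.

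There is one point of confusion in your final paragraph. The Mittag--Leffler condition you need is on the projective system $(\Hom(P^k, M_j))_j$ arising from a level-wise short exact sequence $M_j \hookrightarrow N_j \twoheadrightarrow Q_j$, not on the transition maps of the presentation of $P^k$ itself; the surjectivity of $(i_{n+1})_* P^k_{n+1} \to (i_n)_* P^k_n$ plays no role here. What actually makes the argument work is that each $\Hom(P^k, M_j)$ is \emph{finite-dimensional} (which your first step implicitly shows, since the colimit stabilizes at $\Hom_{\Perv_{\mon{\Iv}}(X_m)}(P^k_m, -)$); any projective system of finite-dimensional vector spaces automatically satisfies Mittag--Leffler, so $\varprojlim^1$ vanishes. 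The paper makes exactly the same observation but packages it differently: it notes that duality $V \mapsto V^*$ turns small filtrant projective limits of short exact sequences of finite-dimensional vector spaces into inductive limits, which are always exact.
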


\begin{proof}
(1) First, let us prove that the functor
\[
\Hom(P^k,-) : \Perv_{\mon{\Iv}}(\Gr_{\Gv}) \to \{ \C\text{-vector spaces} \}
\]
is exact, and takes values in $\Vect_{\C}$. (Here, we consider morphisms in the category of pro-objects.) By definition of pro-objects, for any $M$ in $\Perv_{\mon{\Iv}}(\Gr_{\Gv})$ we have
\[
\Hom(P^k,M) \ \cong \ \varinjlim_{m \geq 0} \, \Hom((i_m)_* P^k_m,M)
\]
(see \cite[Equation (2.6.2)]{ks2}). By definition of the category $\Perv_{\mon{\Iv}}(\Gr_{\Gv})$, there exists $n \geq 0$ and an object $M'$ of $\Perv_{\mon{\Iv}}(X_n)$ such that $M \cong (i_n)_* M'$. By adjunction, for any $m \geq 0$ we have
\[
\Hom((i_m)_* P^k_m,M) \ \cong \ \Hom((i_n)^* (i_m)_* P^k_m,M').
\]
Using Lemma \ref{lem:restriction-projective}, we deduce that for $m \geq n$,
\[
\Hom((i_m)_* P^k_m,M) \ \cong \ \Hom(P^k_n,M').
\]
The claim follows, using the fact that $P^k_n$ is projective in the category $\Perv_{\mon{\Iv}}(X_n)$.

Now we conclude the proof of (1). Consider a short exact sequence $M \hookrightarrow N \twoheadrightarrow Q$ in the abelian category of pro-objects in $\Perv_{\mon{\Iv}}(\Gr_{\Gv})$. By \cite[Proposition 8.6.6(a)]{ks2}, this exact sequence is the projective limit of a projective system $(M_i \hookrightarrow N_i \twoheadrightarrow Q_i)_{i \in I}$ of short exact sequences in $\Perv_{\mon{\Iv}}(\Gr_{\Gv})$ indexed by a small filtrant category $I$. By our intermediate result, for any $i$ the sequence
\[
0 \to \Hom(P^k,M_i) \to \Hom(P^k,N_i) \to \Hom(P^k,Q_i) \to 0
\]
is an exact sequence of finite-dimensional vector spaces. It is well known that small filtrant inductive limits of short exact sequences of vector spaces are exact. Using the fact that the duality $V \mapsto V^*$ is exact, restricts to an involution on finite-dimensional vector spaces, and transforms inductive limits into projective limits, we deduce that the same property holds for small filtrant projective limits of short exact sequences of \emph{finite-dimensional} vector spaces. In particular, we get a short exact sequence
\[
0 \to \varprojlim_i \, \Hom(P^k,M_i) \to \varprojlim_i \, \Hom(P^k,N_i) \to \varprojlim_i \, \Hom(P^k,Q_i) \to 0.
\]
In other words, the sequence
\[
0 \to \Hom(P^k,M) \to \Hom(P^k,N) \to \Hom(P^k,Q) \to 0
\]
is exact, which proves the projectivity of $P^k$.

The claim (2) is obvious from the description of kernels and images in a category of pro-objects given in \cite[Lemma 8.6.4(ii)]{ks2}.
\end{proof}

\begin{rmk}
\label{rmk:projectives}
Let $\Gr_{\Gv}^0$ be the connected component of $\Gr_{\Gv}$ containing $\GvO/\GvO$, and let $G_{\mathrm{adj}}=G/Z(G)$. By the main results of \cite{abg}, the category $\Perv_{\mon{\Iv}}(\Gr_{\Gv}^0)$ is equivalent to a certain category of finite-dimensional representations of Lusztig's quantum group at a root of unity associated with the group $G_{\mathrm{adj}}$, see \cite[Theorem 9.10.2]{abg}. By \cite[Theorem 9.12]{apw}, the latter category has enough projectives. Hence the category $\Perv_{\mon{\Iv}}(\Gr_{\Gv}^0)$ has enough projectives.

It follows from this remark that in fact, for any $j \leq 0$, the sequence of objects $((i_n)_* P^j_n)_{n \geq 0}$ stabilizes for $n \gg 0$. As this proof is very indirect, and as this fact is not strictly necessary for our arguments, we will not use it.
\end{rmk}

\subsection{Formality}
\label{ss:formality}

Consider the $G$-equivariant\footnote{Here, the term ``$G$-equivariant'' is not really appropriate since for $i \in \Z$, $\sE^i(1_G,\cR_G)$ is not a rational $G$-module, but rather a projective limit of rational $G$-modules. We use this term by an abuse of language. Similarly, by a $G$-equivariant dg-module over this dg-algebra we mean a dg-module $M^{\bullet}$ such that for any $i \in \Z$, $M^i$ is a pro-object in the category of rational $G$-modules, compatibly with the $G$-action on the dg-algebra. Similar comments apply to the dg-algebra $A^{\lf}$ defined in \S \ref{ss:lf-subalgebra} below.} dg-algebra $\sE^{\bullet}(1_G,\cR_G)$ such that
\[
\sE^i(1_G,\cR_G) \ := \ \varinjlim_k \, \Hom^{i}(P^{\bullet},P^{\bullet} \star \cR_{G,k}).
\]
Here, we have set as usual
\[
\Hom^{i}(P^{\bullet},P^{\bullet} \star \cR_{G,k}) \ = \ \prod_{j \in \Z} \, \Hom(P^j,P^{i+j} \star \cR_{G,k}),
\]
where the morphisms are taken in the category of pro-objects in the category $\Perv_{\mon{\Iv}}(\Gr_{\Gv})$, and 
\[ 
P^{i+j} \star \cR_{G,k} \ := \ \varprojlim_n \, \bigl( (i_n)_* P^{i+j}_n \bigr) \star \cR_{G,k}.
\]
The action of $G$ is induced by the action on $\cR_G$, the differential on this dg-algebra is the natural one (induced by the differential of the complex $P^{\bullet}$), and the product is defined as follows. Consider $\xi \in \Hom^{i}(P^{\bullet},P^{\bullet} \star \cR_{G,k})$, $\zeta \in \Hom^{j}(P^{\bullet},P^{\bullet} \star \cR_{G,l})$; their product $\xi \cdot \zeta \in \Hom^{i+j}(P^{\bullet},P^{\bullet} \star \cR_{G,k+l})$ is defined as the composition
\[
P^{\bullet} \xrightarrow{\zeta} P^{\bullet} \star \cR_{G,l}[j] \xrightarrow{\xi \star \cR_{G,l}[j]} P^{\bullet} \star \cR_{G,k} \star \cR_{G,l}[i+j] \xrightarrow{P^{\bullet} \star \sm_{k,l}[i+j]} P^{\bullet} \star \cR_{G,k+l} [i+j].
\]

On the other hand, recall the dg-algebra $\Ext^{\bullet}_{\mon{\GvO}}(1_G,\cR_G)$ defined in \S \ref{ss:Ext-algebra}.

\begin{prop}
\label{prop:morphism-dg-algebras}

There exists a natural quasi-isomorphism of $G$-equivariant dg-algebras
\[
\phi : \sE^{\bullet}(1_G,\cR_G) \to \Ext^{\bullet}_{\mon{\GvO}}(1_G,\cR_G).
\]

\end{prop}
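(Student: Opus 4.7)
The plan is to construct $\phi$ via composition with the augmentation of the resolution, and then to establish formality using a mixed/purity argument.

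For a degree-$i$ morphism $\xi = (\xi_j)_j \colon P^\bullet \to P^\bullet \star \cR_{G,k}$, define $\widetilde\phi(\xi) \colon P^{-i} \to \cR_{G,k}$ as $(\varepsilon \star \id_{\cR_{G,k}}) \circ \xi_{-i}$, where $\varepsilon \colon P^0 \to 1_G$ is the augmentation of the resolution. Taking $\varinjlim_k$, this produces a morphism of $G$-equivariant dg-algebras $\widetilde\phi \colon \sE^\bullet(1_G, \cR_G) \to \Hom^\bullet(P^\bullet, \cR_G)$, where the right-hand side has differential induced by that of $P^\bullet$ and product induced by $\sm$. Commutation with differentials uses $\varepsilon \circ d_{P^{-1}} = 0$; multiplicativity is a routine diagram chase from the definitions of the two products (both built from the convolution and $\sm$). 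By projectivity of each $P^j$ and the fact that $P^\bullet \to 1_G$ is a quasi-isomorphism (Lemma \ref{lem:projective-resolution}), the cohomology of the target is $\Ext^\bullet$ in the pro-category of $\Perv_{\mon{\Iv}}(\Gr_\Gv)$; via the realization equivalence \eqref{eqn:realization-functor-equivalence} and the full embedding $\cDb_{\mon{\GvO}}(\Gr_\Gv) \hookrightarrow \cDb_{\mon{\Iv}}(\Gr_\Gv)$, this identifies canonically with $\Ext^\bullet_{\mon{\GvO}}(1_G, \cR_G)$. In particular $\widetilde\phi$ is already a quasi-isomorphism onto a dg-algebra with the desired cohomology.

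To upgrade $\widetilde\phi$ to a dg-algebra quasi-isomorphism into the Ext algebra itself (equipped with zero differential), I would prove formality of $\sE^\bullet(1_G, \cR_G)$. Lift $\cR_G$ to its canonical pure-of-weight-zero object in $\dm_{\mon{\GvO}}(\Gr_\Gv)$, and lift each $(i_n)_* P^j_n$ to a pure object in $\Pervm_{\mon{\Iv}}(\Gr_\Gv)$ using the canonical mixed structures on minimal projective covers of simples from \cite{bgs}, arranged compatibly across $n$ by Lemma \ref{lem:restriction-projective}. These lifts promote $\sE^\bullet(1_G, \cR_G)$ to a $G$-equivariant dgg-algebra whose second grading is the weight. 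A direct weight computation between pure objects in the mixed derived category then shows $\Ext^i_{\mon{\GvO}}(1_G, \cR_{G,k})$ is pure of weight matching the cohomological degree $i$. By Deligne's ``diagonal summand'' formality criterion, the weight-equals-degree subcomplex of $\sE^\bullet$ carries vanishing differential and its inclusion into $\sE^\bullet$ is a quasi-isomorphism of $G$-equivariant dg-algebras. Identifying this subcomplex with $\Ext^\bullet_{\mon{\GvO}}(1_G, \cR_G)$ and composing $\widetilde\phi$ with a homotopy inverse of the inclusion yields the desired $\phi$.

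The main obstacle will be the formality step: constructing compatible pure lifts of the projectives $P^j$ and verifying purity of $\Ext^i_{\mon{\GvO}}(1_G, \cR_{G,k})$. The latter reduces to standard weight estimates for morphism spaces between pure objects in the mixed derived category (analogous to the computation appearing in the proof of Proposition \ref{prop:convolution-mix}), while the former requires a careful normalization respecting the minimality of the resolution $P^\bullet$ and the identification $(\iota_n)^* P^\bullet_{n+1} \cong P^\bullet_n$ of complexes provided after Lemma \ref{lem:restriction-projective}.
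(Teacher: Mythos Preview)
Your first paragraph is essentially the paper's construction: compose with the augmentation to get $\widetilde\phi : \sE^\bullet(1_G,\cR_G) \to \Hom^\bullet(P^\bullet,\cR_G)$, check it is a dg-algebra map, and use projectivity of $P^\bullet$ plus Lemma~\ref{lem:projective-resolution} to see it is a quasi-isomorphism. So far so good.

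The second paragraph, however, is unnecessary and also problematic as written. The paper observes directly that $\Hom^\bullet(P^\bullet,\cR_G)$ already has \emph{trivial} differential and is canonically identified with $\Ext^\bullet_{\mon{\GvO}}(1_G,\cR_G)$ as a graded algebra. The reason is elementary: each $\cR_{G,k}$ is a semisimple perverse sheaf, and the resolution $P^\bullet$ is \emph{minimal}, so any morphism $P^{-i}\to\cR_{G,k}$ factors through $P^{-i}_n/\rad(P^{-i}_n)$ for some $n$. Consequently a morphism of complexes $P^\bullet\to\cR_{G,k}[i]$ is the same data as a single map $P^{-i}\to\cR_{G,k}$, and any null-homotopic such morphism is zero. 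Thus $\Hom(P^{-i},\cR_{G,k})$ computes morphisms in the homotopy category, hence in the derived category, hence equals $\Ext^i$. No mixed structures, no weight filtrations, no Deligne-type formality argument are needed here; $\widetilde\phi$ \emph{is} the desired $\phi$.

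If you nonetheless tried to run the purity route, two concrete issues arise. First, the projective covers $P^j_n$ do \emph{not} lift to pure objects in $\Pervm_{\mon{\Iv}}$; they lift to mixed objects with nontrivial weight filtration (this is exactly what is used in \S\ref{ss:lf-subalgebra} for the genuinely mixed statement). Second, ``composing $\widetilde\phi$ with a homotopy inverse of the inclusion'' is not available: quasi-isomorphisms of dg-algebras are not invertible as dg-algebra maps, even up to homotopy, so a zigzag of quasi-isomorphisms does not produce a single dg-algebra map $\sE^\bullet\to\Ext^\bullet$. The paper's minimality observation sidesteps both problems.
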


\begin{proof}
The morphism of complexes $P^{\bullet} \to 1_G$ induces a morphism of complexes
\[
\sE^{\bullet}(1_G,\cR_G) \to \Hom^{\bullet}(P^{\bullet},\cR_G),
\]
where we use the same notation as above, i.e.~by definition we have
\[
\Hom^{i}(P^{\bullet},\cR_G) \ = \ \varinjlim_k \, \Hom(P^{-i},\cR_{G,k}).
\]

Any morphism $P^{-i} \to \cR_{G,k}$ factors through the composition
\[
P^{-i} \to P^{-i}_n \to P^{-i}_n / \rad(P^{-i}_n)
\]
for some $n$. (This follows from the definition of morphisms in the category of pro-objects, and from the fact that $\cR_{G,k}$ is a semisimple perverse sheaf.) Hence, as the resolutions we have chosen are minimal, $\Hom(P^{-i},\cR_{G,k})$ is isomorphic to the space of morphisms of chain complexes $P^{\bullet} \to \cR_{G,k}[i]$. Similar arguments show that any such morphism of complexes which is homotopic to zero is in fact zero. Hence $\Hom(P^{-i},\cR_{G,k})$ is also isomorphic to the space of morphisms $P^{\bullet} \to \cR_{G,k}[i]$ in the homotopy category of pro-objects in $\Perv_{\mon{\Iv}}(\Gr_{\Gv})$. By Lemma \ref{lem:projective-resolution}, we deduce that $\Hom(P^{-i},\cR_{G,k})$ is the space of morphisms $1_G \to \cR_{G,k}[i]$ in the derived category of the abelian category of pro-objects in $\Perv_{\mon{\Iv}}(\Gr_{\Gv})$. By \cite[Theorem 15.3.1(i)]{ks2} and equivalence \eqref{eqn:realization-functor-equivalence}, this space is also isomorphic to $\Ext^i_{\mon{\GvO}}(1_G,\cR_G)$. These considerations imply that there is a natural isomorphism of complexes 
\[
\Hom^{\bullet}(P^{\bullet},\cR_G) \ \cong \ \Ext^{\bullet}_{\mon{\GvO}}(1_G,\cR_G),
\]
where the right-hand side has trivial differential.

Hence we have constructed a morphism of complexes
\[
\phi : \sE^{\bullet}(1_G,\cR_G) \to \Ext^{\bullet}_{\mon{\GvO}}(1_G,\cR_G).
\]
It follows directly from the definitions that this morphism is compatible with products, hence is a morphism of dg-algebras. The fact that it is a quasi-isomorphism follows from Lemma \ref{lem:projective-resolution}(1) and from the fact that the morphism $P^{\bullet} \star \cR_{G,k} \to \cR_{G,k}$ is a quasi-isomorphism for any $k \geq 0$ by Lemma \ref{lem:projective-resolution}(2).
\end{proof}

\subsection{Construction of the functor}
\label{ss:construction-functor}

Now we can construct a functor
\[
F_G : \cDb_{\mon{\GvO}}(\Gr_{\Gv}) \to \DGCGf(\cN_G).
\]
First, consider the category $\cCb \Perv_{\mon{\Iv}}(\Gr_{\Gv})$ of bounded chain complexes of objects of $\Perv_{\mon{\Iv}}(\Gr_{\Gv})$. One can define a functor from this category to the category of $G$-equivariant \emph{right} dg-modules over the dg-algebra $\sE^{\bullet}(1_G,\cR_G)$, which sends the complex $M^{\bullet}$ to the dg-module $\sE^{\bullet}(1_G,M^{\bullet} \star \cR_G)$ such that
\[
\sE^{i}(1_G,M^{\bullet} \star \cR_G) \ := \ \varinjlim_k \, \Hom^i(P^{\bullet}, M^{\bullet} \star \cR_{G,k}),
\]
where the differential is the natural one (induced by the differentials of $M^{\bullet}$ and $P^{\bullet}$), the action of $G$ is induced by the action on $\cR_G$, and the action of the dg-algebra is defined as follows. Consider some $\xi \in \Hom^{i}(P^{\bullet},M^{\bullet} \star \cR_{G,k})$ and $\zeta \in \Hom^{j}(P^{\bullet},P^{\bullet} \star \cR_{G,l})$; their product $\xi \cdot \zeta \in \Hom^{i+j}(P^{\bullet},M^{\bullet} \star \cR_{G,k+l})$ is defined as the composition
\[
P^{\bullet} \xrightarrow{\zeta} P^{\bullet} \star \cR_{G,l}[j] \xrightarrow{\xi \star \cR_{G,l}[j]} M^{\bullet} \star \cR_{G,k} \star \cR_{G,l}[i+j] \xrightarrow{M^{\bullet} \star \sm_{k,l}[i+j]} M^{\bullet} \star \cR_{G,k+l} [i+j].
\]
This functor is exact by Lemma \ref{lem:projective-resolution}(1), hence it induces a functor 
\begin{equation}
\label{eqn:construction-F_G-1}
\sE^{\bullet}(1_G,(-) \star \cR_G) : \cDb \Perv_{\mon{\Iv}}(\Gr_{\Gv}) \to \DGMG(\sE^{\bullet}(1_G,\cR_G)^{\mathrm{op}}).
\end{equation}

Then, the quasi-isomorphism $\phi$ induces an equivalence of categories
\begin{multline}
\label{eqn:construction-F_G-2}
\Ext^{\bullet}_{\mon{\GvO}}(1_G,\cR_G) \lotimes_{\sE^{\bullet}(1_G,\cR_G)^{\mathrm{op}}} - : \DGMG(\sE^{\bullet}(1_G,\cR_G)^{\mathrm{op}}) \\ \xrightarrow{\sim} \ \DGMG(\Ext^{\bullet}_{\mon{\GvO}}(1_G,\cR_G)),
\end{multline}
see \cite[Theorem 10.12.5.1]{bl} (or rather a $G$-equivariant analogue, which is easy since $G$ is a complex reductive group). Note that the dg-algebra $\Ext^{\bullet}_{\mon{\GvO}}(1_G,\cR_G)$ is graded-commutative and concentrated in even degrees; hence it is equal to its opposite dg-algebra.

Finally, the isomorphism of Proposition \ref{prop:Ext-algebra} induces an equivalence of categories
\begin{eqnarray}
\label{eqn:construction-F_G-3}
\DGMG(\Ext^{\bullet}_{\mon{\GvO}}(1_G,\cR_G)) \ \xrightarrow{\sim} \ \DGMG(\C[\cN_G]).
\end{eqnarray}

Composing the functors \eqref{eqn:construction-F_G-1}, \eqref{eqn:construction-F_G-2} and \eqref{eqn:construction-F_G-3} with the inclusion 
\[
\cDb_{\mon{\GvO}}(\Gr_{\Gv}) \hookrightarrow \cDb_{\mon{\Iv}}(\Gr_{\Gv}) \overset{\eqref{eqn:realization-functor-equivalence}}{\cong} \cDb \Perv_{\mon{\Iv}}(\Gr_{\Gv}),
\]
one obtains a functor
\[
\widetilde{F}_G : \cDb_{\mon{\GvO}}(\Gr_{\Gv}) \to \DGMG(\C[\cN_G]).
\]

\begin{lem} 
\label{lem:image-IC}

\begin{enumerate}
\item There is a natural isomorphism $\widetilde{F}_G(1_G) \, \cong \, \C[\cN_G]$.
\item For any $V$ in $\Rep(G)$ and $M$ in $\cDb_{\mon{\GvO}}(\Gr_{\Gv})$ there is a functorial isomorphism \[ \widetilde{F}_G(M \star \cS_G(V)) \ \cong \ \widetilde{F}_G(M) \otimes V \] in $\DGMG(\C[\cN_G])$.
\end{enumerate}

\end{lem}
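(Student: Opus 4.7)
For part (1), the strategy is simply to unravel the definition of $\widetilde{F}_G$. Applied to $1_G$, the first functor \eqref{eqn:construction-F_G-1} produces $\sE^\bullet(1_G, 1_G \star \cR_G) = \sE^\bullet(1_G,\cR_G)$, which is the free rank-one right dg-module over itself. The equivalence \eqref{eqn:construction-F_G-2} sends this to $\Ext^\bullet_{\mon{\GvO}}(1_G,\cR_G)$ regarded as a right module over itself, and the equivalence \eqref{eqn:construction-F_G-3} together with Proposition \ref{prop:Ext-algebra} then identifies this with $\C[\cN_G]$.

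For part (2), the key algebraic input is the well-known $G$-equivariant isomorphism of $\C$-algebras
\[
V \otimes_\C \C[G] \ \xrightarrow{\sim} \ V^{\mathrm{triv}} \otimes_\C \C[G], \qquad v \otimes f \mapsto \bigl( g \mapsto f(g)\, g^{-1} \cdot v \bigr),
\]
where on the left $G$ acts diagonally (on $V$ via its representation and on $\C[G]$ by right translation), and on the right $G$ acts trivially on $V^{\mathrm{triv}}$ and by right translation on $\C[G]$; this isomorphism is compatible with the algebra structures coming from multiplication in $\C[G]$. Transporting through the Satake equivalence $\cS_G$, which is a tensor functor, one obtains a $G$-equivariant isomorphism of ring ind-objects
\[
\cS_G(V) \star \cR_G \ \cong \ V \otimes_\C \cR_G
\]
in $\Perv_{\mon{\GvO}}(\Gr_{\Gv})$, compatible (up to a rescaling by $V$) with the multiplication maps $\sm_{k,l}$. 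Convolving with $M$ on the left then yields a bifunctorial isomorphism $M \star \cS_G(V) \star \cR_G \cong V \otimes_\C (M \star \cR_G)$.

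Substituting this identification into the definition of the functor \eqref{eqn:construction-F_G-1} produces a natural isomorphism of $G$-equivariant right dg-modules over $\sE^\bullet(1_G,\cR_G)$,
\[
\sE^\bullet(1_G, M \star \cS_G(V) \star \cR_G) \ \cong \ V \otimes_\C \sE^\bullet(1_G, M \star \cR_G),
\]
where the right $\sE^\bullet(1_G,\cR_G)$-action on the target is only through the second factor and $G$ acts diagonally. Applying the remaining equivalences \eqref{eqn:construction-F_G-2} and \eqref{eqn:construction-F_G-3}, both of which commute with the operation $V \otimes_\C (-)$ (since $V$ is finite-dimensional over $\C$), gives the desired isomorphism $\widetilde{F}_G(M \star \cS_G(V)) \cong \widetilde{F}_G(M) \otimes V$.

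The main thing to verify carefully is the claim in the second paragraph that the isomorphism $\cS_G(V) \star \cR_G \cong V \otimes_\C \cR_G$ is compatible with the ring-object structures, so that the induced isomorphism on $\sE^\bullet$ respects the right $\sE^\bullet(1_G,\cR_G)$-action and not merely its underlying graded vector space. This reduces to the tensoriality of $\cS_G$ together with the fact that the algebraic isomorphism above intertwines the multiplication on $V \otimes \C[G]$ (inherited from $\C[G]$) with that on $V^{\mathrm{triv}} \otimes \C[G]$, which is a direct verification on elements.
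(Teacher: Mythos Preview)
Your proposal is correct and follows essentially the same route as the paper: both use the $G$-equivariant isomorphism $V \otimes_\C \C[G] \cong \C[G]^{\oplus \dim V}$ (the paper phrases it via $\Ind_{\{1\}}^G$), transport it through $\cS_G$ to get $\cS_G(V)\star\cR_G \cong V\otimes_\C \cR_G$, and read off the result. Your treatment is in fact more careful than the paper's, which simply says ``the isomorphism follows'' without spelling out the compatibility with the $\sE^\bullet(1_G,\cR_G)$-module structure that you verify.
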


\begin{proof} 
Claim (1) is obvious from definitions and the proof of Proposition \ref{prop:morphism-dg-algebras}.

(2) There exist isomorphisms of $G$-modules 
\[
V \otimes_{\C} \C[G] \, \cong \, V \otimes_{\C} \Ind_{\{1\}}^G(\C) \, \cong \, \Ind_{\{1\}}^G(V) \, \cong \, \C[G]^{\oplus \dim(V)}.
\]
Hence, applying the Satake equivalence $\cS_G$, one obtains an isomorphism of ind-perverse sheaves 
\[
\cS_G(V) \star \cR_G \, \cong \, \cR_G \otimes_{\C} V.
\]
The natural $G$-action on the left-hand side induced by the action on $\cR_G$ corresponds to the diagonal $G$-action on the right-hand side. The isomorphism follows.\end{proof}

It follows in particular from Lemma \ref{lem:image-IC} that the functor $\widetilde{F}_G$ factors through a functor
\[
F_G : \cDb_{\mon{\GvO}}(\Gr_{\Gv}) \to \DGCGf(\cN_G),
\]
as promised.

\subsection{$F_G$ is an equivalence}
\label{ss:FG-equivalence}

The following lemma is well known, see e.g.~\cite[Equation (2.4.1)]{g2}.

\begin{lem}
\label{lem:adjunction}

Let $V$ in $\Rep(G)$. The functors
\[
(-) \star \cS_G(V) : \cDb_c(\Gr_{\Gv}) \to \cDb_c(\Gr_{\Gv})
\quad \text{and} \quad
(-) \star \cS_G(V^*) : \cDb_c(\Gr_{\Gv}) \to \cDb_c(\Gr_{\Gv})
\]
are adjoint.\qed

\end{lem}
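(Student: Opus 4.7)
The plan is to reduce the statement to the fact that in the rigid tensor category $(\Rep(G),\otimes)$, tensoring with $V$ is both a left and right adjoint to tensoring with $V^*$, and transport this adjunction to convolution via the Satake equivalence.

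First, I would invoke the rigidity of $\Rep(G)$ to obtain coevaluation and evaluation morphisms $\C \to V \otimes V^*$ and $V^* \otimes V \to \C$ satisfying the zig-zag identities; applying $\cS_G$, which is a tensor functor, yields morphisms in $\Perv_{\eq{\GvO}}(\Gr_{\Gv})$
\[
\eta : 1_G \to \cS_G(V) \star \cS_G(V^*), \qquad \varepsilon : \cS_G(V^*) \star \cS_G(V) \to 1_G
\]
(and a symmetric pair with $V$ and $V^*$ swapped), still satisfying the zig-zag identities in the tensor category $(\Perv_{\eq{\GvO}}(\Gr_{\Gv}),\star)$.

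Second, I would extend the convolution bifunctor to a bifunctor
\[
\cDb_c(\Gr_{\Gv}) \times \Perv_{\eq{\GvO}}(\Gr_{\Gv}) \to \cDb_c(\Gr_{\Gv}),
\]
defined via the usual diagram $\Gv(\fK) \times^{\GvO} \Gr_{\Gv} \to \Gr_{\Gv}$ together with the twisted external product. The key point is that $\cS_G(V)$ is a $\GvO$-equivariant perverse sheaf supported on a finite union of $\GvO$-orbits, so the relevant convolution morphism is ind-proper and the twisted product makes sense for any $M \in \cDb_c(\Gr_{\Gv})$. The associativity constraint for convolution likewise extends to this setting, since whenever we associate $(M \star \cS_G(V)) \star \cS_G(V^*)$ the two rightmost factors remain $\GvO$-equivariant perverse sheaves supported on finite-dimensional subvarieties.

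Third, using the associativity and unitality of convolution, I would promote $\eta$ and $\varepsilon$ to natural transformations
\[
\id \to (-\star \cS_G(V)) \circ (-\star \cS_G(V^*)), \qquad (-\star \cS_G(V^*)) \circ (-\star \cS_G(V)) \to \id
\]
on $\cDb_c(\Gr_{\Gv})$ by tensoring on the left with $M$. The triangle (zig-zag) identities for this would-be adjunction are images under the functor $M \star (-)$ of the corresponding identities for $\eta$ and $\varepsilon$ in $\Perv_{\eq{\GvO}}(\Gr_{\Gv})$, which hold because $\cS_G$ is a tensor equivalence and the identities already hold in $\Rep(G)$. This exhibits $(-\star \cS_G(V^*))$ as a right adjoint of $(-\star \cS_G(V))$; swapping the roles of $V$ and $V^*$ (or equivalently using the canonical isomorphism $V \cong V^{**}$) gives the other adjunction.

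The only delicate point is the extension of convolution to $\cDb_c(\Gr_{\Gv})$ and the verification that associativity and the unit isomorphism $M \star 1_G \cong M$ are compatible enough to yield natural transformations satisfying the triangle identities; everything else is a formal consequence of the tensor equivalence $\cS_G$ and the rigidity of $\Rep(G)$.
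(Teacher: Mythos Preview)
Your argument is correct and is precisely the standard one: transport the coevaluation/evaluation maps of the rigid tensor category $\Rep(G)$ through the Satake equivalence to obtain $\eta:1_G\to\cS_G(V)\star\cS_G(V^*)$ and $\varepsilon:\cS_G(V^*)\star\cS_G(V)\to 1_G$, then use associativity and unitality of the convolution action to turn these into a unit/counit pair for the desired adjunction on $\cDb_c(\Gr_{\Gv})$. The paper does not actually prove the lemma; it simply declares it well known and refers to \cite[Equation~(2.4.1)]{g2}, where exactly this argument (via the duality data in the Satake category) is indicated. So your proposal agrees with the intended proof.
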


The next important step in the proof of Theorem \ref{thm:equivalence} is the following.

\begin{prop}
\label{prop:isom-morphisms}

For any $V_1, V_2$ in $\Rep(G)$, the functor $F_G$ induces an isomorphism of graded vector spaces
\begin{multline*} 
\Hom^{\bullet}_{\cDb_{\mon{\GvO}}(\Gr_{\Gv})}(\cS_G(V_1),\cS_G(V_2)) \\ \xrightarrow{\sim} \ \Hom^{\bullet}_{\DGCGf(\cN_G)}(\C[\cN_G] \otimes_{\C} V_1,\C[\cN_G] \otimes_{\C} V_2). 
\end{multline*}

\end{prop}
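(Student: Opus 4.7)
The plan is to reduce, via adjunction and the $\Rep(G)$-equivariance of $F_G$ furnished by Lemma \ref{lem:image-IC}(2), to (a graded enhancement of) Proposition \ref{prop:Ext-algebra}.

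\textbf{Reduction.} Using Lemma \ref{lem:image-IC}(2) together with Lemma \ref{lem:adjunction}, and the analogous adjunction $(-) \otimes V_1 \dashv (-) \otimes V_1^*$ on the coherent side, both Hom spaces can be rewritten:
\begin{align*}
\Hom^{\bullet}(\cS_G(V_1), \cS_G(V_2)) & \cong \Hom^{\bullet}(1_G, \cS_G(V_1^* \otimes V_2)), \\
\Hom^{\bullet}(\C[\cN_G] \otimes V_1, \C[\cN_G] \otimes V_2) & \cong \Hom^{\bullet}(\C[\cN_G], \C[\cN_G] \otimes V_1^* \otimes V_2).
\end{align*}
Setting $V := V_1^* \otimes V_2$, it therefore suffices to prove that for every $V \in \Rep(G)$ the map
\[
\Hom^{\bullet}(1_G, \cS_G(V)) \to \Hom^{\bullet}_{\DGCGf(\cN_G)}(\C[\cN_G], \C[\cN_G] \otimes V)
\]
induced by $F_G$ is an isomorphism.

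\textbf{Comparing the two sides.} Both sides are additive in $V$, so by semisimplicity of $\Rep(G)$ it suffices to treat each irreducible $V_\lambda$. The decomposition $\C[G] = \bigoplus_\lambda V_\lambda^* \otimes V_\lambda$ of the right regular representation yields $\cR_G \cong \bigoplus_\lambda V_\lambda^* \otimes \cS_G(V_\lambda)$ as a $G$-equivariant ind-perverse sheaf, whence
\[
\Hom^{\bullet}(1_G, \cR_G) \cong \bigoplus_\lambda V_\lambda^* \otimes \Hom^{\bullet}(1_G, \cS_G(V_\lambda)).
\]
By Proposition \ref{prop:Ext-algebra} this is canonically $G$-equivariantly isomorphic to $\C[\cN_G]$; extracting the $V_\lambda^*$-isotypic component gives
\[
\Hom^{\bullet}(1_G, \cS_G(V)) \cong (V \otimes \C[\cN_G])^G
\]
functorially in $V$. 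On the coherent side, since $G$ is reductive and $\C[\cN_G]$ represents itself in $\DGCGf(\cN_G)$, one has
\[
\Hom^{\bullet}_{\DGCGf(\cN_G)}(\C[\cN_G], \C[\cN_G] \otimes V) = (\C[\cN_G] \otimes V)^G.
\]
Thus both sides are abstractly the same graded vector space.

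\textbf{Identifying the map.} It remains to verify that $F_G$ realizes this canonical identification. This is essentially a bookkeeping exercise: unwinding the construction of $\widetilde{F}_G$ in \S \ref{ss:construction-functor}, its action on a morphism $1_G \to \cS_G(V)[n]$ factors, via the inclusion $\cS_G(V) \hookrightarrow \cR_G$ corresponding to the $V^*$-isotypic component, through the composition of the quasi-isomorphism $\phi$ of Proposition \ref{prop:morphism-dg-algebras} and the identification of Proposition \ref{prop:Ext-algebra}. The main obstacle is keeping careful track of the two distinct $G$-actions at play — the action on $\cR_G$ induced by right translation on $\C[G]$, and the conjugation action on $\C[\cN_G]$ — so as to confirm that $F_G$ yields the canonical identification under the isotypic decomposition, rather than some twist by an automorphism of $\Rep(G)$.
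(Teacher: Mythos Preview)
Your reduction via adjunction matches the paper's first move, but thereafter the two arguments diverge. The paper does not invoke Proposition~\ref{prop:Ext-algebra} as a black box; instead it computes $\Hom^{\bullet}(1_G,\cS_G(V))$ directly using Ginzburg's hypercohomology theorem \cite{g1} and the isomorphism $H^{\bullet}(\Gr_{\Gv}^0)\cong\mathrm{S}(\fg^{e_G})$ from \eqref{eqn:isom-cohomology}, arriving at $(V^{\bullet})^{G^{e_G}}$. It then matches this with $(\C[\cN_G]\otimes V)^G$ via the normality of $\cN_G$ and the identification $\C[\cN_G]\cong\C[G/G^{e_G}]$. Your approach---extracting the $V$-isotypic component from the Peter--Weyl decomposition of $\cR_G$ and reading off the answer from Proposition~\ref{prop:Ext-algebra}---is shorter and logically sound, since Proposition~\ref{prop:Ext-algebra} is cited from \cite{abg}. (Indeed the paper remarks that its proof of Proposition~\ref{prop:isom-morphisms} \emph{reproduces} part of the argument for Proposition~\ref{prop:Ext-algebra}, so you are in effect running the dependency in the reverse direction.) What your route loses is the explicit identification via evaluation at the regular nilpotent $e_G$: the isomorphisms \eqref{eqn:prop-isom-morphisms-1}--\eqref{eqn:prop-isom-morphisms-3} are reused verbatim in the proof of Proposition~\ref{prop:action-functors-morphisms}, where the comparison with the Levi subgroup hinges on the limit $e_L=\lim_{t\to 0}\chi(t)\cdot e_G$.

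One point to tighten: your final paragraph flags the possibility of a ``twist by an automorphism of $\Rep(G)$'' without resolving it. This worry is unfounded. The isomorphism of Proposition~\ref{prop:Ext-algebra} is the very one used in step~\eqref{eqn:construction-F_G-3} of the construction of $F_G$, and the diagonal $G$-action on $V\otimes\cR_G$ is exactly what Lemma~\ref{lem:image-IC}(2) records; so the map $F_G$ induces on $\Hom^{\bullet}(1_G,\cS_G(V))$ is, after unwinding \eqref{eqn:construction-F_G-1}--\eqref{eqn:construction-F_G-3} and taking cohomology via $\phi$, precisely your isotypic projection. You should say this rather than leave it as a caveat.
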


\begin{proof} This result is proved is the case when $G$ is semisimple in \cite[Proposition 1.10.4]{g2}. As the details will be needed later, we reproduce the proof. (See also \cite[\S\S 7.4--7.5]{abg} for similar arguments.)

First, Lemma \ref{lem:adjunction} reduces the proof to the case $V_1=\C$. For simplicity, we write $V$ for $V_2$. Consider the projection $V \twoheadrightarrow V^{Z(G)}$ orthogonal to other weight spaces of the diagonalizable group $Z(G)$. The perverse sheaf $\cS_G(V^{Z(G)})$ is the restriction of $\cS_G(V)$ to $\Gr_{\Gv}^0$. Hence the projection above induces an isomorphism
\begin{equation}
\label{eqn:prop-isom-morphisms-1}
\Hom^{\bullet}_{\cDb_{\mon{\GvO}}(\Gr_{\Gv})}(1_G,\cS_G(V)) \ \xrightarrow{\sim} \ \Hom^{\bullet}_{\cDb_{\mon{\GvO}}(\Gr_{\Gv}^0)}(1_G,\cS_G(V^{Z(G)})).
\end{equation}

Now, by the main result of \cite{g1}, the hypercohomology functor $H^{\bullet}(-)$ induces an isomorphism
\[
\Hom^{\bullet}_{\cDb_{\mon{\GvO}}(\Gr_{\Gv}^0)}(1_G,\cS_G(V^{Z(G)})) \ \xrightarrow{\sim} \ \Hom^{\bullet}_{H^{\bullet}(\Gr_{\Gv}^0)}(\C,(V^{\bullet})^{Z(G)}),
\]
where on the right-hand side we mean morphisms of graded modules, and $V^{\bullet}=H^{\bullet}(\cS_G(V))$ is $V$, graded by the action of the cocharacter $2\rhov$. Using also isomorphism \eqref{eqn:isom-cohomology} (for the group $G_{\mathrm{adj}}:=G/Z(G)$, whose Langlangs dual group is $\Gv_{\mathrm{sc}}$, the simply connected cover of the derived subgroup of $\Gv$, so that we have $\Gr_{\Gv_{\mathrm{sc}}} \xrightarrow{\sim} \Gr_{\Gv}^0$), we obtain an isomorphism
\[
\Hom^{\bullet}_{\cDb_{\mon{\GvO}}(\Gr_{\Gv}^0)}(1_G,\cS_G(V^{Z(G)})) \ \xrightarrow{\sim} (V^{\bullet})^{Z(G),\fg^{e_G}}.
\]
By \cite[Theorem 4.11]{spr}, we have $G^{e_G} \cong Z(G) \times U^{e_G}$, and $U^{e_G}$ is a connected unipotent group. Hence we have $V^{Z(G),\fg^{e_G}} = V^{G^{e_G}}$, and we finally obtain an isomorphism of graded vector spaces
\begin{equation}
\label{eqn:prop-isom-morphisms-2}
\Hom^{\bullet}_{\cDb_{\mon{\GvO}}(\Gr_{\Gv}^0)}(1_G,\cS_G(V^{Z(G)})) \ \xrightarrow{\sim} \ (V^{\bullet})^{G^{e_G}}.
\end{equation}

It is well known that the variety $\cN_G$ is normal, and that $G/G^{e_G} \cong G \cdot e_G$ has complement of codimension $2$ in $\cN_G$. Hence restriction induces an isomorphism of graded algebras $\C[\cN_G] \xrightarrow{\sim} \C[G/G^{e_G}]$, where the grading on $\C[\cN_G]$ is the one defined in \S \ref{ss:equivalence}, and the grading on $\C[G/G^{e_G}]$ is induced by the action of the cocharacter $2\rhov$ via the right regular representation of $G$ on $\C[G]$. It follows that restriction to $e_G \in \cN_G$ induces an isomorphism of graded vector spaces
\begin{equation}
\label{eqn:prop-isom-morphisms-3}
(\C[\cN_G] \otimes V)^G \ \xrightarrow{\sim} \ (V^{\bullet})^{G^{e_G}}.
\end{equation}

Finally, there is a natural isomorphism of graded vector spaces
\begin{equation}
\label{eqn:prop-isom-morphisms-4}
\Hom^{\bullet}_{\DGCGf(\cN_G)}(\C[\cN_G],\C[\cN_G] \otimes_{\C} V) \ \cong \ (\C[\cN_G] \otimes V)^G.
\end{equation}
Indeed, by \cite[Lemma 10.12.2.2]{bl} (adapted to the $G$-equivariant setting), the morphisms in the left-hand side of \eqref{eqn:prop-isom-morphisms-4} can be computed in the homotopy category of $G$-equivariant $\C[\cN_G]$-dg-modules. Then the isomorphism \eqref{eqn:prop-isom-morphisms-4} is clear.

Combining isomorphisms \eqref{eqn:prop-isom-morphisms-1}, \eqref{eqn:prop-isom-morphisms-2}, \eqref{eqn:prop-isom-morphisms-3} and \eqref{eqn:prop-isom-morphisms-4} gives the proposition.\end{proof}

Finally we can prove Theorem \ref{thm:equivalence}.

\begin{proof}[Proof of Theorem {\rm \ref{thm:equivalence}}]
Using Lemma \ref{lem:image-IC} and Proposition \ref{prop:isom-morphisms}, the fact that $F_G$ is an equivalence follows from Lemma \ref{lem:triangulated-categories}.

Isomorphism \eqref{eqn:compatibility-F-S} is proved in Lemma \ref{lem:image-IC}(2).\end{proof}

\subsection{A locally-finite subalgebra}
\label{ss:lf-subalgebra}

Now we come to the proof of the mixed version of Theorem \ref{thm:equivalence}, namely Theorem \ref{thm:equivalence-mix}. For any $n \geq 0$ one can consider the category $\Pervm_{\mon{\Iv}}(X_n)$ defined as in \S \ref{ss:equivalence-mix} (replacing $\Gr_{\Gv}$ by $X_n$), and the forgetful functor
\[
\For : \Pervm_{\mon{\Iv}}(X_n) \to \Perv_{\mon{\Iv}}(X_n).
\] 
By \cite[Lemma 4.4.8]{bgs}, every projective object of $\Perv_{\mon{\Iv}}(X_n)$ can be lifted to $\Pervm_{\mon{\Iv}}(X_n)$. The object $1_G$ can also obviously be lifted to an object $1^{\mix}_G$ of $\Pervm_{\mon{\Iv}}(\Gr_{\Gv})$, of weight $0$. Hence one can choose the projective resolutions $P_n^{\bullet}$ of \S \ref{ss:projective-resolution} in such a way that there exists a projective resolution
\[
\cdots \to P_{\mix}^{-2} \to P_{\mix}^{-1} \to P_{\mix}^{0} \to 1^{\mix}_G
\]
in the category of pro-objects in $\Pervm_{\mon{\Iv}}(\Gr_{\Gv})$ such that $\For(P_{\mix}^{j}) \cong P^j$ for any $j \leq 0$, and similarly for the differentials. The pro-object $P_{\mix}^j$ is obtained by taking a formal projective limit of objects $P^j_{n,\mix}$, where $P^j_{n,\mix}$ is a projective object in $\Pervm_{\mon{\Iv}}(X_n)$ such that $P^j_n \cong \For(P^j_{n,\mix})$.

Recall the equivalence $\cS_G^0$ of \eqref{eqn:satake-0}. We set
\[
\cR_G^{\mix} \ := \ \cS_G^0(\C[G]),
\]
an ind-object in the category $\Perv^0_{\mon{\Iv}}(\Gr_{\Gv})$. This object is endowed with an associative and commutative multiplication map
\[
\sm^{\mix} : \cR_G^{\mix} \star \cR_G^{\mix} \to \cR_G^{\mix}.
\]
One can choose the subobjects $\cR_{G,k} \subset \cR_{G}$ of \S \ref{ss:Ext-algebra} in such a way that there exists $\cR_{G,k}^{\mix} \subset \cR_{G}^{\mix}$ such that the isomorphism $\For(\cR_G^{\mix}) \cong \cR_G$ induces an isomorphism $\For(\cR_{G,k}^{\mix}) \cong \cR_{G,k}$ for any $k$, and $\sm_{k,l}$ can be lifted to a morphism $\sm_{k,l}^{\mix} : \cR^{\mix}_{G,k} \star \cR^{\mix}_{G,l} \to \cR^{\mix}_{G,k+l}$.

Then one can also consider, for any $j$, the object $P^j_{\mix} \star \cR_{G,k}^{\mix}$. This object is a pro-object in the category of perverse sheaves on $\Gr_{\Gv,\F_p}$, but a priori not in the category $\Pervm_{\mon{\Iv}}(\Gr_{\Gv})$. Still, the pro-object 
\[
P^j \star \cR_{G,k} \ \cong \ \For\bigl( P^j_{\mix} \star \cR_{G,k}^{\mix} \bigr)
\]
is endowed with an automorphism induced by the Frobenius.

With these choices, by definition the dg-algebra $\sE^{\bullet}(1_G,\cR_G)$ of \S \ref{ss:formality} is equipped with an automorphism
\[
\Fr : \sE^{\bullet}(1_G,\cR_G) \xrightarrow{\sim} \sE^{\bullet}(1_G,\cR_G)
\]
induced by the Frobenius. We would like to decompose the dg-algebra $\sE^{\bullet}(1_G,\cR_G)$ according to the generalized eigenspaces of this automorphism. However, as this dg-algebra has infinite-dimensional graded pieces, we need to be more careful.

First, observe that the Frobenius also induces an automorphism of the dg-algebra $\Ext^{\bullet}_{\mon{\GvO}}(1_G,\cR_G)$, again denoted $\Fr$.

\begin{lem}
\label{lem:action-Fr-cohomology}

\begin{enumerate}
\item The morphism $\phi$ of Proposition {\rm \ref{prop:morphism-dg-algebras}} commutes with the automorphisms $\Fr$.
\item For any $n \geq 0$, $\Fr$ acts on $\Ext^n_{\mon{\GvO}}(1_G,\cR_G)$ as multiplication by $p^{n/2}$.
\end{enumerate}

\end{lem}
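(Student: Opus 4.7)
For (1), my plan is to trace through the construction of $\phi$ in the proof of Proposition~\ref{prop:morphism-dg-algebras} and observe that, thanks to our compatible mixed lifts, every step is Frobenius-equivariant. Concretely, the projective resolution $P^\bullet \to 1_G$ lifts to $P_{\mix}^\bullet \to 1_G^{\mix}$ and $\cR_{G,k}$ lifts to $\cR^{\mix}_{G,k}$, so the natural map $\sE^\bullet(1_G, \cR_G) \to \Hom^\bullet(P^\bullet, \cR_G)$ induced by the augmentation already commutes with $\Fr$. The further identification of $\Hom(P^{-i}, \cR_{G,k})$ with $\Ext^i_{\mon{\GvO}}(1_G, \cR_{G,k})$ uses only the minimality of $P^\bullet$ (to identify $\Hom$ with chain maps mod homotopy) and Lemma~\ref{lem:projective-resolution} (to pass to the derived category); both are canonical identifications of vector spaces and hence are Frobenius-equivariant.

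For (2) the plan has two steps: establish purity, then pin down the scalar exactly. For purity, I would exploit the fact that the lifts $1_G^{\mix}$ and $\cR^{\mix}_{G,k}$ are pure of weight $0$ in $\Pervm_{\mon{\Iv}}(\Gr_{\Gv})$, so that $\Ext^n_{\mon{\GvO}}(1_G, \cR_{G,k})$ acquires a canonical weight filtration from the ambient Weil category and Deligne's purity theorem forces every $\Fr$-eigenvalue to be a Weil number of weight exactly $n$. To identify the eigenvalue exactly as $p^{n/2}$, rather than just modulus $p^{n/2}$, I would reuse the chain of canonical isomorphisms in the proof of Proposition~\ref{prop:isom-morphisms}: taking $V = \C[G]$ and passing to the limit over $k$, this identifies $\Ext^n_{\mon{\GvO}}(1_G, \cR_G)$ with a graded piece built from $H^n(\Gr_{\Gv}^0, \Qlb)$ via hypercohomology and the Ginzburg isomorphism~\eqref{eqn:isom-cohomology}.

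The calculation then reduces to $\Fr$ on $H^\bullet(\Gr_{\Gv}^0, \Qlb)$. Since $\Gr_{\Gv}^0$ is paved by Schubert cells defined over $\F_p$, its cohomology is pure Tate, with $\Fr$ acting on $H^{2k}$ as multiplication by $p^k$; the Chern class $c_1(\cL_{\det}) \in H^2(\Gr_{\Gv}^0, \Qlb)$ is in particular of weight $2$. The main obstacle I expect is verifying that \emph{all} the intermediate identifications of the proof of Proposition~\ref{prop:isom-morphisms} -- hypercohomology, Ginzburg's morphism $\psi$ built from $c_1(\cL_{\det})$, and restriction from $\cN_G$ to the open orbit $G \cdot e_G$ -- are Frobenius-equivariant with respect to the chosen mixed structures on $1_G^{\mix}$ and $\cR_G^{\mix}$. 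Granting this, the generators of $\C[\cN_G]$ in internal degree $2$ (corresponding under $\psi$ to classes coming from $H^2$) have $\Fr$-eigenvalue exactly $p$, and multiplicativity of the algebra structure propagates this to $p^{n/2}$ on the degree-$n$ part, which is what we want.
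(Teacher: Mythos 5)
Your treatment of part (1) is fine and matches the paper, which simply observes the Frobenius-equivariance is clear from the construction. Your plan for part (2), however, takes a substantially more roundabout route than the paper and has a real gap in its first step. The claim that Deligne purity forces the $\Fr$-eigenvalues on $\Ext^n(1_G,\cR_{G,k})$ to be Weil numbers of weight \emph{exactly} $n$ is not correct: the standard estimate from \cite[\S 5.1]{bbd} only gives an upper bound (weights $\leq n$) when source and target are pure of weight $0$; the lower bound requires a genuine input such as pointwise purity or Frobenius semisimplicity, and is exactly what is at stake here. So your first step, as written, does not deliver purity.

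Your second step — routing through the isomorphisms of Proposition~\ref{prop:isom-morphisms} and the Ginzburg map to $H^{\bullet}(\Gr^0_{\Gv})$ — could in principle compute the eigenvalue, but it silently assumes Frobenius-equivariance of Ginzburg's hypercohomology equivalence from~\cite{g1} and purity (with the correct Tate twist) of $H^{\bullet}(\cS_G(V))$. These are known facts, but verifying them in the \'etale/mixed setting is exactly the kind of additional work the paper avoids. The paper's proof is a two-liner: adjunction for the inclusion $i_0$ of the base point gives $\Hom(1_G,\cR_{G,k}[n]) \cong H^n(i_0^! \cR_{G,k})$, and then the BGS pointwise purity condition $(*)$ of \cite[\S 4.4]{bgs} (valid on $\Gr_{\Gv}$ by \cite{kl,g1}, and part of what makes the $\Iv$-stratification support a mixed/Koszul structure) immediately gives that $\Fr$ acts by $p^{n/2}$ on the costalk cohomology. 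If you want to salvage your argument, you should replace the Deligne purity step with this costalk adjunction plus pointwise purity; the excursion through $H^\bullet(\Gr^0_{\Gv})$ and the Ginzburg isomorphism is then unnecessary.
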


\begin{proof}
Statement (1) is clear from definitions. Let us consider (2). By adjunction we have, for any $n \geq 0$,
\[
\Hom_{\cDb_{\mon{\GvO}}(\Gr_{\Gv})}(1_G,\cR_{G,k}[n]) \ \cong \ H^n(i_0^! \cR_{G,k}).
\]
Hence the result follows from condition $(*)$ of \cite[\S 4.4]{bgs}.
\end{proof}

Statement (2) of this lemma, together with Proposition \ref{prop:Ext-algebra}, imply that there exists an isomorphism of $G$-equivariant bigraded algebras
\begin{equation}
\label{eqn:isom-bigraded-algebras}
\Ext^{\bullet}_{\mon{\GvO}}(1_G,\cR_G) \ \cong \ \C[\cN_G],
\end{equation}
where the additional $\Z$-grading on the left-hand side is induced by the action of $\Fr$, while the additional grading on the right-hand side is defined in \S \ref{ss:equivalence-mix}. We will consider this isomorphism as an isomorphism of dgg-algebras, where both sides have trivial differential.

To simplify notation, let us set
\[
A^{\bullet} \ := \ \sE^{\bullet}(1_G,\cR_G).
\]
By definition, we have $A^{\bullet} = \varinjlim_k A^{\bullet}_k$, where
\[
A^n_k \ = \ \prod_{j \in \Z} \, \Hom(P^j,P^{j+n} \star \cR_{G,k}).
\]
Let $A^n_{j,k}:=\Hom(P^j,P^{j+n} \star \cR_{G,k})$. Then we have 
\[
A^n_{j,k} = \varprojlim_{m \geq 0} \Hom(P^j,P^{j+n}_m \star \cR_{G,k}).
\]
We set $A^n_{j,k,m}:=\Hom(P^j,P^{j+n}_m \star \cR_{G,k})$. By construction, the automorphism $\Fr$ considered above is induced by automorphisms denoted similarly $\Fr : A^n_{j,k,m} \xrightarrow{\sim} A^n_{j,k,m}$.

\begin{lem}
\label{lem:action-Fr-lf}

The action of $\Fr$ on $A^n_{j,k,m}$ is locally finite, and all its eigenvalues are integral powers of $p^{1/2}$.

\end{lem}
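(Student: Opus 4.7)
The strategy is to first show that $A^n_{j,k,m}$ is finite-dimensional, making local finiteness of $\Fr$ automatic, and then use the Weil structures and weight filtrations to pin down the eigenvalues.

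First I would observe that the filtered colimit
\[
A^n_{j,k,m} = \varinjlim_{m' \geq 0} \Hom\bigl( (i_{m'})_* P^j_{m'}, P^{j+n}_m \star \cR_{G,k} \bigr)
\]
stabilizes once $m'$ is large enough that $X_{m'}$ contains the support of $P^{j+n}_m \star \cR_{G,k}$. For such $m'$, Lemma \ref{lem:restriction-projective} together with the full faithfulness of pushforward along the closed embedding $i_{m'}$ identifies each term with the Hom space in $\Perv_{\mon{\Iv}}(X_{m'})$ between $P^j_{m'}$ and the restriction of $P^{j+n}_m \star \cR_{G,k}$ to $X_{m'}$. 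This is a finite-dimensional vector space, so local finiteness of $\Fr$ follows trivially.

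For the eigenvalue claim, I would pass to the mixed setting. Both $P^j_{m',\mix}$ and $P^{j+n}_{m,\mix} \star \cR^{\mix}_{G,k}$ lie in $\Pervw_{\mon{\Iv}}(\Gr_{\Gv,\F_p})$: the first by construction of the mixed projective resolution, and the second because $\cR^{\mix}_{G,k}$ is a sum of $\IC^{\mix}_\lambda$'s (pure of weight $0$) and convolution with such objects preserves this Serre subcategory, all resulting simple subquotients again having integer Tate twists. By \cite[Th{\'e}or{\`e}me 5.3.5]{bbd}, each carries a canonical weight filtration whose graded pieces are direct sums of simples $\IC(Y)\lan j \ran$, pure of weight $j$. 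Since each $\Iv$-orbit $Y$ is geometrically an affine space, $\IC(Y)$ is a shifted constant sheaf with trivial $\Fr$-action, and hence $\Fr$ acts on $\IC(Y)\lan j \ran$ as multiplication by $p^{j/2}$.

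The two weight filtrations then induce a $\Fr$-stable filtration on the finite-dimensional space $A^n_{j,k,m}$. On the piece of its associated graded corresponding to weights $j_1$ (on the source) and $j_2$ (on the target), $\Fr$ acts as multiplication by $p^{(j_2-j_1)/2}$, so all $\Fr$-eigenvalues on the associated graded, and therefore on $A^n_{j,k,m}$ itself by elementary linear algebra, are integer powers of $p^{1/2}$. The main point that requires care is the stability of $\Pervw_{\mon{\Iv}}$ under convolution by $\cR^{\mix}_{G,k}$; I would handle this by reducing to convolution of simples via exactness, combining Proposition \ref{prop:convolution-mix} with the Iwahori-monodromic picture and the compatibility of convolution with Tate twists.
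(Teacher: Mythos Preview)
Your proof is correct and essentially the same as the paper's: the paper also writes $A^n_{j,k,m}$ as a filtered colimit of finite-dimensional $\Fr$-stable spaces $\Hom(P^j_l, P^{j+n}_m \star \cR_{G,k})$, and for the eigenvalue claim observes that $P^{j+n}_{m,\mix} \star \cR^{\mix}_{G,k}$ is (at least) an extension of objects of $\Pervm_{\mon{\Iv}}(\Gr_{\Gv})$ and then invokes \cite[Lemma~7.8 and its proof]{ar}. Your d\'evissage along the weight/composition filtrations is precisely an unpacking of that lemma, and your observation that the colimit actually stabilizes (so that $A^n_{j,k,m}$ is finite-dimensional outright) is a small sharpening the paper does not make explicit.
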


\begin{proof}
By definition we have
\[
A^n_{j,k,m} \ = \ \varinjlim_{l \geq 0} \, \Hom(P^j_l,P^{j+n}_m \star \cR_{G,k}).
\]
Each space $\Hom(P^j_l,P^{j+n}_m \star \cR_{G,k})$ is finite-dimensional, and stable under the action of $\Fr$. Hence it is enough to prove that the eigenvalues of the restriction of $\Fr$ to this space are integral powers of $p^{1/2}$.

By \cite[Lemma 7.8 and its proof]{ar}, the eigenvalues of the Frobenius on the Hom-space between any two objects which are images under $\For$ of objects of $\Pervm_{\mon{\Iv}}(\Gr_{\Gv})$ are integral powers of $p^{1/2}$. Here, as explained above, $P^{j+n}_{\mix,m} \star \cR^{\mix}_{G,k}$ is a priori not necessarily an object of $\Pervm_{\mon{\Iv}}(\Gr_{\Gv})$, but at least it is an extension of objects of $\Pervm_{\mon{\Iv}}(\Gr_{\Gv})$. Hence the same property holds.
\end{proof}

Thanks to Lemma \ref{lem:action-Fr-lf}, one can write
\[
A^n_{j,k,m} \ = \ \bigoplus_{r \in \Z} A^n_{j,k,m,r},
\]
where $A^n_{j,k,m,r}$ is the generalized eigenspace of $\Fr$ for the eigenvalue $p^{r/2}$. Note that for any $m \geq 1$, the morphism $A^n_{j,k,m} \to A^n_{j,k,m-1}$ is surjective (by projectivity of $P^j$, see Lemma \ref{lem:projective-resolution}), and compatible with the direct sum decomposition.

Now we can define our ``locally-finite'' subalgebra of $A^{\bullet}$ as follows. First we set
\[
A^{\lf,n}_{k} \ = \ \bigoplus_{r \in \Z} \, \prod_{j \in \Z} \, \varprojlim_{m \geq 0} \, A^n_{j,k,m,r},
\]
and then
\[
A^{\lf,n} \ = \ \varinjlim_{k \geq 0} \, A^{\lf,n}_{k}.
\]
One easily checks that $A^{\lf,\bullet}$ is a sub-dg-algebra of $A^{\bullet}$, stable under the action of $G$.

\begin{lem}
\label{lem:qis-lf}

The inclusion $A^{\lf,\bullet} \hookrightarrow A^{\bullet}$ is a quasi-isomorphism.

\end{lem}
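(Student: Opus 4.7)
The plan is to exploit the $\Fr$-action to decompose $A^{\bullet}$ into generalized eigenspace contributions, and then to show that $A^{\lf,\bullet}$ already contains enough cocycles and coboundaries to compute $H^{*}(A^{\bullet})$. As a first step I will establish the natural isomorphism
\[
A^n_{j,k} \ \cong \ \prod_r \varprojlim_m A^n_{j,k,m,r},
\]
using that each finite-dimensional piece $A^n_{j,k,m}$ decomposes as $\bigoplus_r A^n_{j,k,m,r}$ with finite $r$-support (which follows from Lemma \ref{lem:action-Fr-lf} and finite-dimensionality) and that the surjective transition maps preserve the eigenspace decomposition. Taking products over $j$ and direct limits over $k$, this gives $A^n = \varinjlim_k \prod_r (A^n_k)^r$ with $(A^n_k)^r := \prod_j \varprojlim_m A^n_{j,k,m,r}$, while by construction $A^{\lf,n}_k = \bigoplus_r (A^n_k)^r$. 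Since $d$ commutes with $\Fr$, each $(A^\bullet_k)^r$ is a subcomplex.

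For surjectivity of $H^n(A^{\lf,\bullet}) \to H^n(A^{\bullet})$, take a cocycle $\xi \in A^n_k$ and decompose it as $(\xi^r)_r$ in the above product; each $\xi^r \in (A^n_k)^r$ is itself a cocycle. The key observation is that $\phi(\xi^r) = 0$ for every $r \neq n$: indeed $\phi$ is $\Fr$-equivariant by Lemma \ref{lem:action-Fr-cohomology}(1), and its target $\Ext^n_{\mon{\GvO}}(1_G,\cR_G)$ has $\Fr$ acting as the scalar $p^{n/2}$ by Lemma \ref{lem:action-Fr-cohomology}(2), so any component with a different generalized eigenvalue must map to zero. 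Unwinding the construction of $\phi$ in the proof of Proposition \ref{prop:morphism-dg-algebras} shows that $\phi(\xi)$ depends only on the coordinate $\xi_{-n}$ and is computed at finite levels $m$, so the term-by-term argument is legitimate. Thus $\phi(\xi) = \phi(\xi^n)$ with $\xi^n \in A^{\lf,n}_k$, and since $\phi$ is a quasi-isomorphism this gives $[\xi^n] = [\xi]$ in $H^n(A^{\bullet})$.

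For injectivity, suppose $\xi \in A^{\lf,n}$ is a cocycle with $\xi = d\eta$ in $A^{\bullet}$, say $\eta \in A^{n-1}_{k'}$. Writing $\xi = \sum_{r \in S} \xi^r$ with $S$ finite and $\eta = (\eta^r)_r \in \prod_r (A^{n-1}_{k'})^r$, and projecting the equation $d\eta = \xi$ onto each $r$-component, I get $d\eta^r = \xi^r$ for $r \in S$ and $d\eta^r = 0$ otherwise; then $\eta' := \sum_{r \in S} \eta^r \in A^{\lf,n-1}_{k'}$ witnesses $[\xi] = 0$ in $H^n(A^{\lf,\bullet})$. The main bookkeeping obstacle throughout is the interplay between $\varprojlim_m$, $\varinjlim_k$, and $\bigoplus_r$ in the definition of $A^{\lf,\bullet}$; what saves the argument is local finiteness of $\Fr$ at each finite-dimensional stage, which lets the eigenspace decomposition commute with the projective limit so that the product-versus-sum distinction is what precisely controls $A^{\lf,\bullet}\subset A^{\bullet}$.
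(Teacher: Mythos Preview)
Your proof is correct and follows essentially the same strategy as the paper's: decompose $A^{\bullet}_k$ according to generalized $\Fr$-eigenvalues, and use Lemma~\ref{lem:action-Fr-cohomology}(2) to see that only the $r=n$ component contributes to $H^n$. The only organizational difference is that for surjectivity you invoke the quasi-isomorphism $\phi$ to conclude $[\xi]=[\xi^n]$ from $\phi(\xi)=\phi(\xi^n)$, whereas the paper argues directly that each component $\xi^r$ with $r\neq n$ is a boundary (``trivial image in cohomology for reasons of degree''); these are equivalent, and your injectivity argument via the product/sum dichotomy is exactly what the paper means by ``proved similarly''.
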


\begin{proof}
Clearly, it suffices to prove that the inclusion $A^{\lf,\bullet}_k \hookrightarrow A^{\bullet}_k$ is a quasi-isomor\-phism for any $k \geq 0$.

First, we check that the morphism $H^{\bullet}(A^{\lf,\bullet}_k) \to H^{\bullet}(A^{\bullet}_k)$ is surjective. Take some $f \in A^n_k$, annihilated by the differential. By definition, $f$ is a family $(f_j)_{j \in \Z}$, where $f_j$ is in $A^n_{j,k}$. Then, each $f_j$ is a family $(f_{j,m})_{m \geq 0}$, where $f_{j,m} \in A^n_{j,k,m}$, and each $f_{j,m}$ can be written as $f_{j,m} = \sum_r f_{j,m,r}$ with $f_{j,m,r} \in A^n_{j,k,m,r}$. To say that $d(f)=0$ amounts to saying that for any $j \in \Z$, $f_{j+1} \circ d_{P}^j = (-1)^n d_{P \star \cR_{G,k}}^{j+n} \circ f_j$. Then this equation can be rewritten as $f_{j+1,m} \circ d_{P}^j = (-1)^n d_{P_m \star \cR_{G,k}}^{j+n} \circ f_{j,m}$ for any $m$, and the latter equation can finally be rewritten as $f_{j+1,m,r} \circ d_{P}^j = (-1)^n d_{P_m \star \cR_{G,k}}^{j+n} \circ f_{j,m,r}$ for any $r$.

By Lemma \ref{lem:action-Fr-cohomology}, the action of $\Fr$ on the image of $f$ in cohomology is multiplication by $p^{n/2}$. We set $f'=(f'_j)_{j \in \Z} \in A^{\lf,n}$, where $f_j'$ is the family $(f'_{j,m})_{m \geq 0}$, where for any $m$ we have set $f'_{j,m}=f_{j,m,n}$. By the remarks above, $f'$ is annihilated by the differential. We claim that $f$ and $f'$ define the same class in cohomology, which proves surjectivity. Indeed, for any $r \neq n$, the family $(f_{j,m,r})_{j,m}$ is an element of $A^{\bullet}$ annihilated by the differential, and has trivial image in cohomology for reasons of degree. Hence it is in the image of the differential. The claim follows.

Injectivity can be proved similarly, which concludes the proof.
\end{proof}

By definition, the dg-algebra $A^{\lf}$ is endowed with an additional $\Z$-grading, which makes it a $G$-equivariant dgg-algebra. By Lemma \ref{lem:qis-lf} and isomorphism \eqref{eqn:isom-bigraded-algebras}, the morphism $\phi$ of Proposition \ref{prop:morphism-dg-algebras} induces a quasi-isomorphism of dgg-algebras
\begin{equation}
\label{eqn:qis}
A^{\lf} \ \xrightarrow{\qis} \ \C[\cN_G],
\end{equation}
where the right-hand side is endowed with the dgg-algebra structure defined in \S \ref{ss:equivalence-mix}.

\subsection{Mixed version of $F_G$}
\label{ss:mixed-version}

Now we can construct the functor
\[
F_G^{\mix} : \cD^{\mix}_{\mon{\GvO}}(\Gr_{\Gv}) \to \cDb_{\fr} \Coh^{G \times \Gm}(\cN_G).
\]
Using equivalence \eqref{eqn:equivalence-regrading}, it is enough to construct a functor
\[
\underline{F}_G^{\mix} : \cD^{\mix}_{\mon{\GvO}}(\Gr_{\Gv}) \to \DGCGmf(\cN_G).
\]
As in \S \ref{ss:construction-functor}, this functor will be the restriction of a functor
\[
\overline{F}_G^{\mix} : \cDb\Perv^{\mix}_{\mon{\Iv}}(\Gr_{\Gv}) \to \DGC^{G \times \Gm}(\cN_G).
\]
The functor $\overline{F}_G^{\mix}$ is constructed as follows. We use the same notation as in \S \ref{ss:lf-subalgebra}.

For any bounded complex $M^{\bullet}$ of objects of $\Perv^{\mix}_{\mon{\Iv}}(\Gr_{\Gv})$, the $\sE^{\bullet}(1_G,\cR_G)$-dg-module $\sE^{\bullet}(1_G,\For(M^{\bullet}) \star \cR_G)$ of \S \ref{ss:construction-functor} is endowed with an automorphism induced by the Frobenius. Moreover, by the same arguments as in the proof of Lemma \ref{lem:action-Fr-lf}, this action is locally finite, and the eigenvalues are all integral powers of $p^{1/2}$. Hence this action gives rise to an additional $\Z$-grading on $\sE^{\bullet}(1_G,\For(M^{\bullet}) \star \cR_G)$. Restricting the action to $A^{\lf}$, this grading makes $\sE^{\bullet}(1_G,\For(M^{\bullet}) \star \cR_G)$ an $A^{\lf}$-dgg-module, denoted $\sE^{\bullet,\bullet}(1_G,M^{\bullet} \star \cR_G)$. Deriving this (exact) functor, we obtain a functor
\[
\sE^{\bullet,\bullet}(1_G,(-) \star \cR_G) : \cDb\Perv^{\mix}_{\mon{\Iv}}(\Gr_{\Gv}) \to \DGM^{G \times \Gm}(A^{\lf}).
\]

Then, quasi-isomorphism \eqref{eqn:qis} induces an equivalence of categories
\[
\C[\cN_G] \, \lotimes_{A^{\lf}} \, (-) :
\DGMGm(A^{\lf}) \ \xrightarrow{\sim} \ \DGMGm(\C[\cN_G]).
\]
Composing this equivalence with the functor $\sE^{\bullet,\bullet}(1_G,(-) \star \cR_G)$ gives our functor $\overline{F}_G^{\mix}$. We denote by $\widetilde{F}_G^{\mix}$ the composition of this functor with the inclusion $\cD^{\mix}_{\mon{\GvO}}(\Gr_{\Gv}) \hookrightarrow \cDb \Pervm_{\mon{\Iv}}(\Gr_{\Gv})$. 

\begin{lem}
\label{lem:diagram-FG-FGmix}

The following diagram commutes up to an isomorphism of functors:
\[
\xymatrix@C=2cm{
\cD^{\mix}_{\mon{\GvO}}(\Gr_{\Gv}) \ar[d]_-{\For} \ar[r]^-{\widetilde{F}_G^{\mix}} & \DGMGm(\C[\cN_G]) \ar[d]^-{\For} \\
\cDb_{\mon{\GvO}}(\Gr_{\Gv}) \ar[r]^-{\widetilde{F}_G} & \DGMG(\C[\cN_G]). \\
}
\] 

\end{lem}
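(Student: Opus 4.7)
The plan is to trace through the constructions of $\widetilde{F}_G^{\mix}$ and $\widetilde{F}_G$ and identify the two compositions using naturality of the various base-change and restriction-of-scalars operations. The underlying commutation comes from the fact that (i) forgetting the internal grading on $\sE^{\bullet,\bullet}$ recovers the underlying $A^{\lf}$-module of $\sE^{\bullet}$, and (ii) derived base change along the chain $A^{\lf} \hookrightarrow A \xrightarrow{\phi} \C[\cN_G]$ is transitive.

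More precisely, I would first observe that for $M^{\bullet} \in \cDb \Pervm_{\mon{\Iv}}(\Gr_{\Gv})$, forgetting the internal grading and $\Gm$-action from the $A^{\lf}$-dgg-module $\sE^{\bullet,\bullet}(1_G, M^{\bullet} \star \cR_G)$ yields precisely the $A^{\lf}$-dg-module obtained from $\sE^{\bullet}(1_G, \For(M^{\bullet}) \star \cR_G)$ by restriction of scalars along the inclusion $A^{\lf} \hookrightarrow A$. This is built into the construction of \S \ref{ss:mixed-version}: the internal $\Z$-grading is by Frobenius generalized eigenspaces, and the $A^{\lf}$-action is by definition the restriction of the $A$-action. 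Likewise, $\For(\C[\cN_G]) = \C[\cN_G]$ (the same underlying dg-algebra), and the quasi-isomorphism \eqref{eqn:qis} restricts along this identification to the map $\phi$ of Proposition \ref{prop:morphism-dg-algebras} composed with \eqref{eqn:isom-bigraded-algebras}.

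Next I would use the fact that the forgetful functor $\DGMGm(\C[\cN_G]) \to \DGMG(\C[\cN_G])$ commutes with derived tensor products: for any $A^{\lf}$-dgg-module $N$, one has a natural isomorphism
\[
\For\bigl(\C[\cN_G] \lotimes_{A^{\lf}} N\bigr) \ \cong \ \C[\cN_G] \lotimes_{A^{\lf}} \For(N).
\]
Combined with the preceding step, this reduces the commutativity of the diagram to the identification, for $N = \sE^{\bullet}(1_G, \For(M^{\bullet}) \star \cR_G)$ viewed as an $A$-module,
\[
\C[\cN_G] \lotimes_{A^{\lf}} N \ \cong \ \C[\cN_G] \lotimes_A N,
\]
in $\DGMG(\C[\cN_G])$, natural in $M^{\bullet}$. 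This final identification follows from transitivity of derived tensor product along $A^{\lf} \hookrightarrow A \xrightarrow{\phi} \C[\cN_G]$ together with the quasi-isomorphism $A \lotimes_{A^{\lf}} N \xrightarrow{\sim} N$ (multiplication), which is a quasi-isomorphism because $A^{\lf} \hookrightarrow A$ is one by Lemma \ref{lem:qis-lf}.

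The main (mildly annoying) point is bookkeeping: one has to check that all the identifications above are truly natural in $M^{\bullet}$ and compatible with the differentials, the $G$-actions, and the module structures. This is essentially formal, since both $\sE^{\bullet,\bullet}(1_G, M^{\bullet} \star \cR_G)$ and $\sE^{\bullet}(1_G, \For(M^{\bullet}) \star \cR_G)$ are built from the same family of Hom-spaces $\Hom(P^j, P^{j+n}_m \star \cR_{G,k})$, with the extra grading on the former being the canonical eigenspace decomposition. Given this, no substantial obstacle remains; the commutativity of the square is a direct consequence of the naturality of all constructions in \S\S \ref{ss:construction-functor}--\ref{ss:mixed-version}.
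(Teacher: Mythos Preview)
Your proposal is correct and follows essentially the same approach as the paper. The paper's proof is more terse: it immediately reduces to showing that $\C[\cN_G] \lotimes_{A^{\lf}} \mathrm{Rest}_1(-) \cong \C[\cN_G] \lotimes_{A} (-)$ as functors $\DGMG(A) \to \DGMG(\C[\cN_G])$, and argues this by passing to the inverse equivalences (restriction of scalars along $A^{\lf} \to \C[\cN_G]$ and $A \to \C[\cN_G]$), which compose correctly precisely because $A^{\lf} \to \C[\cN_G]$ factors through $A$. Your argument reaches the same reduction and then proves it via transitivity of derived tensor product together with $A \lotimes_{A^{\lf}} N \xrightarrow{\sim} N$; this is the dual formulation of the paper's restriction-of-scalars argument, and both rest on the same key fact (Lemma~\ref{lem:qis-lf}).
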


\begin{proof}
By construction of the equivalences, it is enough to prove that the composition of the restriction of scalars functor
\[
\mathrm{Rest}_1: \DGMG(\sE^{\bullet}(1_G,\cR_G)) \to \DGMG(A^{\lf})
\]
with the equivalence
\[
\C[\cN_G] \, \lotimes_{A^{\lf}} \, (-) : \DGMG(A^{\lf}) \to \DGMG(\C[\cN_G])
\]
is isomorphic to the equivalence
\[
\C[\cN_G] \, \lotimes_{\sE^{\bullet}(1_G,\cR_G)} \, (-) : \DGMG(\sE^{\bullet}(1_G,\cR_G)) \to \DGMG(\C[\cN_G]).
\]
However, the latter equivalences have inverses the restriction of scalars functors
\[
\mathrm{Rest}_2 : \DGMG(\C[\cN_G]) \to \DGMG(A^{\lf})
\]
and
\[
\mathrm{Rest}_3 : \DGMG(\C[\cN_G]) \to \DGMG(\sE^{\bullet}(1_G,\cR_G)).
\]
By construction, the morphism $A^{\lf} \to \C[\cN_G]$ is the composition of the morphisms $A^{\lf} \to \sE^{\bullet}(1_G,\cR_G)$ and $\sE^{\bullet}(1_G,\cR_G) \to \C[\cN_G]$. Hence we have $\mathrm{Rest}_2 = \mathrm{Rest}_1 \circ \mathrm{Rest}_3$, and the result follows.
\end{proof}

The same proof as that of Lemma \ref{lem:image-IC} gives the following result.

\begin{lem}
\label{lem:image-IC-mix}

For $V$ in $\Rep(G)$, there is an isomorphism in $\DGMGm(\C[\cN_G])$
\[
\widetilde{F}_G^{\mix}(\cS_G^0(V)) \ \cong \ V \otimes_{\C} \C[\cN_G],
\]
where, on the right-hand side, the $\Gm$-action on $V$ is trivial. \qed

\end{lem}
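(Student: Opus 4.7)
The plan is to mimic the proof of Lemma~\ref{lem:image-IC}, while carefully tracking the Frobenius/$\Gm$-action so that the ``internal'' grading of the output lands as claimed (i.e.\ $V$ sits in $\Gm$-weight $0$).

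\textbf{Step 1: upgrade the key ind-perverse sheaf isomorphism to the mixed setting.} In the non-mixed proof, the central computation was the identification $\cS_G(V) \star \cR_G \cong \cR_G \otimes_\C V$, obtained by applying $\cS_G$ to the $G$-module isomorphism $V \otimes_\C \C[G] \cong \mathrm{Ind}_{\{1\}}^G V \cong \C[G]^{\oplus \dim V}$. I would repeat this argument, but apply instead the tensor equivalence $\cS_G^0 \colon (\Rep(G),\otimes) \xrightarrow{\sim} (\Perv^0_{\mon{\GvO}}(\Gr_{\Gv}),\star)$ from \eqref{eqn:satake-0} (whose compatibility with convolution is guaranteed by Proposition~\ref{prop:convolution-mix}). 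This produces an isomorphism of ind-objects
\[
\cS_G^0(V) \star \cR_G^{\mix} \ \cong \ V \otimes_\C \cR_G^{\mix}
\]
in $\Perv^0_{\mon{\GvO}}(\Gr_{\Gv})$, compatible with the multiplications $\sm^{\mix}_{k,l}$ chosen in \S\ref{ss:lf-subalgebra}. Crucially, since $\cS_G^0$ lands in the weight-$0$ subcategory, the Frobenius action on the left-hand side corresponds under this isomorphism to the Frobenius on $\cR_G^{\mix}$ with the trivial action on~$V$.

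\textbf{Step 2: compute the $A^{\lf}$-dgg-module.} Applying the functor $\sE^{\bullet,\bullet}(1_G,(-) \star \cR_G)$ from \S\ref{ss:mixed-version} to $M^\bullet = \cS_G^0(V)$ and using Step~1, each term
\[
\prod_{j\in\Z}\, \Hom(P^j, P^{j+n} \star \cS_G^0(V) \star \cR_{G,k})
\]
is identified with the analogous expression for $\cR_{G,k}$ tensored over $\C$ with $V$. Passing to generalized Frobenius eigenspaces and taking the limit in $k$, this identification restricts to an isomorphism
\[
\sE^{\bullet,\bullet}(1_G,\cS_G^0(V) \star \cR_G) \ \cong \ A^{\lf} \otimes_\C V
\]
of $G \times \Gm$-equivariant $A^{\lf}$-dgg-modules, where $V$ is placed in internal degree~$0$ with trivial $\Gm$-action. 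The decisive point is that tensoring with $V$ commutes with the eigenspace decomposition because Frobenius acts trivially on $V$ (by Step~1).

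\textbf{Step 3: pass to $\C[\cN_G]$-modules.} Applying the equivalence $\C[\cN_G] \lotimes_{A^{\lf}} (-)$ to the isomorphism of Step~2 gives
\[
\widetilde{F}_G^{\mix}(\cS_G^0(V)) \ \cong \ \C[\cN_G] \otimes_\C V
\]
in $\DGMGm(\C[\cN_G])$, with $V$ still in internal degree~$0$ and trivial $\Gm$-action, as claimed.

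\textbf{Main obstacle.} The only point requiring genuine care is the Frobenius-equivariance in Step~1: one must verify that under the convolution isomorphism the Weil structure on $\cS_G^0(V) \star \cR_G^{\mix}$ matches, up to isomorphism, the Weil structure on $V \otimes \cR_G^{\mix}$ with trivial Frobenius on $V$. This is where the definition of $\Perv^0_{\mon{\GvO}}$ (pure of weight $0$) is essential; once granted, the rest of the argument is a direct transcription of the proof of Lemma~\ref{lem:image-IC}(2), with the eigenspace filtration of Lemma~\ref{lem:action-Fr-lf} producing the internal grading automatically.
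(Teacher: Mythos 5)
Your proposal is correct and is essentially the paper's own argument: the paper simply states that ``the same proof as that of Lemma \ref{lem:image-IC} gives the result,'' i.e.\ one applies the tensor equivalence $\cS_G^0$ to $V \otimes_\C \C[G] \cong \C[G]^{\oplus \dim V}$ and unwinds the construction of $\widetilde{F}_G^{\mix}$. Your Steps 2--3 and the explicit verification that Frobenius acts trivially on the $V$ factor (hence that $V$ lands in internal degree $0$) are exactly the details the paper leaves implicit.
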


It follows from this lemma that $\widetilde{F}^{\mix}_G$  factors through a functor
\[
\underline{F}_G^{\mix} : \cD^{\mix}_{\mon{\GvO}}(\Gr_{\Gv}) \to \DGCGmf(\cN_G).
\]

\begin{prop}
\label{prop:FGmix-fully-faithful}

The functor $\underline{F}_G^{\mix}$ is fully faithful.

\end{prop}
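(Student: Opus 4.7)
The plan is to apply Lemma \ref{lem:triangulated-categories} in its full-faithfulness incarnation (i.e., a devissage on the source only) to a convenient generating family, and then to descend the resulting isomorphism of total Hom-spaces to each Tate-twist component using the graded-version structure on both sides.

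First I will observe that the objects $\cS_G^0(V) \lan n \ran$ for $V \in \Rep(G)$ and $n \in \Z$ generate $\cD^{\mix}_{\mon{\GvO}}(\Gr_{\Gv})$ as a triangulated category: by definition this category is generated by the simples $\IC^{\mix}_\lambda \lan n \ran$ for $\lambda \in \bX^+$, and each $\IC^{\mix}_\lambda$ equals $\cS_G^0(V_\lambda)$. Under $\underline{F}_G^{\mix}$, Lemma \ref{lem:image-IC-mix} sends these to $(V \otimes_\C \C[\cN_G])\lan n \ran$, which generate $\DGCGmf(\cN_G)$ by the very definition of that subcategory. So it will be enough to show that, for all $V_1, V_2 \in \Rep(G)$ and all $n, m \in \Z$, the functor $\underline{F}_G^{\mix}$ induces an isomorphism
\[
\Hom_{\cD^{\mix}_{\mon{\GvO}}(\Gr_{\Gv})}\bigl(\cS_G^0(V_1),\, \cS_G^0(V_2)\lan n\ran[m]\bigr) \xrightarrow{\sim} \Hom_{\DGCGmf(\cN_G)}\bigl(V_1 \otimes \C[\cN_G],\, (V_2\otimes\C[\cN_G])\lan n\ran[m]\bigr).
\]

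Next I will exploit that $\cD^{\mix}_{\mon{\GvO}}(\Gr_{\Gv})$ and $\DGCGmf(\cN_G)$ are mixed, respectively graded, versions of their non-mixed counterparts in the sense of \cite[\S 2.3]{ar}, via the forgetful functors \eqref{eqn:for-const} and \eqref{eqn:for-coh}. By this property, summing over $n \in \Z$ yields a canonical identification between the direct sum of the left-hand Hom-spaces above and $\Hom_{\cDb_{\mon{\GvO}}(\Gr_{\Gv})}(\cS_G(V_1), \cS_G(V_2)[m])$, and similarly on the coherent side. By Lemma \ref{lem:diagram-FG-FGmix}, $\underline{F}_G^{\mix}$ intertwines the forgetful functors with $\widetilde{F}_G$, so the direct sum of the maps above equals the map induced by $\widetilde{F}_G$, which is an isomorphism by Proposition \ref{prop:isom-morphisms}. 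To descend this isomorphism to each graded piece, I will use that $\underline{F}_G^{\mix}$ commutes with the internal shift $\lan n \ran$, so the individual maps form a degree-preserving morphism of $\Z$-graded vector spaces whose total is an iso; this forces each graded component to be an iso.

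The main technical obstacle is precisely this last compatibility, $\underline{F}_G^{\mix}(M\lan n\ran) \cong \underline{F}_G^{\mix}(M)\lan n \ran$, which must be extracted from the construction of \S\S \ref{ss:lf-subalgebra}--\ref{ss:mixed-version}. The internal grading on $A^{\lf}$ is defined via the Frobenius eigenvalue decomposition, and the Tate twist $\lan n \ran$ scales Frobenius by $p^{n/2}$ and hence shifts this grading by $n$; the quasi-isomorphism \eqref{eqn:qis} and the identification \eqref{eqn:isom-bigraded-algebras} are compatible with both internal gradings by Lemma \ref{lem:action-Fr-cohomology}(2). Tracing these conventions carefully through the construction of $\underline{F}_G^{\mix}$ is where the bulk of the verification lies; everything else is devissage and the non-mixed Proposition \ref{prop:isom-morphisms}.
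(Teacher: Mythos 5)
Your proposal is correct and takes essentially the same approach as the paper: both exploit the graded-version property to identify the direct sum over Tate twists of $\Hom$-spaces in the mixed category with the $\Hom$-space between the forgetful images in the non-mixed category, invoke the non-mixed isomorphism (via Lemma \ref{lem:diagram-FG-FGmix} together with Proposition \ref{prop:isom-morphisms} or equivalently Theorem \ref{thm:equivalence}), and then use compatibility of $\underline{F}_G^{\mix}$ with $\lan n \ran$ to descend the total isomorphism to each graded component. The only cosmetic difference is your initial reduction to the generators $\cS_G^0(V)\lan n\ran$, which is a valid but superfluous step since the paper works directly with arbitrary $M$, $N$ and Theorem \ref{thm:equivalence} already supplies the isomorphism for all objects of $\cDb_{\mon{\GvO}}(\Gr_{\Gv})$.
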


\begin{proof}
The category $\cD^{\mix}_{\mon{\GvO}}(\Gr_{\Gv})$, respectively $\DGCGmf(\cN_G)$, is a graded version of the category $\cDb_{\mon{\GvO}}(\Gr_{\Gv})$, respectively $\DGCGf(\cN_G)$ in the sense of \cite[\S 2.3]{ar}. Hence for two objects $M,N$ in $\cD^{\mix}_{\mon{\GvO}}(\Gr_{\Gv})$ we have isomorphisms
\[
\Hom_{\cDb_{\mon{\GvO}}(\Gr_{\Gv})}(\For(M),\For(N)) \ \cong \ \bigoplus_{i \in \Z} \, \Hom_{\cD^{\mix}_{\mon{\GvO}}(\Gr_{\Gv})}(M,N\langle i \rangle), \]
and
\begin{multline*}
\Hom_{\DGCGf(\cN_G)}(\For \circ \underline{F}_G^{\mix}(M), \For \circ \underline{F}_G^{\mix}(N)) \ \cong \\ \bigoplus_{i \in \Z} \, \Hom_{\DGCGmf(\cN_G)}(\underline{F}_G^{\mix}(M),\underline{F}_G^{\mix}(N) \langle i \rangle).
\end{multline*}
By Theorem \ref{thm:equivalence} and Lemma \ref{lem:diagram-FG-FGmix}, we know that the functor $\underline{F}_G^{\mix}$ induces an isomorphism between the left-hand sides of these equations. Moreover, it respects the direct sum decompositions of the right-hand sides. Hence it also induces an isomorphism between the right-hand sides, and also between the summands associated to $i=0$. The result follows.
\end{proof}

Finally we can finish the proof of Theorem \ref{thm:equivalence-mix}.

\begin{proof}[Proof of Theorem {\rm \ref{thm:equivalence-mix}}]
By Proposition \ref{prop:FGmix-fully-faithful} and Lemma \ref{lem:image-IC-mix}, one can apply Lemma \ref{lem:triangulated-categories} to the functor $\underline{F}_G^{\mix}$, proving that it is an equivalence of categories. Composing with the equivalence \eqref{eqn:equivalence-regrading}, we obtain the equivalence $F_G^{\mix}$. 

The commutativity of the diagram of the theorem follows from Lemma \ref{lem:diagram-FG-FGmix}. By construction, the functor $\underline{F}_G^{\mix}$ commutes with internal shifts $\lan i \ran$. The last assertion of the theorem follows.
\end{proof}

\section{Constructible sheaves on $\Gr_{\Gv}$ and coherent sheaves on $\cN_G$:\\ second approach}
\label{sect:proof-2}

In this section we give a second proof of Theorems \ref{thm:equivalence} and \ref{thm:equivalence-mix}, which was inspired by the methods of \cite[\S 6.5]{bf}.

\subsection{Reminder on pretriangulated categories}

The notion of pretriangulated category was introduced in \cite{bk}. We will rather use \cite{dr, bll} as references. Recall the basic notions of dg-categories, see e.g.~\cite[\S 4.1]{bll}\footnote{In \cite{bll}, the authors assume that dg-categories are additive, but this is not really used in the results we quote. We only assume our dg-categories to be preadditive.} or \cite[\S 2.1]{dr}.

Let $\scA$ be a dg-category over a field. Then one can define the dg-category $\scA^{\prt}$ as follows. First, one defines the dg-category $\overline{\scA}$ with
\begin{itemize}
\item objects: formal expressions $A[n]$ where $A$ is an object of $A$ and $n \in \Z$;
\item morphisms: 
\[
\Hom^{\bullet}_{\overline{\scA}}(A[n],B[m]) \ = \ \Hom^{\bullet}_{\scA}(A,B)[m-n]
\]
as graded vector spaces, and the differential of an element $f \in \Hom^{\bullet}_{\scA}(A,B)$ viewed as an element of $\Hom^{\bullet}_{\overline{\scA}}(A[n],B[m])$ is given by
\[
d_{\overline{\scA}}(f)=(-1)^m \cdot d_{\scA}(f).
\]
Composition is defined in the natural way.
\end{itemize}
Note that this definition is chosen so as to mimic the relations for complexes of morphisms between complexes in an additive category, with the usual sign conventions.

Then, one defines the dg-category $\scA^{\prt}$ with
\begin{itemize}
\item objects: formal expressions 
\[
\left( \bigoplus_{i=1}^n \, C_i [d_i], q \right)
\]
where $n \geq 0$, $C_i$ is an object of $\scA$, $d_i \in \Z$, $q=(q_{ij})_{i,j=1 \cdots n}$ with $q_{ij} \in \Hom^{1}_{\overline{\scA}}(C_i[d_i],C_j[d_j])$ such that $dq+q^2=0$ and $q_{ij}=0$ for $i \geq j$;
\item morphisms: for objects $C=(\oplus_{i=1}^n C_i [d_i], q)$ and $C'=(\oplus_{i=1}^m C_i' [d_i'], q')$, the graded vector space $\Hom^{\bullet}_{\scA^{\prt}}(C,C')$ is the space of matrices $(f_{ij})_{i=1 \cdots m}^{j=1 \cdots n}$ such that $f_{ij} \in \Hom^{\bullet}(C_j[d_j],C'_i[d'_i])$. The differential is defined by the rule
\[
d_{\scA^{\prt}}(f)=(d_{\overline{\scA}}(f_{ij}))_{i,j} + q'f - (-1)^k fq
\]
if $f$ is homogeneous of degree $k$. Composition is defined by matrix multiplication.
\end{itemize}

Note that the assignment $\scA \mapsto \scA^{\prt}$ defines an endofunctor of the category of dg-categories and dg-functors between them.

Recall that for any dg-category $\scA$, the category $\mathsf{Ho}(\scA^{\prt})$ is always triangulated. (Here, for a dg-category $\scB$, we denote by $\mathsf{Ho}(\scB)$ its homotopy category.) For example, the cone of a closed morphism $f \in \Hom^0_{\scA^{\prt}}(C,C')$ is defined as the object
\[
\cone(f) = \left( \bigoplus_i C'_i[d'_i] \oplus \bigoplus_j C_j[d_j+1], \left(
\begin{array}{cc}
q' & f \\
0 & -q \\
\end{array}
\right) \right).
\]

One has a natural fully faithful inclusion $\scA \to \scA^{\prt}$ of dg-categories. One says that $\scA$ is \emph{pretriangulated} if the induced functor
\[
\mathsf{Ho}(\scA) \to \mathsf{Ho}(\scA^{\prt})
\]
is an equivalence of categories. If $\scA$ is pretriangulated, then by the remarks above $\mathsf{Ho}(\scA)$ has a canonical structure of triangulated category.

An \emph{enhanced triangulated category} is a triple $(\scD,\scE,\phi)$ where $\scD$ is a triangulated category, $\scE$ is a pretriangulated category, and $\phi$ is an equivalence of triangulated categories $\scD \cong \mathsf{Ho}(\scE)$.

Let $(\scD,\scE,\phi)$ be an enhanced triangulated category. Consider a set of objects $S=\{E_j, j \in J\}$ of $\scE$ (or equivalently of $\scD$). Let us denote by $\lan S \ran_{\scD}$ the full triangulated subcategory of $\scD$ generated by $S$, i.e.~the smallest strictly full triangulated subcategory of $\scD$ containing $S$. Our aim is to explain how one can concretely construct the category $\lan S \ran_{\scD}$ starting from the datum of $S$ and $\scE$.

Let $\scA$ denote the dg-subcategory of $\scE$ whose objects are the set $S$. This category is determined by the morphism complexes $\Hom^{\bullet}_{\scE}(E_i,E_j)$ and the composition maps. Consider the functor
\[
\Psi : \mathsf{Ho}(\scA^{\prt}) \, \to \, \mathsf{Ho}(\scE^{\prt}) \, \cong \, \mathsf{Ho}(\scE) \, \overset{\phi}{\cong} \, \scD
\]
induced by the inclusion $\scA \hookrightarrow \scE$. The following result is \cite[\S 4, Theorem 1]{bk}. We include the (very easy) proof for the reader's convenience.

\begin{prop}
\label{prop:subcategory-generated}

The functor $\Psi$ induces an equivalence of triangulated categories
\[
\mathsf{Ho}(\scA^{\prt}) \, \xrightarrow{\sim} \, \lan S \ran_{\scD}.
\]

\end{prop}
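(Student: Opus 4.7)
The plan is to verify three properties of the functor $\Psi$: it is triangulated, it is fully faithful, and its essential image coincides with $\lan S \ran_{\scD}$. For the first point, applying the endofunctor $(-)^{\prt}$ to the inclusion $\scA \hookrightarrow \scE$ yields a dg-functor $\scA^{\prt} \to \scE^{\prt}$, given on objects and on Hom-matrices by the obvious formulas. Since $\scA^{\prt}$ and $\scE^{\prt}$ are both pretriangulated and this dg-functor strictly preserves shifts and cones, the induced functor on homotopy categories is triangulated. Composing with the equivalences $\mathsf{Ho}(\scE^{\prt}) \simeq \mathsf{Ho}(\scE) \overset{\phi}{\simeq} \scD$ gives that $\Psi$ is triangulated.

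The crux of the argument is fully faithfulness, which is essentially a formal consequence of the definition of $(-)^{\prt}$. By construction, the Hom complex in $\scA^{\prt}$ between two twisted complexes is a space of matrices whose entries lie in shifted Hom complexes of $\scA$, with differential and composition determined entirely by those data. Since $\scA \subset \scE$ is a full dg-subcategory, for any two objects $X, Y$ of $\scA^{\prt}$ the induced map $\Hom^{\bullet}_{\scA^{\prt}}(X,Y) \to \Hom^{\bullet}_{\scE^{\prt}}(X,Y)$ is an isomorphism of complexes. Taking $H^0$ gives that $\mathsf{Ho}(\scA^{\prt}) \to \mathsf{Ho}(\scE^{\prt})$ is fully faithful, and composing with the equivalence $\mathsf{Ho}(\scE^{\prt}) \simeq \scD$ yields fully faithfulness of $\Psi$.

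It remains to identify the essential image. Since $\Psi$ is triangulated, its essential image (closed under isomorphism in $\scD$) is a strictly full triangulated subcategory. It contains each $E_j \in S$ (since $\Psi$ restricted to $\scA \subset \scA^{\prt}$ is the inclusion $S \hookrightarrow \scD$), hence it contains $\lan S \ran_{\scD}$. For the reverse inclusion, the plan is to show by induction on $n$ that every object $(\oplus_{i=1}^n C_i[d_i], q)$ of $\scA^{\prt}$ already lies in the triangulated subcategory of $\mathsf{Ho}(\scA^{\prt})$ generated by $\scA$: for $n=1$ it is an object of $\scA$ up to shift, and for $n \geq 2$ one uses the strict upper-triangularity of $q$ to exhibit it as the cone of a closed degree-zero morphism between two twisted complexes of smaller length, to which the induction hypothesis applies. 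Applying $\Psi$ then shows that every object in its image lies in $\lan S \ran_{\scD}$, and the two containments give the desired equality.

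No serious obstacle is anticipated: the construction $(-)^{\prt}$ is transparent enough that each step is essentially a direct unwinding of definitions, and the proof is of a formal categorical nature. The only mildly delicate point is keeping track of signs in the differential on $\overline{\scA}$ and the block matrix for $\cone$, but since the dg-functor $\scA^{\prt} \to \scE^{\prt}$ preserves these formulas on the nose, compatibility of the triangulated structures on $\mathsf{Ho}(\scA^{\prt})$ and $\mathsf{Ho}(\scE^{\prt})$ is automatic.
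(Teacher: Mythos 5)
Your proposal is correct and follows essentially the same route as the paper's proof. The paper notes directly that $\Psi$ is fully faithful and triangulated ``by construction'' (which your first two paragraphs unwind explicitly), and it cites \cite[Proposition 4.10(b)]{bll} for the fact that $\mathsf{Ho}(\scA^{\prt})$ is generated as a triangulated category by $S$, whereas you sketch the underlying induction on the length of a twisted complex using the strict upper-triangularity of $q$ to peel off a cone — which is exactly the content of that cited proposition.
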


\begin{proof}
By construction, $\Psi$ is a fully faithful triangulated functor. By \cite[Proposition 4.10(b)]{bll}, the category $\mathsf{Ho}(\scA^{\prt})$ is generated, as a triangulated category, by the set of objects $S$. Hence $\Psi$ factors though $\lan S \ran_{\scD}$, which is its essential image.\end{proof}

Finally, we will need the following result, see \cite[Proposition 2.5]{dr}. Recall that a dg-functor $F:\scA \to \scB$ is a \emph{quasi-equivalence} if it induces an isomorphism
\[
H^{\bullet}(\Hom^{\bullet}_{\scA}(A,B)) \to H^{\bullet}(\Hom^{\bullet}_{\scB}(F(A),F(B)))
\]
for any $A,B$ in $\scA$ and if moreover $\mathsf{Ho}(F) : \mathsf{Ho}(\scA) \to \mathsf{Ho}(\scB)$ is essentially surjective.

\begin{prop}
\label{prop:qis-pretr}

If $F : \scA \to \scB$ is a quasi-equivalence, then the induced functor $F^{\prt} : \scA^{\prt} \to \scB^{\prt}$ is also a quasi-equivalence. \qed

\end{prop}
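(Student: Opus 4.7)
The plan is to verify the two defining conditions of a quasi-equivalence for $F^{\prt}$ separately: the quasi-isomorphism property on Hom-complexes, and the essential surjectivity of $\mathsf{Ho}(F^{\prt})$. The essential surjectivity is the easier half. Since $\scB^{\prt}$ is pretriangulated and, by construction, every object of $\scB^{\prt}$ is obtained by finitely many iterated cones and shifts from objects of $\scB \subset \scB^{\prt}$ (compare the argument of Proposition \ref{prop:subcategory-generated}), and since $\mathsf{Ho}(F)$ is essentially surjective by hypothesis, one deduces by induction on the number of summands of a twisted complex that every object of $\mathsf{Ho}(\scB^{\prt})$ is isomorphic to the image under $\mathsf{Ho}(F^{\prt})$ of some object of $\scA^{\prt}$. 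Here one uses that $F^{\prt}$ is built so as to commute with the formation of twisted complexes, and hence that $\mathsf{Ho}(F^{\prt})$ is a triangulated functor commuting with cones and shifts.

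For the quasi-isomorphism on Hom-complexes, I would fix objects
\[
C = \left( \bigoplus_{i=1}^n C_i[d_i], q \right), \qquad C' = \left( \bigoplus_{i'=1}^m C'_{i'}[d'_{i'}], q' \right)
\]
of $\scA^{\prt}$ and exploit the explicit matrix description of $\Hom^{\bullet}_{\scA^{\prt}}(C, C')$ recalled above. The key observation is that $q$ and $q'$ are strictly upper triangular: $q_{ji}=0$ unless $j<i$, and $q'_{i'j'}=0$ unless $i'<j'$. Consequently, the finite decreasing filtration $F^p := \{f \mid f_{i'i}=0 \text{ for } i<p\}$, followed on each associated graded by an analogous filtration in the $i'$-direction, is exhaustive, separated, and stable under the twisted differential $d_{\overline{\scA}}(f) + q'f - (-1)^{|f|}fq$; moreover, its total associated graded is identified with
\[
\bigoplus_{i,i'} \Hom^{\bullet}_{\scA}(C_i, C'_{i'})[d'_{i'}-d_i],
\]
equipped with the untwisted differential inherited from $\scA$. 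The dg-functor $F^{\prt}$ carries this filtration to the corresponding filtration on the $\scB$-side, and on each piece of the associated graded induces, up to shift, the hypothesized quasi-isomorphism $\Hom^{\bullet}_{\scA}(C_i, C'_{i'}) \xrightarrow{\qis} \Hom^{\bullet}_{\scB}(F(C_i), F(C'_{i'}))$. A standard finite spectral sequence argument, or equivalently an iterated use of the Five Lemma along the (finitely many) filtration steps, then upgrades this to a quasi-isomorphism of the full Hom-complexes, completing the proof.

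The main obstacle I anticipate is the bookkeeping needed to set up the filtration so that it is visibly stable under the twisted differential and has the desired associated graded; this rests entirely on the strict upper-triangularity of $q$ and $q'$ together with the sign and shift conventions in $\overline{\scA}$. Once that verification is carried out, no new ideas beyond the definitions are required, and the result follows by formal homological algebra.
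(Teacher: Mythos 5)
The paper gives no proof here; it simply cites Drinfeld \cite[Proposition 2.5]{dr}, so there is no in-paper argument to compare against, and your attempt should be judged as a self-contained replacement. The overall strategy is the right one and is essentially the Bondal--Kapranov/Drinfeld argument: filter $\Hom^{\bullet}_{\scA^{\prt}}(C,C')$ using the strict triangularity of $q$ and $q'$, reduce the associated graded to the untwisted Hom-complexes where the hypothesis applies, and conclude by a finite spectral sequence or iterated five lemma; then deduce essential surjectivity by generation. That said, there are two points that need repair.

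First, the filtration as you wrote it is not stable under the twisted differential. With $f_{i'i}\colon C_i[d_i]\to C'_{i'}[d'_{i'}]$ and $q_{ij}$ supported on $i<j$, one has $(fq)_{i'i}=\sum_{j>i}f_{i'j}\,q_{ij}$. If $f\in F^p=\{f\mid f_{i'i}=0\text{ for }i<p\}$ and $i<p$, the sum over $j\ge p$ need not vanish, so $fq\notin F^p$ in general. The inequality must be reversed: one can filter by $\{f\mid f_{i'i}=0\text{ for }i>p\}$ in the source direction, or equivalently by $\{f\mid f_{i'i}=0\text{ for }i'<p\}$ in the target direction (one of these kills the $fq$ contribution, the other kills $q'f$; a two-step filtration as you describe then reduces the differential to $d_{\overline{\scA}}$ on the associated graded). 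Second, your essential-surjectivity argument is presented first but actually depends on the Hom-quasi-isomorphism: when inducting on the number of summands of a twisted complex, expressing it as a cone $\cone(g)$ with $g$ between objects already known to be in the essential image requires lifting $g$ through $\mathsf{Ho}(F^{\prt})$, which is exactly surjectivity of $F^{\prt}$ on $H^0$ of Hom-complexes. (One cannot in general lift the twisting data $q$ directly, since it satisfies a nonlinear Maurer--Cartan constraint rather than being a cocycle.) So the two halves should be done in the opposite order: prove the Hom-quasi-isomorphism first, observe that the essential image of the resulting fully faithful triangulated functor $\mathsf{Ho}(F^{\prt})$ is a strictly full triangulated subcategory, and note that it contains the generators $\scB\subset\scB^{\prt}$. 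Both fixes are routine once spotted, and your own flagged caution about the bookkeeping was well-founded.
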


\subsection{Alternative definition of $F^G$}
\label{ss:alternative-definition}

Now we can give the alternative definition of the equivalence
\[
F^G: \cDb_{\mon{\GvO}}(\Gr_{\Gv}) \ \xrightarrow{\sim} \ \DGCGf(\cN_G).
\]
More precisely, we will construct an equivalence in the opposite direction. 

Consider the triangulated category $\DGMG(\C[\cN_G])$. This category has a natural enhancement. Indeed, let $\KPG(\C[\cN_G])$ be the sub-dg-category of the dg-category of $\C[\cN_G]$-dg-modules whose objects are the K-projective objects (in the sense of \cite[Definition 10.12.2.1]{bl}, adapted to our setting). Then this category is pretriangulated, and there is a natural equivalence of categories
\[
\mathsf{Ho}(\KPG(\C[\cN_G])) \ \xrightarrow{\sim} \ \DGMG(\C[\cN_G]).
\]
Hence we are in the setting of Proposition \ref{prop:subcategory-generated}. We let $\scA_G$ be the sub-dg-category of $\KPG(\C[\cN_G])$ whose objects are the $V \otimes \C[\cN_G]$ for $V \in \Rep(G)$. By Proposition \ref{prop:subcategory-generated}, there is a natural equivalence
\begin{equation}
\label{eqn:enhanced-1}
\mathsf{Ho}(\scA_G^{\prt}) \ \cong \ \DGCGf(\cN_G).
\end{equation}
Note that all morphism spaces in the dg-category $\scA_G$ have trivial differential.

Now, consider the constructible side. The category $\cDb_{\mon{\GvO}}(\Gr_{\Gv})$ is the subcategory of the triangulated category $\cDb_{\mon{\Iv}}(\Gr_{\Gv})$ generated by the objects $\cS_G(V)$ for $V$ in $\Rep(G)$. Moreover, the realization functor 
\[
\cDb \Perv_{\mon{\Iv}}(\Gr_{\Gv}) \ \to \ \cDb_{\mon{\Iv}}(\Gr_{\Gv})
\]
is an equivalence of categories, see \eqref{eqn:realization-functor-equivalence}. As above, the category $\cDb \Perv_{\mon{\Iv}}(\Gr_{\Gv})$ has a natural enhancement. Indeed, let $\Proj(\Gr_{\Gv})$ be the dg-category of bounded above complexes of projective pro-objects in $\Perv_{\mon{\Iv}}(\Gr_{\Gv})$ whose cohomology is bounded, and is in $\Perv_{\mon{\Iv}}(\Gr_{\Gv})$. Then this is a pretriangulated category, and the natural functor
\[
\mathsf{Ho}(\Proj(\Gr_{\Gv})) \to \cDb \Perv_{\mon{\Iv}}(\Gr_{\Gv})
\]
is an equivalence of triangulated categories. Hence we are again in the setting of Proposition \ref{prop:subcategory-generated}. Recall the resolution $P^{\bullet}$ constructed in \S \ref{ss:projective-resolution}. We denote by $\scB_G$ the dg-sub-category of $\Proj(\Gr_{\Gv})$ whose objects are the complexes $P^{\bullet} \star \cS_G(V)$, $V$ in $\Rep(G)$. Then, by Proposition \ref{prop:subcategory-generated}, there is a natural equivalence of triangulated categories
\begin{equation}
\label{eqn:enhanced-2}
\mathsf{Ho}(\scB_G^{\prt}) \ \cong \ \cDb_{\mon{\GvO}}(\Gr_{\Gv}).
\end{equation}

Now, we observe that the dg-categories $\scA_G$ and $\scB_G$ are quasi-equivalent. Indeed, first there is a natural bijection between objects of these categories. Then, consider $V,V'$ in $\Rep(G)$. Using Lemma \ref{lem:adjunction} and the same arguments as in the proof of Proposition \ref{prop:morphism-dg-algebras}, there are natural quasi-isomorphisms of complexes
\[
\Hom^{\bullet}_{\scB_G}(P^{\bullet} \star \cS_G(V), P^{\bullet} \star \cS_G(V')) \ \xrightarrow{\qis} \ \Ext^{\bullet}_{\cDb_{\mon{\GvO}}(\Gr_{\Gv})}(\cS_G(V),\cS_G(V'))
\]
which are compatible with composition, where the differential on the right-hand side is trivial. Then, by Proposition \ref{prop:isom-morphisms}, we have a natural isomorphism
\[
\Ext^{\bullet}_{\cDb_{\mon{\GvO}}(\Gr_{\Gv})}(\cS_G(V),\cS_G(V')) \ \cong \ \Ext^{\bullet}_{\DGCGf(\cN_G)}(V \otimes \C[\cN_G], V' \otimes \C[\cN_G]).
\]
Finally, we observe that the graded vector space on the right-hand side, endowed with the trivial differential, is the complex of morphisms in $\scA_G$ between $V \otimes \C[\cN_G]$ and $V' \otimes \C[\cN_G]$. Combining these morphisms provides the quasi-equivalence of dg-categories
\[
\scB_G \ \xrightarrow{\qis} \ \scA_G.
\]
Using Proposition \ref{prop:qis-pretr}, we deduce an equivalence of triangulated categories
\begin{equation}
\label{eqn:enhanced-3}
\mathsf{Ho}(\scB_G^{\prt}) \ \xrightarrow{\sim} \ \mathsf{Ho}(\scA_G^{\prt}).
\end{equation}

Combining equivalences \eqref{eqn:enhanced-1}, \eqref{eqn:enhanced-2} and \eqref{eqn:enhanced-3}, we obtain a second construction of the equivalence of Theorem \ref{thm:equivalence}. Property \eqref{eqn:compatibility-F-S} is obvious: the category $\Rep(G)$ acts on all the categories we have considered, in particular on the dg-categories $\scA_G$ and $\scB_G$, and all our equivalences commute with the action of $\Rep(G)$.

\begin{rmk}
It would be natural to expect that the equivalence constructed in this subsection is isomorphic to the one constructed in \S \ref{ss:construction-functor}. However, we were not able to prove this fact.
\end{rmk}

\subsection{Mixed version}
\label{ss:mixed-version-enhanced}

One can give a completely parallel proof of Theorem \ref{thm:equivalence-mix}. Namely, for any $V$ in $\Rep(G)$ one can construct a projective resolution in the category of pro-objects in $\Pervm_{\mon{\Iv}}(\Gr_{\Gv})$:
\[
\cdots \to P_V^{\mix,-2} \to P_V^{\mix,-1} \to P_V^0 \twoheadrightarrow \cS_G^0(V)
\]
by choosing for any $n \gg 0$ a minimal projective resolution in the abelian category $\Pervm_{\mon{\Iv}}(X_n)$ and then taking a projective limit over $n$. (See \S \ref{ss:projective-resolution} for details.)

Then one considers the dg-category $\scB_{G,\mix}$ whose objects are the resolutions $P_V^{\bullet} \lan j \ran$ for $V$ in $\Rep(G)$ and $j \in \Z$. Similarly, one defines the dg-category $\scA_{G,\mix}$ whose objects are the $G$-equivariant dgg-modules $V \otimes \C[\cN_G] \lan j \ran$ for $V$ in $\Rep(G)$ and $j \in \Z$. Then, one can construct a quasi-equivalence of dg-categories
\[
\scB_{G,\mix} \ \to \ \scA_{G,\mix},
\]
hence obtain an equivalence of triangulated categories $\mathsf{Ho}(\scB_{G,\mix}^{\prt}) \cong \mathsf{Ho}(\scA_{G,\mix}^{\prt})$. Combining with mixed analogs of equivalences \eqref{eqn:enhanced-1} and \eqref{eqn:enhanced-2}, and equivalence \eqref{eqn:equivalence-regrading}, one obtains an equivalence as in Theorem \ref{thm:equivalence-mix}. One can also show that it is possible to construct this equivalence and that of \S \ref{ss:alternative-definition} in such a way that the diagram of Theorem \ref{thm:equivalence-mix} commutes. Details are left to the reader.

In the remainder of this section, we prove that the equivalence constructed in this subsection, denoted by $'F_G^{\mix}$, is isomorphic to the equivalence $F^G_{\mix}$ constructed in \S \ref{ss:mixed-version}. This result will not be used in this paper; we only include it for completeness. However, the constructions of \S \ref{ss:orlov-category} will be used in Sections \ref{sect:hl-restriction} and \ref{sect:mix-convolution} below.

\subsection{An Orlov category}
\label{ss:orlov-category}

Consider the full subcategory $\Coh_{\free}^{G \times \Gm}(\cN_G)$ of the category $\Coh^{G \times \Gm}(\cN_G)$ which is generated under extensions by the objects $V \otimes_{\C} \cO_{\cN_G} \langle i \rangle$, for $V$ a simple $G$-module. Note that these objects, called \emph{free objects}, are
projective: since $G \times \Gm$ is a reductive group, the functor of
taking $G \times \Gm$-fixed points is exact.  Hence the objects of
$\Coh^{G \times \Gm}_{\free}(\cN_G)$ are in fact direct sums of free
objects. In particular, the indecomposable objects of the category $\Coh_{\free}^{G \times \Gm}(\cN_G)$ are exactly the objects $V \otimes_{\C} \cO_{\cN_G} \langle i \rangle$, for $V$ a simple $G$-module. Recall the notion of \emph{Orlov category} introduced in \cite[Definition 4.1]{ar}.

\begin{lem}
\label{lem:orlov-coherent}

The category $\Coh_{\free}^{G \times \Gm}(\cN_G)$, endowed with the function $\deg$ defined by
\[
\deg(V \otimes_{\C} \cO_{\cN_G} \langle i \rangle) = i,
\]
is an Orlov category. Moreover, there is a natural equivalence of triangulated categories
\begin{equation}
\label{eqn:equivalence-coherent}
\Kb\bigl( \Coh_{\free}^{G \times \Gm}(\cN_G) \bigr) \ \cong \ \cDb_{\free} \Coh^{G \times \Gm}(\cN_G).
\end{equation}

\end{lem}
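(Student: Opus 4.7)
The proof has two components: verifying the Orlov axioms and then identifying $\Kb(\Coh_{\free}^{G \times \Gm}(\cN_G))$ with the generated triangulated subcategory. Both are, at heart, consequences of the reductivity of $G \times \Gm$ together with the non-negative grading of $\C[\cN_G]$; the main thing to be careful about is the grading conventions.

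First I would clarify the structure of $\Coh^{G \times \Gm}_{\free}(\cN_G)$. The generators $V \otimes_{\C} \cO_{\cN_G} \langle i \rangle$ are projective objects of $\Coh^{G \times \Gm}(\cN_G)$: they are free $\C[\cN_G]$-modules, and, since $G \times \Gm$ is reductive, taking $(G \times \Gm)$-invariants is exact, so $\Hom^{G \times \Gm}_{\cO_{\cN_G}}(V \otimes \cO_{\cN_G}\langle i \rangle, -)$ is exact. Consequently every extension of free objects splits, and $\Coh^{G \times \Gm}_{\free}(\cN_G)$ is equivalent to the full additive Karoubian subcategory of $\Coh^{G \times \Gm}(\cN_G)$ generated by the objects $V \otimes \cO_{\cN_G} \langle i \rangle$ for $V$ simple. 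In particular this category is Krull--Schmidt, and its indecomposables are exactly these generators.

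Next I would verify the Orlov axioms by the direct Hom computation. Writing $\C[\cN_G] = \bigoplus_{n \ge 0} \C[\cN_G]_n$ for its decomposition by $\Gm$-weight (with $\C[\cN_G]_0 = \C$), homogeneous $\cO_{\cN_G}$-linear $(G \times \Gm)$-equivariant maps unwind to
\[
\Hom_{\Coh^{G \times \Gm}_{\free}(\cN_G)}(V_1 \otimes \cO_{\cN_G}\langle i \rangle, V_2 \otimes \cO_{\cN_G}\langle j \rangle) \ \cong \ \Hom_G(V_1, V_2 \otimes \C[\cN_G]_{i-j}).
\]
This space vanishes when $i < j$, which is the Orlov vanishing for $\deg(X) < \deg(Y)$. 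When $i = j$ it reduces, by Schur's lemma, to $\C$ when $V_1 \cong V_2$ and $0$ otherwise, which simultaneously gives vanishing for non-isomorphic indecomposables in the same degree and shows that each indecomposable has endomorphism ring the skew field $\C$.

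Finally, I would deduce the equivalence~\eqref{eqn:equivalence-coherent}. Since the generators are projective in $\Coh^{G \times \Gm}(\cN_G)$, the standard argument (complexes of projectives calculate Hom in the derived category) shows that the natural triangulated functor
\[
\Kb\bigl( \Coh_{\free}^{G \times \Gm}(\cN_G) \bigr) \ \longrightarrow \ \cDb \Coh^{G \times \Gm}(\cN_G)
\]
is fully faithful. Its essential image is a full triangulated subcategory closed under isomorphism that contains the free objects; by definition this is exactly $\cDb_{\free}\Coh^{G \times \Gm}(\cN_G)$, yielding the desired equivalence. No step here presents a real obstacle; the only thing to double-check is that the sign conventions for $\langle i \rangle$ give the correct direction of the Orlov inequality, but this is settled by the Hom computation above.
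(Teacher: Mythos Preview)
Your proof is correct and follows essentially the same approach as the paper: the same Hom computation (the paper writes it as $(V' \otimes V^* \otimes \C[\cN_G]\langle j-i\rangle)^{G \times \Gm}$, which after taking $\Gm$-invariants is your $\Hom_G(V_1, V_2 \otimes \C[\cN_G]_{i-j})$), the same projectivity observation, and the same identification of the essential image. The only point you leave implicit that the paper states explicitly is finite-dimensionality of Hom spaces, but this is immediate from your formula.
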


\begin{proof}
First, it is clear that morphism spaces between objects of $\Coh_{\free}^{G \times \Gm}(\cN_G)$ are finite-dimensional. Then, for $V,V'$ simple $G$-modules and $i,j \in \Z$ we have
\[
\Hom_{\Coh^{G \times \Gm}_{\free}(\cN_G)}\bigl(V \otimes_{\C} \cO_{\cN_G} \langle i \rangle, V' \otimes_{\C} \cO_{\cN_G} \langle j \rangle \bigr) \cong (V' \otimes V^* \otimes \C[\cN_G] \langle j-i \rangle)^{G \times \Gm}.
\]
If $V \cong V'$ and $i=j$, this space has dimension $1$. And, in the general case, this space is zero unless $j-i \in 2\Z_{<0}$, or $j=i$ and $V \cong V'$. This proves the properties of an Orlov category.

Now we prove equivalence \eqref{eqn:equivalence-coherent}. The abelian category $\Coh^{G \times \Gm}(\cN_G)$ has enough projectives. Hence there is an equivalence of categories
\[
K^-\bigl( \Proj(\cN_G) \bigr) \ \cong \ \cD^- \Coh^{G \times \Gm}(\cN_G),
\]
where $\Proj(\cN_G)$ is the additive category of projective objects in $\Coh^{G \times \Gm}(\cN_G)$. We have remarked above that the objects $V \otimes_{\C} \cO_{\cN_G} \langle i \rangle$, $V$ a simple $G$-module, are projective. Hence the category $\cDb_{\free} \Coh^{G \times \Gm}(\cN_G)$ is equivalent to the full triangulated subcategory of $K^-\bigl( \Proj(\cN_G) \bigr)$ generated by these objects. It is clear that this subcategory is the essential image of the fully faithful functor
\[
\Kb\bigl( \Coh_{\free}^{G \times \Gm}(\cN_G) \bigr) \to K^-\bigl( \Proj(\cN_G) \bigr)
\]
induced by the inclusion $\Coh_{\free}^{G \times \Gm}(\cN_G) \hookrightarrow \Proj(\cN_G)$. This proves the equivalence.
\end{proof}

To simplify the task of working with the category $\Coh_{\free}^{G \times \Gm}(\cN_G)$, let us make the following remark. We have explained above that any object of $\Coh_{\free}^{G \times \Gm}(\cN_G)$ is isomorphic to a direct sum of objects of the form $V \otimes \cO_{\cN_G} \lan i \ran$. Hence this category is equivalent to the additive (Orlov) category $\sA_G$ with:
\begin{itemize}
\item objects: finite-dimensional graded $G$-modules $V=\bigoplus_{n \in \Z} V_n$;
\item morphisms:
\[
\Hom_{\sA_G}(V,V') = \Hom_{\Coh^{G \times \Gm}(\cN_G)}( \bigoplus_{n \in \Z} V_n \otimes \C[\cN_G]\lan n \ran, \bigoplus_{n \in \Z} V'_n \otimes \C[\cN_G]\lan n \ran).
\]
\end{itemize}

\subsection{Isomorphism of $F_G^{\mix}$ and $'F_G^{\mix}$}
\label{ss:isom-FG-FGmix}

Consider the functor
\[
{}'F_G^{\mix} \circ (F_G^{\mix})^{-1} : \cDb_{\free} \Coh^{G \times \Gm}(\cN_G) \to \cDb_{\free} \Coh^{G \times \Gm}(\cN_G).
\]
By Lemma \ref{lem:orlov-coherent}, one can consider this functor as a functor from $\Kb\bigl( \Coh_{\free}^{G \times \Gm}(\cN_G) \bigr)$ to itself. As such, this functor stabilizes the subcategory $\Coh_{\free}^{G \times \Gm}(\cN_G)$, and there is a natural isomorphism of functors
\[
\bigl( {}'F_G^{\mix} \circ (F_G^{\mix})^{-1} \bigr)_{|\Coh_{\free}^{G \times \Gm}(\cN_G)} \ \cong \ \id_{\Coh_{\free}^{G \times \Gm}(\cN_G)}.
\]
(Use Lemma \ref{lem:image-IC-mix} and the category $\sA_G$ introduced in \S \ref{ss:orlov-category}.) By \cite[Theorem 4.7]{ar}, we deduce that there exists an isomorphism of functors
\[
{}'F_G^{\mix} \circ (F_G^{\mix})^{-1} \ \cong \ \id_{\cDb_{\free} \Coh^{G \times \Gm}(\cN_G)},
\]
hence an isomorphism of functors
\[
{}'F_G^{\mix} \ \cong \ F_G^{\mix}.
\]

\section{Relation to \cite{abg}}
\label{sect:relation-ABG}

In this section we explain the relationship between Theorems \ref{thm:equivalence} and \ref{thm:equivalence-mix} and \cite[Theorems 9.1.4 and 9.4.3]{abg}. For simplicity, we only treat the non-mixed case. The mixed equivalences can be related similarly. (In this case, this result can also be proved ``abstractly'' using Orlov categories.) The equivalence ``$F_G$'' we consider in this section is the one constructed in Section \ref{sect:proof-1}. The results of this section are not used in the rest of the paper.


\subsection{Sheaves on $\wcN$ and multihomogeneous coordinate algebra}
\label{ss:multihomogenous-coord-alg}

Let $\cB_G:=G/B$ be the flag variety of $G$, and let $\wcN_G:=T^*\cB_G$ be its cotangent bundle. Recall that for every weight $\lambda \in \bX$ there is a natural line bundle $\cO_{\cB_G}(\lambda)$ on $\cB_G$, which is globally generated iff $\lambda$ is dominant. We denote by $\cO_{\wcN_G}(\lambda)$ the pullback of $\cO_{\cB_G}(\lambda)$ to $\wcN_G$. Set
\[
\mathbf{\Gamma}(\wcN_G) \ := \ \bigoplus_{\lambda \in \bX^+} \, \Gamma(\wcN_G,\, \cO_{\wcN_G}(\lambda)).
\]
This is a $G$-equivariant $\bX$-graded algebra, called the \emph{multihomogeneous coordinate algebra} of $\wcN_G$. We denote by $\Mod^G_{\bX}(\mathbf{\Gamma}(\wcN_G))$ the abelian category of $G$-equivariant $\bX$-graded $\mathbf{\Gamma}(\wcN_G)$-modules, and by $\QCoh^G(\wcN_G)$ the abelian category of $G$-equivariant quasi-coherent sheaves on $\wcN_G$. There is a natural functor
\[
\mathbf{\Gamma} : \left\{ 
\begin{array}{ccc}
\QCoh^G(\wcN_G) & \to & \Mod^G_{\bX}(\mathbf{\Gamma}(\wcN_G)) \\
\cM & \mapsto & \bigoplus_{\lambda \in \bX^+} \, \Gamma(\wcN_G, \cM \otimes_{\cO_{\wcN_G}} \cO_{\wcN_G}(\lambda))
\end{array} .
\right.
\]

Now consider the $G$-equivariant $\bX$-graded sheaf of algebras
\[
\bO_{\wcN_G} \, := \, \bigoplus_{\lambda \in \bX} \, \cO_{\wcN_G}(\lambda).
\]
We denote by $\QCoh^G_{\bX}(\wcN_G, \bO_{\wcN_G})$ the abelian category of $G$-equivariant $\bX$-graded sheaves of modules over the algebra $\bO_{\wcN_G}$, which are quasi-coherent over $\cO_{\wcN_G}$. There are natural adjoint functors
\[
\sfL : \left\{
\begin{array}{ccc}
\Mod^G_{\bX}(\mathbf{\Gamma}(\wcN_G)) & \to & \QCoh^G_{\bX}(\wcN_G, \bO_{\wcN_G}) \\
M & \mapsto & \bO_{\wcN_G} \otimes_{\mathbf{\Gamma}(\wcN_G)} M
\end{array}
\right.
\]
and
\[
\sfG : \left\{
\begin{array}{ccc}
\QCoh^G_{\bX}(\wcN_G, \bO_{\wcN_G}) & \to & \Mod^G_{\bX}(\mathbf{\Gamma}(\wcN_G)) \\
\cM & \mapsto & \Gamma(\wcN_G, \cM)
\end{array} .
\right.
\]
Here, $\sfG$ and $\sfL$ stand for ``global'' and ``local.'' Note that the inclusion $\mathbf{\Gamma}(\wcN_G) \subset \Gamma(\wcN_G, \bO_{\wcN_G})$ is strict.

There are also natural functors
\[
\mathsf{Ind}_{\wcN_G} : \left\{
\begin{array}{ccc}
\QCoh^G(\wcN_G) & \to & \QCoh^G_{\bX}(\wcN_G, \bO_{\wcN_G}) \\
\cM & \mapsto & \bO_{\wcN_G} \otimes_{\cO_{\wcN_G}} \cM
\end{array}
\right.
\]
and
\[
\mathsf{Res}_{\wcN_G} : \left\{
\begin{array}{ccc}
\QCoh^G_{\bX}(\wcN_G, \bO_{\wcN_G}) & \to & \QCoh^G(\wcN_G) \\
\cM & \mapsto & [\cM]_0
\end{array} ,
\right.
\]
where $[\cM]_{\lambda}$ is the component of $\cM$ of degree $\lambda \in \bX$.

Finally, we let $\mathsf{Tor}^G_{\bX}(\mathbf{\Gamma}(\wcN_G)) \subset \Mod^G_{\bX}(\mathbf{\Gamma}(\wcN_G))$ be the subcategory whose objects are inductive limits of objects $M$ such that there exists $\lambda \in \bX$ (depending on $M$) such that $[M]_{\mu}=0$ for $\mu \in \lambda + \bX^+$. We denote by
\[
\mathsf{Q} : \Mod^G_{\bX}(\mathbf{\Gamma}(\wcN_G)) \to \Mod^G_{\bX}(\mathbf{\Gamma}(\wcN_G)) / \mathsf{Tor}^G_{\bX}(\mathbf{\Gamma}(\wcN_G))
\]
the quotient functor.

The following result is a version of Serre's theorem on quasi-coherent sheaves on projective varieties. See \cite{av} for a similar result, whose proof can easily be adapted to our setting.

\begin{prop}
\label{prop:serre-thm}

\begin{enumerate}

\item 
The functor $\sfL$ is exact and vanishes on the subcategory $\mathsf{Tor}^G_{\bX}(\mathbf{\Gamma}(\wcN_G))$. The induced functor
\[
\sfL' : \Mod^G_{\bX}(\mathbf{\Gamma}(\wcN_G)) / \mathsf{Tor}^G_{\bX}(\mathbf{\Gamma}(\wcN_G)) \to \QCoh^G_{\bX}(\wcN_G, \bO_{\wcN_G})
\]
is an equivalence of abelian categories, with quasi-inverse $\mathsf{Q} \circ \sfG$.

\item
\label{it:prop-serre-thm-Ind-Res}
The functors $\mathsf{Ind}_{\wcN_G}$ and $\mathsf{Res}_{\wcN_G}$ are quasi-inverse equivalences of categories.

\item
There exists an isomorphism of functors $\mathsf{Q} \circ \mathbf{\Gamma} \ \cong \ \mathsf{Q} \circ \sfG \circ \mathsf{Ind}_{\wcN}$.\qed

\end{enumerate}

\end{prop}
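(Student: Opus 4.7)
The plan is to treat the three assertions in the order (2), (3), (1), since the first two are essentially formal while (1) contains the substantive Serre-type argument.

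For (2), the adjointness of $\mathsf{Ind}_{\wcN_G}$ and $\mathsf{Res}_{\wcN_G}$ is tautological, and it suffices to check that the unit and counit are isomorphisms. The unit $\cN \to [\bO_{\wcN_G} \otimes_{\cO_{\wcN_G}} \cN]_0$ is just the canonical isomorphism $\cN \cong \cO_{\wcN_G} \otimes_{\cO_{\wcN_G}} \cN$. For the counit, the key observation is that in any graded $\bO_{\wcN_G}$-module $\cM$ the action map $\cO_{\wcN_G}(\lambda) \otimes_{\cO_{\wcN_G}} [\cM]_0 \to [\cM]_\lambda$ admits an inverse built from the action of $\cO_{\wcN_G}(-\lambda)$ together with the canonical isomorphism $\cO_{\wcN_G}(\lambda) \otimes \cO_{\wcN_G}(-\lambda) \cong \cO_{\wcN_G}$; associativity of the action and the unit axiom force both compositions to be the identity, so the counit is an isomorphism in every graded degree.

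For (3), observe that
\[
\sfG \circ \mathsf{Ind}_{\wcN_G}(\cM) \ = \ \bigoplus_{\lambda \in \bX} \Gamma\bigl( \wcN_G,\, \cO_{\wcN_G}(\lambda) \otimes_{\cO_{\wcN_G}} \cM \bigr)
\]
contains $\mathbf{\Gamma}(\cM)$ as the graded $\mathbf{\Gamma}(\wcN_G)$-submodule supported on dominant weights, with quotient $N$ supported entirely in $\bX \smallsetminus \bX^+$. Taking $\lambda_0 = 0 \in \bX^+$, one has $[N]_\mu = 0$ for every $\mu \in \lambda_0 + \bX^+ = \bX^+$, so $N$ itself already fulfils the defining condition of $\mathsf{Tor}^G_\bX(\mathbf{\Gamma}(\wcN_G))$. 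Applying $\mathsf{Q}$ to the short exact sequence $\mathbf{\Gamma}(\cM) \hookrightarrow \sfG \circ \mathsf{Ind}_{\wcN_G}(\cM) \twoheadrightarrow N$ then produces the claimed isomorphism of functors.

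For (1), by part (2) it is enough to show that the functor $\mathsf{Res}_{\wcN_G} \circ \sfL : \Mod^G_\bX(\mathbf{\Gamma}(\wcN_G)) \to \QCoh^G(\wcN_G)$ is exact, annihilates $\mathsf{Tor}^G_\bX(\mathbf{\Gamma}(\wcN_G))$, and descends to an equivalence with quasi-inverse $\mathsf{Q} \circ \mathbf{\Gamma}$ (which by (3) agrees with $\mathsf{Q} \circ \sfG \circ \mathsf{Ind}_{\wcN_G}$). I would reduce this to the classical Serre correspondence on the projective variety $\cB_G$ via the affine projection $\pi : \wcN_G \to \cB_G$: pushforward identifies $\QCoh^G(\wcN_G)$ with $G$-equivariant quasi-coherent modules over the sheaf of algebras $\pi_* \cO_{\wcN_G}$ on $\cB_G$, and the projection formula rewrites $\Gamma(\wcN_G, \cO_{\wcN_G}(\lambda) \otimes \cM)$ as $\Gamma(\cB_G, \cO_{\cB_G}(\lambda) \otimes \pi_* \cM)$. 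Covering $\cB_G$ by the affine loci on which simultaneously chosen dominant sections trivialize enough of the line bundles $\cO_{\cB_G}(\lambda)$ (possible because these line bundles are globally generated), the sheaf $\bO_{\wcN_G}$ becomes a multigraded localization of $\mathbf{\Gamma}(\wcN_G)$ at these sections. From this description one reads off exactness of $\sfL$, its vanishing on torsion (which is killed after inverting a sufficiently dominant section), and the reconstruction of $\cM$ from its graded sections by the standard Serre-type argument. The main obstacle is carrying out this localization-by-global-sections argument simultaneously in the $\bX$-multigraded, $G$-equivariant, and relatively-affine settings, as in \cite{av}; once those decorations are disentangled, each step reduces to the classical content of Serre's theorem on a projective variety.
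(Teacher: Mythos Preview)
The paper does not actually prove this proposition: it is stated with a $\qed$ and accompanied only by the remark that it is ``a version of Serre's theorem on quasi-coherent sheaves on projective varieties'' whose proof can be adapted from \cite{av}. So there is no paper proof to compare against line by line; your proposal is therefore filling a gap the authors deliberately left.

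That said, your outline is sound and is exactly in the spirit of the reference the authors point to. A couple of minor comments. In part (3), your argument is correct but you should perhaps stress that $\mathbf{\Gamma}(\cM)$ really is a $\mathbf{\Gamma}(\wcN_G)$-\emph{submodule} of $\sfG\circ\mathsf{Ind}_{\wcN_G}(\cM)$ (not merely a graded subspace): this holds because the algebra $\mathbf{\Gamma}(\wcN_G)$ is itself supported on $\bX^+$ and $\bX^+$ is a submonoid, so the action preserves the $\bX^+$-supported part. In part (1), your reduction via the affine morphism $\pi:\wcN_G\to\cB_G$ and the projection formula is the right move, and the observation that $\bO_{\wcN_G}$ is locally obtained from $\mathbf{\Gamma}(\wcN_G)$ by inverting global sections of the $\cO_{\cB_G}(\lambda)$ is the heart of the Serre argument; the $G$-equivariance is harmless since $G$ is reductive and all constructions are functorial. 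This is precisely the kind of adaptation of \cite{av} the authors had in mind.
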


\begin{rmk}
The functor $\mathsf{Res}_{\wcN_G} \circ \sfL$ is a non-$\C^{\times}$-equivariant version of the functor denoted by $\mathscr{F}$ in \cite[\S 8.8]{abg}.
\end{rmk}

In fact, we will not use Proposition \ref{prop:serre-thm} in the sequel. This proposition only serves as a motivation for the definition of the functor $\mathsf{Loc}$ below.

Consider the Springer resolution $\pi : \wcN_G \to \cN_G$, and the associated inverse image functor $\pi^* : \QCoh^G(\cN_G) \to \QCoh^G(\wcN_G)$. As $\cN_G$ is an affine variety, the global sections functor induces an equivalence
\[
\Gamma(\cN_G,-) : \QCoh^G(\cN_G) \to \Mod^G(\C[\cN_G])
\]
(where $\Mod^G(\C[\cN_G])$ is the category of $G$-equivariant $\C[\cN_G]$-modules).

\begin{prop}
\label{prop:inverse-image-multihomogeneous}

The following diagram commutes up to an isomorphism of functors:
\[
\xymatrix@C=1.7cm{
\QCoh^G(\cN_G) \ar[r]^-{\Gamma(\cN_G,-)} \ar[d]_-{\pi^*} & \Mod^G(\C[\cN_G]) \ar[r]^-{\mathbf{\Gamma}(\wcN_G) \otimes_{\C[\cN_G]} -} & \Mod^G_{\bX}(\mathbf{\Gamma}(\wcN_G)) \ar[d]^-{\sfL} \\
\QCoh^G(\wcN_G) \ar[rr]^-{\mathsf{Ind}_{\wcN_G}} & & \QCoh^G_{\bX}(\wcN_G, \bO_{\wcN_G}).
}
\]

\end{prop}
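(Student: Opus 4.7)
The plan is to establish the commutativity by showing that both composed functors are naturally isomorphic to a single, simpler functor $T:\QCoh^G(\cN_G) \to \QCoh^G_{\bX}(\wcN_G,\bO_{\wcN_G})$ defined by
$$
T(\cM) \ := \ \bO_{\wcN_G} \otimes_{\C[\cN_G]} \Gamma(\cN_G,\cM),
$$
where $\bO_{\wcN_G}$ is regarded as a sheaf of $\C[\cN_G]$-algebras through the composition
$\C[\cN_G] \xrightarrow{\pi^{\#}} \Gamma(\wcN_G,\cO_{\wcN_G}) = [\mathbf{\Gamma}(\wcN_G)]_{0} \hookrightarrow \mathbf{\Gamma}(\wcN_G) \to \bO_{\wcN_G}$, i.e.\ via the $\lambda=0$ summand.

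Let $\cM \in \QCoh^G(\cN_G)$ and set $M:=\Gamma(\cN_G,\cM)$. For the upper-right route, associativity of tensor product provides a canonical isomorphism of $G$-equivariant $\bX$-graded $\bO_{\wcN_G}$-modules
$$
\sfL\bigl(\mathbf{\Gamma}(\wcN_G) \otimes_{\C[\cN_G]} M\bigr) \ = \ \bO_{\wcN_G} \otimes_{\mathbf{\Gamma}(\wcN_G)} \bigl(\mathbf{\Gamma}(\wcN_G) \otimes_{\C[\cN_G]} M\bigr) \ \cong \ T(\cM).
$$
For the lower-left route, since $\cN_G$ is affine, $\cM$ is the quasi-coherent sheaf associated with $M$, so $\pi^{*}\cM$ is the quasi-coherent sheaf on $\wcN_G$ whose sections over any affine open $U \subset \wcN_G$ are $\Gamma(U,\cO_{\wcN_G}) \otimes_{\C[\cN_G]} M$. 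Tensoring over $\cO_{\wcN_G}$ with $\bO_{\wcN_G}$ then yields, again by associativity,
$$
\mathsf{Ind}_{\wcN_G}(\pi^{*}\cM) \ = \ \bO_{\wcN_G} \otimes_{\cO_{\wcN_G}} \pi^{*}\cM \ \cong \ T(\cM).
$$
Both identifications are manifestly $G$-equivariant, $\bX$-graded, and functorial in $\cM$.

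No deeper input is needed: we do not invoke flatness, Serre's theorem (Proposition~\ref{prop:serre-thm}), nor any resolution. The only point requiring care is the verification that the two natural $\C[\cN_G]$-actions on $\bO_{\wcN_G}$ (one via the composition $\C[\cN_G] \hookrightarrow \mathbf{\Gamma}(\wcN_G) \to \bO_{\wcN_G}$, the other via $\pi^{\#}$ on $\cO_{\wcN_G} = [\bO_{\wcN_G}]_0$) coincide, but this is tautological from the definitions: both arise by identifying $\C[\cN_G]$ with its image in the $\lambda=0$ graded piece of $\mathbf{\Gamma}(\wcN_G)$, which in turn sits inside $\bO_{\wcN_G}$ as the degree-zero summand. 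I do not expect any genuine obstacle; the proposition is essentially a bookkeeping exercise with associativity of tensor product, and the mild subtleties are entirely in keeping the $\bX$-grading and the $G$-equivariance straight throughout the identifications.
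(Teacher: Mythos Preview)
Your proof is correct and follows essentially the same approach as the paper: the paper's argument is simply the observation that $\pi^* \cong \cO_{\wcN_G} \otimes_{\C[\cN_G]} \Gamma(\cN_G,-)$ together with transitivity of the tensor product, which is exactly what you have spelled out via the intermediate functor $T$.
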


\begin{proof}
This follows immediately from the fact that there exists a natural isomorphism of functors
\[
\pi^* \ \cong \ \cO_{\wcN_G} \otimes_{\C[\cN_G]} \Gamma(\cN_G,-)
\]
and transitivity of the tensor product.
\end{proof}

Now we consider dg-analogues of these constructions. Let $\DGC^G(\wcN_G)$ be the subcategory of the the derived category of $G$-equivariant quasi-coherent sheaves of dg-modules over the sheaf of dg-algebras $\mathrm{S}_{\cO_{\cB_G}}(\mathcal{T}_{\cB_G})$ on $\cB_G$ (where the tangent sheaf $\mathcal{T}_{\cB_G}$ is in degree $2$, and the differential is trivial) whose objects have their cohomology locally finitely generated over $\mathrm{S}_{\cO_{\cB_G}}(\mathcal{T}_{\cB_G})$.

Consider also the $\bX$-graded $G$-equivariant quasi-coherent sheaf of dg-algebras on $\cB_G$
\[
\wbO_{\wcN_G} \ := \ \bigoplus_{\lambda \in \bX} \, \mathrm{S}_{\cO_{\cB_G}}(\mathcal{T}_{\cB_G}) \otimes_{\cO_{\cB_G}} \cO_{\cB_G}(\lambda).
\]
Here the multiplication is the natural one, $\mathcal{T}_{\cB_G}$ is in degree $2$ is each direct summand, and the differential is trivial. (This definition is chosen so that, if we forget about the grading, $\wbO_{\wcN_G}$ is the direct image to $\cB_G$ of $\bO_{\wcN_G}$.) We denote by $\DGC^G_{\bX}(\wcN_G,\wbO_{\wcN_G})$ the subcategory of the derived category of $\bX$-graded $G$-equivariant quasi-coherent sheaves of dg-modules over $\wbO_{\wcN_G}$ whose objects have their cohomology locally finitely generated over $\wbO_{\wcN_G}$. By the same arguments as for Proposition \ref{prop:serre-thm}\eqref{it:prop-serre-thm-Ind-Res}, there is a natural equivalence of triangulated categories
\begin{equation}
\label{eqn:equivalence-Ind-Res}
\DGC^G(\wcN_G) \ \cong \ \DGC^G_{\bX}(\wcN_G,\wbO_{\wcN_G}).
\end{equation}

Finally, we consider $\mathbf{\Gamma}(\wcN_G)$ as an $\bX$-graded $G$-equivariant dg-algebra with trivial differential and the $\bX \times \Z$-grading chosen so that the inclusion $\mathbf{\Gamma}(\wcN_G) \subset \Gamma(\cB_G, \wbO_{\wcN_G})$ is graded. We denote by $\DGM^{G}_{\mathrm{fg},\bX}(\mathbf{\Gamma}(\wcN_G))$ the subcategory of the derived category of $\bX$-graded $G$-equivariant dg-modules over this dg-algebra whose objects have their cohomology finitely generated over $\mathbf{\Gamma}(\wcN_G)$. There is also a natural functor
\[
\left\{ 
\begin{array}{ccc}
\DGM^{G}_{\mathrm{fg},\bX}(\mathbf{\Gamma}(\wcN_G)) & \to & \DGC^G_{\bX}(\wcN_G,\wbO_{\wcN_G}) \\
M & \mapsto & \wbO_{\wcN_G} \, \lotimes_{\mathbf{\Gamma}(\wcN_G)} \, M
\end{array}
\right. .
\]
We denote by
\begin{equation}
\label{eqn:def-Loc}
\mathsf{Loc} : \DGM^{G}_{\mathrm{fg},\bX}(\mathbf{\Gamma}(\wcN_G)) \to \DGC^G(\wcN_G)
\end{equation}
the composition of this functor with the equivalence \eqref{eqn:equivalence-Ind-Res}.

The morphism $\pi$ defined above induces a (derived) inverse image functor
\[
\pi^* : \DGCGf(\cN_G) \to \DGC^G(\wcN_G).
\]
There is also a functor
\[
\mathbf{\Gamma}(\wcN_G) \, \lotimes_{\C[\cN_G]} \, (-) : \DGCGf(\cN_G) \to \DGM^{G}_{\mathrm{fg},\bX}(\mathbf{\Gamma}(\wcN_G))
\]
The same proof as that of Proposition \ref{prop:inverse-image-multihomogeneous} gives the following result.

\begin{prop}
\label{prop:inverse-image-dg}

The diagram
\[
\xymatrix{
& \DGCGf(\cN_G) \ar[ld]|-{\mathbf{\Gamma}(\wcN_G) \, \lotimes_{\C[\cN_G]} \, (-)} \ar[rd]^-{\pi^*} & \\
\DGM^{G}_{\mathrm{fg},\bX}(\mathbf{\Gamma}(\wcN_G)) \ar[rr]^-{\mathsf{Loc}} & & \DGC^G(\wcN_G). 
}
\]
commutes up to an isomorphism of functors.\qed

\end{prop}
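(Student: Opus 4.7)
The plan is to imitate the proof of Proposition \ref{prop:inverse-image-multihomogeneous} in the derived setting, as the remark preceding the statement suggests. The commutativity of the diagram will reduce to two ingredients: transitivity of the derived tensor product, and a description of $\pi_*\cO_{\wcN_G}$ as the $\cO_{\cB_G}$-algebra $\mathrm{S}_{\cO_{\cB_G}}(\mathcal{T}_{\cB_G})$.

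First I would unwind the definition of $\mathsf{Loc}$ given in \eqref{eqn:def-Loc}. For $M \in \DGCGf(\cN_G)$, following the down-left-then-across branch of the diagram produces the object
\[
\wbO_{\wcN_G} \lotimes_{\mathbf{\Gamma}(\wcN_G)} \bigl( \mathbf{\Gamma}(\wcN_G) \lotimes_{\C[\cN_G]} M \bigr)
\]
in $\DGC^G_{\bX}(\wcN_G, \wbO_{\wcN_G})$, which is then transported to $\DGC^G(\wcN_G)$ via the inverse of the equivalence \eqref{eqn:equivalence-Ind-Res}, i.e.~extraction of the $\bX$-degree-$0$ component. By transitivity of the derived tensor product, justified by choosing a K-flat resolution of $M$ as a $G$-equivariant $\C[\cN_G]$-dg-module, this object is canonically isomorphic to $\wbO_{\wcN_G} \lotimes_{\C[\cN_G]} M$.

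Next, I would identify the $\bX$-degree-$0$ component of $\wbO_{\wcN_G} \lotimes_{\C[\cN_G]} M$ with $\pi^* M$. Since $\wbO_{\wcN_G}$ is $\bX$-graded with degree-$0$ piece equal to $\mathrm{S}_{\cO_{\cB_G}}(\mathcal{T}_{\cB_G})$, and the $\C[\cN_G]$-action respects this grading (acting only on the sheaf side), extraction of the degree-$0$ component commutes with the tensor product and yields $\mathrm{S}_{\cO_{\cB_G}}(\mathcal{T}_{\cB_G}) \lotimes_{\C[\cN_G]} M$. Since the Springer map $\pi$ is affine over $\cB_G$ with $\pi_* \cO_{\wcN_G} \cong \mathrm{S}_{\cO_{\cB_G}}(\mathcal{T}_{\cB_G})$, the same tensor product computes $\pi^* M$, producing the desired natural isomorphism in $\DGC^G(\wcN_G)$.

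The only real obstacle is the bookkeeping needed to ensure that the above isomorphisms are compatible with the $G$-equivariant $\bX$-graded dg-structure on all objects involved, and are functorial in $M$. Since $G$ is a complex reductive group (so equivariance adds no homological difficulty) and all relevant modules admit K-projective resolutions built from $G$-equivariant $\bX$-graded free modules, this bookkeeping is routine.
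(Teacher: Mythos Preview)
Your proposal is correct and follows exactly the approach the paper intends: the paper's proof is simply the remark ``the same proof as that of Proposition~\ref{prop:inverse-image-multihomogeneous} gives the result,'' and that proof amounts precisely to transitivity of the tensor product together with the identification $\pi^* \cong \cO_{\wcN_G} \otimes_{\C[\cN_G]} \Gamma(\cN_G,-)$. You have spelled out the dg-version of this in more detail than the paper does (unwinding $\mathsf{Loc}$ via \eqref{eqn:def-Loc} and extracting the $\bX$-degree-$0$ component through equivalence \eqref{eqn:equivalence-Ind-Res}), but the substance is the same.
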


\subsection{Reminder on \cite{abg}}
\label{ss:reminder-abg}

For the remainder of Section \ref{sect:relation-ABG} we assume that $G$ is semisimple of adjoint type. By \cite[Theorem 9.1.4]{abg}, there exists an equivalence of categories
\begin{equation}
\label{eqn:equiv-abg}
\mathbf{F}_G : \cDb_{\mon{\Iv}}(\Gr_{\Gv}) \ \xrightarrow{\sim} \ \DGC^G(\wcN_G).
\end{equation}
Let us recall how this equivalence is constructed.

First, we denote by $\cD_{\prj}(\Gr_{\Gv})$ the subcategory of the homotopy category of bounded above complexes of projective pro-objects in $\Perv_{\mon{\Iv}}(\Gr_{\Gv})$ whose objects $C^{\bullet}$ satisfy the following conditions: 
\begin{itemize}
\item $H^i(C) \in \Perv_{\mon{\Iv}}(\Gr_{\Gv})$ for any $i \in \Z$;
\item $H^i(C)=0$ for $i \ll 0$.
\end{itemize}
There exists a natural functor from $\cD_{\prj}(\Gr_{\Gv})$ to the derived category of the abelian category of pro-objects in $\Perv_{\mon{\Iv}}(\Gr_{\Gv})$. The essential image of this functor is the subcategory whose objects have their total cohomology in $\Perv_{\mon{\Iv}}(\Gr_{\Gv})$. By \cite[Theorem 15.3.1(i)]{ks2}, the latter subcategory is equivalent to $\cDb \Perv_{\mon{\Iv}}(\Gr_{\Gv})$. Hence one obtains an equivalence of triangulated categories
\begin{equation}
\label{eqn:equivalence-proj}
\Upsilon : \cD_{\prj}(\Gr_{\Gv}) \xrightarrow{\sim} \cDb \Perv_{\mon{\Iv}}(\Gr_{\Gv}) \cong \cDb_{\mon{\Iv}}(\Gr_{\Gv}).
\end{equation}

As in \S \ref{ss:Satake-mix}, let $\Fl_{\Gv}:=\Gv(\fK)/\Iv$ be the affine flag variety. For every $\lambda \in \bX$ one can define a \emph{Wakimoto sheaf} $\cW_{\lambda} \in \Perv_{\eq{\Iv}}(\Fl_{\Gv})$, see \cite[\S 8.3]{abg}. Recall that there is a natural convolution product on $\cDb_{\eq{\Iv}}(\Fl_{\Gv})$, which we denote by $\star^{\Iv}$. For any $\lambda,\mu \in \bX$ there exists a canonical isomorphism
\begin{equation}
\label{eqn:wakimoto}
\cW_{\lambda} \star^{\Iv} \cW_{\mu} \ \cong \ \cW_{\lambda+\mu}
\end{equation}
(see \cite[Corollary 8.3.2]{abg} or \cite[Corollary 1]{ab}).

There exists also a (convolution) action of the category $\cDb_{\eq{\Iv}}(\Fl_{\Gv})$ on the category $\cDb_{\eq{\Iv}}(\Gr_{\Gv})$. In \cite[\S 8.9]{abg}, the authors explain how to ``extend'' the convolution with $\cW_{\lambda}$ to a functor on $\cDb_{\mon{\Iv}}(\Gr_{\Gv})$. More precisely, they construct for every $\lambda \in \bX^+$ an equivalence of categories $C_{\lambda}: \cDb_{\mon{\Iv}}(\Gr_{\Gv}) \to \cDb_{\mon{\Iv}}(\Gr_{\Gv})$ such that the following diagram commutes up to isomorphism:
\[
\xymatrix@C=1.5cm{
\cDb_{\eq{\Iv}}(\Gr_{\Gv}) \ar[d]_-{\For} \ar[r]^-{\cW_{\lambda} \star (-)} & \cDb_{\eq{\Iv}}(\Gr_{\Gv}) \ar[d]^-{\For} \\
\cDb_{\mon{\Iv}}(\Gr_{\Gv}) \ar[r]^{C_{\lambda}} & \cDb_{\mon{\Iv}}(\Gr_{\Gv}).
}
\]
Moreover, for any $\lambda,\mu \in \bX^+$ there exists a canonical isomorphism of functors
\[
C_{\lambda} \circ C_{\mu} \ \cong \ C_{\lambda+\mu}
\]
compatible (in the obvious sense) with isomorphism \eqref{eqn:wakimoto}. For these reasons, for any $M$ in $\cDb_{\mon{\Iv}}(\Gr_{\Gv})$ one can set  $\cW_{\lambda} \star M := C_{\lambda}(M)$. (Note that this notation may be misleading, as this construction is \emph{not} functorial in the left factor, but only in the right one.)

Consider the $\bX \times \Z$-graded vector space
\[
\Ext^{\bullet}_{\mon{\Iv}}(1_G, \cW_{\bX^+} \star \cR_G),
\]
whose $(\lambda,i)$-component is zero if $\lambda \notin \bX^+$, and otherwise is
\[
\varinjlim_{k \geq 0} \, \Hom_{\cDb_{\mon{\Iv}}(\Gr_{\Gv})}(1_G,\cW_{\lambda} \star \cR_{G,k}[i]).
\]
This graded vector space can be endowed with an algebra structure, where for $\xi \in \Ext^i_{\mon{\Iv}}(1_G,\cW_{\lambda} \star \cR_G)$ and $\zeta \in \Ext^j_{\mon{\Iv}}(1_G,\cW_{\mu} \star \cR_G)$, the product $\xi \cdot \zeta$ is by definition the morphism
\begin{multline*}
1_G \xrightarrow{\zeta} \cW_{\mu} \star \cR_G[j] \cong \cW_{\mu} \star 1_G \star \cR_G[j] \xrightarrow{\cW_{\lambda} \star \xi \star \cR_G[j]} \cW_{\mu} \star \cW_{\lambda} \star \cR_G \star \cR_G[i+j] \\ \cong \cW_{\lambda+\mu} \star \cR_G \star \cR_G[i+j] \xrightarrow{\cW_{\lambda+\mu} \star \sm[i+j]} \cW_{\lambda+\mu} \star \cR_G [i+j].
\end{multline*}
The action of $G$ on $\cR_G$ also induces an action on this algebra, which is compatible with the product. Note that the algebra $\Ext^{\bullet}_{\mon{\GvO}}(1_G,\cR_G)$ of \S \ref{ss:Ext-algebra} is the component of the algebra $\Ext^{\bullet}_{\mon{\Iv}}(1_G, \cW_{\bX^+} \star \cR_G)$ of weight $0 \in \bX$.

By \cite[Theorem 8.5.2]{abg}, there exists an isomorphism of $G$-equivariant $\bX \times \Z$-graded algebras
\begin{equation}
\label{eqn:isom-abg}
\Ext^{\bullet}_{\mon{\Iv}}(1_G, \cW_{\bX^+} \star \cR_G) \ \cong \ \mathbf{\Gamma}(\wcN_G),
\end{equation}
where the $\Z$-grading on the right-hand side is as in \S \ref{ss:multihomogenous-coord-alg}.

The next step is a formality result for some dg-algebra. Consider the resolution $P^{\bullet}$ as in \S \ref{ss:projective-resolution}, and form the $\bX$-graded dg-algebra
\[
\sE_{\bX}^{\bullet}(1_G,\cR_G) \ := \ \bigoplus_{\genfrac{}{}{0pt}{}{\lambda \in \bX^+}{i\in \Z}} \, \varinjlim_{k \geq 0} \, \Hom^i(P^{\bullet},\cW_{\lambda} \star P^{\bullet} \star \cR_{G,k}).
\]
Here the differential is the natural one, and the product is defined as follows. If $\xi \in \Hom^i(P^{\bullet},\cW_{\lambda} \star P^{\bullet} \star \cR_{G,k})$ and $\zeta \in \Hom^j(P^{\bullet},\cW_{\mu} \star P^{\bullet} \star \cR_{G,l})$, then the product $\xi \cdot \zeta$ is the composition
\begin{multline*}
P^{\bullet} \xrightarrow{\zeta} \cW_{\mu} \star P^{\bullet} \star \cR_{G,l}[j] \ \xrightarrow{\cW_{\mu} \star \xi \star \cR_{G,l}[j]} \cW_{\mu} \star \cW_{\lambda} \star P^{\bullet} \star \cR_{G,k} \star \cR_{G,l}[i+j] \cong \\
\cW_{\lambda+\mu} \star P^{\bullet} \star \cR_{G,k} \star \cR_{G,l}[i+j] \xrightarrow{\cW_{\lambda+\mu} \star P^{\bullet} \star \sm_{k,l}[i+j]} \cW_{\lambda+\mu} \star P^{\bullet} \star \cR_{G,k+l} [i+j].
\end{multline*}
Note that the dg-algebra $\sE^{\bullet}(1_G,\cR_G)$ of \S \ref{ss:formality} is the component of $\sE_{\bX}^{\bullet}(1_G,\cR_G)$ of weight $0 \in \bX$.

By the same arguments as in \S \ref{ss:lf-subalgebra}, one can construct a sub-dg-algebra\footnote{This construction is not performed in \cite{abg}. It is necessary for the arguments there to work, however, even in the non-mixed case.} 
\begin{equation}
\label{eqn:lf-subalgebra-X}
\sE_{\bX}^{\bullet}(1_G,\cR_G)^{\lf} \subset \sE_{\bX}^{\bullet}(1_G,\cR_G)
\end{equation}
which is ``the locally finite part for the action of the Frobenius'', such that the inclusion $\sE_{\bX}^{\bullet}(1_G,\cR_G)^{\lf} \hookrightarrow \sE_{\bX}^{\bullet}(1_G,\cR_G)$ is a quasi-isomorphism. Then, using pointwise purity of simple $\GvO$-equivariant perverse sheaves on $\Gr_{\Gv}$ (see \cite{kl,g1}), one can construct an injection and a surjection of $\bX$-graded, $G$-equivariant dg-algebras
\begin{equation}
\label{eqn:qis-pointwise-purity}
\xymatrix{
\sE_{\bX}^{\bullet}(1_G,\cR_G)^{\lf} &
\ \sE_{\bX}^{\bullet}(1_G,\cR_G)^{\lf}_< \ar@{_{(}->}[l] \ar@{->>}[r] & \Ext^{\bullet}_{\mon{\Iv}}(1_G, \cW_{\bX^+} \star \cR_G)
}
\end{equation}
which are both quasi-isomorphisms\footnote{Note that these arguments prove in particular the formality of the dg-algebra $\sE^{\bullet}(1_G,\cR_G)$ of \S \ref{ss:formality}. We believe our argument in the proof of Proposition \ref{prop:morphism-dg-algebras} is more elementary.} (see \cite[\S 9.5]{abg}).

Finally we can review the construction of the equivalence $\mathbf{F}_G$. It is defined as the composition
\begin{multline*}
\cDb_{\mon{\Iv}}(\Gr_{\Gv}) \xrightarrow{\Upsilon^{-1}} \cD_{\prj}(\Gr_{\Gv}) \xrightarrow{(1)} \DGM^G_{\mathrm{fg},\bX}(\sE_{\bX}^{\bullet}(1_G,\cR_G)^{\mathrm{op}}) \\
\xrightarrow{(2)} \DGM^G_{\mathrm{fg},\bX}(\sE_{\bX}^{\bullet}(1_G,\cR_G)^{\lf,\mathrm{op}}) \xrightarrow{(3)} \DGM^G_{\mathrm{fg},\bX}(\sE_{\bX}^{\bullet}(1_G,\cR_G)^{\lf,\mathrm{op}}_<) \\
\xrightarrow{(4)} \DGM^G_{\mathrm{fg},\bX}(\Ext^{\bullet}_{\mon{\Iv}}(1_G, \cW_{\bX^+} \star \cR_G)) \xrightarrow{(5)} \DGMG_{\mathrm{fg},\bX}(\mathbf{\Gamma}(\wcN_G)) \\
\xrightarrow{\mathsf{Loc}} \DGC^G(\wcN_G).
\end{multline*}
Here, the categories $\DGM^G_{\mathrm{fg},\bX}(-)$ are defined as for $\mathbf{\Gamma}(\wcN_G)$ in \S \ref{ss:multihomogenous-coord-alg}. The functor $(1)$ is defined by a formula very similar to that for the functor $\sE^{\bullet}(1_G,(-) \star \cR_G)$ defined in \S \ref{ss:construction-functor}, adding Wakimoto sheaves to the picture. (This functor is defined at the level of homotopy categories; the functor $(1)$ is the composition with the natural functor from the homotopy category to the derived category.) The functor $(2)$ is the equivalence induced by quasi-isomorphism \eqref{eqn:lf-subalgebra-X}. The functors $(3)$ and $(4)$ are similarly induced by quasi-isomorphisms \eqref{eqn:qis-pointwise-purity}. The functor $(5)$ is the equivalence induced by isomorphism \eqref{eqn:isom-abg}. Finally, the equivalence $\Upsilon$ is defined in \eqref{eqn:equivalence-proj}, and the functor $\mathsf{Loc}$ in \eqref{eqn:def-Loc}.

\subsection{Compatibility}
\label{ss:compatibility-abg}

The main result of this section is the following.

\begin{prop}

The following diagram is commutative up to an isomorphism of functors:
\[
\xymatrix@C=2cm{
\cDb_{\mon{\GvO}}(\Gr_{\Gv}) \ar[r]^-{F_G}_-{{\rm Thm.~\ref{thm:equivalence}}} \ar@{^{(}->}[d]_-{i} & \DGCGf(\cN_G) \ar[d]^-{\pi^*} \\
\cDb_{\mon{\Iv}}(\Gr_{\Gv}) \ar[r]^-{\mathbf{F}_G}_-{\eqref{eqn:equiv-abg}} & \DGC^G(\wcN_G).
}
\]

\end{prop}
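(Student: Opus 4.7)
The plan is to match the two sides of the diagram by unwinding the dg-algebraic descriptions of both equivalences and using Proposition~\ref{prop:inverse-image-dg} to compute $\pi^*$. For $M \in \cDb_{\mon{\GvO}}(\Gr_{\Gv})$ represented by a complex in $\cDb\Perv_{\mon{\Iv}}(\Gr_{\Gv})$, combining Proposition~\ref{prop:inverse-image-dg} with the construction of $F_G$ in Section~\ref{sect:proof-1}, we rewrite $\pi^* \circ F_G(M)$ as
\[
\mathsf{Loc}\bigl(\mathbf{\Gamma}(\wcN_G) \lotimes_{\sE^{\bullet}(1_G,\cR_G)^{\mathrm{op}}} \sE^{\bullet}(1_G, M \star \cR_G)\bigr),
\]
where $\mathbf{\Gamma}(\wcN_G)$ is regarded as an $\sE^{\bullet}(1_G,\cR_G)$-module via the composition of the formality quasi-isomorphism $\sE^{\bullet}(1_G,\cR_G) \xrightarrow{\qis} \C[\cN_G]$ from Proposition~\ref{prop:morphism-dg-algebras} and the weight-zero inclusion $\C[\cN_G] \hookrightarrow \mathbf{\Gamma}(\wcN_G)$.

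On the other hand, bundling together the intermediate quasi-isomorphisms used in the construction of $\mathbf{F}_G$ recalled in \S\ref{ss:reminder-abg} (namely the locally-finite subalgebra inclusion \eqref{eqn:lf-subalgebra-X}, the quasi-isomorphisms \eqref{eqn:qis-pointwise-purity}, and the formality isomorphism \eqref{eqn:isom-abg}), we obtain
\[
\mathbf{F}_G \circ i(M) \ \cong \ \mathsf{Loc}\bigl(\mathbf{\Gamma}(\wcN_G) \lotimes_{\sE^{\bullet}_{\bX}(1_G,\cR_G)^{\mathrm{op}}} \sE^{\bullet}_{\bX}(1_G, M \star \cR_G)\bigr).
\]
Comparing these two expressions and using transitivity of the derived tensor product, the proposition reduces to exhibiting, for every $M$, a natural isomorphism
\[
\sE^{\bullet}_{\bX}(1_G, M \star \cR_G) \ \cong \ \sE^{\bullet}_{\bX}(1_G,\cR_G) \lotimes_{\sE^{\bullet}(1_G,\cR_G)^{\mathrm{op}}} \sE^{\bullet}(1_G, M \star \cR_G),
\]
compatibly with the identifications of $\sE^{\bullet}(1_G,\cR_G)$ and $\C[\cN_G]$ as the weight-zero summands of $\sE^{\bullet}_{\bX}(1_G,\cR_G)$ and $\mathbf{\Gamma}(\wcN_G)$, respectively.

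The main obstacle is constructing this last isomorphism and verifying the compatibilities of formality data and of locally-finite subalgebras across the weight-zero inclusions. The isomorphism itself should boil down to expressing, for each $\lambda \in \bX^+$, the complex $\cW_\lambda \star M \star \cR_G$ as a homotopy tensor product of $\cW_\lambda \star \cR_G$ with $M \star \cR_G$ over $\cR_G$, using the commutative ring-object structure \eqref{eqn:def-m} together with the compatibility of Wakimoto convolution recalled in \eqref{eqn:wakimoto}. The remaining compatibilities should then be formal: the locally finite subalgebras of \S\ref{ss:lf-subalgebra} and \eqref{eqn:lf-subalgebra-X} are compatible with the weight decomposition by construction, and the formality isomorphism \eqref{eqn:isom-abg} restricts on the weight-zero summand to that of Proposition~\ref{prop:Ext-algebra}. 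Provided one chooses the projective resolution $P^{\bullet}$ of Section~\ref{sect:proof-1} simultaneously for both constructions, these checks reduce to unwinding definitions.
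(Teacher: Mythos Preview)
Your overall strategy matches the paper's: use Proposition~\ref{prop:inverse-image-dg} to trade $\pi^*$ for $\mathbf{\Gamma}(\wcN_G)\lotimes_{\C[\cN_G]}(-)$, then reduce everything to comparing
\[
\sE^{\bullet}_{\bX}(1_G,\cR_G)\,\lotimes_{\sE^{\bullet}(1_G,\cR_G)}\,\sE^{\bullet}(1_G,M\star\cR_G)
\qquad\text{with}\qquad
\sE^{\bullet}_{\bX}(1_G,M\star\cR_G).
\]
Your identification of the compatibility checks (locally-finite subalgebras, weight-zero restriction of the formality isomorphism) is also on target; the paper dispatches these by the method of Lemma~\ref{lem:diagram-FG-FGmix}.

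The gap is in how you propose to produce the comparison isomorphism. You suggest it ``should boil down to expressing $\cW_\lambda\star M\star\cR_G$ as a homotopy tensor product of $\cW_\lambda\star\cR_G$ with $M\star\cR_G$ over $\cR_G$.'' This does not quite make sense: $\cW_\lambda\star\cR_G$ and $M\star\cR_G$ are both objects on $\Gr_{\Gv}$, and there is no convolution of two such objects that would yield $\cW_\lambda\star M\star\cR_G$; the Wakimoto convolution is a left action from $\Fl_{\Gv}$, not a bimodule tensor product over $\cR_G$. More to the point, no such sheaf-level statement is needed. The paper instead writes down directly the obvious map of Hom-complexes
\[
\Hom^i(P^{\bullet},\cW_\lambda\star P^{\bullet}\star\cR_G)\otimes_{\C}\Hom^j(P^{\bullet},Q^{\bullet}\star\cR_G)\;\to\;\Hom^{i+j}(P^{\bullet},\cW_\lambda\star Q^{\bullet}\star\cR_G),
\]
given by applying $\cW_\lambda\star(-)\star\cR_G$ to the second factor, postcomposing with $\sm$, and precomposing with the first factor. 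This is precisely the action map that descends to a morphism from the tensor product over $\sE^{\bullet}(1_G,\cR_G)$, and hence to a morphism of functors $\nu:\pi^*\circ F_G\to\mathbf{F}_G\circ i$. One does \emph{not} attempt to show $\nu$ is an isomorphism for arbitrary $M$ by any structural argument; instead one simply checks it on the generators $\cS_G(V)$, where both sides are identified with $V\otimes\cO_{\wcN_G}$ via Lemma~\ref{lem:image-IC} and \cite[Proposition~9.8.1]{abg}. Since $\nu$ is a morphism of triangulated functors, this suffices.
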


\begin{proof}
First, using Proposition \ref{prop:inverse-image-dg} and arguments similar to those of the proof of Lemma \ref{lem:diagram-FG-FGmix}, one checks that the following diagram commutes up to an isomorphism of functors:
\[
\xymatrix@C=3cm@R=1.2cm{
\DGMG_{\fr}(\sE^{\bullet}(1_G,\cR_G)) \ar[d]|-{\sE^{\bullet}_{\bX}(1_G,\cR_G) \, \lotimes_{\sE^{\bullet}(1_G,\cR_G)} \, (-)} \ar[r] & \DGCGf(\cN_G) \ar[d]^-{\pi^*} \\
\DGMG_{\mathrm{fg}, \bX}(\sE^{\bullet}_{\bX}(1_G,\cR_G)) \ar[r] & \DGC^G(\wcN_G).
}
\]
Here, $\DGMG_{\fr}(\sE^{\bullet}(1_G,\cR_G))$ is the subcategory of $\DGMG(\sE^{\bullet}(1_G,\cR_G))$ generated by objects of the form $\sE^{\bullet}(1_G,\cR_G) \otimes_{\C} V$ for $V$ in $\Rep(G)$, the functor on the top line is the one appearing in the definition of $F_G$, and the functor on the bottom line is the one appearing in the definition of $\mathbf{F}_G$.

We define $\cH \DGMG(\sE^{\bullet}(1_G,\cR_G))$ and $\cH \DGMG_{\bX}(\sE^{\bullet}_{\bX}(1_G,\cR_G))$ as the homotopy categories whose associated derived categories are $\DGMG(\sE^{\bullet}(1_G,\cR_G))$ and $\DGMG_{\bX}(\sE^{\bullet}_{\bX}(1_G,\cR_G))$. As $\sE^{\bullet}_{\bX}(1_G,\cR_G)$ is K-flat as an $\sE^{\bullet}(1_G,\cR_G)$-dg-module, the following diagram commutes:
\[
\xymatrix@C=0.7cm@R=1.2cm{
\cH \DGMG(\sE^{\bullet}(1_G,\cR_G)) \ar[r] \ar[d]|-{\sE^{\bullet}_{\bX}(1_G,\cR_G) \otimes_{\sE^{\bullet}(1_G,\cR_G)} (-)} & \DGMG(\sE^{\bullet}(1_G,\cR_G)) \ar[d]|-{\sE^{\bullet}_{\bX}(1_G,\cR_G) \, \lotimes_{\sE^{\bullet}(1_G,\cR_G)} \, (-)} \\
\cH \DGMG_{\bX}(\sE^{\bullet}_{\bX}(1_G,\cR_G)) \ar[r] & \DGMG_{\bX}(\sE^{\bullet}_{\bX}(1_G,\cR_G)).
}
\]

Recall that the functor $F_G$ is constructed using an exact functor from the category $\cCb \Perv_{\mon{\Iv}}(\Gr_{\Gv})$ to the category of $G$-equivariant dg-modules over the dg-algebra $\sE^{\bullet}(1_G,\cR_G)$. Hence this functor factors through the composition
\[
\cDb_{\mon{\GvO}}(\Gr_{\Gv}) \xrightarrow{i} \cDb_{\mon{\Iv}}(\Gr_{\Gv}) \xrightarrow{\Upsilon^{-1}} \cD_{\prj}(\Gr_{\Gv}) \xrightarrow{H_G} \cH \DGMG(\sE^{\bullet}(1_G,\cR_G))
\]
which we denote by $I_G$. Here, the functor $H_G$ sends a complex $Q^{\bullet}$ to the dg-module
\[
\Hom^{\bullet}(P^{\bullet},Q^{\bullet} \star \cR_G)
\]
(with obvious notation).

Now, consider the following diagram:
\[
\xymatrix@R=1.2cm{
\cDb_{\mon{\GvO}}(\Gr_{\Gv}) \ar@{^{(}->}[d]_-{\Upsilon^{-1} \circ i} \ar[r]^-{I_G} & \cH \DGMG(\sE^{\bullet}(1_G,\cR_G)) \ar[d]|-{\sE^{\bullet}_{\bX}(1_G,\cR_G) \otimes_{\sE^{\bullet}(1_G,\cR_G)} (-)} \\
\cD_{\prj}(\Gr_{\Gv}) \ar[r]^-{H_G^{\bX}} & \cH \DGMG_{\bX}(\sE^{\bullet}_{\bX}(1_G,\cR_G)),
}
\]
where the functor $H_G^{\bX}$ is the analogue of $H_G$ which appears in the definition of $\mathbf{F}_G$. More precisely, $H_G^{\bX}$ sends a complex $Q^{\bullet}$ to the $\bX$-graded dg-module
\[
\bigoplus_{\lambda \in \bX^+} \, \Hom^{\bullet}(P^{\bullet},\cW_{\lambda} \star Q^{\bullet} \star \cR_G).
\]
There exists a natural morphism of functors
\[
\sE^{\bullet}_{\bX}(1_G,\cR_G) \otimes_{\sE^{\bullet}(1_G,\cR_G)} H_G(-) \to H_G^{\bX}(-)
\]
defined by the natural map
\[
\Hom^i(P^{\bullet},\cW_{\lambda} \star P^{\bullet} \star \cR_G) \otimes_{\C} \Hom^j(P^{\bullet},Q^{\bullet} \star \cR_G) \to \Hom^{i+j}(P^{\bullet},\cW_{\lambda} \star Q^{\bullet} \star \cR_G).
\]
Composing with $\Upsilon^{-1} \circ i$ on the right, one obtains a morphism of functors
\[
\sE^{\bullet}_{\bX}(1_G,\cR_G) \otimes_{\sE^{\bullet}(1_G,\cR_G)} I_G(-) \to H_G^{\bX} \circ \Upsilon^{-1} \circ i (-).
\]

Using the diagram considered earlier in this proof, we obtain a morphism a functors
\[
\nu : \pi^* \circ F_G \to \mathbf{F}_G \circ i.
\]
To prove that $\nu$ is an isomorphism, it is enough to check that for any $V$ in $\Rep(G)$, $\nu(\cS_G(V))$ is an isomorphism. However, there are natural isomorphisms
\[
F_G(\cS_G(V)) \ \cong \ V \otimes_{\C} \C[\cN_G], \quad \mathbf{F}_G(\cS_G(V)) \ \cong \ V \otimes_{\C} \cO_{\wcN_G}
\]
(see Lemma \ref{lem:image-IC} and \cite[Proposition 9.8.1]{abg}), and this claim is obvious.
\end{proof}

\section{Hyperbolic localization and restriction}
\label{sect:hl-restriction}

\subsection{Reminder on the Brylinski--Kostant filtration}
\label{ss:reminder-BK-filtration}

Recall the regular nilpotent element $e_G \in \fg$ introduced in \S \ref{ss:reminder-cohomology}. For any $G$-module $V$, and any subspace $U \subset V$, the Brylinski--Kostant filtration $\FBK_{\bullet} U$ on $U$ associated to $e_G$ (introduced and studied in particular in \cite{bry}) is by definition given by
\[
\FBK_i(U) \ = \ U \cap \ker(e_G^{i+1} : V \to V),
\]
where $e_G$ acts on $V$ via the differential of the $G$-action. In particular, this way we get a filtration on every $T$-weight space $V(\lambda)$ of $V$ ($\lambda \in \bX$). Let us recall a geometric construction of this filtration, due to Ginzburg. For any $\mu$ in $\bX$, we let $\fT_{\mu}$ be the $\Uv^-(\fK)$-orbit through $L_{\mu}$, where $\Uv^-$ is the unipotent radical of the Borel subgroup opposite to $\Bv$ with respect to $\Tv$.

Fix a $G$-module $V$, and a weight $\lambda \in \bX$. Then, by construction of the torus $T$ and \cite[Theorem 3.5]{mv} we have natural isomorphisms
\[
V(\lambda) \ \cong \ H^{\bullet}_c(\fS_{\lambda},\cS_G(V)) \ \cong \ H^{\bullet}_{\fT_{\lambda}}(\cS_G(V)),
\]
and both cohomology groups are concentrated in degree $\lan \lambda,2\rhov\ran$. Let $t_{\lambda} : \fT_{\lambda} \hookrightarrow \Gr_{\Gv}$ be the inclusion. Then by definition we have 
\[
H^{\bullet}_{\fT_{\lambda}}(\cS_G(V)) \ = \ H^{\bullet}(\fT_{\lambda}, t_{\lambda}^! \cS_G(V)).
\]

Now, consider the $\Tv$-action on $\Gr_{\Gv}$ by left multiplication. Recall that we have a natural isomorphism of graded algebras
\[
H^{\bullet}_{\Tv}(\pt) \ \cong \ \mathrm{S}(\ftv^*) \ \cong \ \mathrm{S}(\ft),
\]
where $\ftv^*$ is in degree $2$. In particular, any point $h \in \ftv$ defines a character of the algebra $H^{\bullet}_{\Tv}(\pt)$. We denote by $\C_h$ the corresponding one-dimensional module. The only $\Tv$-fixed point in $\fT_{\lambda}$ is $\{L_{\lambda}\}$. Hence, by the localization theorem in equivariant cohomology, the morphism
\begin{equation}
\label{eqn:localization-map}
H^{\bullet}_{\Tv}(i_{\lambda}^! \cS_G(V)) \ \to \ H^{\bullet}_{\Tv}(\fT_{\lambda}, t_{\lambda}^! \cS_G(V))
\end{equation}
induced by the adjunction for the inclusion $\{L_{\lambda}\} \hookrightarrow \fT_{\lambda}$ becomes an isomorphism after inverting all $\alv \in \Rv \subset \ftv^*$, or equivalently induces an isomorphism
\begin{equation}
\label{eqn:localization-map-specialized}
H^{\bullet}_{\Tv}(i_{\lambda}^! \cS_G(V)) \otimes_{H^{\bullet}_{\Tv}(\pt)} \C_h \ \xrightarrow{\sim} \ H^{\bullet}_{\C^{\times}}(\fT_{\lambda}, t_{\lambda}^! \cS_G(V)) \otimes_{H^{\bullet}_{\Tv}(\pt)} \C_h.
\end{equation}
for any $h \in \ftv \smallsetminus \cup_{\alv \in \Rv} \ker(\alv)$. (See Remark \ref{rmk:localization-thm} below for comments.)

By \cite[Equation (8.3.3)]{g2}, there is a (Leray) spectral sequence which computes $H^{\bullet}_{\Tv}(\fT_{\lambda}, t_{\lambda}^! \cS_G(V))$ and with $E_2$-term 
\[
E_2^{p,q} \ = \ H^p_{\Tv}(\pt) \otimes_{\C} H^{q}(\fT_{\lambda}, t_{\lambda}^! \cS_G(V)).
\]
As recalled above, the cohomology $H^{\bullet}(\fT_{\lambda}, t_{\lambda}^! \cS_G(V))$ is concentrated in one degree (in particular in degrees of constant parity). Hence this spectral sequence degenerates, and $H^{\bullet}_{\Tv}(\fT_{\lambda}, t_{\lambda}^! \cS_G(V))$ is a free $H^{\bullet}_{\Tv}(\pt)$-module, with a canonical isomorphism of graded vector spaces
\[
H^{\bullet}_{\Tv}(\fT_{\lambda}, t_{\lambda}^! \cS_G(V)) \otimes_{H^{\bullet}_{\Tv}(\pt)} \C_0 \ \cong \ H^{\bullet}(\fT_{\lambda}, t_{\lambda}^! \cS_G(V))
\]
In particular, as the right-hand side is concentrated in degree $\lan \lambda,2\rhov \ran$, it follows that the lowest non-zero degree in $H^{\bullet}_{\Tv}(\fT_{\lambda}, t_{\lambda}^! \cS_G(V))$ is $\lan \lambda,2\rhov \ran$, and that we have a natural isomorphism (induced by forgetting the equivariance)
\[
H^{\lan \lambda,2\rhov \ran}_{\Tv}(\fT_{\lambda}, t_{\lambda}^! \cS_G(V)) \ \cong \ H^{\lan \lambda,2\rhov \ran}(\fT_{\lambda}, t_{\lambda}^! \cS_G(V)).
\]
Hence there is a canonical morphism
\begin{equation}
\label{eqn:isom-equivariant-cohomology}
\bigl( H^{\lan \lambda,2\rhov\ran}(\fT_{\lambda}, t_{\lambda}^! \cS_G(V)) \otimes_{\C} H^{\bullet}_{\Tv}(\pt) \bigr) [-\lan \lambda, 2\rhov \ran] \ \to \ H^{\bullet}_{\Tv}(\fT_{\lambda}, t_{\lambda}^! \cS_G(V)),
\end{equation}
which is necessarily an isomorphism. (See also \cite[proof of Lemma 2.2]{yz} for similar arguments.) In particular, this way we get for any $h \in \ftv$ a \emph{canonical} isomorphism
\begin{equation}
\label{eqn:isom-cohomology-T}
H^{\lan \lambda,2\rhov\ran}(\fT_{\lambda}, t_{\lambda}^! \cS_G(V)) \ \xrightarrow{\sim} \ H^{\bullet}_{\Tv}(\fT_{\lambda}, t_{\lambda}^! \cS_G(V)) \otimes_{H^{\bullet}_{\Tv}(\pt)} \C_h.
\end{equation}

Now we come back to the morphism \eqref{eqn:localization-map-specialized}. It is well known that $H^{\bullet}(i_{\lambda}^! \cS_G(V))$ is concentrated in degrees of constant parity (see \cite[Corollaire 2.10]{sp2} or \cite[Theorem 5.5]{kl}). Hence, by the same spectral sequence arguments as above, the equivariant cohomology $H^{\bullet}_{\Tv}(i_{\lambda}^! \cS_G(V))$ is also a free $H^{\bullet}_{\Tv}(\pt)$-module, and there is a canonical isomorphism
\[
H^{\bullet}_{\Tv}(i_{\lambda}^! \cS_G(V)) \otimes_{H^{\bullet}_{\Tv}(\pt)} \C_0 \ \cong \ H^{\bullet}(i_{\lambda}^! \cS_G(V)).
\]
It follows that for any $h \in \ftv$ the canonical filtration on the vector space
\[
H^{\bullet}_{\Tv}(i_{\lambda}^! \cS_G(V)) \otimes_{H^{\bullet}_{\Tv}(\pt)} \C_h
\]
induced by the grading on $H^{\bullet}_{\Tv}(i_{\lambda}^! \cS_G(V))$ has associated graded $H^{\bullet}(i_{\lambda}^! \cS_G(V))$.

Finally, using isomorphisms \eqref{eqn:localization-map-specialized} and \eqref{eqn:isom-cohomology-T}, we have constructed for any $h \in \ftv \smallsetminus \cup_{\alv} \ker(\alv)$ a canonical filtration on the vector space $V(\lambda) \cong H^{\lan \lambda,2\rhov\ran}(\fT_{\lambda}, t_{\lambda}^! \cS_G(V))$, denoted $\mathrm{F}^{\geom}_{\bullet}$, with associated graded $H^{\bullet}(i_{\lambda}^! \cS_G(V))$. (Note that this filtration depends on $h$, although we do not indicate this in the notation, for simplicity.)

The following result is due to Ginzburg (see \cite[Proposition 5.5.2]{g2}). As our point of view is different from that of Ginzburg, we include a proof in \S \ref{ss:proof-filtrations}. This proof is completely different from the one given by Ginzburg, and more in the spirit of \cite{mv} and \cite{yz}. It is independent of the rest of the paper.

\begin{thm}
\label{thm:filtrations}

There exists an explicit choice of $h \in \ftv$ such that the filtration $\mathrm{F}^{\geom}_{\bullet}$ on $V(\lambda)$ coincides with the filtration $\FBK_{\bullet}$ up to a shift. More precisely, for this choice of $h$, for any $i$ we have
\[
\FBK_i \, V(\lambda)  \ = \ \mathrm{F}^{\geom}_{2i+\lan \lambda,2\rhov \ran} \, V(\lambda) \ = \ \mathrm{F}^{\geom}_{2i+1+\lan \lambda,2\rhov \ran} \, V(\lambda).
\]

\end{thm}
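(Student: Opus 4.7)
The plan is to exploit the identification $e_G = \psi(c_1(\cL_{\det}))$ from \S\ref{ss:reminder-cohomology}, which says that under the Satake equivalence, cup product with the first Chern class of $\cL_{\det}$ on $H^\bullet(\Gr_{\Gv}, \cS_G(V))$ corresponds to the action of $e_G$ on $V$. I would lift this compatibility to $\Tv$-equivariant cohomology and then specialize at a suitably chosen $h \in \ftv \smallsetminus \bigcup_{\alv \in \Rv} \ker(\alv)$.

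Concretely, I would first pick a $\Tv$-equivariant lift $c_1^\Tv(\cL_{\det}) \in H^2_\Tv(\Gr_{\Gv})$ of $c_1(\cL_{\det})$. Cup product with this class is an $H^\bullet_\Tv(\pt)$-linear, degree-$2$ endomorphism of $H^\bullet_\Tv(\Gr_{\Gv},\cS_G(V))$; the latter, by the parity vanishing of $\cS_G(V)$, is a free graded $H^\bullet_\Tv(\pt)$-module whose underlying vector space is identified with $V$, compatibly with the MV weight decomposition. After specialization at $h$, the cup product yields an endomorphism $\Phi(h)$ of $V$. At $h=0$, $\Phi(0)$ coincides with $e_G$ by the $\psi$-compatibility; at nonzero $h$, $\Phi(h) - e_G$ is a diagonal correction acting on each $V(\mu)$ by the scalar $\ell_\mu(h)$, where $\ell_\mu := c_1^\Tv(\cL_{\det})|_{L_\mu} \in H^2_\Tv(\pt)$. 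The key step is to identify a specific generic $h$ for which these diagonal corrections reduce to a global scalar (or vanish outright), so that $\Phi(h)$ and $e_G$ have the same iterated kernels on $V(\lambda)$; this is a linear problem on $\ftv$ which can be solved explicitly given a natural normalization of $\cL_{\det}$.

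With the identification $\Phi(h) = e_G$ in hand, the theorem follows by unraveling the construction of $\mathrm{F}^{\geom}_\bullet$: by definition, $\mathrm{F}^{\geom}_n V(\lambda)$ is the image in $V(\lambda)$ of the degree-$\leq n$ part of $H^\bullet_\Tv(i_\lambda^!\cS_G(V))$ under the localization and specialization isomorphisms \eqref{eqn:localization-map-specialized} and \eqref{eqn:isom-cohomology-T}. Tracking an element $v \in V(\lambda)$ through these isomorphisms and relating its equivariant-degree bound to the vanishing of powers of the cup product operator (hence of powers of $\Phi(h)$, hence of powers of $e_G$), one obtains the equality $\mathrm{F}^{\geom}_{2i + \lan \lambda, 2\rhov \ran} V(\lambda) = V(\lambda) \cap \ker(e_G^{i+1}) = \FBK_i V(\lambda)$; the factor of $2$ reflects the cohomological degree of $c_1^\Tv(\cL_{\det})$, and the shift $\lan \lambda, 2\rhov \ran$ reflects the concentration degree of $H^\bullet(i_\lambda^!\cS_G(V))$. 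The main obstacle, in my view, is the verification that $\Phi(h) = e_G$ on the nose (up to a controllable scalar) and the precise matching of the equivariant-degree bound on $H^\bullet_\Tv(i_\lambda^!\cS_G(V))$ with the condition $e_G^{i+1}v = 0$ in $V$; this requires a $\Tv$-equivariant refinement of the $\psi$-map of \S\ref{ss:reminder-cohomology} and a careful analysis of how cup product with $c_1^\Tv(\cL_{\det})$ connects distinct MV weight summands after specialization, together with a solution of the linear problem that pins down $h$.
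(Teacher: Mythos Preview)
Your strategy of lifting the Chern class to $\Tv$-equivariant cohomology and specializing at a generic $h$ is exactly the paper's approach, but your proposal contains two genuine gaps.

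First, your ``key step'' --- finding $h$ so that the diagonal corrections $\ell_\mu(h)$ reduce to a global scalar, making $\Phi(h)$ and $e_G$ have the same iterated kernels on $V(\lambda)$ --- cannot be carried out. The correction $\ell_\mu(h)$ is linear in $\mu$ (in the paper's notation it is $f_G(\mu,h)$), so the only way it is independent of $\mu$ is if it vanishes, which forces $h$ into the non-generic locus. The paper does \emph{not} achieve $\Phi(h)=e_G$; instead it chooses $h$ so that $f_G(-,h)=2\rhov$, giving $\Phi(h)=e_G+2\rhov$ (with $2\rhov\in\ft$ acting diagonally by $\langle\mu,2\rhov\rangle$ on $V(\mu)$). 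The argument that extracts $e_G^{i+1}\cdot v=0$ from a degree bound is then a nontrivial recursion (Proposition~\ref{prop:inclusion-filtration}): one uses that the image of a degree-$(2i+\langle\lambda,2\rhov\rangle)$ class decomposes along weights $\lambda+\gamma$ with $\gamma\in j\Delta$, $0\le j\le i$, and that this image is an eigenvector of $\Phi(h)$ with eigenvalue $\langle\lambda,2\rhov\rangle$. Comparing the two descriptions of the cup product yields a triangular system that terminates after $i+1$ applications of $e_G$. This is not the same as ``$\Phi(h)$ and $e_G$ have the same iterated kernels''.

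Second, even with this corrected argument, you only obtain the inclusion $\mathrm{F}^{\geom}_{2i+\langle\lambda,2\rhov\rangle}V(\lambda)\subset \FBK_i V(\lambda)$. The reverse inclusion is \emph{not} obtained by tracking elements: the paper closes by a dimension count, matching the Hilbert series of both filtrations against Lusztig's $q$-analog of weight multiplicity (via Kazhdan--Lusztig polynomials and Kato's theorem on the geometric side, and the Brylinski/Joseph--Letzter--Zelikson formula on the BK side). Your proposal is missing this ingredient entirely.
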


\begin{rmk}
\label{rmk:localization-thm}
Here the particular case of the localization theorem we use in \eqref{eqn:localization-map} is very easy to prove directly. Indeed, consider the object $N:=t_{\lambda}^! \cS_G(V)$, an object of the $\Tv$-equivariant derived category $\cDb_{\eq{\Tv}}(\fT_{\lambda})$. The variety $\fT_{\lambda}$ has a natural action of $\Uv^-(\C[t^{-1}])$ (which is compatible with the $\Tv$-action in the natural way), and there exists a subgroup $K \subset \Uv^-(\C[t^{-1}])$, normalized by $\Tv$, such that the quotient $K \backslash \fT_{\lambda}$ is a finite dimensional affine space, with a linear $\Tv$-action (whose weights are in $-\Rv^+$), and such that the natural quotient map $\fT_{\lambda} \to K \backslash \fT_{\lambda}$ restricts to a closed embedding on the support of $N$. Then we can consider $N$ as an object of $\cDb_{\eq{\Tv}}(K \backslash \fT_{\lambda})$.

Consider the inclusions
\[
\xymatrix{
\{L_{\lambda}\} \ar@{^{(}->}[r]^-{a_{\lambda}} & K \backslash \fT_{\lambda} & \ (K \backslash \fT_{\lambda}) \smallsetminus \{L_{\lambda}\} \ar@{_{(}->}_-{j_{\lambda}}[l],
}
\]
and the distinguished triangle
\[
(a_{\lambda})_! (a_{\lambda})^! N \to N \to (j_{\lambda})_* (j_{\lambda})^* N \xrightarrow{+1}.
\]
It is easy to check that $H^{\bullet}_{\Tv}((j_{\lambda})^* N)$ is anihilated by $\prod_{{\check \alpha} \in -\Rv^+} {\check \alpha}$ (see e.g.~\cite[Lemma 3.3]{fw}). Hence morphism \eqref{eqn:localization-map} becomes an isomorphism after inverting all ${\check \alpha} \in \Rv$. Moreover, as $H_{\Tv}^{\bullet}(i_{\lambda}^! \cS_G(V))$ is free over $\mathrm{S}(\ft)$, this morphism is injective.
\end{rmk}

\subsection{Hyperbolic localization and semisimplicity of Frobenius}

Fix a standard Levi $\Lv \subset \Gv$, and recall the notation of \S \ref{ss:hl-restriction}. We will denote by the same symbol the morphisms similar to $i,j,p,q$ but defined over $\F_p$. We define the functor $\Theta_L^{\mix}$ so that it is a mixed version of $\Theta_L$ and that it sends pure objects of weight $0$ to pure objects of weight $0$. More precisely, using again the notation of \S \ref{ss:hl-restriction}, this functor sends an object $M$ to
\[
\Theta_L^{\mix}(M) \ := \
\bigoplus_{\chi \in X^*(Z(L))} \, M_{\chi} [\lan \chi , 2\rho_{\Gv} - 2 \rho_{\Lv} \ran] \bigl< - \lan \chi , 2\rho_{\Gv} - 2 \rho_{\Lv} \ran \bigr>.
\]
Then for any $\lambda \in \bX^+$ one can consider the object
\[
\Theta_L^{\mix} \circ p_! i^* \, \ICm_{\lambda} \ \cong \ \Theta_L^{\mix} \circ q_* j^! \ICm_{\lambda},
\]
an $\Lv(\F_p[[x]])$-equivariant perverse sheaf on $\Gr_{\Lv,\F_p}$. (Note that the isomorphism provided by \cite[Theorem 1]{br} also holds over $\F_p$, see \cite[Section 5]{br}.)

\begin{prop}
\label{prop:hl-Frobenius}

The perverse sheaf $\Theta_L^{\mix} \circ p_! i^* \, \ICm_{\lambda}$ is an object of the category $\Perv^0_{\mon{\LvO}}(\Gr_{\Lv})$.

\end{prop}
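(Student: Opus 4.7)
The idea is that the functor $\Theta_L^{\mix}$ is designed so as to preserve purity of weight $0$; combined with the weight-preservation properties of the hyperbolic localization $p_!i^*\cong q_*j^!$, this shows that $\Theta_L^{\mix}\circ p_!i^*\ICm_\lambda$ is a perverse sheaf pure of weight $0$. Together with the known identification of its underlying geometric perverse sheaf coming from Theorem~\ref{thm:hl-restriction-classical}, this will pin it down as a direct sum of $\ICm_\mu$'s, as required.

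First I would verify purity. By definition $\ICm_\lambda$ is pure of weight $0$. By the standard weight estimates of \cite[Stabilit\'es~5.1.14]{bbd}, the functors $i^*$ and $p_!$ preserve the condition ``of weight $\leq 0$'', while $j^!$ and $q_*$ preserve ``of weight $\geq 0$''. The Braden isomorphism $p_!i^*\ICm_\lambda\cong q_*j^!\ICm_\lambda$, which holds just as well over $\F_p$ (see \cite[Section~5]{br}), therefore forces this object to be pure of weight $0$. The combined cohomological shift and Tate twist $[n]\lan -n\ran$ with $n=\lan\chi,2\rho_{\Gv}-2\rho_{\Lv}\ran$ appearing in the definition of $\Theta_L^{\mix}$ moves weight by $+n$ and then $-n$, so purity of weight $0$ is preserved on each connected-component summand. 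Perversity of $\Theta_L^{\mix}\circ p_!i^*\ICm_\lambda$ is granted by Theorem~\ref{thm:hl-restriction-classical} applied to the underlying geometric object.

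Next I would identify the Weil-simple constituents. Read geometrically, Theorem~\ref{thm:hl-restriction-classical} says that $\For(\Theta_L^{\mix}\circ p_!i^*\ICm_\lambda)\cong \fR^G_L(\IC_\lambda)$ is a direct sum of $\IC_\mu$'s for $L$-dominant weights $\mu$. Purity of weight $0$, together with the fact that all Frobenius eigenvalues arising from $\ICm_\lambda$ via these six functors are integral powers of $p^{1/2}$, forces the Weil lifts of these geometric simples to be the $\ICm_\mu$'s themselves (the only pure-of-weight-$0$ lifts with eigenvalues of this form). Hence our object lies in $\Pervw_{\mon{\LvO}}(\Gr_{\Lv,\F_p})$, with all Weil-simple constituents among the $\ICm_\mu$. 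Finally, repeating the argument of Lemma~\ref{lem:spherical-perv-mix}(1) in the context of $\Lv$: any $\Ext^1$ between two $\ICm_\mu$'s in $\Pervw_{\mon{\LvO}}$ is a direct factor of $\Ext^1_{\Perv_{\mon{\LvO}}(\Gr_\Lv)}(\IC_\mu,\IC_\nu)=0$ (the latter vanishing by Lemma~7.1 of \cite{mv}). Therefore our pure-of-weight-$0$ object is in fact a direct sum of $\ICm_\mu$'s, i.e., it lies in $\Perv^0_{\mon{\LvO}}(\Gr_\Lv)$.

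The only real technical point is the bookkeeping of shifts, Tate twists, and weights: one has to confirm that the cohomological shift $[\lan\chi,2\rho_{\Gv}-2\rho_{\Lv}\ran]$ in each $Z(L)$-component is precisely compensated on weights by the accompanying Tate twist $\lan-\lan\chi,2\rho_{\Gv}-2\rho_{\Lv}\ran\ran$, so that each summand $M_\chi$ returns to weight $0$ after applying $\Theta_L^{\mix}$. Once this is checked, the argument is a straightforward combination of the weight formalism, Braden's theorem, the non-mixed statement of Theorem~\ref{thm:hl-restriction-classical}, and the semisimplicity of $\Pervw_{\mon{\LvO}}$.
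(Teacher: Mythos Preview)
Your purity argument is fine and is essentially the content of \cite[Theorem~8]{br}, which is what the paper cites. The gap is in the semisimplicity step.

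The argument you borrow from Lemma~\ref{lem:spherical-perv-mix}(1) takes place in $\Pervm$, not in $\Pervw$. The claim that $\Ext^1_{\Pervw_{\mon{\LvO}}}(\ICm_\mu,\ICm_\nu)$ embeds in the geometric $\Ext^1$ is false: by \cite[(5.1.2.5)]{bbd} one has an exact sequence
\[
0 \to \Hom_{\overline{\F_p}}(\IC_\mu,\IC_\nu)_{\Fr} \to \Ext^1_{\F_p}(\ICm_\mu,\ICm_\nu) \to \Ext^1_{\overline{\F_p}}(\IC_\mu,\IC_\nu)^{\Fr} \to 0,
\]
so for $\mu=\nu$ the left term contributes a copy of $\Qlb$ even though the geometric $\Ext^1$ vanishes. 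Concretely, this is the extension $\ICm_\mu \otimes V$ with $V$ a two-dimensional space on which Frobenius acts by a nontrivial unipotent Jordan block; it is pure of weight $0$, lies in $\Pervw$, has all the right Frobenius eigenvalues, but is not semisimple. Your argument gives no obstruction to such a summand appearing. Note also that invoking Lemma~\ref{lem:spherical-perv-mix}(1) directly would be circular: for a pure weight-$0$ object, membership in $\Pervm$ \emph{is} semisimplicity.

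This is precisely the difficulty the paper's proof addresses. It takes the case $L=T$ as an external input from \cite[Th{\'e}or{\`e}me~3.1]{np}, then for general $L$ uses the factorization $\fR^G_T \cong \fR^L_T \circ \fR^G_L$ together with \cite[Proposition~5.3.9]{bbd}: if some indecomposable summand were $S_i \otimes V_i$ with $V_i$ carrying a nontrivial unipotent Frobenius, applying $\fR^L_T$ would produce a non-semisimple summand of $\fR^G_T(\ICm_\lambda)$, contradicting the torus case. To fix your proof you need an independent reason to exclude such unipotent twists; absent that, the reduction to $L=T$ seems unavoidable.
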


\begin{proof}
By \cite[Theorem 8]{br}, this perverse sheaf is pure of weight $0$. Hence we only have to show that it is semisimple. The case $L=T$ is contained in \cite[Th{\'e}or{\`e}me 3.1]{np}. Now we deduce the general case. To avoid confusion, we add a subscript ``${}_{L \subset G}$" to the morphisms $i$ and $p$ and to $\Theta^{\mix}$ relative to the inclusion $L \subset G$, and similarly for the other inclusions. 

First, by base change there is an isomorphism of functors
\begin{multline}
\label{eqn:hl-composition}
\bigl( \Theta_{T \subset L}^{\mix} \circ (p_{T \subset L})_! (i_{T \subset L})^* \bigr) \circ \bigl( \Theta_{L \subset G}^{\mix} \circ (p_{L \subset G})_! (i_{L \subset G})^* \bigr)\\ \cong \ \Theta_{T \subset G}^{\mix} \circ (p_{T \subset G})_! (i_{T \subset G})^*.
\end{multline}
Consider the perverse sheaf $\Theta_{L \subset G}^{\mix} \circ (p_{L \subset G})_! (i_{L \subset G})^* \, \ICm_{\lambda}$. Choose a decomposition into a sum of indecomposable pure perverse sheaves on $\Gr_{\Lv,\F_p}$:
\[
\Theta_{L \subset G}^{\mix} \circ (p_{L \subset G})_! (i_{L \subset G})^* \, \ICm_{\lambda} \ \cong \ \bigoplus_i \, M_i.
\]
By \cite[Proposition 5.3.9]{bbd}, each $M_i$ can be written as $S_i \otimes_{\Qlb} V_i$, where $S_i$ is a simple perverse sheaf, and $V_i$ is a $\Qlb$-vector space endowed with an indecomposable unipotent action of the Frobenius. Assume that $V_{i_0} \neq \Qlb$ for some $i_0$. Then, using isomorphism \eqref{eqn:hl-composition}, we obtain that $\Theta_{T \subset G}^{\mix} \circ (p_{T \subset G})_! (i_{T \subset G})^* \, \ICm_{\lambda}$ has a direct summand which is indecomposable but not simple. This is absurd since we know already the result for $T$. This concludes the proof.
\end{proof}

\subsection{Mixed version of $\fR^G_L$}
\label{ss:mixed-version-hl}

Consider the subcategory $\cD^{\Weil}_{\mon{\Iv}}(\Gr_{\Gv,\F_p})$ of the derived category of constructible sheaves on $\Gr_{\Gv,\F_p}$ generated by the simple objects $\IC(Y)\lan j \ran$, for $Y$ an $\Iv_{\F_p}$-orbit on $\Gr_{\Gv,\F_p}$ and $j \in \Z$. We use the same notation for $\GvO$-monodromic complexes. As in \S \ref{ss:equivalence-mix}, we also denote by $\Pervw_{\mon{\Iv}}(\Gr_{\Gv,\F_p})$ the abelian subcategory of perverse sheaves. As in \cite[\S 7.2]{ar}, we denote by $\iota$ the composition
\[
\cDb \Pervm_{\mon{\Iv}}(\Gr_{\Gv}) \, \to \, \cDb \Pervw_{\mon{\Iv}}(\Gr_{\Gv,\F_p}) \, \xrightarrow{\mathsf{real}} \, \cD^{\Weil}_{\mon{\Iv}}(\Gr_{\Gv,\F_p}),
\]
where the first arrow is the derived functor of the embedding $\Pervm_{\mon{\Iv}}(\Gr_{\Gv,\F_p}) \hookrightarrow \Pervw_{\mon{\Iv}}(\Gr_{\Gv,\F_p})$, and the second arrow is the realization functor. We also use the same notation for the functor
\[
\cD^{\mix}_{\mon{\GvO}}(\Gr_{\Gv}) \, \to \, \cD^{\Weil}_{\mon{\GvO}}(\Gr_{\Gv,\F_p})
\]
obtained by restriction. We denote by
\[
\varkappa : \cD^{\Weil}_{\mon{\GvO}}(\Gr_{\Gv,\F_p}) \, \to \, \cDb_{\mon{\GvO}}(\Gr_{\Gv})
\]
the composition of the extension of scalars from $\F_p$ to $\overline{\F_p}$, followed by the equivalence obtained by restriction of the first equivalence of Lemma \ref{lem:change-of-field}. With this notation, by construction we have $\For = \varkappa \circ \iota$.

It follows in particular from Proposition \ref{prop:hl-Frobenius} that the functor $\Theta_L^{\mix} \circ p_! i^*$ restricts to a functor
\[
\cD^{\Weil}_{\mon{\GvO}}(\Gr_{\Gv,\F_p}) \to \cD^{\Weil}_{\mon{\LvO}}(\Gr_{\Lv,\F_p}).
\]
Hence it defines a \emph{geometric} functor in the sense of \cite[Definition 6.6]{ar}. Recall the notation $\Pure(-)$ introduced in \cite[\S 6.4]{ar}. Then our functor restricts to a homogeneous functor
\[
\Pure_{\mon{\GvO}}(\Gr_{\Gv}) \ \to \ \Pure_{\mon{\LvO}}(\Gr_{\Lv})
\]
in the sense of \cite[Definition 4.1]{ar}. Hence by \cite[Proposition 9.1]{ar} we have the following existence result.

\begin{prop}
\label{prop:mixed-version-RGL}

There exists a functor $\fR^{G,\mix}_L$ which makes the diagram
\[
\xymatrix@R=0.5cm{
\cD^{\mix}_{\mon{\GvO}}(\Gr_{\Gv}) \ar[rr]^-{\iota} \ar[rd]_{\For} \ar[dd]_-{\fR^{G,\mix}_L} & & \cD^{\Weil}_{\mon{\GvO}}(\Gr_{\Gv,\F_p}) \ar[ld]^-{\varkappa} \ar[dd]^-{\Theta_L^{\mix} \circ p_! i^*} & \\
& \cDb_{\mon{\GvO}}(\Gr_{\Gv}) \ar[dd]^(.3){\fR^G_L} & \\
\cD^{\mix}_{\mon{\LvO}}(\Gr_{\Lv}) \ar'[r][rr]^(-.3){\iota} \ar[rd]_-{\For} & & \cD^{\Weil}_{\mon{\LvO}}(\Gr_{\Lv,\F_p}) \ar[ld]^-{\varkappa} \\
& \cDb_{\mon{\LvO}}(\Gr_{\Lv}) & \\
}
\]
commutative up to isomorphisms of functors.\qed

\end{prop}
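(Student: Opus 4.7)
The approach will be to apply \cite[Proposition 9.1]{ar}, which takes a geometric functor between Weil categories (in the sense of \cite[Definition 6.6]{ar}) that restricts to a homogeneous functor on the pure weight-zero subcategories and produces a lift to the mixed derived categories together with the desired commutativity isomorphism. Thus the task reduces to checking that $\Theta_L^{\mix} \circ p_! i^*$ fits into this framework; this splits naturally into two verifications.

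First I would check that $\Theta_L^{\mix} \circ p_! i^*$ preserves the monodromic Weil subcategories, i.e., that it restricts to a triangulated functor $\cD^{\Weil}_{\mon{\GvO}}(\Gr_{\Gv,\F_p}) \to \cD^{\Weil}_{\mon{\LvO}}(\Gr_{\Lv,\F_p})$. Since the source is generated as a triangulated category by the objects $\ICm_{\lambda}\lan j \ran$ with $\lambda \in \bX^+$ and $j \in \Z$, it suffices to observe, using Proposition \ref{prop:hl-Frobenius}, that each $\Theta_L^{\mix} \circ p_! i^*(\ICm_{\lambda})$ lies in $\Perv^0_{\mon{\LvO}}(\Gr_{\Lv})$, hence is a direct sum of simple perverse sheaves $\ICm_{\mu}$ for appropriate dominant coweights $\mu$ of $\Lv$---and these (together with their Tate twists) are among the generators of the target Weil category.

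Next I would verify homogeneity on the pure weight-zero subcategories. By definition, objects of $\Pure_{\mon{\GvO}}(\Gr_{\Gv})$ are generated by shifts $\ICm_{\lambda}[n]\lan -n \ran$; Proposition \ref{prop:hl-Frobenius}, combined with our choice of normalization for $\Theta_L^{\mix}$ (whose cohomological shifts are compensated by corresponding Tate twists), says exactly that $\Theta_L^{\mix} \circ p_! i^*(\ICm_{\lambda})$ is pure of weight zero and is a direct sum of objects of the required form. Hence the restriction $\Pure_{\mon{\GvO}}(\Gr_{\Gv}) \to \Pure_{\mon{\LvO}}(\Gr_{\Lv})$ is a homogeneous functor in the sense of \cite[Definition 4.1]{ar}.

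With both hypotheses confirmed, \cite[Proposition 9.1]{ar} directly produces the mixed lift $\fR^{G,\mix}_L$ and the commutativity of the back face of the cube (the one connecting $\iota$ with the Weil functor); the remaining faces commute by the definitions of $\For$, $\iota$, $\varkappa$, together with the observation that, after composition with $\varkappa$ (which forgets Tate twists and Frobenius), $\Theta_L^{\mix}$ becomes $\Theta_L$ and $p_! i^*$ becomes its own underlying functor, so the right-hand face reduces to the tautological identity $\fR^G_L = \Theta_L \circ p_! i^*$. The real content lies entirely in Proposition \ref{prop:hl-Frobenius}, which is already established; consequently no genuine obstacle remains, the present statement being a formal invocation of the machinery developed in \cite{ar}.
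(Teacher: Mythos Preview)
Your proposal is correct and follows essentially the same approach as the paper: the paper's argument (given in the paragraph immediately preceding the proposition, with the statement itself marked \qed) is precisely to invoke Proposition~\ref{prop:hl-Frobenius} to see that $\Theta_L^{\mix}\circ p_! i^*$ is a geometric functor restricting to a homogeneous functor on the $\Pure$ subcategories, and then to apply \cite[Proposition~9.1]{ar}. Your additional remarks on the commutativity of the remaining faces of the cube are accurate elaborations of points the paper leaves implicit.
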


\begin{rmk}
\label{rmk:hl-restriction-mix}
It follows in particular from Proposition \ref{prop:mixed-version-RGL} that the following diagram commutes:
\[
\xymatrix@C=1.5cm{
\Perv^0_{\mon{\GvO}}(\Gr_{\Gv}) \ar[r]^-{\Phi_G}_-{\sim} \ar[d]_-{\fR^{G,\mix}_L} & \Perv_{\mon{\GvO}}(\Gr_{\Gv}) \ar[d]^-{\fR^G_L} \\
\Perv^0_{\mon{\LvO}}(\Gr_{\Lv}) \ar[r]^-{\Phi_L}_-{\sim} & \Perv_{\mon{\LvO}}(\Gr_{\Lv}).
}
\]
Hence we deduce from Theorem \ref{thm:hl-restriction-classical} an isomorphism of functors
\[
\fR^{G,\mix}_L \circ \cS_G^0 \ \cong \ \cS_L^0 \circ \mathrm{Res}^G_L.
\]
\end{rmk}

\subsection{Action of the functors on morphisms}
\label{ss:functors-morphisms}

Consider the triangulated functors
\begin{multline}
\label{eqn:functors-hl-restriction}
(i_L^G)_{\mix}, \ F_L^{\mix} \circ \fR^{G,\mix}_L \circ (F_G^{\mix})^{-1} : \cDb_{\free} \Coh^{G \times \Gm}(\cN_G) \\ \to \cDb_{\free} \Coh^{L \times \Gm}(\cN_L).
\end{multline}
By Lemma \ref{lem:orlov-coherent}, there are equivalences of categories
\begin{align*}
\Kb\bigl( \Coh_{\free}^{G \times \Gm}(\cN_G) \bigr) \ & \cong \ \cDb_{\free} \Coh^{G \times \Gm}(\cN_G), \\ \Kb\bigl( \Coh_{\free}^{L \times \Gm}(\cN_L) \bigr) \ & \cong \ \cDb_{\free} \Coh^{L \times \Gm}(\cN_L).
\end{align*}
We claim that both functors in \eqref{eqn:functors-hl-restriction} send the subcategory $\Coh_{\free}^{G \times \Gm}(\cN_G)$ to the subcategory $\Coh_{\free}^{L \times \Gm}(\cN_L)$. This is obvious for the first functor. For the second one, we have natural isomorphisms, for any $V$ in $\Rep(G)$ and $n \in \Z$,
\begin{align*}
F_L^{\mix} \circ \fR^{G,\mix}_L \circ (F_G^{\mix})^{-1}(V \otimes \cO_{\cN_G} \lan n \ran) \ & \cong \ F_L^{\mix} \circ \fR^{G,\mix}_L(\cS_G^0(V) \lan n \ran [-n]) \\
& \cong \ F_L^{\mix} \bigl(\cS_L^0(\mathrm{Res}^G_L(V)) \lan n \ran [-n] \bigr) \\
& \cong \ \mathrm{Res}^G_L(V) \otimes \cO_{\cN_L} \lan n \ran,
\end{align*}
where we have used Lemma \ref{lem:image-IC-mix} and Remark \ref{rmk:hl-restriction-mix}. The claim follows.

\begin{prop}
\label{prop:isom-functors-Orlov-categories}

There exists an isomorphism of additive functors from the category $\Coh_{\free}^{G \times \Gm}(\cN_G)$ to $\Coh_{\free}^{L \times \Gm}(\cN_L)$:
\[
(i_L^G)_{\mix}{}_{|\Coh_{\free}^{G \times \Gm}(\cN_G)} \ \cong \ \bigl( F_L^{\mix} \circ \fR^{G,\mix}_L \circ (F_G^{\mix})^{-1} \bigr){}_{|\Coh_{\free}^{G \times \Gm}(\cN_G)}.
\]

\end{prop}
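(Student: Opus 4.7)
The plan is to observe first that on objects the two functors agree: both send the indecomposable $V \otimes \cO_{\cN_G}\lan n \ran$ (for $V$ a simple $G$-module and $n \in \Z$) to $\mathrm{Res}^G_L(V) \otimes \cO_{\cN_L}\lan n \ran$, as was already checked just before the statement of the proposition using Lemma \ref{lem:image-IC-mix} and Remark \ref{rmk:hl-restriction-mix}. I would therefore define a candidate natural transformation by taking the identity on each such indecomposable and extending additively. What remains is naturality, i.e.\ checking that the two functors induce the same map on morphism spaces between any two generators.

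Since $\Coh_{\free}^{G \times \Gm}(\cN_G)$ is an Orlov category with indecomposables the $V \otimes \cO_{\cN_G}\lan n \ran$, it suffices to verify naturality on morphisms between such objects. Using the identification
\[
\Hom_{\Coh^{G \times \Gm}(\cN_G)}\bigl(V \otimes \cO_{\cN_G}\lan n \ran,\, V' \otimes \cO_{\cN_G}\lan m \ran\bigr) \cong \bigl((V' \otimes V^*) \otimes \C[\cN_G]\lan m-n \ran\bigr)^{G \times \Gm},
\]
the induced map from $(i_L^G)^*_{\mix}$ is literal restriction of functions along $i_L^G : \cN_L \hookrightarrow \cN_G$, together with restriction of the $G$-action to $L$. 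For the second functor I would transport the question through $F_G^{\mix}$ and $F_L^{\mix}$, so that its induced action is expressed as the action of $\fR^{G,\mix}_L$ on
\[
\Hom^{\bullet}_{\dm_{\mon{\GvO}}(\Gr_{\Gv})}\bigl(\cS_G^0(V)\lan n \ran,\, \cS_G^0(V')\lan m \ran\bigr).
\]

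The heart of the argument is to match these two descriptions. Via the forgetful functors, this reduces to the non-mixed statement: under the isomorphism $\Hom^{\bullet}_{\cDb_{\mon{\GvO}}(\Gr_{\Gv})}(\cS_G(V),\cS_G(V')) \cong (V' \otimes V^* \otimes \C[\cN_G])^G$ constructed in the proof of Proposition \ref{prop:isom-morphisms}, the map induced by $\fR^G_L$ must coincide with the restriction $(V' \otimes V^* \otimes \C[\cN_G])^G \to (V' \otimes V^* \otimes \C[\cN_L])^L$. This is essentially the content of Proposition \ref{prop:action-functors-morphisms} cited in the introduction, and it would be verified using Theorem \ref{thm:hl-restriction-classical} (which identifies $\fR^G_L$ with $\mathrm{Res}^G_L$ after Satake) together with the explicit cohomological description recalled in \S \ref{ss:reminder-cohomology}: compatibility of the regular nilpotent $e_L \in \cN_L$ with the inclusion $\cN_L \hookrightarrow \cN_G$ matches exactly the restriction of $G$-representations to $L$.

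The main obstacle is ensuring that the identifications of $\Hom^{\bullet}$-spaces in Proposition \ref{prop:isom-morphisms} are functorial with respect to Levi restriction, i.e.\ that the geometric constructions (cup product with primitive cohomology classes, equivariant localization to fixed points, restriction to $e_G$) are compatible with the parallel constructions for $L$. Once the non-mixed identification is established, the mixed refinement is essentially bookkeeping: by construction $F_G^{\mix}$ and $\fR^{G,\mix}_L$ are mixed lifts of $F_G$ and $\fR^G_L$ respecting weight gradings, so the identified maps on $\Hom$-spaces automatically preserve the internal $\Z$-grading provided by the $\Gm$-action, hence match the Orlov-category morphism spaces.
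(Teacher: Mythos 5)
Your proposal is correct and follows essentially the same route as the paper: both agree on objects via the isomorphisms established just before the statement, reduce naturality to comparing the induced maps on morphism spaces between the free generators (summed over internal shifts, so that the comparison can be made in the non-mixed category $\DGCGf(\cN_G)$), and then invoke Proposition \ref{prop:action-functors-morphisms} for that non-mixed comparison. The only difference is cosmetic: the paper defers the geometric verification (compatibility of the restriction-to-$e_G$ identification with Levi restriction, via the degeneration $e_L=\lim_{t\to 0}\chi(t)\cdot e_G$ and the Brylinski--Kostant filtration) entirely to the separately stated Proposition \ref{prop:action-functors-morphisms}, which you sketch inline.
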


\begin{proof}
Recall the category $\sA_G$ and the equivalence $\sA_G \cong \Coh_{\free}^{G \times \Gm}(\cN_G)$ , see \S \ref{ss:orlov-category}. We have already constructed isomorphisms
\begin{multline*}
(i_L^G)_{\mix}(V \otimes \cO_{\cN_G} \lan n \ran) 
\ \cong \ \mathrm{Res^G_L}(V) \otimes \cO_{\cN_L} \lan n \ran 
\\ \cong \ \bigl( F_L^{\mix} \circ \fR^{G,\mix}_L \circ (F_G^{\mix})^{-1} \bigr)(V \otimes \cO_{\cN_G} \lan n \ran)
\end{multline*}
for any $V$ in $\Rep(G)$ and any $n \in \Z$. Hence to prove the proposition is enough to prove that for any $V,V'$ in $\Rep(G)$ and $n \in \Z$, the morphisms
\begin{multline*}
\bigoplus_{n \in \Z} \, \Hom_{\Coh^{G \times \Gm}(\cN_G)} \bigl( V \otimes \cO_{\cN_G},V' \otimes \cO_{\cN_G} \lan n \ran \bigr) \ \to \\
\bigoplus_{n \in \Z} \, \Hom_{\Coh^{L \times \Gm}(\cN_L)} \bigl( \mathrm{Res^G_L}(V) \otimes \cO_{\cN_L},\mathrm{Res^G_L}(V') \otimes \cO_{\cN_L} \lan n \ran \bigr)
\end{multline*}
induced by our two functors coincide. Moreover, the direct sums of morphisms spaces considered here can be expressed in terms of morphisms in the category $\DGCGf(\cN_G)$. Hence Proposition \ref{prop:isom-functors-Orlov-categories} follows from Proposition \ref{prop:action-functors-morphisms} below.
\end{proof}

\begin{prop}
\label{prop:action-functors-morphisms}

For any $V,V'$ in $\Rep(G)$, the morphisms
\begin{multline*}
\bigoplus_{n \in \Z} \, \Hom_{\DGCGf(\cN_G)}^n \bigl( V \otimes \C[\cN_G],V' \otimes \C[\cN_G]) \ \to \\
\bigoplus_{n \in \Z} \, \Hom_{\DGC_{\fr}^{L}(\cN_L)}^n \bigl( \mathrm{Res^G_L}(V) \otimes \C[\cN_L],\mathrm{Res^G_L}(V') \otimes \C[\cN_L]\bigr)
\end{multline*}
induced by the functors $(i_L^G)^*$ and $F_L \circ \fR^{G}_L \circ (F_G)^{-1}$ coincide.

\end{prop}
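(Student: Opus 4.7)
The plan is to reduce both sides to explicit cohomological descriptions coming from the proof of Proposition \ref{prop:isom-morphisms}, and then verify the matching using Theorem \ref{thm:hl-restriction-classical} together with a geometric compatibility between the Ginzburg isomorphisms for $G$ and for $L$.

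First, I would use Lemma \ref{lem:image-IC}(2) to rewrite $V \otimes \C[\cN_G] \cong F_G(\cS_G(V))$ and similarly for $V'$, so that the Hom spaces on the coherent side become Hom spaces between $\cS_G(V)$ and $\cS_G(V')$ in $\cDb_{\mon{\GvO}}(\Gr_{\Gv})$. Applying Lemma \ref{lem:adjunction} with $V^*$ reduces the claim to the case $V = \C$, in which case we must show that the two maps
\[
\Hom^\bullet_{\cDb_{\mon{\GvO}}(\Gr_{\Gv})}(1_G, \cS_G(V')) \ \to \ \Hom^\bullet_{\cDb_{\mon{\LvO}}(\Gr_{\Lv})}(1_L, \cS_L(\mathrm{Res}^G_L V'))
\]
induced by $\fR^G_L$ and by $(i_L^G)^* \circ F_G$ (then transported through $F_L^{-1}$) coincide.

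Second, I would trace through the chain of isomorphisms \eqref{eqn:prop-isom-morphisms-1}--\eqref{eqn:prop-isom-morphisms-4} on both sides. On the coherent side, $(i_L^G)^*$ directly induces the restriction of functions $\C[\cN_G] \to \C[\cN_L]$, and under the identifications of the morphism spaces with $(V')^{G^{e_G}}$ and with $(\mathrm{Res}^G_L V')^{L^{e_L}}$ this yields a concrete $\C$-linear map. On the constructible side, Theorem \ref{thm:hl-restriction-classical} replaces $\fR^G_L \circ \cS_G$ by $\cS_L \circ \mathrm{Res}^G_L$, and the induced map on morphism spaces can be expressed in terms of how $\fR^G_L$ interacts with the hypercohomology functor and with the action of $H^\bullet(\Gr_{\Gv}^0)$ via the Ginzburg isomorphism $H^\bullet(\Gr_{\Gv}^0) \cong \mathrm{S}(\fg^{e_G})$ from \eqref{eqn:isom-cohomology}.

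The main obstacle is matching these two descriptions. The crucial geometric input is that $\fR^G_L$ transports the Tannakian construction of $\psi_G: \pr(H^\bullet(\Gr_{\Gv}^0)) \to \fg$ from \S \ref{ss:reminder-cohomology} to the analogous $\psi_L$; this in turn should follow from the compatibility of $\fR^G_L$ with the fiber functors and convolution (cf.~\cite[Proposition 5.3.29]{bd}), combined with the Tannakian description of the restriction morphism between Langlands dual groups. Once this compatibility is in place, both of the maps on invariants are identified with the same concrete restriction map, proving the proposition. I expect the principal technical subtlety to lie in the careful unraveling of this Tannakian compatibility at the level of the regular nilpotents $e_G$ and $e_L$ defined via Chern classes of ample line bundles on $\Gr_{\Gv}$ and $\Gr_{\Lv}$, where an appeal to Theorem \ref{thm:filtrations} (identifying the geometric filtration with the Brylinski--Kostant filtration) may be needed to pin down the matching precisely.
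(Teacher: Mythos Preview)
Your reduction to $V=\C$ via adjunction is correct and matches the paper. After that point, however, your plan diverges from the paper's proof in a way that leaves a genuine gap.

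You propose to establish a ``Tannakian compatibility'' showing that $\fR^G_L$ transports $\psi_G$ to $\psi_L$, and then conclude by matching the two descriptions of the map on invariants. The difficulty is that there is no natural algebra map $H^\bullet(\Gr_{\Gv}^0) \to H^\bullet(\Gr_{\Lv}^0)$ through which such a compatibility could be phrased, and the regular nilpotents $e_G \in \fg$ and $e_L \in \mathfrak{l}$ are not related by any morphism of Lie algebras. They are related by a \emph{degeneration}: there is a cocharacter $\chi:\C^\times \to T$ with $e_L = \lim_{t\to 0} \chi(t)\cdot e_G$ inside $\fg$. Without this observation your matching step does not go through.

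The paper's route is to place Theorem~\ref{thm:filtrations} at the centre rather than at the periphery. By adjunction, $\Hom^\bullet(1_G,\cS_G(V))\cong H^\bullet(i_0^!\cS_G(V))$, and Theorem~\ref{thm:filtrations} (case $\lambda=0$) identifies this with $\gr^{\mathrm{BK},G}_\bullet(V^T)$; similarly for $L$. Since $\fR^G_T = \fR^L_T \circ \fR^G_L$ by base change, the map induced by $\fR^G_L$ on the common vector space $V^T$ is the identity. The degeneration formula then gives inclusions $V^T\cap\ker(e_G^{n})\subset V^T\cap\ker(e_L^{n})$, hence a map $\vartheta:\gr^{\mathrm{BK},G}_\bullet(V^T)\to\gr^{\mathrm{BK},L}_\bullet(V^T)$, and this is precisely the map that $\fR^G_L$ induces. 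On the coherent side, Brylinski's explicit isomorphism $\gr^{\mathrm{BK},G}_i(V^T)\xrightarrow{\sim} V^{G^{e_G}}$, $v\mapsto e_G^i\cdot v$ (and its analogue for $L$), together with the same limit formula, identifies $\vartheta$ with the map induced by restriction along $i_L^G$. This closes the diagram.

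In short, the missing ingredients in your outline are the passage through the common torus weight space $V^T$ via Theorem~\ref{thm:filtrations}, and the limit formula relating $e_G$ to $e_L$; both are essential and neither is supplied by a Tannakian comparison of the Chern-class constructions alone.
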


\begin{proof}
As in the proof of Proposition \ref{prop:isom-morphisms}, using adjunction (see Lemma \ref{lem:adjunction}), one can assume that $V=\C$. Then for simplicity we replace $V'$ by $V$ in the notation. What we have to prove is the commutativity of the following diagram:
\begin{equation}
\label{eqn:diagram-morphisms-Orlov-categories}
\vcenter{
\xymatrix{
\Hom_{\cDb_{\mon{\GvO}}(\Gr_{\Gv})}^{\bullet}(1_G,\cS_G(V)) \ar[d]_-{\fR^G_L} \ar[r]^-{\sim} & V^{G^{e_G}} &\Hom_G(\cN_G,V) \ar[l]_-{\sim} \ar[d]^-{(-) \circ i_L^G} \\
\Hom_{\cDb_{\mon{\LvO}}(\Gr_{\Lv})}^{\bullet}(1_L,\cS_L(\mathrm{Res}^G_L V)) \ar[r]^-{\sim} & V^{L^{e_L}} & \Hom_L(\cN_L,\mathrm{Res}^G_L V), \ar[l]_-{\sim}
}
}
\end{equation}
where the first horizontal isomorphisms are given by \eqref{eqn:prop-isom-morphisms-1} and \eqref{eqn:prop-isom-morphisms-2}, and the second horizontal isomorphisms are given by restriction to $e_G \in \cN_G$, respectively $e_L \in \cN_L$. (Here, $\Hom_G(\cN_G,V)$ denotes the space of morphisms of $G$-varieties from $\cN_G$ to $V$, and similarly for $L$.)

By adjunction for the pair $((i_0)_!,i_0^!)$ we have
\[
\Hom_{\cDb_{\mon{\GvO}}(\Gr_{\Gv})}^{\bullet}(1_G,\cS_G(V)) \ \cong \ H^{\bullet}(i_0^! \cS_G(V)).
\]
Hence, by Theorem \ref{thm:filtrations}, this graded vector space is (up to regrading) the associated graded of the Brylinski--Kostant filtration on $V^T$, for the group $G$ and the nilpotent $e_G$, denoted by $F^{\mathrm{BK},G}_{\bullet}$. Similarly, $\Hom_{\cDb_{\mon{\LvO}}(\Gr_{\Lv})}^{\bullet}(1_L,\cS_L(\mathrm{Res}^G_L V))$ is the associated graded of the Brylinski--Kostant filtration on $V^T$ for the group $L$ and the nilpotent $e_L$, denoted by $F^{\mathrm{BK},L}_{\bullet}$. Moreover, by base change we have an isomorphism $\fR^G_L \circ \fR^L_T = \fR^G_T$ of functors on $\cDb_{\mon{\GvO}}(\Gr_{\Gv})$. Hence the morphism $V^T \to V^T$ induced by $\fR^G_L$ is the identity.

By the description of $e_G$ in \S \ref{ss:reminder-cohomology} and the fact that $\prod_{\alpha \in \Delta} \alpha : T \to (\C^{\times})^{\# \Delta}$ is surjective, there exists a cocharacter $\chi : \C^{\times} \to T$ such that 
\begin{equation}
\label{eqn:limite-e_G}
e_L=\lim_{t \to 0} \chi(t) \cdot e_G.
\end{equation}
It follows that we have inclusions
\[
V^T \cap \ker(e_G^n) \subset V^T \cap \ker(e_L^n),
\]
which induce a morphism
\[
\vartheta : \mathrm{gr}^{\mathrm{BK},G}_{\bullet}(V^T) \to \mathrm{gr}^{\mathrm{BK},L}_{\bullet}(V^T).
\]
Via the isomorphisms
\[
\mathrm{gr}^{\mathrm{BK},G}_{\bullet}(V^T) \cong V^{G^{e_G}}, \quad \mathrm{gr}^{\mathrm{BK},L}_{\bullet}(V^T) \cong V^{L^{e_G}}
\]
which send $v \in \mathrm{gr}^{\mathrm{BK},G}_{i}(V^T)$ to $e_G^i \cdot v$ (and a similar formula for $L$), see \cite[Corollary 2.7]{bry}, one can complete diagram \eqref{eqn:diagram-morphisms-Orlov-categories} to the diagram
\[
\xymatrix{
\Hom_{\cDb_{\mon{\GvO}}(\Gr_{\Gv})}^{\bullet}(1_G,\cS_G(V)) \ar[d]_-{\fR^G_L} \ar[r]^-{\sim} & V^{G^{e_G}} \ar[d]^-{\vartheta} &\Hom^G(\cN_G,V) \ar[l]_-{\sim} \ar[d]^-{(-) \circ i_L^G} \\
\Hom_{\cDb_{\mon{\LvO}}(\Gr_{\Lv})}^{\bullet}(1_L,\cS_L(\mathrm{Res}^G_L V)) \ar[r]^-{\sim} & V^{L^{e_L}} & \Hom^L(\cN_L,\mathrm{Res}^G_L V), \ar[l]_-{\sim} \\
}
\]
in which the left square commutes. The commutativity of the right square follows again from \eqref{eqn:limite-e_G}. This finishes the proof.\end{proof}

\subsection{Proof of Theorem \ref{thm:hl-restriction}}
\label{ss:proof-thm-hl-restriction}

Now we can finish the proof of Theorem \ref{thm:hl-restriction}. Na\-mely, we are in the situation of \cite[Theorem 4.7]{ar}: we have two triangulated functors (see Equation \eqref{eqn:functors-hl-restriction}) between bounded homotopy categories of Orlov categories (see Lemma \ref{lem:orlov-coherent}), which induce homogeneous functors between these Orlov categories (see \S \ref{ss:functors-morphisms}), and an isomorphism of additive functors between their restrictions (see Proposition \ref{prop:isom-functors-Orlov-categories}). Hence, by \cite[Theorem 4.7]{ar}, we get an isomorphism
\[
(i_L^G)_{\mix} \ \cong \ F_L^{\mix} \circ \fR^{G,\mix}_L \circ (F_G^{\mix})^{-1}.
\]
Composing with $F_G^{\mix}$ gives the commutativity of the diagram of Theorem \ref{thm:hl-restriction}.

\subsection{Digression: $t$-structures}
\label{ss:digression}

We have explained in \cite[\S 5.1]{ar} that for any Orlov category $\scA$, there exists a natural bounded $t$-structure on $\Kb(\scA)$ whose heart is a finite length abelian category endowed with a mixed structure. The simple objects in this heart are the $S[\deg S]$, where $S$ runs over indecomposable objects of $\scA$.

By Lemma \ref{lem:orlov-coherent}, the category $\Coh_{\fr}^{G \times \Gm}(\cN_G)$ has the structure of an Orlov category, and its bounded homotopy category is
\[
\cDb_{\fr} \Coh^{G \times \Gm}(\cN_G) \ \cong \ \cD^{\mix}_{\mon{\GvO}}(\Gr_{\Gv}).
\]
Hence one gets an abelian category $\scC_G$ as the heart of a certain $t$-structure on these triangulated categories. With the definition of Lemma \ref{lem:orlov-coherent}, the simple objects of this heart are the objects
\[
V_{\lambda} \otimes \cO_{\cN_G} \lan i \ran [i] \ \text{ in } \ \cDb_{\fr} \Coh^{G \times \Gm}(\cN_G), \ \text{ or } \ \IC_{\lambda}^{\mix}\lan i \ran \ \text{ in } \ \cD^{\mix}_{\mon{\GvO}}(\Gr_{\Gv}),
\]
where $\lambda$ runs over $\bX^+$, and $i$ over $\Z$. The abelian category $\scC_G$ is semisimple and equivalent to the category $\Pervm_{\mon{\GvO}}(\Gr_{\Gv})$ of \S \ref{ss:Satake-mix}, so nothing new arises in this situation.

But the structure of $\Coh_{\fr}^{G \times \Gm}(\cN_G)$ as an Orlov category is not unique. In fact, one can take as a degree function
\[
\deg(V_{\lambda} \otimes \cO_{\cN_G} \lan i \ran) = \lfloor {\textstyle \frac{ki}{2}} \rfloor
\] 
for any $k \in \Z_{>0}$. If $k$ is even, then the situation is the same as above, and the heart is semisimple. But if $k$ is odd, then the simple objects are the
\[
V_{\lambda} \otimes \cO_{\cN_G} \lan i \ran [\lfloor {\textstyle \frac{k i}{2} } \rfloor] \ \text{ in } \ \cDb_{\fr} \Coh^{G \times \Gm}(\cN_G), \] 
or the 
\[ 
\IC_{\lambda}^{\mix}\lan i \ran [\lfloor {\textstyle \frac{ki}{2} } \rfloor - i] \ \text{ in } \ \cD^{\mix}_{\mon{\GvO}}(\Gr_{\Gv}).
\]
In particular, this heart is not semisimple.

As Koszul duality is ubiquitous in this geometric context, we expect that our Orlov category is Koszulescent (in the sense of \cite[\S 5.2]{ar}) in this case, but we were not able to prove it.

\subsection{Reminder on equivariant cohomology of $\Gr_{\Gv}$}
\label{ss:reminder-cohomology-equivariant}

In the rest of this section we give a proof of Theorem \ref{thm:filtrations}. 

Consider the $\Tv$-equivariant cohomology $H^{\bullet}_{\Tv}(\Gr_{\Gv})$. It is naturally a Hopf algebra. By the same arguments as in \S \ref{ss:reminder-cohomology}, any primitive element $c$ in $H^{\bullet}_{\Tv}(\Gr_{\Gv})$ defines an element $\psi^{\Tv}(c) \in \fg \otimes_{\C} H^{\bullet}_{\Tv}(\pt)$ (see \cite[\S 5.3]{yz} for details). In particular, we can assume that the line bundle $\cL_{\det}$ has a $\Tv$-equivariant structure, replacing it by a sufficiently large power if necessary (see \cite[Lemma 4.2]{yz}). Hence one can consider the equivariant first Chern class $c_1^{\Tv}:=c_1^{\Tv}(\cL_{\det}) \in H^2_{\Tv}(\Gr_{\Gv})$. By \cite[Lemma 5.1]{yz} this element is primitive. Hence one can define
\[
e_G^{\Tv}:=\psi^{\Tv}(c_1^{\Tv}) \ \in \fg \otimes_{\C} H^{\bullet}_{\Tv}(\pt).
\]

The element $e_G^{\Tv}$ is described very explicitly in \cite[Propositions 5.6 and 5.7]{yz} (in the case $\cL_{\det}$ is the determinant line bundle). Let us recall the parts of this description that we will need. First, it is easy to show (see \cite[Lemma 5.5]{yz}) that $e_G^{\Tv} \in \fg \oplus (\ft \otimes_{\C} H^2_{\Tv}(\pt))$. By construction, the component on $\fg$ is the element $e_G$ of \S \ref{ss:reminder-cohomology}. Hence one can write
\[
e_G^{\Tv} = e_G + f_G
\]
where $f_G \in \ft \otimes_{\C} H^2_{\Tv}(\pt) \cong \ft \otimes_{\C} \ft$. One can view $f_G$ as a bilinear form on $\ftv \cong \ft^*$. Decomposing the basis $\Delta$ into connected components, one obtains a direct sum decomposition $\fgv = \mathfrak{z}(\fgv) \oplus \bigl( \oplus_i \, \fgv_i \bigr)$ where each $\fgv_i$ is a simple Lie algebra, and $\mathfrak{z}(\fgv)$ is the center of $\fgv$. Accordingly we have a decomposition $\ftv=\mathfrak{z}(\fgv) \oplus \bigl( \oplus_i \, \ftv_i \bigr)$. On each $\ftv_i$ one can consider the restriction $\kappa_i$ of the Killing form of $\fgv_i$, and use the direct sum decomposition to extend it to $\ftv$. By \cite[Proposition 5.7]{yz} we have the following:
\[
f_G \text{ is a linear combination with non-zero coefficients of the } \kappa_i\text{'s}.
\]
This bilinear form defines a morphism $\ftv \to \ftv^*\cong \ft$. Let us fix an element $h \in \ftv$ such that
\begin{equation}
\label{eqn:choice-h}
f_G (-,h) = 2\rhov \in \ft.
\end{equation}
Such a choice is possible, and is unique up to adding an element of $\mathfrak{z}(\fgv)$. This will be our choice of $h$ in Theorem \ref{thm:filtrations}.

\subsection{Proof of Theorem \ref{thm:filtrations}}
\label{ss:proof-filtrations}

Fix $V$ in $\Rep(G)$, and let $M=\cS_G(V)$. Recall that we have
\[
\overline{\fT_{\lambda}} \ = \ \bigsqcup_{\mu \geq \lambda} \fT_{\mu}
\]
(see \cite[Proposition 3.1]{mv}). Here, ``$\geq$" is the partial order on $\bX$ determined by our choice of $R^+$. Recall also that we denote by $t_{\lambda} : \fT_{\lambda} \hookrightarrow \Gr_{\Gv}$ the inclusion. Similarly, we denote the inclusion of the closure by $\overline{t_{\lambda}} : \overline{\fT_{\lambda}} \hookrightarrow \Gr_{\Gv}$. As $\overline{\fT_{\lambda}}$ is closed in $\Gr_{\Gv}$ and $\fT_{\lambda}$ is open in $\overline{\fT_{\lambda}}$, there are natural morphisms
\begin{equation}
\label{eqn:diagram-cohomology}
\vcenter{
\xymatrix{
H^{\bullet}_{\Tv}(i_{\lambda}^! M) \ar[r]^-{\eta} \ar[rd]_-{\xi} & H^{\bullet}_{\Tv}(\overline{\fT_{\lambda}}, \overline{t_{\lambda}}^! M) \ar[r]^-{\psi} \ar[d]^-{\phi} & H^{\bullet}_{\Tv}(M) \\
& H^{\bullet}_{\Tv}(\fT_{\lambda}, t_{\lambda}^! M). &
}
}
\end{equation}
Here, $\xi$ is the morphism \eqref{eqn:localization-map}.

The arguments for the proof of the following lemma are adapted from \cite[Proof of Lemma 2.2]{yz}.

\begin{lem}
\label{lem:equivariant-cohomology}

\begin{enumerate}
\item The morphism $\phi$ has a canonical splitting.
\item There is a canonical isomorphism
\[
H_{\Tv}^{\bullet}(\overline{\fT_{\lambda}},\overline{t_{\lambda}}^! M) \ \cong \ \bigoplus_{\mu \geq \lambda} \, \bigl( H^{\lan \mu,2\rhov \ran}(\fT_{\mu},t_{\mu}^! M) \otimes_{\C} H_{\Tv}^{\bullet}(\pt) \bigr) [-\lan \mu,2\rhov \ran].
\]
\item There is a canonical isomorphism
\[
H_{\Tv}^{\bullet}(M) \ \cong \ \bigoplus_{\mu \in \bX} \, \bigl( H^{\lan \mu, 2\rhov \ran}(\fT_{\mu},t_{\mu}^! M) \otimes_{\C} H_{\Tv}^{\bullet}(\pt) \bigr) [- \lan \mu, 2\rhov \ran].
\]
\item Under the isomorphisms of {\rm (2)} and {\rm (3)}, and {\rm \eqref{eqn:isom-equivariant-cohomology}}, the morphism $\psi$, respectively $\phi$, is the inclusion of, respectively projection on, the corresponding direct summands.
\end{enumerate}

\end{lem}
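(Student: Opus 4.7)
The plan is to prove (2) by induction via the recollement triangle together with a parity argument, and then derive (1), (3), and (4) as formal consequences.

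The key parity input is the following: by the Mirković--Vilonen theorem, for each $\mu \in \bX$ the non-equivariant cohomology $H^{\bullet}(\fT_{\mu}, t_{\mu}^! M)$ is concentrated in the single degree $d_{\mu} := \lan \mu, 2\rhov \ran$, and the Leray-type spectral sequence argument already used in \S\ref{ss:reminder-BK-filtration} upgrades this to a canonical isomorphism
\[
H^{\bullet}_{\Tv}(\fT_{\mu}, t_{\mu}^! M) \;\cong\; \bigl(H^{d_{\mu}}(\fT_{\mu}, t_{\mu}^! M) \otimes_{\C} H^{\bullet}_{\Tv}(\pt)\bigr)[-d_{\mu}].
\]
Within a single connected component of $\Gr_{\Gv}$ (to which we may restrict without loss of generality), any two $\mu, \mu'$ appearing in the support of $M$ differ by an element of the coroot lattice of $\Gv$, each coroot pairs with $2\rhov$ into $2\Z$ (the pairing equals twice the height for a positive coroot), so all of the $d_{\mu}$'s share a common parity.

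For (2), I would induct on the finite set of $\mu \geq \lambda$ with $\fT_{\mu} \cap \supp M \neq \emptyset$, the finiteness coming from the fact that $\supp M$ is a finite union of $\GvO$-orbit closures and each such closure meets only finitely many $\Uv^-(\fK)$-orbits (indexed by the weights of the relevant irreducible representation). Set $Z := \overline{\fT_{\lambda}} \smallsetminus \fT_{\lambda}$, with closed inclusion $i \colon Z \hookrightarrow \overline{\fT_{\lambda}}$ and open $j \colon \fT_{\lambda} \hookrightarrow \overline{\fT_{\lambda}}$, and consider the recollement triangle $i_* i^! \overline{t_{\lambda}}^! M \to \overline{t_{\lambda}}^! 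M \to j_* t_{\lambda}^! M \xrightarrow{+1}$. By the inductive hypothesis, $H^{\bullet}_{\Tv}(Z, i^! \overline{t_{\lambda}}^! M)$ is free over $H^{\bullet}_{\Tv}(\pt)$, concentrated in degrees of the same parity as $d_{\lambda}$, and generated in degrees $d_{\mu}$ for $\mu > \lambda$, each of which is $\geq d_{\lambda} + 2$. The connecting maps in the resulting long exact sequence shift parity by $1$, so they vanish by the parity concentration, yielding short exact sequences. To split these canonically, observe that $H^{d_{\lambda}}_{\Tv}(Z, \ldots) = 0$, so $\phi$ restricts in degree $d_{\lambda}$ to an isomorphism $H^{d_{\lambda}}_{\Tv}(\overline{\fT_{\lambda}}, \overline{t_{\lambda}}^! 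M) \xrightarrow{\sim} H^{d_{\lambda}}(\fT_{\lambda}, t_{\lambda}^! M)$; the $H^{\bullet}_{\Tv}(\pt)$-submodule generated by the image of its inverse is canonically a free module that splits $\phi$, and the complementary part of the short exact sequence is identified with $H^{\bullet}_{\Tv}(Z, \ldots)$, which decomposes further by induction.

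Statements (1) and (4) are then immediate from the explicit construction: (1) is the splitting of $\phi$ produced above, and under the resulting decompositions the maps $\psi$ and $\phi$ arise from the recollement triangles, acting as inclusion of (respectively projection onto) the appropriate subsets of summands. Statement (3) is proved by the same recollement/parity strategy applied to an exhaustion of $\supp M$ by increasing finite unions of orbit closures $\overline{\fT_{\mu}}$, which exists by the finiteness argument above. The main technical point is ensuring that the canonical splittings constructed at each inductive step are functorial in $M$ and independent of the order in which strata are peeled off; this follows from the intrinsic characterization of each summand as the $H^{\bullet}_{\Tv}(\pt)$-module generated by the unique lift of the lowest-degree cohomology, a characterization made possible precisely by the strict separation of generator degrees across summands coming from different strata.
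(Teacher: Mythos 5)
Your argument is essentially the paper's: the same parity concentration of the weight functors $H^{\bullet}(\fT_{\mu},t_{\mu}^!M)$ in the single degree $\lan\mu,2\rhov\ran$, the same Leray spectral sequence degeneration giving freeness over $H^{\bullet}_{\Tv}(\pt)$, and the same canonical splitting extracted from the lowest-degree piece, applied stratum by stratum along the closure order (the paper proves (1) first and then peels off strata for (2)--(3), but the content is identical). The one imprecision is your closing appeal to ``strict separation of generator degrees across summands coming from different strata,'' which can fail (distinct $\mu$ may satisfy $\lan\mu,2\rhov\ran=\lan\mu',2\rhov\ran$) and is not needed: summands attached to distinct strata are distinguished not by degree but by the closed subsets $\overline{\fT_{\mu}}$ indexing the splittings, together with the fact that the equivariant cohomology of a disjoint union of strata at a given level of the closure order decomposes automatically.
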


\begin{proof}
(1) It is easy to show using degree arguments that there is a canonical isomorphism
\[
H^{\bullet}(\overline{\fT_{\lambda}},\overline{t_{\lambda}}^! M) \ \cong \ \bigoplus_{\mu \geq \lambda} H^{\lan \mu, 2\rhov \ran}(\fT_{\mu},t_{\mu}^! M)[- \lan \mu,2\rhov \ran].
\]
For any $\mu \geq \lambda$, the parity of $\lan \mu,2\rhov \ran$ is the same as that of $\lan \lambda,2\rhov \ran$. Hence, by the same argument using the Leray spectral sequence as in \S \ref{ss:reminder-BK-filtration}, there exists an \emph{a priori} non-canonical isomorphism as in (2). In particular, the lowest non-zero degree in $H_{\Tv}^{\bullet}(\overline{\fT_{\lambda}},\overline{t_{\lambda}}^! M)$ is $\lan \lambda, 2\rhov \ran$, and we have
\[
H_{\Tv}^{\lan \lambda, 2\rhov \ran}(\overline{\fT_{\lambda}},\overline{t_{\lambda}}^! M) \ \cong \ H^{\lan \lambda, 2\rhov \ran}(\fT_{\lambda},t_{\lambda}^! M).
\]
In particular, there is a canonical morphism
\[
\bigl( H^{\lan \lambda, 2\rhov \ran}(\fT_{\lambda},t_{\lambda}^! M) \otimes H_{\Tv}^{\bullet}(\pt) \bigr) [-\lan \lambda,2\rho \ran] \to H_{\Tv}^{\bullet}(\overline{\fT_{\lambda}},\overline{t_{\lambda}}^! M).
\]
Using isomorphism \eqref{eqn:isom-equivariant-cohomology}, one sees that this morphism is a splitting for $\phi$.

(2) By (1), $\phi$ is a canonically split surjection. Its kernel is
\[
H^{\bullet}_{\Tv}(\overline{\fT_{\lambda}} \smallsetminus \fT_{\lambda}, r_{\lambda}^! M),
\]
where $r_{\lambda} : \overline{\fT_{\lambda}} \smallsetminus \fT_{\lambda} \hookrightarrow \Gr_{\Gv}$ is the (closed) inclusion. By the same arguments, the restriction morphism to
\[
\bigoplus_{\alpha \in \Delta} H^{\bullet}_{\Tv}(\fT_{\lambda+\alpha}, t_{\lambda+\alpha}^! M)
\]
is also a canonically split surjection. Repeating this argument again and again, one obtains the isomorphism.

(3) Using the splitting in (1) for any $\mu \in \bX$, one obtains a canonical morphism from the right-hand side to the left-hand side. By the same Leray spectral sequence argument as above, one proves that this morphism is an isomorphism.

Property (4) is clear by construction.\end{proof}

\begin{prop}
\label{prop:inclusion-filtration}

Assume $h \in \ftv$ is such that \eqref{eqn:choice-h} holds. Then for any $i \in \Z$ we have
\[
\mathrm{F}^{\geom}_{2i+\lan \lambda,2\rhov \ran} \, V(\lambda) \ = \ \mathrm{F}^{\geom}_{2i+1+\lan \lambda,2\rhov \ran} \, V(\lambda) \ \subset \ \FBK_i \, V(\lambda).
\]

\end{prop}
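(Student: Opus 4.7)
The plan is as follows. Given $v \in \mathrm{F}^{\geom}_{2i+\lan\lambda,2\rhov\ran} V(\lambda)$, I start by lifting it to a class $\tilde v \in H^{\leq 2i+\lan\lambda,2\rhov\ran}_{\Tv}(i_\lambda^! M)$ and pushing it via the maps of diagram~\eqref{eqn:diagram-cohomology} to $w := \psi\,\eta(\tilde v) \in H^\bullet_{\Tv}(M)$. Using the canonical decomposition of Lemma~\ref{lem:equivariant-cohomology}(3), I will write $w = \sum_{\mu \geq \lambda} w_\mu$ with each $w_\mu \in V(\mu) \otimes_{\C} H^\bullet_{\Tv}(\pt)\,[-\lan\mu,2\rhov\ran]$, and observe that the degree bound on $w$ forces the $H^\bullet_{\Tv}(\pt)$-part of $w_\mu$ to sit in degrees at most $2i - 2\,|\mu-\lambda|_\Delta$, where $|\mu-\lambda|_\Delta$ denotes the height of $\mu - \lambda$. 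In particular $w_\mu = 0$ whenever $|\mu-\lambda|_\Delta > i$.

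The crux of the argument is to convert this degree information into an algebraic identity. Both $H^\bullet_{\Tv}(i_\lambda^! M)$ and $H^\bullet_{\Tv}(M)$ carry natural $H^\bullet_{\Tv}(\Gr_{\Gv})$-module structures, and $\psi\eta$ is linear for these; on the costalk at a point, however, adjunction for the inclusion $i_\lambda$ reduces cup product with any $c \in H^\bullet_{\Tv}(\Gr_{\Gv})$ to multiplication by the restriction $c|_{L_\lambda} \in H^\bullet_{\Tv}(\pt)$. Applying this to $c = c_1^{\Tv}(\cL_{\det})$ and pushing through $\psi\eta$ yields the identity $c_1^{\Tv}(\cL_{\det}) \smile w = \kappa(\lambda) \cdot w$ in $H^\bullet_{\Tv}(M)$, where $\kappa(\lambda) := c_1^{\Tv}(\cL_{\det})|_{L_\lambda} \in H^2_{\Tv}(\pt)$. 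Under the identification $H^\bullet_{\Tv}(M) \cong V \otimes_\C H^\bullet_{\Tv}(\pt)$ coming from the canonical decomposition, the left-hand side realises the action of $\psi^{\Tv}(c_1^{\Tv}) = e_G + f_G$ from \S\ref{ss:reminder-cohomology-equivariant}. Projecting to the $V(\mu)$-summand gives
\[
\sum_{\alpha \in \Delta} e_\alpha \cdot w_{\mu-\alpha} \ + \ f_G(\mu,-)\cdot w_\mu \ = \ \kappa(\lambda)\cdot w_\mu.
\]
Reading this at $\mu = \lambda$ (where the left-hand sum vanishes, since $w_{\lambda-\alpha} = 0$) forces $\kappa(\lambda) = f_G(\lambda,-)$, and for $\mu > \lambda$ it rearranges to $\sum_\alpha e_\alpha w_{\mu-\alpha} = f_G(\lambda - \mu,-)\cdot w_\mu$.

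Specializing at the chosen $h$ and using $f_G(-,h) = 2\rhov$ together with $\lan\alpha,2\rhov\ran = 2$ for $\alpha \in \Delta$, the vectors $v_\mu := w_\mu|_h \in V(\mu)$ (with $v_\lambda = v$) satisfy the recursion $\sum_\alpha e_\alpha v_{\mu-\alpha} = -2\,|\mu-\lambda|_\Delta \cdot v_\mu$ for $\mu > \lambda$. I will unroll this inductively to the closed formula $v_\mu = \frac{(-1)^m}{2^m\,m!}\,[e_G^m v]_{V(\mu)}$ with $m = |\mu-\lambda|_\Delta$. Applying the recursion at any $\mu$ with $|\mu-\lambda|_\Delta = i+1$, where already $v_\mu = 0$ from the degree bound on $w$, yields $\frac{(-1)^i}{2^i\,i!}\,[e_G^{i+1} v]_{V(\mu)} = 0$, so $e_G^{i+1} v = 0$, i.e.\ $v \in \FBK_i V(\lambda)$. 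The coincidence $\mathrm{F}^{\geom}_{2i+\lan\lambda,2\rhov\ran} = \mathrm{F}^{\geom}_{2i+1+\lan\lambda,2\rhov\ran}$ then follows from the parity of costalk cohomology recalled in \S\ref{ss:reminder-BK-filtration}. The main subtlety I expect is the identification $\kappa(\lambda) = f_G(\lambda,-)$: rather than computing the $\Tv$-equivariant first Chern class of $\cL_{\det}$ at $L_\lambda$ directly, it drops out for free by reading off the lowest-weight component of the eigenvalue-style equation, and this is the structural trick that turns the topological input into a clean Lie-algebraic recursion.
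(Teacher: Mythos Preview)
Your argument is correct and follows essentially the same strategy as the paper's proof: lift to equivariant cohomology, decompose along the canonical splitting of Lemma~\ref{lem:equivariant-cohomology}, use that cup product with $c_1^{\Tv}$ on the costalk at $L_\lambda$ acts by the scalar $f_G(\lambda,-)$ while on $H^\bullet_{\Tv}(M)$ it acts by $e_G+f_G$, and then unwind the resulting recursion at $h$ to obtain $e_G^{i+1}v=0$. The only differences are cosmetic: the paper stays in $H^\bullet_{\Tv}(\overline{\fT_\lambda},\overline{t_\lambda}^! M)$ via $\eta$ rather than pushing through $\psi$ to $H^\bullet_{\Tv}(M)$ (harmless since $\psi$ is an inclusion by Lemma~\ref{lem:equivariant-cohomology}(4)); it cites \cite[Proof of Proposition 5.7]{yz} for $c_1^{\Tv}|_{L_\lambda}=f_G(\lambda,-)$ rather than extracting it from the $\mu=\lambda$ component of your eigenvalue equation (your trick works whenever $v\neq 0$, which suffices, but note that the identification is actually a general fact independent of $v$); and it iterates the recursion $e_G\cdot S_{m-1}=-2m\,S_m$ directly rather than recording your closed formula $v_\mu=\frac{(-1)^m}{2^m m!}[e_G^m v]_{V(\mu)}$.
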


\begin{proof}
The equality follows directly from the fact that $H^{\bullet}(i_{\lambda}^! M)$ is concentrated in degrees of the same parity as $\lan \lambda,2\rhov \ran$, see e.g.~\cite[Corollaire 2.10]{sp2}.

Consider some $c \in H^{2i+\lan \lambda,2\rhov \ran}_{\Tv}(i_{\lambda}^! M)$. Its image under the morphism $\eta$ of \eqref{eqn:diagram-cohomology} decomposes according to Lemma \ref{lem:equivariant-cohomology} as:
\[
\eta(c) \ = \ x_{\lambda} + \bigl( \sum_{0 < j \leq i} \sum_{\gamma \in j \Delta} x_{\lambda + \gamma} \bigr)
\]
where for any relevant $\mu$ 
\[
x_{\mu} \in H^{\lan \mu,2\rhov \ran}(\fT_{\mu},t_{\mu}^! M) \otimes_{\C} H^{2i + \lan \lambda-\mu,2\rhov \ran}_{\Tv}(\pt).
\] 
Here, $j \Delta \subset \bX$ is the set of sums of $j$ (not necessarily distinct) elements of $\Delta$. We have $\xi(c)=x_{\lambda}$. Hence what we have to show is that $e_G^{i+1} \cdot (x_{\lambda})_{h}=0$, where $(x_{\lambda})_{h}$ is the image of $x_{\lambda}$ in $V(\lambda)$, under the isomorphism \eqref{eqn:isom-cohomology-T}.

The morphism $\eta$ is compatible with cup products, and the element $c_1^{\Tv}$ of \S \ref{ss:reminder-cohomology-equivariant} acts on $H^{\bullet}_{\Tv}(i_{\lambda}^! M)$ via its restriction to $\{L_{\lambda}\}$, which is $f_G(\lambda,-) \in \ft$ (see \cite[Proof of Proposition 5.7]{yz}). (Here we view $\lambda$ in $\ft^* \cong \ftv$.) Hence
\[
c_1^{\Tv} \cup \bigl(x_{\lambda} + \sum_{0 < j \leq i} \sum_{\gamma \in j \Delta} x_{\lambda + \gamma} \bigr)_h = \lan \lambda,2\rhov \ran \cdot \bigl(x_{\lambda} + \sum_{0 < j \leq i} \sum_{\gamma \in j \Delta} x_{\lambda + \gamma} \bigr)_h,
\]
by our choice of $h$. On the other hand,
\[
c_1^{\Tv} \cup \bigl(x_{\lambda} + \sum_{0 < j \leq i} \sum_{\gamma \in j \Delta} x_{\lambda + \gamma} \bigr)_h =
(e_G + f_G(-,h)) \cdot \bigl(x_{\lambda} + \sum_{0 < j \leq i} \sum_{\gamma \in j \Delta} x_{\lambda + \gamma} \bigr)_h,
\]
where on the right-hand side we consider the action in $V$. We deduce that
\[
e_G \cdot (x_{\lambda})_h = - 2(\sum_{\gamma \in \Delta} x_{\lambda+\gamma})_h, \quad e_G \cdot \bigl( \sum_{\gamma \in \Delta} x_{\lambda+\gamma} \bigr)_h = -4 (\sum_{\delta \in 2\Delta} x_{\lambda+\delta})_h,
\]
and so on until
\[
e_G \cdot \bigl( \sum_{\gamma \in i\Delta} x_{\lambda+\gamma} \bigr)_h = 0.
\]
Taking all these equations into account, we obtain finally that $e_G^{i+1} \cdot (x_{\lambda})_h=0$.
\end{proof}

Now we can finish our proof of Theorem \ref{thm:filtrations}.

\begin{proof}[Proof of Theorem {\rm \ref{thm:filtrations}}]
By Proposition \ref{prop:inclusion-filtration}, it is enough to prove that the Laurent polynomials in $q$
\begin{equation}
\label{eqn:polynomials-geom-BK}
\sum_{i \in \Z} \, \dim \bigl( \mathrm{gr}^{\geom}_{\lan \lambda,2\rhov \ran + 2i} V(\lambda) \bigr) \cdot q^i \quad \text{and} \quad \sum_{i \in \Z} \, \dim \bigl( \mathrm{gr}^{\mathrm{BK}}_i V(\lambda) \bigr) \cdot q^i
\end{equation}
coincide. However, both of these polynomials are known explicitly, as follows. One can assume that $V$ is a simple module: $V=V_{\nu}$ for some $\nu \in \bXp$. Moreover, $V$ restricts to a simple module of the derived subgroup of $G$; hence one can assume that $G$ is semisimple. For simplicity, we write $P^{\geom}_{\nu,\lambda}(q)$ for the left-hand side of \eqref{eqn:polynomials-geom-BK}, and $P^{\mathrm{BK}}_{\nu,\lambda}(q)$ for the right-hand side.

First, consider the left-hand side. The associated graded of the geometric filtration is known by construction, hence we have
\[
P^{\geom}_{\nu,\lambda}(q) = \sum_{j \in \Z} \, \dim \bigl( H^{\lan \lambda,2\rhov \ran+ 2j}(i_{\lambda}^! \IC_{\nu}) \bigr) \cdot q^j.
\]
Let $w \in W$ be such that $w(\lambda)$ is dominant. Then we have by $\GvO$-equivariance of $\IC_{\nu}$
\[
P^{\geom}_{\nu,\lambda}(q) = q^{\lan w(\lambda)-\lambda,\rhov \ran} \cdot P^{\geom}_{\nu,w(\lambda)}(q).
\]
Now, this polynomial can be expressed in terms of Kazhdan--Lusztig polynomials (see \cite{kl, sp2, lu}). Using also the formula provided by \cite[Theorem 1.8]{ka}\footnote{This formula was conjectured by Lusztig in \cite[Equation (9.4)]{lu}.}, one obtains that $P^{\geom}_{\nu,w(\lambda)}(q)$ is equal to a polynomial $m^{w(\lambda)}_{\nu}(q)$ defined combinatorially and nowadays known as Lusztig's $q$-analog of the weight multiplicity (see e.g.~\cite[\S 2.3]{jlz} or \cite[Equation (3.3)]{bry} for a definition).

Now, consider the right-hand side of \eqref{eqn:polynomials-geom-BK}. By \cite[Theorem 7.6]{jlz}, we have
\begin{equation}
\label{eqn:Polynomial-BK}
P^{\mathrm{BK}}_{\nu,\lambda}(q) = q^{\lan w(\lambda)-\lambda,\rhov \ran} \cdot m^{w(\lambda)}_{\nu}(q).
\end{equation}
The result follows.
\end{proof}

\begin{rmk}
In this paper we use Theorem \ref{thm:filtrations} only in the case $\lambda=0$. In this case (and more generally in the case where $\lambda$ is dominant), \eqref{eqn:Polynomial-BK} is proved in \cite[Theorem 3.4]{bry} (under a cohomology vanishing assumption later proved in \cite{bro}).
\end{rmk}

\section{Mixed equivalence and convolution}
\label{sect:mix-convolution}

In this section we prove Proposition \ref{prop:FGmix-convolution}.

\subsection{Convolution with mixed perverse sheaves}

Recall the notation of \S \ref{ss:mixed-version-hl}. By definition, any object of $\Perv^0_{\mon{\GvO}}(\Gr_{\Gv})$ has a natural $\Gv(\F_p[[x]])$-equivariant structure. Let $M$ be in $\Perv^0_{\mon{\GvO}}(\Gr_{\Gv})$. It follows from Proposition \ref{prop:convolution-mix} that the convolution with $M$ defines a functor
\[
(-) \star M : \cD^{\Weil}_{\mon{\GvO}}(\Gr_{\Gv}) \to \cD^{\Weil}_{\mon{\GvO}}(\Gr_{\Gv}).
\]
Moreover, this functor preserves the additive subcategory $\Pure_{\mon{\GvO}}(\Gr_{\Gv})$, and induces a homogeneous functor $\Pure_{\mon{\GvO}}(\Gr_{\Gv}) \to \Pure_{\mon{\GvO}}(\Gr_{\Gv})$ in the sense of \cite[Definition 4.1]{ar}. Hence by \cite[Proposition 9.1]{ar} we have the following existence result.

\begin{prop}
\label{prop:mixed-version-convolution}

For any $M$ in $\Perv^0_{\mon{\GvO}}(\Gr_{\Gv})$, there exists a functor 
\[
(-) \star M : \cD^{\mix}_{\mon{\GvO}}(\Gr_{\Gv}) \to \cD^{\mix}_{\mon{\GvO}}(\Gr_{\Gv})
\]
which makes the diagram
\[
\xymatrix@R=0.5cm{
\cD^{\mix}_{\mon{\GvO}}(\Gr_{\Gv}) \ar[rr]^-{\iota} \ar[rd]_{\For} \ar[dd]_-{(-) \star M} & & \cD^{\Weil}_{\mon{\GvO}}(\Gr_{\Gv}) \ar[ld]^-{\varkappa} \ar[dd]^-{(-) \star M} & \\
& \cDb_{\mon{\GvO}}(\Gr_{\Gv}) \ar[dd]^(.3){(-) \star \Phi_G(M)} & \\
\cD^{\mix}_{\mon{\LvO}}(\Gr_{\Lv}) \ar'[r][rr]^(-.3){\iota} \ar[rd]_-{\For} & & \cD^{\Weil}_{\mon{\LvO}}(\Gr_{\Lv}) \ar[ld]^-{\varkappa} \\
& \cDb_{\mon{\LvO}}(\Gr_{\Lv}) & \\
}
\]
commutative up to isomorphisms of functors.\qed

\end{prop}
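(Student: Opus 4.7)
The plan is to deduce the proposition as an application of \cite[Proposition 9.1]{ar} to the functor $(-) \star M : \cD^{\Weil}_{\mon{\GvO}}(\Gr_{\Gv}) \to \cD^{\Weil}_{\mon{\GvO}}(\Gr_{\Gv})$ introduced just above the statement. According to that citation, what remains to verify is that this functor is \emph{geometric} (preserves the subcategory $\Pure_{\mon{\GvO}}(\Gr_{\Gv})$ of pure objects of weight $0$) and that its restriction to the Orlov category $\Pure_{\mon{\GvO}}(\Gr_{\Gv})$ is homogeneous in the sense of \cite[Definition 4.1]{ar}. The compatibilities with $\iota$, $\varkappa$ and $\For$ displayed in the diagram are then automatic consequences of the construction of the mixed lift in \cite[Proposition 9.1]{ar}.

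The first step is to check stability of $\Pure_{\mon{\GvO}}(\Gr_{\Gv})$. By definition of $\Perv^0_{\mon{\GvO}}(\Gr_{\Gv})$, the object $M$ is a finite direct sum of simple perverse sheaves $\IC^{\mix}_{\mu}$, each of which is pure of weight $0$. A typical indecomposable object of $\Pure_{\mon{\GvO}}(\Gr_{\Gv})$ has the form $\IC^{\mix}_{\lambda}[n]\lan n \ran$, so it suffices to treat $\IC^{\mix}_{\lambda} \star \IC^{\mix}_{\mu}$. But Proposition \ref{prop:convolution-mix} tells us precisely that this convolution is a direct sum of objects $\IC^{\mix}_{\nu}$; in particular it is pure of weight $0$ and lies in $\Pure_{\mon{\GvO}}(\Gr_{\Gv})$, and the same then holds after any shift-twist combination $[n]\lan n \ran$.

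The second step is homogeneity. The Orlov category structure on $\Pure_{\mon{\GvO}}(\Gr_{\Gv})$ (as in \cite[\S 6.4]{ar}) has as its indecomposables exactly the $\IC^{\mix}_{\lambda}[n]\lan n \ran$, with degree function recording~$n$. The computation in the preceding paragraph shows that $(-) \star M$ sends such an indecomposable of degree $n$ to a direct sum of indecomposables of the same degree $n$, which is the defining property of a homogeneous additive functor. With these two verifications in hand, \cite[Proposition 9.1]{ar} produces the desired functor $(-) \star M$ on $\cD^{\mix}_{\mon{\GvO}}(\Gr_{\Gv})$ together with the commutative diagram.

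There is essentially no serious obstacle here: the nontrivial content has already been absorbed into Proposition \ref{prop:convolution-mix}, whose semisimplicity statement is exactly what is needed for both the purity and the homogeneity checks. The only minor point worth stating carefully in writing is that, although $\Phi_G(M)$ is a priori only $\Gv(\F_p[[x]])$-equivariant after extension of scalars to $\overline{\F_p}$, convolution with it is well defined on the Weil-equivariant derived category because $M$ itself is an object over $\F_p$; this ensures the right-hand square of the diagram genuinely commutes through $\varkappa$.
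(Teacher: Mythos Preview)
Your proposal is correct and follows essentially the same approach as the paper. The paper's proof is the paragraph immediately preceding the proposition (note the \texttt{\textbackslash qed} in the statement): it observes that Proposition~\ref{prop:convolution-mix} guarantees $(-)\star M$ is well defined on $\cD^{\Weil}_{\mon{\GvO}}(\Gr_{\Gv})$, preserves $\Pure_{\mon{\GvO}}(\Gr_{\Gv})$, and restricts to a homogeneous functor there, after which \cite[Proposition~9.1]{ar} applies; you have simply unpacked these verifications in more detail.
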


\subsection{Compatibility of $F_G^{\mix}$ with convolution}
\label{ss:compatibility}

Fix some $V$ in $\Rep(G)$. Consider the functors
\[
A_1, A_2 : \cDb_{\free} \Coh^{G \times \Gm}(\cN_G) \to \cDb_{\free} \Coh^{G \times \Gm}(\cN_G)
\]
defined as follows: 
\[
A_1(M)=M \otimes V \quad \text{ and } \quad
A_2(M) = F^{\mix}_G \bigl( (F^{\mix}_G)^{-1}(M) \star \cS_G^0(V) \bigr),
\]
where the object $(F^{\mix}_G)^{-1}(M) \star \cS_G^0(V)$ is defined in Proposition \ref{prop:mixed-version-convolution}.

Under the equivalence
\[
\Kb\bigl( \Coh_{\free}^{G \times \Gm}(\cN_G) \bigr) \ \cong \ \cDb_{\free} \Coh^{G \times \Gm}(\cN_G)
\]
of Lemma \ref{lem:orlov-coherent}, both functors $A_1$ and $A_2$ stabilize the subcategory $\Coh_{\free}^{G \times \Gm}(\cN_G)$. Moreover, it is easy to construct an isomorphism of additive functors
\[
(A_1){}_{|\Coh_{\free}^{G \times \Gm}(\cN_G)} \ \cong \ (A_2){}_{|\Coh_{\free}^{G \times \Gm}(\cN_G)}
\]
using Lemma \ref{lem:image-IC}(2). By \cite[Theorem 4.7]{ar}, we deduce that there exists an isomorphism of functors
\[
A_1 \ \cong \ A_2,
\]
which proves Proposition \ref{prop:FGmix-convolution}.

\section{Convolution and hyperbolic localization}
\label{sect:hl-convolution}

In this section we prove Proposition \ref{prop:convolution-hl}. Our arguments are independent of the rest of the paper.

\subsection{Reminder on nearby cycles}
\label{ss:nearby-cycles}

Recall the definition of the nearby cycles functor (see \cite[p.~98]{re} for more details and references). Let $X$ be a variety, and let $f: X \to \C$ be a morphism. Let $X_0:=f^{-1}(0)$, $X_U:=f^{-1}(\C^{\times})$, and $\widetilde{X}_U:= X_U \times_{\C^{\times}} \widetilde{\C^{\times}}$, where $\widetilde{\C^{\times}}$ is the universal cover of $\C^{\times}$. We have the following diagram, where the morphisms are the natural ones, and all squares are cartesian: 
\[
\xymatrix{
X_0 \ar[d] \ar@{^{(}->}[r]^-{i} & X \ar[d]^-{f} & X_U \ar@{_{(}->}[l]_-{j} \ar[d] & \ar[l]_-{v} \widetilde{X}_U \ar[d] \\ \{0\} \ar@{^{(}->}[r] & \C & \C^{\times} \ar@{_{(}->}[l] & \widetilde{\C^{\times}}. \ar[l]
}
\]
Then the nearby cycles functor is defined by
\[
\Psi_f := i^* j_* v_* v^* [-1] : \cDb_c(X_U) \to \cDb_c(X_0).
\]
(Here, $\cDb_c(-)$ is the bounded derived category of constructible sheaves.)

We will need some functoriality properties of this construction. Let $g: Y \to \C$ be another variety over $\C$, and consider a commutative diagram 
\[
\xymatrix{
Y \ar[d]_-{g} \ar[r]^-{\pi} & X \ar[d]^-{f} \\ \C \ar@{=}[r] & \C.
}
\]
We let $\pi_0 : Y_0 \to X_0$ and $\pi_U : Y_U \to X_U$ be the restrictions of $\pi$, and $\widetilde{\pi}_U : \widetilde{Y}_U \to \widetilde{X}_U$ be the morphism obtained from $\pi_U$ by base change. Then there are morphisms of functors 
\begin{align}
\label{eqn:Psi-inverse-*}
(\pi_0)^* \circ \Psi_f \ & \to \ \Psi_g \circ (\pi_U)^*, \\
\label{eqn:Psi-direct-!}
(\pi_0)_! \circ \Psi_g \ & \to \Psi_f \circ (\pi_U)_!, \\
\label{eqn:Psi-direct-*}
\Psi_f \circ (\pi_U)_* \ & \to \ (\pi_0)_* \circ \Psi_g, \\
\label{eqn:Psi-inverse-!}
\Psi_g \circ (\pi_U)^! \ & \to \ (\pi_0)^! \circ \Psi_f.
\end{align}
For example, let us construct morphism \eqref{eqn:Psi-inverse-*}. The other morphisms are constructed similarly. We use the notation above, adding indices ``$X$'' and ``$Y$'' to distinguish the morphisms associated to $f$ and $g$. Hence we have the following diagram, where all squares are cartesian:
\[
\xymatrix{
Y_0 \ar@{^{(}->}[r]^-{i_Y} \ar[d]_-{\pi_0} & Y \ar[d]^-{\pi} & Y_U \ar@{_{(}->}[l]_-{j_Y} \ar[d]^-{\pi_U} & \widetilde{Y}_U \ar[d]^{\widetilde{\pi}_U} \ar[l]_-{v_Y} \\ 
X_0 \ar@{^{(}->}[r]^-{i_X} & X & X_U \ar@{_{(}->}[l]_-{j_X} & \widetilde{X}_U. \ar[l]_-{v_X}
}
\]
By definition, we have 
\[
(\pi_0)^* \circ \Psi_f \ = \ (\pi_0)^* (i_X)^* (j_X)_* (v_X)_* (v_X)^* \ \cong \ (i_Y)^* \pi^* (j_X)_* (v_X)_* (v_X)^*.
\]
Here, the isomorphism follows from the identity $i_X \circ \pi_0 = i_Y \circ \pi$. Next, there are natural morphisms of functors 
\[
\pi^* (j_X)_* \to (j_Y)_* (\pi_U)^* \quad \text{and} \quad (\pi_U)^* (v_X)_* \to (v_Y)_* (\widetilde{\pi}_U)^*,
\]
induced by the adjunctions $\bigl( (\pi_U)^*, (\pi_U)_* \bigr)$ and $\bigl( (\widetilde{\pi}_U)^*, (\widetilde{\pi}_U)_* \bigr)$. Hence we obtain a morphism 
\[
(\pi_0)^* \circ \Psi_f \ \to \ (i_Y)^* (j_Y)_* (v_Y)_* (\widetilde{\pi}_U)^* (v_X)^*.
\]
Finally, using the equality $v_X \circ \widetilde{\pi}_U = \pi_U \circ v_Y$ and the definition of $\Psi_g$, one obtains morphism \eqref{eqn:Psi-inverse-*}.

\subsection{Convolution via nearby cycles}

Consider the convolution bifunctor 
\[
(- \star -) : \cDb_c(\Gr_{\Gv}) \times \cDb_{\eq{\GvO}}(\Gr_{\Gv}) \to \cDb_c(\Gr_{\Gv}).
\]
It induces, via the forgetful functor, a bifunctor denoted similarly
\[
(- \star -) : \cDb_c(\Gr_{\Gv}) \times \cDb_{\eq{\GvO \rtimes \mathrm{Aut}}}(\Gr_{\Gv}) \to \cDb_c(\Gr_{\Gv}).
\]
In this subsection we recall a construction of Gaitsgory (\cite{ga}) which allows to give an equivalent definition of this bifunctor in terms of nearby cycles.

In what follows, we set $X:=\C$ and $x=0 \in X$. For an algebraic group\footnote{In \cite{ga}, the author works with reductive groups. However, some of his constructions generalize to an arbitrary algebraic group, see e.g.~\cite[\S 4.5.1--4.5.2]{bd} and \cite[\S A.5]{ga}. We freely use these extensions.} $H$, the ind-scheme $\Gr_{H,X}'$ is defined in \cite[\S 3.1.1]{ga} as the ind-scheme which represents the functor sending an affine scheme $S$ to the set of triples $(y,\cF_{H},\beta')$, where $y$ is an $S$-point of $X$, $\cF_{H}$ is an $H$-bundle over $X \times S$, and $\beta'$ is a trivialization of $\cF_{H}$ off the divisor $\Gamma_y \cup (x \times S)$. Here, $\Gamma_y$ is the graph of $y$. Let $\Gr_{H,X \smallsetminus x}'$, resp. $\Gr_{H,x}'$, be the restriction of $\Gr_{H,X}'$ to $X \smallsetminus \{x\}$, resp. $x$.

Following \cite[Lemma 3]{ga}, we also define the ind-scheme $\Gr_{H,X}$, which represents the functor sending an affine scheme $S$ to the set of triples $(y,\cF_{H},\beta)$, where $y$ is an $S$-point of $X$, $\cF_{H}$ is an $H$-bundle over $X \times S$, and $\beta$ is a trivialization of $\cF_{H}$ off the divisor $\Gamma_y$. As above, let $\Gr_{H,X \smallsetminus x}$, resp. $\Gr_{H,x}$, be the restriction of $\Gr_{H,X}$ to $X \smallsetminus \{x\}$, resp. $x$. By \cite[Lemma 3]{ga}, there is an isomorphism of ind-schemes
\[
\Gr_{H,X} \ \cong \ \mathfrak{X} \times^{\mathrm{Aut}} \Gr_H,
\]
where $\mathfrak{X}$ is the canonical $\mathrm{Aut}$-torsor over $X$, see \cite[\S 2.1.2]{ga}. (Here, as in the preceding sections, $\Gr_H$ is the affine Grassmannian of $H$, see \cite[\S 4.5]{bd}, endowed with the natural action of $\mathrm{Aut}$.)

By \cite[Proposition 5]{ga}, there are natural isomorphisms 
\begin{equation}
\label{eqn:isomorphisms-Gr-Gr'} \Gr'_{H,X \smallsetminus x} \cong \Gr_{H, X \smallsetminus x} \times \Gr_{H}, \quad \Gr'_{H,x} \cong \Gr_H.
\end{equation}

There is a functor
\[
\cDb_{\eq{\mathrm{Aut}}}(\Gr_H) \to \cDb_c(\Gr_{H,X}),
\]
denoted $M \mapsto M_X$, where $M_X$ is the twisted external product $\uC{}_X[1] \tsqtimes M$, an object of the derived category of sheaves on $\mathfrak{X} \times^{\mathrm{Aut}} \Gr_H \cong \Gr_{H,X}$. We denote by $M_{X \smallsetminus x}$ the restriction of $M_X$ to $\Gr_{H,X \smallsetminus x}$.

Finally we can explain the construction of the bifunctor $\cC_H(-,-)$ of \cite[\S 3.2]{ga}. Starting from $M$ in $\cDb_c(\Gr_{H})$ and $N$ in $\cDb_{\eq{\mathrm{Aut}}}(\Gr_{H})$, one can consider the external product
\[
N_{X \smallsetminus x} \boxtimes M
\]
in $\cDb_{\mathrm{const}}(\Gr'_{H, X \smallsetminus x})$. (Here we have used the first isomorphism in \eqref{eqn:isomorphisms-Gr-Gr'}.) Let
\[
\Psi^H : \cDb_c(\Gr'_{H, X \smallsetminus x}) \to \cDb_c(\Gr_{H}),
\]
be the nearby cycle functor associated to the natural map $\Gr'_{H,X} \to X=\C$ (see \S \ref{ss:nearby-cycles}). (Here we have used the second isomorphism in \eqref{eqn:isomorphisms-Gr-Gr'}.) Composing these two operations, one obtains a bifunctor
\[
\cC_H(-,-) : \cDb_c(\Gr_{H}) \times \cDb_{\eq{\mathrm{Aut}}}(\Gr_{H}) \to \cDb_c(\Gr_{H}),
\]
such that
\[
\cC_H(M,N)= \Psi^H(N_{X \smallsetminus x} \boxtimes M).
\]
The following result is part of \cite[Proposition 6(b)]{ga}.

\begin{prop} 
\label{prop:gaitsgory}

Assume $H$ is reductive.

For $M$ in $\cDb_c(\Gr_{H})$ and $N$ in $\cDb_{\eq{H(\cO) \rtimes \mathrm{Aut}}}(\Gr_{H})$, there exists a bifunctorial isomorphism
\[
\cC_H(M,N) \ \cong \ M \star N.
\]

\end{prop}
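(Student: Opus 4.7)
The strategy is to realize both $M \star N$ and $\cC_H(M,N)$ as arising, via nearby cycles, from a single global object living on a ``convolution'' variant of $\Gr'_{H,X}$ that degenerates a product of two Grassmannians into the twisted convolution space as the moving point $y$ collides with the fixed point $x$. This is the standard fusion picture, due to Beilinson--Drinfeld and Mirkovi\'c--Vilonen, packaged in a form adapted to $\Gr'_{H,X}$.

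First, I would introduce the ind-scheme $\widetilde{\Gr}'_{H,X}$ whose $S$-points classify quintuples $(y,\cF_1,\cF_2,\beta_1,\beta_2)$, where $y \in X(S)$, $\cF_1$ and $\cF_2$ are $H$-bundles on $X \times S$, $\beta_1$ is a trivialization of $\cF_1$ off $x \times S$, and $\beta_2$ is an identification $\cF_1 \simeq \cF_2$ off $\Gamma_y$. Forgetting $\cF_1$ and composing trivializations yields a forgetful morphism $\pi : \widetilde{\Gr}'_{H,X} \to \Gr'_{H,X}$, which is ind-proper. On fibers, $\pi$ restricts to the identity of $\Gr_H \times \Gr_H$ over any $y \neq x$ (with $\cF_1$ recording the modification at $x$ and $\cF_2$ recording the one at $y$), and restricts over $y = x$ to the convolution morphism $m : \Gr_H \ttimes \Gr_H \to \Gr_H$.

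Next I would construct a canonical object $(M \tsqtimes N_X)$ on $\widetilde{\Gr}'_{H,X}$ whose restriction to $X \smallsetminus \{x\}$ pushes forward under $\pi$ to $N_{X \smallsetminus x} \boxtimes M$ and whose restriction to the fiber over $x$ is the twisted external product $M \tsqtimes N$ used to define $M \star N$. The equivariance hypothesis on $N$ is essential here: the $\mathrm{Aut}$-equivariance is what allows $N$ to be spread out into the family $N_X$ on $\Gr_{H,X}$ in the first place (via the canonical $\mathrm{Aut}$-torsor $\mathfrak X$), while the $H(\cO)$-equivariance permits the formation of the twisted product along the second factor, both away from $x$ and on the degenerate fiber. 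Then, using the identification of the fiber of $\widetilde{\Gr}'_{H,X}$ at $x$ with $\Gr_H \ttimes \Gr_H$, one checks that this restriction is indeed $M \tsqtimes N$.

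With these ingredients in place, the conclusion is a formal manipulation. Ind-properness of $\pi$ together with the standard compatibility of nearby cycles with proper pushforward gives
\[
\cC_H(M,N) \;=\; \Psi^H\bigl(N_{X \smallsetminus x} \boxtimes M\bigr) \;\cong\; \Psi^H\bigl(\pi_*(M \tsqtimes N_X)_{X \smallsetminus x}\bigr) \;\cong\; m_*\,\Psi\bigl((M \tsqtimes N_X)_{X \smallsetminus x}\bigr).
\]
Because the family $(M \tsqtimes N_X)$ is (locally over $X$) an external product with the constant factor $M$, its nearby cycles along $X \to \C$ at $x$ are canonically identified with its restriction to the fiber at $x$, namely $M \tsqtimes N$. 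Pushing forward along $m$ then yields $M \star N$ by the definition of convolution, and bifunctoriality is visible from the construction. The main obstacle is the careful construction of $(M \tsqtimes N_X)$ and the verification of its two defining properties (pushforward off $x$, restriction at $x$); this is where the hypothesis that $N$ is equivariant for $H(\cO) \rtimes \mathrm{Aut}$ (rather than merely $H(\cO)$ or $\mathrm{Aut}$) is used in an essential way, since both types of equivariance are needed to simultaneously globalize along $X$ and descend along the twisted quotient defining $\Gr_H \ttimes \Gr_H$.
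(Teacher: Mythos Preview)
The paper does not give its own proof of this proposition: it simply records it as ``part of \cite[Proposition 6(b)]{ga}'' and moves on. Your sketch is essentially the argument Gaitsgory gives there, namely the Beilinson--Drinfeld fusion picture realized via a convolution version of $\Gr'_{H,X}$, so there is nothing to compare on the paper's side.

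Your outline is correct in spirit and hits the right ingredients: the convolution space $\widetilde{\Gr}'_{H,X}$ with its ind-proper map to $\Gr'_{H,X}$, the twisted product $M \tsqtimes N_X$ (whose formation uses both the $H(\cO)$- and the $\mathrm{Aut}$-equivariance of $N$), and the compatibility of nearby cycles with proper push-forward. The one step that deserves more than the phrase ``locally over $X$ an external product with the constant factor $M$'' is the identification $\Psi\bigl((M \tsqtimes N_X)_{X \smallsetminus x}\bigr) \cong M \tsqtimes N$. What one actually shows is that the family $\widetilde{\Gr}'_{H,X}$ together with the sheaf $M \tsqtimes N_X$ is, \'etale-locally on $X$ near $x$, pulled back from the special fiber; this uses that the first factor is literally constant (it sits over the fixed point $x$) and that $N_X$ is, by its very construction as $\uC_X[1] \tsqtimes N$ via the $\mathrm{Aut}$-torsor $\mathfrak X$, locally constant along $X$. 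Once that local triviality is established, the nearby cycles functor is indeed just restriction to the fiber. This is the content behind your sentence, and it is where the argument would break down if $N$ were only $H(\cO)$-equivariant.
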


We will also need the following technical result.

\begin{lem}
\label{lem:globalization-duality}

The functor $M \mapsto M_X$ commutes with Verdier duality. In other words, for $M$ in $\cDb_{\eq{\mathrm{Aut}}}(\Gr_H)$ there is a functorial isomorphism
\[
\D_{\Gr_{H,X}}(M_X) \ \cong \ (\D_{\Gr_H}(M))_X.
\]

\end{lem}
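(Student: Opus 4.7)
The approach is to check the isomorphism locally on $X$---where the twisted external product reduces to an honest external product---and then to observe that the resulting local isomorphism is canonical and hence descends. I would begin by choosing a coordinate on an open subset $U \subset X$; this provides a section of the $\mathrm{Aut}$-torsor $\mathfrak{X}|_U \to U$, hence a trivialization $\Gr_{H,X}|_U \cong U \times \Gr_H$, under which $M_X$ becomes the ordinary external product $\uC{}_U[1] \boxtimes M$.

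Given this reduction, I would invoke the K\"unneth formula for Verdier duality, $\D(A \boxtimes B) \cong \D(A) \boxtimes \D(B)$; since $M$ is supported (cohomology sheaf by cohomology sheaf) on finite-dimensional closed subvarieties of $\Gr_H$, the formula applies with no genuine ind-scheme issues. The computation thereby reduces to determining the Verdier dual of $\uC{}_U[1]$ on the smooth curve $U$: from $\D_U(\uC{}_U) \cong \uC{}_U[2]$ we obtain $\D_U(\uC{}_U[1]) \cong \uC{}_U[1]$, i.e.\ $\uC{}_U[1]$ is Verdier self-dual. Combining these gives a canonical isomorphism $\D(M_X|_U) \cong (\D M)_X|_U$.

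The final step is descent. The construction $M \mapsto M_X$ is designed precisely to be independent of the trivialization of $\mathfrak{X}$; the isomorphism produced above is equivariant for changes of coordinate and therefore descends along the quotient map $\mathfrak{X} \times \Gr_H \twoheadrightarrow \mathfrak{X} \times^{\mathrm{Aut}} \Gr_H = \Gr_{H,X}$. I expect the main technical obstacle to be bookkeeping the pro-algebraic nature of $\mathrm{Aut}$: one has to check that all the isomorphisms produced (K\"unneth, self-duality of $\uC{}_U[1]$, and the local triviality of $\mathfrak{X}$) are compatible with the $\mathrm{Aut}$-equivariant structure. This is manageable because an object $M \in \cDb_{\eq{\mathrm{Aut}}}(\Gr_H)$ has equivariant structure factoring through a finite-dimensional quotient of $\mathrm{Aut}$ on any fixed closed finite-dimensional subvariety of $\Gr_H$, reducing the descent to the standard case of an honest algebraic group.
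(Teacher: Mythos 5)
Your proof takes a genuinely different route from the paper's. The paper does not localize on $X$: it characterizes $N_X$, for any $N$, as the unique object whose pullback along the quotient map $a : (\fX/K) \times Y \to \fX \times^{\mathrm{Aut}} Y$ (where $Y$ is a finite-dimensional $\mathrm{Aut}$-stable support of $M$ and $K$ the kernel of the relevant finite-dimensional quotient $\mathrm{Aut}_0$) agrees with $b^*(\uC{}_X[1] \boxtimes N)$, where $b : (\fX/K) \times Y \to X \times Y$ is the projection. It then computes $a^*\D(M_X)$ using only the smoothness of $a$ and $b$ (both of relative dimension $r = \dim \mathrm{Aut}_0$) and the compatibilities $a^*\D = \D a^!$, $a^! \cong a^*[2r]$, etc.\ — a single global verification, no gluing. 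Your argument instead localizes on $X$: pick a coordinate on $U \subset X$, use the resulting local section of $\fX$ to identify the twisted product with an honest external product, apply K\"unneth and self-duality of $\uC{}_U[1]$, and then descend. The substance is the same (in both cases the pro-algebraicity of $\mathrm{Aut}$ is tamed by passing to $\mathrm{Aut}_0$), but the bookkeeping differs.

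The one point you should tighten is the descent step, which you assert rather than establish. Because you are gluing an \emph{isomorphism in a derived category}, it is not automatic that locally constructed isomorphisms patch: you must check that on the cover $\fX \times Y \to \fX \times^{\mathrm{Aut}_0} Y$ (equivalently, on overlaps of trivializing opens, where the transition is by a map $U \cap U' \to \mathrm{Aut}_0$) the K\"unneth isomorphism and the identification $\D\uC{}_U[1] \cong \uC{}_U[1]$ are compatible with the twisting by $\mathrm{Aut}_0$ and with the equivariant structure carried by $M$. This is believable — all the ingredients are canonical and the orientation of a complex curve is canonical — but it is precisely the content that the paper's argument renders unnecessary, since the pullback characterization of $(-)_X$ turns the statement into a single computation with no gluing at all. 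If you want to keep your local approach, the cleanest repair is to replace the gluing step by exactly the paper's observation: it suffices to check the two sides agree after $a^*$, which is a global statement.
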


\begin{proof}
For simplicity, in the proof we omit the subscript ``$H$.'' By definition, $M$ is supported on a finite-dimensional $\mathrm{Aut}$-stable closed subvariety $Y \subset \Gr$. (For simplicity again, we omit direct image functors under closed embeddings.) Moreover, the action of $\mathrm{Aut}$ on $Y$ factors through a finite dimensional quotient $\mathrm{Aut}_0=\mathrm{Aut}/K$ of $\mathrm{Aut}$ such that $M$ is an $\mathrm{Aut}_0$-equivariant complex on $Y$. Consider the morphisms
\[
\xymatrix{
\fX \times^{\mathrm{Aut}} Y \, = \, (\fX/K) \times^{\mathrm{Aut}_0} Y & (\fX/K) \times Y \ar[l]_-{a} \ar[r]^-{b} & X \times Y.
}
\]
By definition, $\D_{\Gr}(M) = \D_{Y}(M)$. And $(\D_{\Gr}(M))_X$ is the unique object of the category $\cDb_c(\fX \times^{\mathrm{Aut}} Y)$ such that
\[
a^*\bigl( (\D_{\Gr}(M))_X \bigr) \ \cong \ b^* \bigl( \underline{\C}_X[1] \boxtimes \D_{Y}(M) \bigr).
\]
Hence, to prove the lemma, we only have to prove that $\D_{\Gr_{X}}(M_X) = \D_{\fX \times^{\mathrm{Aut}} Y}(M_X)$ satisfies this condition. However, as $a$ is a smooth map of relative dimension $r:=\dim(\mathrm{Aut}_0)$ we have
\[
a^*(\D_{\fX \times^{\mathrm{Aut}} Y}(M_X)) \, \cong \, \D_{(\fX/K) \times Y}(a^! (M_X)) \, \cong \, \D_{(\fX/K) \times Y}(a^* (M_X) [r]).
\]
By definition of $M_X$ we have $a^* (M_X) \cong b^* (\underline{\C}_X[1] \boxtimes M)$. As $b$ is also smooth of relative dimension $r$ we obtain
\begin{multline*}
a^*(\D_{\fX \times^{\mathrm{Aut}} Y}(M_X)) \, \cong \, \D_{(\fX/K) \times Y}(b^* (\underline{\C}_X[1] \boxtimes M) [r]) \, \cong \, \D_{(\fX/K) \times Y}(b^! (\underline{\C}_X[1] \boxtimes M)) \\ \cong \, b^* \D_{X \times Y}(\underline{\C}_X[1] \boxtimes M) \, \cong \, b^* (\underline{\C}_X[1] \boxtimes \D_Y(M)).
\end{multline*}
This concludes the proof.
\end{proof}

\subsection{Convolution and hyperbolic localization}
\label{ss:convolution-hl}

Finally we can prove Proposition \ref{prop:convolution-hl}.

Recall the notation of \S \ref{ss:hl-restriction}. The morphisms $\Pv \hookrightarrow \Gv$, $\Pv^- \hookrightarrow \Gv$, $\Pv \twoheadrightarrow \Lv$, $\Pv^- \twoheadrightarrow \Lv$ induce morphisms of ind-schemes 
\begin{align*}
i': \Gr_{\Pv,X}' \to \Gr_{\Gv,X}', & \quad j': \Gr_{\Pv^-,X}' \to \Gr_{\Gv,X}', \\
p': \Gr_{\Pv,X}' \to \Gr_{\Lv,X}', & \quad q': \Gr_{\Pv^-,X}' \to \Gr_{\Lv,X}'
\end{align*}
(via induction of $\Pv$-bundles or $\Pv^-$-bundles to $\Gv$-bundles, and quotient of $\Pv$-bundles or $\Pv^-$-bundles to $\Lv$-bundles). By definition, and using the identifications given by the second isomorphism in \eqref{eqn:isomorphisms-Gr-Gr'}, the hyperbolic localization functor is
\[
h_L^{!*}:=(p_0')_! (i_0')^* : \cDb_c(\Gr_{\Gv}) \to \cDb_c(\Gr_{\Lv}).
\]
By \cite[Theorem 1]{br}, on the category $\cDb_{\mon{\lambda_L(\C^{\times})}}(\Gr_{\Gv})$ there is an isomorphism of functors
\begin{equation}
\label{eqn:isom-braden}
(p_0')_! (i_0')^* \ \cong \ (q_0')_* (j_0')^! \ : \cDb_{\mon{\lambda_L(\C^{\times})}}(\Gr_{\Gv}) \to \cDb_c(\Gr_{\Lv}).
\end{equation}

By \eqref{eqn:Psi-direct-!} and \eqref{eqn:Psi-inverse-*}, respectively by \eqref{eqn:Psi-inverse-!} and \eqref{eqn:Psi-direct-*}, there are natural morphisms of functors
\begin{equation}
\label{eqn:morphism-hl-nearby}
(p_0')_! (i_0')^* \circ \Psi^G \to \Psi^L \circ (p_U')_! (i_U')^*, \quad \Psi^L \circ (q_U')_* (j_U')^! \to (q_0')_* (j_0')^! \circ \Psi^G.
\end{equation}

Now, there are also natural morphisms 
\begin{align*}
i'': \Gr_{\Pv,X} \to \Gr_{\Gv,X}, \quad j'': \Gr_{\Pv^-,X} \to \Gr_{\Gv,X}, \\
p'': \Gr_{\Pv,X} \to \Gr_{\Lv,X}, \quad q'': \Gr_{\Pv^-,X} \to \Gr_{\Lv,X}.
\end{align*}
Note that, via the identifications \eqref{eqn:isomorphisms-Gr-Gr'}, we have 
\begin{align*}
i_0'=i_0'', \quad j_0'=j_0'', & \quad p_0'=p_0'', \quad q_0'=q_0'', \\
i_U' = (i_U'' \times i_0'), \quad j_U' = (j_U'' \times j_0'), & \quad p_U' = (p_U'' \times p_0'), \quad q_U' = (q_U'' \times q_0').
\end{align*}
In particular, we get isomorphisms of bifunctors, for $M_1$ in $\cDb_c(\Gr_{\Gv,X \smallsetminus x})$ and $M_2$ in $\cDb_c(\Gr_{\Gv})$,
\begin{align}
\label{eqn:isomorphism-hl}
(p_U')_! (i_U')^*(M_1 \boxtimes M_2) \ & \cong \ \bigl( (p_U'')_! (i_U'')^* M_1 \bigr) \boxtimes ( (p_0')_! (i_0')^* M_2), \\
\label{eqn:isomorphism-hl-2}
(q_U')_* (j_U')^!(M_1 \boxtimes M_2) \ & \cong \ \bigl( (q_U'')_* (j_U'')^! M_1 \bigr) \boxtimes ( (q_0')_* (j_0')^! M_2).
\end{align}

We claim that for $M$ in $\cDb_{\eq{\mathrm{Aut}}}(\Gr_{\Gv})$, there is a functorial isomorphism
\begin{equation}
\label{eqn:hl-globalization}
(p_U'')_! (i_U'')^* (M_{X \smallsetminus x}) \ \cong \ ((p_0')_! (i_0')^* M)_{X \smallsetminus x}.
\end{equation}
Using base change for the diagram
\[
\xymatrix{
\Gr_{\Pv,X \smallsetminus x} \ar@{^{(}->}[r] \ar[d]_-{p_U''} & \Gr_{\Pv,X} \ar[d]^-{p''} \\
\Gr_{\Lv,X \smallsetminus x} \ar@{^{(}->}[r] & \Gr_{\Lv,X},
}
\]
the left-hand side is isomorphic to $\bigl( (p'')_! (i'')^* (M_X)
\bigr)_{|\Gr_{\Lv, X \smallsetminus x}}$. Hence isomorphism \eqref{eqn:hl-globalization} would follow from an isomorphism
\begin{equation}
\label{eqn:hl-globalization'}
(p'')_! (i'')^* (M_{X}) \ \cong \ ((p_0')_! (i_0')^* M)_{X}.
\end{equation}
However, it follows directly from the definition that
\begin{equation}
\label{eqn:globalization-inverse-*}
(i'')^* M_X \cong ((i_0')^* M)_X.
\end{equation}
Now, consider the cartesian diagram
\[
\xymatrix{
\mathfrak{X} \times \Gr_{\Pv} \ar[r]^-{\id \times p_0'} \ar[d]_-{\pi_{\Pv}} & \mathfrak{X} \times \Gr_{\Lv} \ar[d]_-{\pi_{\Lv}} \\
\mathfrak{X} \times^{\mathrm{Aut}} \Gr_{\Pv} \ar[r]^-{p''} & \mathfrak{X} \times^{\mathrm{Aut}} \Gr_{\Lv}.
}
\]
The base change theorem gives an isomorphism
\[
(\pi_{\Lv})^* (p'')_! \ \cong \ (\id \times p_0')_! (\pi_{\Pv})^*.
\]
By definition, for $N$ in $\cDb_{\eq{\mathrm{Aut}}}(\Gr_{\Lv})$, $N_X$ is the only object of $\cDb_c(\Gr_{\Lv,X})$ such that $(\pi_{\Lv})^* N_X \cong \uC{}_{\mathfrak{X}} [1] \boxtimes N$, and the same is true for $\Gr_{\Pv}$. Hence we have a functorial isomorphism
\begin{equation}
\label{eqn:globalization-direct-!}
\bigl( (p_0')_! \, N \bigr)_X \ \cong \ (p'')_! \, N_X
\end{equation}
for $N$ in $\cDb_{\eq{\mathrm{Aut}}}(\Gr_{\Pv})$. Combining \eqref{eqn:globalization-inverse-*} and \eqref{eqn:globalization-direct-!}, one gets \eqref{eqn:hl-globalization'}, hence also \eqref{eqn:hl-globalization}.

Now, if $M$ is in $\cDb_{\eq{\mathrm{Aut}}}(\Gr_{\Gv})$, we claim that there is a functorial isomorphism
\begin{equation}
\label{eqn:hl-globalization-2}
(q''_U)_* (j''_U)^! (M_{X \smallsetminus x}) \ \cong \ ((q_0')_* (j_0')^! M)_{X \smallsetminus x}.
\end{equation}
Indeed, applying \eqref{eqn:hl-globalization'} to $\Pv^-$ instead of $\Pv$, and using the fact that $M \mapsto M_X$ commutes with Verdier duality (see Lemma \ref{lem:globalization-duality}), we obtain a functorial isomorphism
\[
(q'')_* (j'')^! (M_X) \ \cong \ ((q_0')_* (j_0')^! M)_{X}.
\]
Restricting to $\Gr_{\Lv,X \smallsetminus x}$ gives \eqref{eqn:hl-globalization-2}.

Combining the first morphism of functors in \eqref{eqn:morphism-hl-nearby} and isomorphisms \eqref{eqn:isomorphism-hl} and \eqref{eqn:hl-globalization}, one obtains for $M$ in $\cDb_{\mon{\lambda_L(\C^{\times})}}(\Gr_{\Gv})$ and $N$ in $\cDb_{\eq{\GvO \rtimes \mathrm{Aut}}}(\Gr_{\Gv})$ a bifunctorial morphism
\[
h_L^{!*} \circ \cC_{\Gv}(M,N) \ \to \ \cC_{\Lv}(h_L^{!*}(M),h_L^{!*}(N)),
\]
hence, using Proposition \ref{prop:gaitsgory} and the definition of $\fR^G_L$, a bifunctorial morphism
\[
\fR^G_L (M \star N) \ \to \ \fR^G_L(M) \star \fR^G_L(N).
\]
Similarly, using isomorphisms \eqref{eqn:isom-braden}, \eqref{eqn:isomorphism-hl-2} and \eqref{eqn:hl-globalization-2}, the second morphism of functors in \eqref{eqn:morphism-hl-nearby} provides a bifunctorial morphism
\[
\fR^G_L(M) \star \fR^G_L(N) \ \to \ \fR^G_L (M \star N).
\]
One can check that these two morphisms are inverse to each other, which concludes the proof of Proposition \ref{prop:convolution-hl}.

\section{Example: $\Gv=\mathrm{SL}(2)$}
\label{sect:example}

In this section we concentrate on the case $\Gv=\mathrm{SL}(2)$. We choose as $\Tv$ the subgroup of diagonal matrices, and as $\Bv$ the subgroup of upper triangular matrices. Then we have $G=\mathrm{PSL}(2)$. There is a natural isomorphism $\bX \cong 2\Z$, which matches $2\rho$ with $2$. We denote by $X_n$ the $\Iv$-orbit of $L_n$, so that we have inclusions
\[
X_0=\{L_0\} \subset \overline{X_{-2}} \subset \overline{X_{2}} \subset \overline{X_{-4}} \subset \cdots
\]
and we have
\[
\dim(X_n)=\left\{ 
\begin{array}{cl}
n & \text{if } n \geq 0; \\ 
-n-1 & \text{if } n<0.
\end{array}
\right.
\]

For any $n \in 2\Z$, we denote by $j_n : X_n \hookrightarrow \Gr_{\Gv}$ the inclusion, and by $\IC_n$ the simple perverse sheaf $\IC(\overline{X_n})$.

\subsection{Simple objects}

It is well known that in this case all closures of $\Iv$-orbits are rationally smooth. For the reader's convenience, we include a simple proof of this fact.

\begin{prop}
\label{prop:simples-SL2}

For any $n \in 2\Z$ we have
\[
\IC_n \ \cong \ \underline{\C}_{\overline{X_n}}[\dim X_n].
\]

\end{prop}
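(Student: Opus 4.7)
The plan is to reduce the statement to showing that each orbit closure $\overline{X_n}$ is smooth of dimension $\dim X_n$: once this is established, the normalization assertion $\IC_n \cong \uC_{\overline{X_n}}[\dim X_n]$ is automatic, because on a smooth irreducible variety the shifted constant sheaf satisfies the support and costandard conditions characterizing the intersection cohomology complex of the open stratum.

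To prove smoothness I would proceed by induction on $|n|$. The base case $n=0$ is the point $\{L_0\}$, which is trivial. For the inductive step I would exploit the fact that the affine Weyl group of $\mathrm{SL}(2)$ is the infinite dihedral group $\langle s_0, s_1 \rangle$: the $\Iv$-orbit $X_n$ corresponds to an element $w_n$ of this group whose reduced expression is a \emph{unique} alternating word in $s_0, s_1$ of length $\dim X_n$. Let $\pi: \Fl_{\Gv} \to \Gr_{\Gv}$ be the natural projection (a $\mathbb{P}^1$-bundle), and let $Y_n \subset \Fl_{\Gv}$ be the $\Iv$-orbit closure associated to $w_n$. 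One can then carry out the Bott--Samelson construction for the unique reduced expression of $w_n$, obtaining a smooth iterated $\mathbb{P}^1$-bundle $BS_n$ of dimension $\dim X_n$ and a proper map $BS_n \to Y_n$; after composing with $\pi$, one gets a proper surjection $\pi_n : BS_n \to \overline{X_n}$, birational onto its image because it carries the open cell of $BS_n$ identically onto $X_n$.

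The crucial step, and the main obstacle, is to verify that $\pi_n$ is in fact an \emph{isomorphism}, not merely a resolution. Concretely, in type $\tilde A_1$ no braid relation exists between $s_0$ and $s_1$, so the Bott--Samelson construction never collapses two distinct reduced subexpressions to the same point in $\Gr_{\Gv}$; this is where rank-one rigidity enters. I would verify this by showing (using the inductive description of $BS_n$ as a $\mathbb{P}^1$-bundle over $BS_{n'}$ for an appropriate $n'$ with $|n'|<|n|$, together with the inductive hypothesis that $BS_{n'} \simto \overline{X_{n'}}$) that the fibers of $\pi_n$ are all reduced points; combined with Zariski's main theorem and the normality of $\overline{X_n}$, this forces $\pi_n$ to be an isomorphism.

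An alternative I would have in reserve, if the $\mathbb{P}^1$-bundle description above proves awkward to make precise in a short space, is a direct explicit description of $\overline{X_n}$ as a smooth subvariety in a finite-dimensional Grassmannian of lattices in $\C((x))^2$: parametrize lattices $\Lambda \supset x^N \cO^2$ (for $N$ large enough) whose relative position with $\cO^2$ lies in the closure of the orbit labelled by $n$, and check smoothness by writing explicit coordinates. This is completely concrete but computationally more tedious than the Bott--Samelson route.
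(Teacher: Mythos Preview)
Your approach has a fundamental gap: the orbit closures $\overline{X_n}$ are \emph{not} smooth once $\dim X_n \geq 2$, so the reduction to smoothness cannot succeed. The proposition asserts only \emph{rational} smoothness, which is strictly weaker.

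Concretely, already $\overline{X_2} = \overline{\Gr^2}$ is singular at $L_0$: the transversal slice there (via the negative congruence subgroup $\ker(\Gv(\C[x^{-1}]) \to \Gv)$) identifies with the nilpotent cone of $\mathfrak{sl}_2$, a quadric cone with an isolated singularity. In terms of your own map, the minimal coset representative is $w_2 = s_1 s_0$, and while $BS_2 \to \overline{\Fl_{s_1 s_0}}$ is indeed an isomorphism, the projection $\overline{\Fl_{s_1 s_0}} \to \overline{X_2}$ contracts the curve $\overline{\Fl_{s_1}} = \GvO/\Iv \cong \bP^1$ to the point $L_0$. So $\pi_2$ has a $\bP^1$-fiber over $L_0$, and your proposed inductive verification that all fibers are reduced points fails at the first nontrivial step. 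The ``no braid relations'' heuristic only guarantees uniqueness of reduced expressions; it does not prevent collapsing, which here occurs in the projection $\Fl_{\Gv} \to \Gr_{\Gv}$ rather than in the Bott--Samelson map itself. Your lattice alternative would hit the same wall, since the varieties simply are not smooth.

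The paper's proof avoids smoothness altogether. It uses the same parahoric induction map ${\check Q} \times^{\Iv} \overline{X_n} \to \overline{X_{-n-2}}$, but rather than claiming it is an isomorphism, observes that it is semismall and applies the decomposition theorem: the pushforward of the (inductively known) shifted constant sheaf is a semisimple perverse sheaf. A direct stalk calculation identifies this pushforward as $\IC_{-n-2} \oplus \IC_{-n}$, and subtracting the inductively known stalks of $\IC_{-n}$ forces $\IC_{-n-2}$ to be the shifted constant sheaf.
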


\begin{proof}
We proceed by induction on the dimension of orbits. The case $n=0$ is obvious. To fix notation, we assume that $n \geq 0$ and that the result is known for $X_n$, and we prove it for $X_{-n-2}$.

Let ${\check Q}$ be the (parahoric) subgroup of $\Gv(\fK)$ generated by $\Iv$ and by the matrix
\[
\left(
\begin{array}{cc}
0 & x^{-1} \\
-x & 0
\end{array}
\right).
\]
Then we have ${\check Q}/\Iv \cong \bP^1$. Consider the morphism
\[
\pi : {\check Q} \times^{\Iv} \overline{X_n} \to \Gr_{\Gv}
\]
induced by the $\Gv(\fK)$-action on $\Gr_{\Gv}$ by left multiplication. It is well known that its image is $\overline{X_{-n-2}}$. By induction, the shifted constant sheaf $\underline{\C}_{{\check Q} \times^{\Iv} \overline{X_n}}[n+1]$ is a simple perverse sheaf. The decomposition theorem, and the fact that $\pi$ is a semismall map, imply that the direct image
\[
\cK := \pi_* \bigl( \underline{\C}_{{\check Q} \times^{\Iv} \overline{X_n}}[n+1] \bigr)
\]
is a semisimple perverse sheaf.

The morphism $\pi$ is an isomorphism over $X_{-n-2} \cup X_n$, and its fibers over $\overline{X_{-n}}$ are isomorphic to $\bP^1$. Hence the cohomology of the stalks of $\cK$ are as follows:\medskip
\[
\begin{tabular}[c]{|c|c|c|c|c|}
\hline $\dim$ & orbit & $-n-1$ & $-n$ & $-n+1$ \\
\hline $n+1$ & $X_{-n-2}$ & $\C$ & $0$ & $0$ \\
\hline $n$ & $X_n$ & $\C$ & $0$ & $0$ \\
\hline $n-1$ & $X_{-n}$ & $\C$ & $0$ & $\C$ \\
\hline $\vdots$ & & $\vdots$ & & $\vdots$ \\
\hline $0$ & $X_0$ & $\C$ & $0$ & $\C$ \\ 
\hline
\end{tabular}
\] \medskip
From this table and the fact that the $\Iv$-orbits are simply connected, we deduce (looking at the diagonal) that 
\[
\cK \ \cong \ \IC_{-n-2} \oplus \IC_{-n}.
\]
By induction, we know that $\IC_{-n} \cong \underline{\C}_{\overline{X_{-n}}}[n-1]$. We deduce again from the table that $\IC_{-n-2} \cong \underline{\C}_{\overline{X_{-n-2}}}[n+1]$ (see \cite[Proposition 1.4]{bm}).
\end{proof}

\subsection{Standard and projective objects}

\begin{prop}
\label{prop:standard-sl2}

For $n \in 2\Z$, the composition factors of the standard objects $\Delta_n:=(j_n)_! \C[\dim X_n]$ are given as follows:
\begin{itemize}
\item $\Delta_0=\IC_0$;
\item for $n>0$,
\[
\Delta_n = 
\begin{tabular}{|c|} 
\hline $\IC_n$ \\
\hline $\IC_{-n}$ \\ 
\hline
\end{tabular} \ ;
\]
\item for $n<0$,
\[
\Delta_{n} = 
\begin{tabular}{|c|} 
\hline $\IC_{n}$ \\ 
\hline $\IC_{-n-2}$ \\ 
\hline
\end{tabular} \ .
\]
\end{itemize}

\end{prop}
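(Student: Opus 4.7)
The case $n=0$ is immediate since $X_0 = \{L_0\}$ is a point and $\Delta_0 = \underline{\C}_{\{L_0\}} = \IC_0$. So assume $n \neq 0$.

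The plan is to exploit the fact, given by the chain $X_0 \subset \overline{X_{-2}} \subset \overline{X_2} \subset \overline{X_{-4}} \subset \cdots$, that the poset of $\Iv$-orbit closures in $\Gr_{\Gv}$ is totally ordered, so that $\overline{X_n} \smallsetminus X_n = \overline{X_{n'}}$ for a unique orbit $X_{n'}$ in our chain, which necessarily satisfies $\dim X_{n'} = \dim X_n - 1$. Inspecting the chain, one sees that
\[
n' = \begin{cases} -n & \text{if } n > 0, \\ -n-2 & \text{if } n < 0. \end{cases}
\]
Let $\iota : \overline{X_{n'}} \hookrightarrow \overline{X_n}$ be the closed inclusion. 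Applying the standard open-closed distinguished triangle to $\underline{\C}_{\overline{X_n}}[\dim X_n]$ yields
\[
(j_n)_! \underline{\C}_{X_n}[\dim X_n] \to \underline{\C}_{\overline{X_n}}[\dim X_n] \to \iota_* \underline{\C}_{\overline{X_{n'}}}[\dim X_n] \xrightarrow{+1}.
\]
By Proposition~\ref{prop:simples-SL2}, $\underline{\C}_{\overline{X_n}}[\dim X_n] \cong \IC_n$ and, since $\dim X_{n'} = \dim X_n - 1$,
\[
\iota_* \underline{\C}_{\overline{X_{n'}}}[\dim X_n] \ \cong \ \IC_{n'}[1].
\]
Rotating gives a distinguished triangle $\IC_{n'} \to \Delta_n \to \IC_n \xrightarrow{+1}$.

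Since all three terms are perverse sheaves, this triangle corresponds to a short exact sequence
\[
0 \to \IC_{n'} \to \Delta_n \to \IC_n \to 0
\]
in $\Perv_{\mon{\Iv}}(\Gr_{\Gv})$. Substituting the values of $n'$ above yields the two cases of the statement. There is essentially no obstacle here: once one grants Proposition~\ref{prop:simples-SL2} and the total ordering of orbit closures, the result reduces to a single application of the open-closed distinguished triangle, so the only thing to verify carefully is the bookkeeping identifying $n'$ as $-n$ or $-n-2$ in the chain.
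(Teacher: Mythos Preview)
Your proof is correct and takes essentially the same approach as the paper: both arguments use the open--closed exact sequence (or distinguished triangle) for $X_n \subset \overline{X_n} \supset \overline{X_{n'}}$ together with Proposition~\ref{prop:simples-SL2} to obtain the short exact sequence $\IC_{n'} \hookrightarrow \Delta_n \twoheadrightarrow \IC_n$. The only difference is presentational---the paper writes it directly as an exact sequence of sheaves and then shifts, whereas you write the distinguished triangle and rotate---but the content is identical.
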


\begin{proof}
For $n>0$, the natural exact sequence of sheaves
\[
(j_n)_! \uC_{X_n} \hookrightarrow \uC_{\overline{X_n}} \twoheadrightarrow \uC_{\overline{X_{-n}}}
\]
induces an exact sequence of perverse sheaves
\[
\IC_{-n} \hookrightarrow \Delta_n \twoheadrightarrow \IC_n.
\]
(Here we use Proposition \ref{prop:simples-SL2}.) The case $n<0$ is similar.\end{proof}

By duality, we obtain the following.

\begin{prop}
\label{prop:costandard-sl2}

For $n \in 2\Z$, the composition factors of the costandard objects $\nabla_n:=(j_n)_* \C[\dim X_n]$ are given as follows:
\begin{itemize}
\item $\nabla_0=\IC_0$;
\item for $n>0$,
\[
\nabla_n = 
\begin{tabular}{|c|} 
\hline $\IC_{-n}$ \\
\hline $\IC_n$ \\ 
\hline
\end{tabular} \ ;
\]
\item for $n<0$,
\[
\nabla_{n} = 
\begin{tabular}{|c|} 
\hline $\IC_{-n-2}$ \\ 
\hline $\IC_{n}$ \\ 
\hline
\end{tabular} \ .
\]
\end{itemize}

\end{prop}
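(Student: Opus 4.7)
The plan is to deduce Proposition~\ref{prop:costandard-sl2} directly from Proposition~\ref{prop:standard-sl2} by Verdier duality, as indicated in the sentence introducing the statement. First, observe that for the locally closed inclusion $j_n : X_n \hookrightarrow \Gr_{\Gv}$, the stratum $X_n$ is isomorphic to affine space (in particular smooth of dimension $\dim X_n$), so the constant local system $\uC_{X_n}[\dim X_n]$ is Verdier self-dual. Since $\D \circ (j_n)_! \cong (j_n)_* \circ \D$, this gives a canonical isomorphism $\D(\Delta_n) \cong \nabla_n$ in $\Perv_{\mon{\Iv}}(\Gr_{\Gv})$.

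Next, I would observe that each simple perverse sheaf $\IC_n$ is self-dual: by Proposition~\ref{prop:simples-SL2} we have $\IC_n \cong \uC_{\overline{X_n}}[\dim X_n]$, and the argument of that proposition (together with \cite[Proposition 1.4]{bm}) shows that $\overline{X_n}$ is rationally smooth, so $\D(\uC_{\overline{X_n}}[\dim X_n]) \cong \uC_{\overline{X_n}}[\dim X_n]$. Alternatively, one invokes the general fact that the IC extension of a self-dual local system on a smooth open stratum (here the constant $\uC$ on $X_n$) is Verdier self-dual.

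Finally, Verdier duality is a contravariant $t$-exact auto-equivalence of $\Perv_{\mon{\Iv}}(\Gr_{\Gv})$, hence carries a short exact sequence $0 \to L \to M \to N \to 0$ to $0 \to \D N \to \D M \to \D L \to 0$; in particular it exchanges the socle and the head. Applying $\D$ to the two non-trivial short exact sequences produced in the proof of Proposition~\ref{prop:standard-sl2}, namely
\[
\IC_{-n} \hookrightarrow \Delta_n \twoheadrightarrow \IC_n \quad (n>0), \qquad \IC_{-n-2} \hookrightarrow \Delta_n \twoheadrightarrow \IC_n \quad (n<0),
\]
and using the two observations above, yields
\[
\IC_n \hookrightarrow \nabla_n \twoheadrightarrow \IC_{-n} \quad (n>0), \qquad \IC_n \hookrightarrow \nabla_n \twoheadrightarrow \IC_{-n-2} \quad (n<0),
\]
which is precisely the content of Proposition~\ref{prop:costandard-sl2}. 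The case $n=0$ is immediate since $X_0$ is a point, so $\Delta_0 = \nabla_0 = \IC_0$.

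There is no serious obstacle here: the proof is essentially formal once one has Proposition~\ref{prop:standard-sl2} and the self-duality of the $\IC_n$. The only point that needs to be stated carefully is the compatibility of Verdier duality with the normalization $\Delta_n = (j_n)_! \uC[\dim X_n]$ versus $\nabla_n = (j_n)_* \uC[\dim X_n]$, which is clean because we use complex coefficients (no Tate twists appear) and the strata are smooth.
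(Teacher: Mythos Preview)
Your proof is correct and follows exactly the approach indicated in the paper, which simply says ``By duality, we obtain the following.'' You have spelled out the details of this duality argument carefully (the identifications $\D(\Delta_n)\cong\nabla_n$, the self-duality of each $\IC_n$, and the reversal of short exact sequences under $\D$), but the underlying idea is the same.
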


\begin{cor}
\label{cor:projectives-sl2}

Let $n \in 2\Z$.

\begin{itemize}
\item For $n \geq 0$, the projective cover of $\IC_n$ in the category $\Perv_{\mon{\Iv}}(\overline{X_{-n-2}})$ is still projective in the category $\Perv_{\mon{\Iv}}(\overline{X})$ for any $\Iv$-orbit $X$ whose closure contains $X_{-n-2}$, hence also in the category $\Perv_{\mon{\Iv}}(\Gr_{\Gv})$. Its standard flag and Jordan--H{\"o}lder series are given as follows:
\[
P_n = 
\begin{tabular}{|c|} 
\hline $\Delta_n$ \\ 
\hline $\Delta_{-n-2}$ \\ 
\hline
\end{tabular} \ = \
\begin{tabular}{|c|} 
\hline $\IC_n$ \\ 
\hline $\IC_{-n} \oplus \IC_{-n-2}$ \\ 
\hline $\IC_n$ \\ 
\hline
\end{tabular} \
\]
if $n>0$, and 
\[
P_0 = 
\begin{tabular}{|c|} 
\hline $\Delta_0$ \\ 
\hline $\Delta_{-2}$ \\ 
\hline
\end{tabular} \ = \
\begin{tabular}{|c|} 
\hline $\IC_0$ \\ 
\hline $\IC_{-2}$ \\ 
\hline $\IC_0$ \\ 
\hline
\end{tabular} \ .
\]

\item For $n<0$, the projective cover of $\IC_n$ in the category $\Perv_{\mon{\Iv}}(\overline{X_{-n}})$ is still projective in the category $\Perv_{\mon{\Iv}}(\overline{X})$ for any $\Iv$-orbit $X$ whose closure contains $X_{-n}$, hence also in the category $\Perv_{\mon{\Iv}}(\Gr_{\Gv})$. Its standard flag and Jordan--H{\"o}lder series are given as follows:
\[
P_n = 
\begin{tabular}{|c|} 
\hline $\Delta_n$ \\ 
\hline $\Delta_{-n}$ \\ 
\hline
\end{tabular} \ = \
\begin{tabular}{|c|} 
\hline $\IC_{n}$ \\ 
\hline $\IC_{-n-2} \oplus \IC_{-n}$ \\ 
\hline $\IC_n$ \\ 
\hline
\end{tabular} \ .
\]

\end{itemize}

\end{cor}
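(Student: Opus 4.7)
\emph{Proof plan.} My plan is to derive everything formally from the highest weight structure on $\Perv_{\mon{\Iv}}(\overline{X})$, for $\overline{X}$ any finite closed union of $\Iv$-orbits, together with Propositions \ref{prop:standard-sl2} and \ref{prop:costandard-sl2}. By \cite{bgs}, each $\Perv_{\mon{\Iv}}(\overline{X})$ is the module category of a finite-dimensional quasi-hereditary algebra; in particular projective covers exist, admit $\Delta$-filtrations, and satisfy BGG reciprocity
\[
[P_n : \Delta_k] \;=\; [\nabla_k : \IC_n].
\]

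First I would determine the $\Delta$-flag of $P_n$. Take $n>0$ and work inside $\Perv_{\mon{\Iv}}(\overline{X_{-n-2}})$; Proposition \ref{prop:costandard-sl2} gives $[\nabla_k : \IC_n] = 1$ precisely for $k = n$ and $k = -n-2$, and vanishes otherwise. Hence $P_n$ fits in a necessarily non-split (lest $P_n$ have two simple heads) short exact sequence
\[
0 \to \Delta_{-n-2} \to P_n \to \Delta_n \to 0,
\]
with $\Delta_n$ as the quotient, since $\mathrm{head}(\Delta_n) = \IC_n$. Substituting the composition factors of $\Delta_n$ and $\Delta_{-n-2}$ from Proposition \ref{prop:standard-sl2} reads off the Jordan--H\"older content of $P_n$. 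The cases $n=0$ and $n<0$ are handled identically, noting for $n<0$ that $[\nabla_k : \IC_n] \neq 0$ exactly for $k \in \{n, -n\}$.

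To pin down the radical filtration pictured, and to transport projectivity to larger subcategories, I would use Verdier duality $\D$ (shifted to preserve perversity): it fixes each $\IC_k$, swaps $\Delta_k \leftrightarrow \nabla_k$, and hence interchanges the projective cover $P_n$ with the injective envelope $I_n$ of $\IC_n$. The parallel BGG reciprocity on the injective side produces $I_n$ with a $\nabla$-filtration of the mirror shape $0 \to \nabla_{-n-2} \to I_n \to \nabla_n \to 0$ and with Jordan--H\"older content matching that of $P_n$. Identifying $P_n \cong I_n$, which uses the concrete rank-one quasi-hereditary algebra description of \cite{bgs} where an indecomposable with the prescribed $\Delta$-flag is unique up to isomorphism, makes $P_n$ self-dual; consequently $\soc(P_n) = \D(\mathrm{head}(P_n)) = \IC_n$, and combined with the length count this forces the symmetric three-layer Loewy picture displayed. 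Finally, if $\overline{X} \supseteq \overline{X_{-n-2}}$, the new simples of $\Perv_{\mon{\Iv}}(\overline{X})$ correspond to orbits $X_m$ with $X_m \not\subseteq \overline{X_{-n-2}}$, and for such $m$ Proposition \ref{prop:costandard-sl2} shows $[\nabla_m : \IC_n] = 0$; BGG reciprocity in the larger category then yields the same $\Delta$-flag for the projective cover of $\IC_n$, which therefore coincides with $P_n$. The main obstacle is the self-duality step $P_n \cong I_n$, where one genuinely uses the small-rank explicit description of the quasi-hereditary algebra; everything else is bookkeeping against Propositions \ref{prop:standard-sl2} and \ref{prop:costandard-sl2}.
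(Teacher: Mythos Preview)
Your core argument is exactly the paper's: its entire proof is the one line ``this follows from Proposition~\ref{prop:costandard-sl2} and the reciprocity formula,'' and your computation $[P_n:\Delta_k]=[\nabla_k:\IC_n]$ together with the stabilization check in larger closures is precisely that.

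Two small points on the extra detail you supply. First, your $\nabla$-filtration of $I_n$ is written the wrong way round: since $\soc(I_n)=\IC_n=\soc(\nabla_n)$, the subobject is $\nabla_n$ and the quotient is $\nabla_{-n-2}$. Second, and more substantively, the identification $P_n\cong I_n$ is left vague (``concrete rank-one quasi-hereditary algebra description'') and is in any case unnecessary for the Loewy picture. In any highest weight category one has $\Ext^1(L_\lambda,L_\lambda)=0$ (chase the sequences $0\to\rad\Delta_\lambda\to\Delta_\lambda\to L_\lambda\to 0$ and $0\to L_\lambda\to\nabla_\lambda\to\nabla_\lambda/L_\lambda\to 0$ using $\Ext^1(\Delta_\lambda,\nabla_\lambda)=0$), so $\IC_n$ cannot occur in $\rad P_n/\rad^2 P_n$; the length-four count then forces the three displayed layers immediately.
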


\begin{proof}
This follows from Proposition \ref{prop:costandard-sl2} and the reciprocity formula, see \cite[Remark (1) after Theorem 3.2.1]{bgs}.
\end{proof}

In particular, it follows from this corollary that the category $\Perv_{\mon{\Iv}}(\Gr_{\Gv})$ has enough projectives, in accordance with Remark \ref{rmk:projectives}.

\subsection{Projective resolution of $1_G$}

In this case, the projective resolution of the object $1_G$ constructed in \S \ref{ss:projective-resolution} looks as follows:
\[
\cdots \to P_4 \to P_{-4} \to P_{2} \to P_{-2} \to P_0 \twoheadrightarrow \IC_0=1_G.
\]

\subsection{Convolution of simple perverse sheaves}

\begin{prop}
\label{prop:convolution-sl2}

Let $n,k \in 2\Z_{\geq 0}$.

\begin{itemize}

\item If $n \geq k$ we have
\[
\IC_n \star \IC_k \ \cong \ \IC_k \star \IC_n \ \cong \ \IC_{n+k} \oplus \IC_{n+k-2} \oplus \cdots \oplus \IC_{n-k}.
\]

\item If $n > k$ we have
\[
\IC_{-n} \star \IC_k \ \cong \ \IC_{-n-k} \oplus \IC_{-n-k+2} \oplus \cdots \oplus \IC_{-n+k}.
\]

\item If $0 < n \leq k$ we have
\[
\IC_{-n} \star \IC_k \ \cong \ \IC_{-n-k} \oplus \IC_{-n-k+2} \oplus \cdots \oplus \IC_{n-k-2}.
\]

\end{itemize}

\end{prop}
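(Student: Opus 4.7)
The first formula (case $n,k\geq 0$) follows at once from the Satake equivalence of \S \ref{ss:reminder-satake}. Under $\cS_G : \Rep(G) \xrightarrow{\sim} \Perv_{\eq{\GvO}}(\Gr_{\Gv})$, with $G = \mathrm{PSL}(2)$, the sheaf $\IC_n$ corresponds to the irreducible $G$-module $V_n$ of highest weight $n$ (see \cite[Proposition 13.1]{mv}), and $\star$ corresponds to $\otimes$. The Clebsch--Gordan formula
\[
V_n \otimes V_k \ \cong \ V_{n+k} \oplus V_{n+k-2} \oplus \cdots \oplus V_{n-k} \qquad (n \geq k \geq 0),
\]
valid in $\Rep(\mathrm{PSL}(2))$ since $n,k$ even implies every summand has even highest weight, yields the first claim; commutativity is inherited from that of $\otimes$.

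For the remaining two cases, I would first establish two structural facts about $\cF := \IC_{-n} \star \IC_k$ for $n > 0$. Firstly, $\cF$ is geometrically semisimple: by an argument strictly analogous to the proof of Proposition \ref{prop:convolution-mix} (pull back along $p : \Fl_{\Gv} \to \Gr_{\Gv}$ and invoke \cite[Proposition 3.2.5]{by}), $\IC_{-n} \star \IC_k$ is pure and semisimple. Secondly, since $V_k^* \cong V_k$ for $\mathrm{SL}(2)$, the self-adjointness of $(-) \star \IC_k$ (Lemma \ref{lem:adjunction}) gives
\[
[\cF : \IC_j] \ = \ \dim \Hom(\IC_j \star \IC_k, \IC_{-n}),
\]
and this vanishes whenever $j \geq 0$ because, by the first formula, $\IC_j \star \IC_k$ is then a sum of $\IC_m$ with $m \geq 0$. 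Hence only $\IC_{-m}$ with $m > 0$ can appear as summands of $\cF$.

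The multiplicities will then be pinned down by induction on $k$. The recursion comes from Clebsch--Gordan: for $k \geq 4$, $\IC_{k-2} \star \IC_2 \cong \IC_k \oplus \IC_{k-2} \oplus \IC_{k-4}$, so associativity of convolution and semisimplicity give
\[
\IC_{-n} \star \IC_k \ \oplus \ \IC_{-n} \star \IC_{k-2} \ \oplus \ \IC_{-n} \star \IC_{k-4} \ \cong \ (\IC_{-n} \star \IC_{k-2}) \star \IC_2.
\]
A direct combinatorial verification shows that the claimed formulae in (b) and (c) are the unique semisimple solution of this recursion together with the base cases $k = 0$ (trivial) and $k = 2$. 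So the proof reduces to computing $\IC_{-n} \star \IC_2$ for all $n > 0$.

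This base case is the principal obstacle, and requires direct geometric analysis. By Proposition \ref{prop:simples-SL2}, the external twisted product $\IC_{-n} \tsqtimes \IC_2$ is a shift of the constant sheaf on the $(n+1)$-dimensional variety $\overline{X_{-n}} \ttimes \overline{\Gr_{\Gv}^2}$, and the convolution map $m$ sends this onto $\overline{X_{-n-2}}$, which is also of dimension $n+1$. Stratifying the target by $\Iv$-orbits and analysing the fibers of $m$, using the $\bP^1$-bundle $\check{Q}/\Iv$ that appears in the proof of Proposition \ref{prop:simples-SL2}, I expect: for $n > 2$, the fibers over $X_{-n-2}$, $X_{-n}$, $X_{-n+2}$ have dimensions $0, 1, 1$ respectively, yielding $\IC_{-n-2} \oplus \IC_{-n} \oplus \IC_{-n+2}$ via the Decomposition Theorem; for $n = 2$, the orbit $X_{-n+2} = X_0$ degenerates to a single point $\{L_0\}$, and together with the a priori vanishing $[\cF:\IC_0]=0$ established in the previous paragraph, this forces the truncated answer $\IC_{-4} \oplus \IC_{-2}$.
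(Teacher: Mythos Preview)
Your treatment of the first formula via the Satake equivalence and Clebsch--Gordan is correct and is exactly what the paper does. Your inductive scheme for the remaining two formulae is also a legitimate strategy, but it runs in the opposite direction from the paper's: you induct on $k$ with base case $\IC_{-n}\star\IC_2$, whereas the paper inducts on $n$ with base case $\IC_{-2}\star\IC_k$. The paper's choice is not accidental: the decomposition $\IC_{-2}\star\IC_k\cong\IC_{-k-2}\oplus\IC_{-k}$ (for $k\ge 2$) is precisely the isomorphism $\cK\cong\IC_{-n-2}\oplus\IC_{-n}$ obtained in the proof of Proposition~\ref{prop:simples-SL2}, so no further geometry is required. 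The inductive step is then the two-line associativity computation
\[
\IC_{-2}\star\IC_n\star\IC_k\;\cong\;\IC_{-2}\star\bigl(\IC_{n+k}\oplus\cdots\oplus\IC_{|n-k|}\bigr)\;\cong\;\bigl(\IC_{-n-2}\oplus\IC_{-n}\bigr)\star\IC_k,
\]
which determines $\IC_{-n-2}\star\IC_k$ from $\IC_{-n}\star\IC_k$ by cancellation. Semisimplicity and the absence of summands $\IC_j$ with $j\ge 0$ come for free from this identity (the left-hand side is a direct sum of negative-index simples by the base case), so your separate arguments via \cite{by} and Lemma~\ref{lem:adjunction}, while correct, are not needed.

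Your base case, by contrast, is a genuinely different computation, and your sketch of it contains a numerical error. For $n>2$ you claim the fibre of $m:\overline{X_{-n}}\ttimes\overline{\Gr_{\Gv}^2}\to\overline{X_{-n-2}}$ over $X_{-n+2}$ has dimension~$1$; in fact it has dimension~$2$. One sees this by comparing stalks: for $n=4$ the expected answer $\IC_{-6}\oplus\IC_{-4}\oplus\IC_{-2}$ has stalk $\C[5]\oplus\C[3]\oplus\C[1]$ at $L_{-2}$, so $H^{\bullet}(m^{-1}(L_{-2}))$ sits in degrees $0,2,4$. With fibre dimension~$1$ the stratum $X_{-n+2}$ (of codimension~$4$) would not be relevant in the semi-small sense and $\IC_{-n+2}$ would be missing; with the correct dimension~$2$ it becomes relevant and the Decomposition Theorem gives the right answer. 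You would also need to check the fibres over the positive-index orbits $X_{n},X_{n-2},\ldots$; these have odd codimension and hence are never relevant, consistent with your adjunction argument. So your plan can be made to work, but only after repairing the fibre analysis, and the paper's route avoids this entirely by recycling Proposition~\ref{prop:simples-SL2}.
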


\begin{proof}
The first formula follows from representation theory of $\mathrm{PGL}(2)$, via the Satake equivalence. The second and third formulas in the case $n=2$ were proved in the course of the proof of Proposition \ref{prop:simples-SL2} (in a different language). The general case can be proved by induction on $n$ using the following two expressions:
\begin{align*}
\IC_{-2} \star \IC_n \star \IC_k \ & \cong \ \IC_{-2} \star \bigl(\IC_{n+k} \oplus \cdots \oplus \IC_{|n-k|} \bigr) \\
& \cong \ \bigl( \IC_{-n-2} \oplus \IC_{-n} \bigr) \star \IC_k,
\end{align*}
where in the first isomorphism we use the first formula, and in the second one we use the case ``$n=2$'' of the third formula.
\end{proof}

\subsection{Dg-algebra}

Using Corollary \ref{cor:projectives-sl2} and Proposition \ref{prop:convolution-sl2}, one can describe the dg-algebra $\Hom^{\bullet}(P^{\bullet},P^{\bullet} \star \cR_G)$ concretely.

For example, the dg-algebra
\[
\Hom^{\bullet}(P^{\bullet}, P^{\bullet})
\]
is isomorphic to the product of an infinite number of exact complexes
\[
\C \hookrightarrow \C \oplus \C \twoheadrightarrow \C
\]
(in degrees $-1$, $0$ and $1$) parametrized by $\Z_{<0}$ and one copy of the complex
\[
\C \hookrightarrow \C \oplus \C \to \{0\}
\]
(again in degrees $-1$, $0$ and $1$) with cohomology $\C$ in degree $0$.

Similarly, the complex
\[
\Hom^{\bullet}(P^{\bullet}, P^{\bullet} \star \IC_2)
\]
is isomorphic to the product of an infinite number of exact complexes
\[
\C \hookrightarrow \C^2 \to \C^2 \to \C^2 \to \C^2 \to \C^2 \twoheadrightarrow \C
\]
(in degrees between $-3$ and $3$) parametrized by $\Z_{<-2}$, one copy of the complex
\[
\C \hookrightarrow \C^2 \to \C^2 \to \C^2 \to \C^2 \to \C^2 \to 0
\]
(again in degrees between $-3$ and $3$) with cohomology $\C$ in degree $2$, one copy of the exact complex
\[
\C \hookrightarrow \C^2 \to \C^2 \to \C^2 \twoheadrightarrow \C
\]
(in degrees between $-3$ and $1$) and one copy of the exact complex
\[
\C \hookrightarrow \C^2 \twoheadrightarrow \C
\]
(in degrees between $-3$ and $-1$).

The description of
\[
\Hom^{\bullet}(P^{\bullet}, P^{\bullet} \star \IC_n)
\]
for a general $n \in 2\Z_{\geq 0}$ is similar.


\begin{thebibliography}{APW}


\bibitem[AR]{ar}
P.~Achar, S.~Riche, \emph{Koszul duality and semisimplicity of Frobenius}, preprint.

\bibitem[APW]{apw}
H.~H.~Andersen, P.~Polo, K.~Wen, \emph{Representations of quantum algebras},
Invent. Math. \textbf{104} (1991), 1--59.

\bibitem[AB]{ab}
S.~Arkhipov, R.~Bezrukavnikov, \emph{Perverse sheaves on affine flags and Langlands dual group}, with an appendix by R.~Bezrukavnikov and I.~Mirkovi{\'c}, Israel J. Math. \textbf{170} (2009), 135--183. 

\bibitem[ABG]{abg}
S.~Arkhipov, R.~Bezrukavnikov, V.~Ginzburg, \emph{Quantum groups, the loop Grassmannian, and the Springer resolution}, J. Amer. Math. Soc. \textbf{17} (2004), 595--678. 

\bibitem[AV]{av}
M.~Artin, M.~Van den Bergh, \emph{Twisted homogeneous coordinate rings},
J. Algebra \textbf{133} (1990), 249--271. 

\bibitem[BBD]{bbd}
A.~Beilinson, J.~Bernstein, P.~Deligne, \emph{Faisceaux pervers}, in \emph{Analysis and topology on singular spaces, I (Luminy, 1981)}, Ast{\'e}risque \textbf{100} (1982), 5--171.

\bibitem[BD]{bd} A.~Beilinson, V.~Drinfeld, \emph{Quantization of Hitchin's integrable system and Hecke eigensheaves}, preprint available at ``http://www.math.uchicago.edu/$\sim$mitya/langlands.html''.

\bibitem[BGS]{bgs}
A.~Beilinson, V.~Ginzburg, W.~Soergel, \emph{Koszul duality patterns in representation theory}, J. Amer. Math. Soc. \textbf{9} (1996), 473--527.

\bibitem[BL]{bl}
J.~Bernstein, V.~Lunts, \emph{Equivariant sheaves and functors}, Lecture Notes in Mathematics 1578, Springer, 1994.


\bibitem[BF]{bf} 
R.~Bezrukavnikov, M.~Finkelberg, \emph{Equivariant Satake category and Kostant--Whittaker reduction}, Mosc. Math. J. \textbf{8} (2008), 39--72. 

\bibitem[BY]{by}
R.~Bezrukavnikov, Z.~Yun, \emph{On Koszul duality for Kac-Moody groups}, preprint arXiv:1101.1253 (2011).

\bibitem[BK]{bk} 
A.~Bondal, M.~Kapranov, \emph{Enhanced triangulated categories}, Math. USSR-Sb. \textbf{70} (1991), 93--107.

\bibitem[BLL]{bll}
A.~Bondal, M.~Larsen, V.~Lunts, \emph{Grothendieck ring of pretriangulated categories}, Int. Math. Res. Not. \textbf{2004}, 1461--1495. 

\bibitem[BM]{bm}
W.~Borho, R.~MacPherson, \emph{Partial resolutions of nilpotent varieties}, in \emph{Analysis and topology on singular spaces, II, III (Luminy, 1981)}, 23--74, Ast{\'e}risque \textbf{101--102}, Soc. Math. France, 1983. 

\bibitem[Bra]{br} 
T.~Braden, \emph{Hyperbolic localization of intersection cohomology}, Transform. Groups \textbf{8} (2003), 209--216.

\bibitem[Bro]{bro}
B.~Broer, \emph{Line bundles on the cotangent bundle of the flag variety}, Invent. Math. \textbf{113} (1993), 1--20. 

\bibitem[Bry]{bry}
R.~Brylinski, \emph{Limits of weight spaces, Lusztig's $q$-analogs, and fiberings of adjoint orbits}, J. Amer. Math. Soc. \textbf{2} (1989), 517--533. 

\bibitem[DM]{dm} 
P.~Deligne, J.~Milne, \emph{Tannakian categories}, in \emph{Hodge cycles, motives, and Shimura varieties}, Lecture Notes in Mathematics 900, Springer, 1982.

\bibitem[Dr]{dr} 
V.~Drinfeld, \emph{DG quotients of DG categories}, J. Algebra \textbf{272} (2004), 643--691. 

\bibitem[FW]{fw}
P.~Fiebig, G.~Williamson, \emph{Parity sheaves, moment graphs and the $p$-smooth locus of Schubert varieties}, preprint arXiv:1008.0719 (2010).

\bibitem[Ga]{ga}
D.~Gaitsgory, \emph{Construction of central elements in the affine Hecke algebra via nearby cycles}, Invent. Math. {\bf 144} (2001), 253--280.

\bibitem[G1]{g1} 
V. Ginsburg, \emph{Perverse sheaves and $\C^*$-actions}, J. Amer. Math. Soc. \textbf{4} (1991), 483--490.

\bibitem[G2]{g2} 
V.~Ginzburg, \emph{Perverse sheaves on a loop group and Langlands' duality}, preprint arXiv:alg-geom/9511007 (1995).

\bibitem[JLZ]{jlz}
A.~Joseph, G.~Letzter, S.~Zelikson, \emph{On the Brylinski--Kostant filtration}, J. Amer. Math. Soc. \textbf{13} (2000), 945--970.

\bibitem[KS1]{ks1} 
M.~Kashiwara, P.~Schapira, \emph{Sheaves on manifolds}, Grundlehren der Mathematischen Wissenschaften 292, Springer, 1990.

\bibitem[KS2]{ks2} 
M.~Kashiwara, P.~Schapira, \emph{Categories and sheaves}, Grundlehren der Mathematischen Wissenschaften 332, Springer, 2006.

\bibitem[Ka]{ka}
S.~Kato, \emph{Spherical functions and a $q$-analogue of Kostant's weight multiplicity formula}, Invent. Math. \textbf{66} (1982), 461--468. 

\bibitem[KL]{kl}
D.~Kazhdan, G.~Lusztig, \emph{Schubert varieties and Poincar{\'e} duality}, in \emph{Geometry of the Laplace operator (Proc. Sympos. Pure Math., Univ. Hawaii, Honolulu, Hawaii, 1979)}, 185--203,
Proc. Sympos. Pure Math. XXXVI, Amer. Math. Soc., 1980.

\bibitem[Lu]{lu}
G.~Lusztig, \emph{Singularities, character formulas, and a $q$-analog of weight multiplicities}, in \emph{Analysis and topology on singular spaces, II, III (Luminy, 1981)}, 208--229,
Ast{\'e}risque \textbf{101--102}, Soc. Math. France, 1983.

\bibitem[Mi]{mi}
J.~S.~Milne, \emph{{\'E}tale cohomology}, Princeton University Press, 1980.

\bibitem[MV]{mv} 
I.~Mirkovi{\'c}, K.~Vilonen, \emph{Geometric Langlands duality and representations of algebraic groups over commutative rings}, Ann. of Math. (2) \textbf{166} (2007), 95--143.

\bibitem[NP]{np} 
B.~C.~Ng{\^o}, P.~Polo, \emph{R{\'e}solutions de Demazure affines et formule de Casselman-Shalika g{\'e}om{\'e}trique}, J. Algebraic Geom. \textbf{10} (2001), 515--547.

\bibitem[Re]{re} 
R.~Reich, \emph{Notes on Beilinson's ``How to glue perverse sheaves''}, J. Singul. \textbf{1} (2010), 94--115.


\bibitem[Sp1]{spr}
T.~Springer, \emph{Some arithmetical results on semi-simple Lie algebras}, Inst. Hautes {\'E}tudes Sci. Publ. Math. \textbf{30} (1966), 115--141.

\bibitem[Sp2]{sp2}
T.~Springer, \emph{Quelques applications de la cohomologie d'intersection}, Ast{\'e}risque \textbf{92--93} (1982), 249--273.

\bibitem[Sw]{sw}
M.~Sweedler, \emph{Hopf algebras}, Benjamin, 1969.

\bibitem[YZ]{yz}
Z.~Yun, X.~Zhu, \emph{Integral homology of loop groups via Langlands dual group}, preprint arXiv:0909.5487 (2009).

\end{thebibliography}
\end{document}